\numberwithin{equation}{section}
\newtheorem{thm}{Theorem}[section]
\newtheorem{thma}{Theorem}
\newtheorem*{thm*}{Theorem}
\newtheorem{lem}[thm]{Lemma}
\newtheorem{prop}[thm]{Proposition}
\newtheorem{cor}[thm]{Corollary}
\newtheorem{cora}{Corollary}
\newtheorem{defn}[thm]{Definition}
\theoremstyle{remark}
\newtheorem{rem}[thm]{Remark}
\newtheorem{ex}[thm]{Example}
\DeclareMathOperator*{\esssup}{ess\,sup}
\DeclareMathOperator*{\aplimsup}{ap\,\limsup}
\DeclareMathOperator*{\apliminf}{ap\,\liminf}
\newcommand{\Ch}{{\mathrm {Ch}}}
\newcommand{\lip}{{\mathrm {lip}}}
\newcommand{\Lip}{{\mathrm {Lip}}}
\newcommand{\diff}{{\mathrm{d}}}
\newcommand{\DIFF}{{\mathrm{D}}}
\newcommand{\nablatilde}{\bar{\nabla}}
\newcommand{\Prbar}{\bar{\Pr}}
\newcommand{\pibar}{\bar{\pi}}
\newcommand{\qcr}{{\mathrm{QCR}}}
\renewcommand{\tr}{{\mathrm{tr}}}
\newcommand{\trbar}{{{\bar{\tr}}}}
\newcommand{\qcrbar}{{\mathrm{\bar{QCR}}}}
\newcommand{\dive}{{\mathrm{div}}}
\newcommand{\supp}{{\mathrm {supp\,}}}
\newcommand{\diam}{{\mathrm {diam\,}}}
\newcommand{\capa}{{\mathrm {Cap}}}
\newcommand{\per}{{\mathrm {Per}}}
\newcommand{\mres}{\mathbin{\vrule height 1.6ex depth 0pt width 0.13ex\vrule height 0.13ex depth 0pt width 1.3ex}}
\newcommand{\mressmall}{\mathbin{\vrule height 1.2ex depth 0pt width 0.11ex\vrule height 0.10ex depth 0pt width 1.0ex}}
\newcommand{\dist}{{\mathsf{d}}}
\newcommand{\mass}{{\mathsf{m}}}
\newcommand{\XX}{{\mathsf{X}}}
\newcommand{\YY}{{\mathsf{Y}}}
\newcommand{\FF}{{\mathcal{F}}}
\newcommand{\VV}{{\mathcal{V}}}
\newcommand{\DD}{{\mathcal{D}}}
\newcommand{\Rr}{{\mathcal{R}}}
\newcommand{\WW}{{\mathcal{W}}}
\newcommand{\mvect}{{\mathsf{M}}}
\newcommand{\nvect}{{\mathsf{N}}}
\newcommand{\defeq}{\mathrel{\mathop:}=}
\newcommand{\AK}{\mathrm{AK}}
\newcommand{\MM}{\mathcal{M}}
\newcommand{\LL}{\mathcal{L}}
\newcommand{\RR}{\mathbb{R}}
\newcommand{\NN}{\mathbb{N}}
\newcommand{\Cqcvf}{\mathcal{QC}(T\XX)}
\newcommand{\Cqcvfinf}{\mathcal{QC}^\infty(T\XX)}
\newcommand{\Ss}{{\mathrm {S}^2}(\XX)}
\newcommand{\Lp}{{\mathrm {L}}}
\newcommand{\Lploc}{\mathrm {L}_\mathrm{loc}}
\newcommand{\Lpo}{\Lp^0(\mass)}
\newcommand{\Lpc}{\Lp^0(\capa)}
\newcommand{\Lpcinf}{\Lp^\infty(\capa)}
\newcommand{\Lpu}{\Lp^1(\mass)}
\newcommand{\Lpuloc}{\Lp^1_{\mathrm{loc}}(\mass)}
\newcommand{\Lpp}{\Lp^p(\XX)}
\newcommand{\Lpt}{\Lp^2(\mass)}
\newcommand{\Lpi}{\Lp^\infty(\mass)}
\newcommand{\HSs}{{\mathrm {H^{1,2}(\XX)}}}
\newcommand{\WHCSs}{{\mathrm {H^{1,2}_C}(T\XX)}}
\newcommand{\WHHSs}{{\mathrm {H^{1,2}_H}(T\XX)}}
\newcommand{\HSsloc}{{\mathrm {H^{1,2}_{loc}(\XX)}}}
\newcommand{\LIP}{{\mathrm {LIP}}}
\newcommand{\BV}{{\mathrm {BV}}}
\newcommand{\BVv}{{\mathrm {BV}}(\XX)}
\newcommand{\LIPbs}{{\mathrm {LIP_{bs}}}}
\newcommand{\Cc}{{C_{\mathrm{c}}}}
\newcommand{\Cb}{{C_{\mathrm{b}}}}
\newcommand{\LIPloc}{{\mathrm {LIP_{loc}}}}
\newcommand{\LIPb}{{\mathrm {LIP_{b}}}}
\newcommand{\TestF}{{\mathrm {Test}}}
\newcommand{\TestV}{{\mathrm {TestV}}}
\newcommand{\TestVbar}{{\mathrm {Test\bar{V}}}}
\newcommand{\cotX}{\Lp^2(T^*\XX)}
\newcommand{\tanX}{\Lp^2(T\XX)}
\newcommand{\tanXp}{\Lp^p(T\XX)}
\newcommand{\tanXzero}{\Lp^0(T\XX)}
\newcommand{\cotanXzero}{\Lp^0(T^*\XX)}
\newcommand{\tanXinf}{\Lp^\infty(T\XX)}
\newcommand{\tanXinfn}{\Lp^\infty(T^n\XX)}
\newcommand{\cotanXp}{\Lp^p(T^*\XX)}
\newcommand{\tanbvXp}[2]{\Lp^{#1}_{#2}(T\XX)}
\newcommand{\tanbvXpn}[2]{\Lp^{#1}_{#2}(T^n\XX)}
\newcommand{\tanbvXn}[1]{\Lp^2_{#1}(T^n\XX)}
\newcommand{\tanbvXzero}[1]{\Lp^0_{#1}(T\XX)}
\newcommand{\tanXcap}{\Lp^0_\capa(T\XX)}
\newcommand{\tanXcapn}{\Lp^0_\capa(T^n\XX)}
\newcommand{\tanXcapinfn}{\Lp^\infty_\capa(T^n\XX)}
\newcommand{\tanXcapinfm}{\Lp^\infty_\capa(T^m\XX)}
\newcommand{\tanXcapinfk}{\Lp^\infty_\capa(T^k\XX)}
\newcommand{\RCD}{{\mathrm {RCD}}}
\newcommand{\PI}{{\mathrm {PI}}}
\newcommand{\fr}{\penalty-20\null\hfill$\blacksquare$}     
\newcommand{\leb}{\mathrm{Leb}}
\newcommand{\borrep}{\mathrm{BorRep}}
\let\epsilon\varepsilon
\newcommand{\Id}{{\mathrm {Id}}}
\def\Xint#1{\mathchoice
	{\XXint\displaystyle\textstyle{#1}}%
	{\XXint\textstyle\scriptstyle{#1}}%
	{\XXint\scriptstyle\scriptscriptstyle{#1}}%
	{\XXint\scriptscriptstyle\scriptscriptstyle{#1}}%
	\!\int}
\def\XXint#1#2#3{{\setbox0=\hbox{$#1{#2#3}{\int}$}
		\vcenter{\hbox{$#2#3$}}\kern-.5\wd0}}
\def\dashint{\Xint-}
\title{}
\begin{document}
	\title[Local vector measures]{Local vector measures}

\author[C. Brena]{Camillo Brena}
\address{C.~Brena: Scuola Normale Superiore, Piazza dei Cavalieri 7, 56126 Pisa} 
\email{\tt camillo.brena@sns.it}

\author[N. Gigli]{Nicola Gigli}
\address{N.~Gigli: SISSA, Via Bonomea 265, 34136 Trieste} 
\email{\tt ngigli@sissa.it}

	\begin{abstract} Consider a BV function on a Riemannian manifold. What is its differential? And what about the Hessian of a convex function? These questions have clear answers in terms of (co)vector/matrix valued measures if the manifold is the Euclidean space. In more general curved contexts, the same objects can be perfectly understood via charts. However,  charts are often unavailable in the less regular setting of metric geometry, where still the questions make sense.
	
	In this paper we propose a way to deal with this sort of problems and, more generally, to give a meaning to a concept of `measure acting in duality with sections of a given bundle', loosely speaking. Despite the generality, several classical results in measure theory like Riesz's and Alexandrov's theorems have a natural counterpart in this setting. Moreover, as we are going to discuss, the notions introduced here provide a unified framework for several key concepts in nonsmooth analysis that have been introduced more than two decades ago, such as: Ambrosio-Kirchheim's metric currents, Cheeger's Sobolev functions and Miranda's BV functions.
	
	Not surprisingly, the understanding of the structure of these objects improves with the regularity of the underlying space. We are particularly interested in the case of $\RCD$ spaces where, as we will argue, the regularity of several key measures of the type we study nicely matches the known regularity theory for vector fields, resulting in a very effective theory. 
	
	We expect that the notions developed here will help creating stronger links between differential calculus in Alexandrov spaces (based on Perelman's DC charts) and in $\RCD$ ones (based on  intrinsic tensor calculus).

	\end{abstract}
\maketitle
\setcounter{tocdepth}{3}

\tableofcontents
\section*{Introduction}

This paper is about further developing differential calculus in the nonsmooth setting of metric (measure) spaces. The starting point is the paper \cite{Gigli14}  of the second author, where the concept of $\Lp^p(\mass)$-normed $\Lp^\infty(\mass)$-module has been introduced as a means to interpret what $\mass$-a.e.\ defined tensor fields should be on a given metric measure space $(\XX,\dist,\mass)$. A typical object that is well defined in such framework is that of differential $\dd f$ of a Sobolev function $f\in {\rm W}^{1,p}(\XX)$: shortly said, this is possible because such differential is, even in the smooth world, an a.e.\ defined 1-form.

Still, when taking distributional derivatives it might very well be that one ends up with objects more singular  than these. Typical instances where this occurs are:
\begin{itemize}
\item[A)] In dealing with  the differential of a BV function. 
\item[B)] In  dealing with  the Hessian of a convex function. 
\item[C)] In dealing with the Ricci curvature tensor on $\RCD$ spaces: this  is a sort of measure defined in duality with smooth vector fields whose properties are not yet well understood (see \cite[Section 3.6]{Gigli14}).
\end{itemize}
In all these cases, the relevant concept should be something possibly concentrated on a negligible set: as such, the notion of $\Lp^p(\mass)$-normed $\Lp^\infty(\mass)$-modules does not really suffice. It is our goal here to propose a more general theory capable of dealing, in particular, with the cases above. As it will turn out, the framework we develop is fully compatible even with objects conceptually far from the examples mentioned, like metric currents as developed by Ambrosio-Kirchheim in \cite{AmbrosioKirchheim00}.

The theory, as well as the manuscript, is divided in two parts: in the first we develop the concept of `local vector measure' in the general setting of Polish spaces, while in the second we discuss how the general framework fits in the more regular environment of $\RCD$ spaces.

\subsection*{Polish theory} Measures are defined as functionals over other objects: either sets or functions or both. As such, if we want to give an abstract notion of `measure' capable of giving a meaning to the examples above, we should at the same time be ready to specify the space of objects where it is acting. Example $\rm A)$ above is particularly illuminating: consider a BV function on a Riemannian manifold. Then its differential, whatever it is, should be something acting in duality with continuous vector fields; as such one should have at disposal such vector fields before defining the differential.

With this in mind, we build our theory by modelling it on the duality
\[
\{\text{functions in }\Cb(\XX)\}\qquad\qquad\text{  in duality with }\qquad\qquad\{\text{finite Radon measures on $\XX$}\}
\]
(for comparison, notice that the duality theory in \cite{Gigli14} was modelled upon the duality between $\Lp^p$-functions and $\Lp^q$-functions, $\frac1p+\frac1q=1$). 

We thus begin our presentation introducing the concept of `normed $\Cb(\XX)$-module' that aims at being an abstract version of the space of continuous (or, more generally,  bounded) sections of a normed bundle. By definition, a normed $\Cb(\XX)$-module is a normed space $(\VV,\|\cdot\|)$ that is also a module over $\Cb(\XX)$ in the algebraic sense and for which the inequality
\begin{equation}
\label{eq:compintro}
\|f_1v_1+\cdots+f_nv_n\|\leq\max_i\|f_i\|_{\infty}\|v_i\|
\end{equation}
holds for any $n\in\NN$ and choice of  $v_i\in\VV$ and $f_i\in\Cb(\XX)$ with pairwise disjoint supports. The structure of $\Cb(\XX)$-module gives the possibility of inspecting the `local behaviour' of elements in $\VV$. For instance, for $v\in\VV$ and $A\subseteq\XX$ open we can define the seminorm $\|v\|_{|A}$ of $v$ in $A$ as $\sup\|fv\|$, the sup being taken among all $f\in\Cb(\XX)$ with support in $A$ and norm bounded by 1. Then for $C\subseteq\XX$ closed we say that the support of $v$ is contained in $C$ provided $\|v\|_{|\XX\setminus C}=0$. With these definitions it is easy to see that the compatibility condition \eqref{eq:compintro} can be read as a way of saying that the norm of $\VV$ is a sort of `sup' norm, as it implies that
\[
\|v_1+\cdots+v_n\|=\max_i\|v_i\|\qquad\text{ provided the $v_i$'s have disjoint support}
\]
(here having disjoint supports means that there are disjoint closed sets $C_i$ with $\supp v_i\subseteq C_i$).

For example,  even though   for every metric measure space $(\XX,\dist,\mass)$ and $p\in[1,\infty]$ the space $\Lp^p(\XX)$ is a module over $\Cb(\XX)$ in the algebraic sense,  condition \eqref{eq:compintro} only holds in the case $p=\infty$.

Now recall that a vector  measure on the topological dual $\VV'$ of $\VV$ as Banach space is a $\sigma$-additive function taking Borel subsets of $\XX$ and returning functionals in $\VV'$. Then a \emph{local} vector  measure defined on $\VV$ (i.e.\ acting in duality with elements of $\VV$) is a vector  measure $\nvect$ on  $\VV'$ that is compatible with the module  structure in the sense that
\begin{equation}
\label{eq:wlintro}
\nvect(A)(v)=0\qquad\forall A\subseteq\XX \text{ open and $v\in\VV$ such that $\|v\|_{|A}=0$}.
\end{equation}
We refer to property \eqref{eq:wlintro} as `locality' (or `weak locality', to distinguish it from the stronger notion discussed below). The basic example here is: $\VV:=\Cb(\XX)$, $\mu$ finite (possibly signed) Radon measure on $\XX$ and $\nvect$  given by the formula
\[
\nvect(B)(f):=\int_B f\,\dd\mu\qquad\text{$\forall B\subseteq\XX$ Borel and $f\in\Cb(\XX)$}.
\]
Section \ref{sectinit} is devoted to the study of this sort of measures. Among others, a relevant result that we obtain is a rather abstract version of Riesz's representation theorem, see Theorem \ref{weakder}. In the case of $\XX$ compact can be stated as: for any $F\in \VV'$ there is a unique local vector measure $\nvect$ on $\VV$ such that
\begin{equation}
\label{eq:rieszintro}
\nvect(\XX)(v)=F(v)\qquad\forall v\in\VV.
\end{equation}
Notice that in the case $\VV=C(\XX)$ this easily reduces to the standard Riesz's theorem. In the non-compact case  we prove that for $F\in\VV'$  there exists $\nvect$ as in \eqref{eq:rieszintro} if and only if
\begin{equation}
\label{eq:tightintro}
F(f_nv)\to0\qquad\forall v\in\VV\text{ and $\{f_n\}_n\subseteq\Cb(\XX)$ with $f_n(x)\searrow 0$ for any $x\in\XX$.}
\end{equation}
This  is reminiscent of the analogue condition appearing in the study of Daniell's integral.

Another relevant property of local vector measures concerns their total variation, that for a  generic vector  measure $\nvect$ on $\VV'$ is defined as $|\nvect|(E):=\sup\sum_i\|\nvect(E_i)\|'$,  the sup being taken among  at most countable partitions of the Borel set $E$. As it turns out, for local vector measures the total variation is always finite and the identity
\[
|\nvect|(E)=\|\nvect(E)\|'\qquad\text{$\forall E\subseteq \XX$ Borel}
\]
holds. In particular,  $|\nvect|$ is always a Radon measure and it is therefore natural to ask whether any sort of polar decomposition is in place. As it turns out, this is always the case, meaning that for any local vector measure $\nvect$ there is a unique $L:\XX\to\VV'$ such that
\begin{equation}
\notag
\nvect=L|\nvect|\quad\text{ in the sense that }\quad\nvect(B)(v)=\int_BL(x)(v)\,\dd|\nvect|(x)\qquad\text{$\forall B\subseteq\XX$ Borel and $v\in\VV$,}
\end{equation}
and it holds $\|L(x)\|'=1$ for $|\nvect|$-a.e.\ $x$ (see Proposition \ref{allhaspolarabs} for the precise formulation). The locality of $\nvect$ implies the following locality property of $L$:
\begin{equation}
\label{eq:locpol}
L(x)(v)=0,\quad|\nvect|\text{-a.e.\ on}\ A\qquad\text{$\forall A\subseteq\XX$ open such that $\|v\|_{|A}=0$},
\end{equation}
that we can also interpret by saying that $L(x)(v)$ depends only on the germ of $v$ at $x$.

At this level of generality we cannot say much more than this, but still these concepts turn out to be flexible enough to be  naturally compatible with various pre-existing notions in metric geometry. Given the conceptual proximity of the definition of $L^\infty$-module and $\Cb$-module, it is not surprising that the notion of differential of a Sobolev function as given in \cite{Gigli14} can be reinterpreted in this framework (see Section \ref{se:diffsob}). On the other hand, one thing that we gain from the studies conducted here, and that in fact motivates them, is the possibility  to give a meaning to the differential $\DIFF f$ of a BV function $f$ on arbitrary metric measure spaces. This notion of differential comes with some non-trivial calculus rule, for instance the Leibniz formula
\[
\DIFF (fg)=g\DIFF f+f\DIFF g
\]
holds for any couple $f,g$ of  bounded\emph{ continuous} functions of bounded variation (see Section \ref{sectBV}). 

As mentioned above, also the concept of metric current naturally fits in this framework, let us briefly mention how. By the definition and the results in \cite{AmbrosioKirchheim00}, an $n$ current acts on the space
\[
\DD^n(\XX)\defeq \Cb(\XX)\otimes \bigwedge^n \LIP(\XX)
\]
whose elements are  sums of objects formally written as $f\dd\varphi_1\wedge\cdots\wedge\dd\varphi_n$. Thus, by also looking at the Euclidean case, it is natural to equip $\DD^n(\XX)$ with the seminorm 
\begin{equation}\notag
\Vert v\Vert\defeq\sup_T T(v)\quad\text{for every }v\in\DD^n(\XX)
\end{equation}
where the supremum is taken among all currents $T$ with $\Vert T\Vert_{\AK}(\XX)\le 1$ (here $\|T\|_{\AK}$ is the mass of the current as defined in \cite{AmbrosioKirchheim00}). Then, tautologically, an $n$  current $T$ can be seen as an element of $\DD^n(\XX)'$ and the theories in \cite{AmbrosioKirchheim00} and in here quite nicely match, meaning that:
\begin{itemize}
\item[-] The results in \cite{AmbrosioKirchheim00}  show that $\DD^n(\XX)'$ can naturally be equipped with the structure of a normed $\Cb(\XX)$-module.
\item[-] An $n$ current $T$ always satisfies \eqref{eq:tightintro} when seen as element of $\DD^n(\XX)'$ and thus it induces a unique local vector measure $\nvect_T$ on $\DD^n(\XX)$. Also, the total variation $|\nvect_T|$ of $\nvect_T$ as defined above coincides with the mass $\|T\|_{\AK}$ of $T$ as introduced in \cite{AmbrosioKirchheim00}.
\item[-] The map sending $(f\dd\varphi_1\wedge \cdots\dd\varphi_n,g\dd\psi_1\wedge \cdots\dd\psi_m)$ to $fg\dd\varphi_1\wedge \cdots\dd\varphi_n\wedge\dd \psi_1\wedge \cdots\dd\psi_m$ induces a unique bilinear and continuous map from $\DD^n(\XX)\times \DD^m(\XX)$ to $\DD^{n+m}(\XX)$ of norm $\leq 1$. 
\item[-] In particular, the collection of normed $\Cb(\XX)$-modules  $\{\DD^n(\XX)\}_{n\in\NN}$ possesses a natural algebra structure and - reading in this language some of the results in \cite{AmbrosioKirchheim00}  - we see that the `differentiation' map $\LIP(\XX)\ni f\mapsto 1\dd f\in \DD^1(\XX)$ satisfies  Leibniz and chain rules and it is closed in a natural sense. 
\end{itemize}
See Section \ref{se:curr}.

\medskip

The concept of locality as expressed in \eqref{eq:wlintro} and in \eqref{eq:locpol} is the most we can expect for local vector measures defined on arbitrary $\Cb(\XX)$-modules. It is so because, in some sense, for an element of such a module we are capable of saying whether it is 0 on an open set, but we cannot give a reasonable meaning to it being 0 on a Borel set. Still, in many practical situations a relevant normed $\Cb(\XX)$-module $\VV$ is given as suitable space of bounded elements of a larger $\Lp^p(\mass)$-normed $\Lp^\infty(\mass)$-module $\mathscr M$ equipped with the norm
\[
\|v\|:=\||v|\|_{\Lp^\infty(\mass)}\qquad\forall v\in\VV.
\]
If this is the case it is important to understand how the structure of $\Lp^\infty(\mass)$-module and that of local vector measures on $\VV$ interact. We are going to see in Section \ref{normedmodules} that a local vector measure $\nvect$ on such a module $\VV$ with $|\nvect|\ll\mass$ satisfies the stronger locality property
\[
\nvect(B)(v)=0\qquad\forall B\subseteq\XX \text{ Borel and $v\in\VV$ such that $|v|=0$ $\mass$-a.e.\ on $B$}
\]
if and only if there is an element $M_\nvect\in \mathscr M^*$ (the dual in the sense of modules) with $|M_\nvect|=1$ $|\nvect|$-a.e.\ such that 
\[
L_\nvect(v)=M_\nvect(v)\quad|\nvect|\text{-a.e.}\qquad\forall v\in\VV.
\]
We shall call measures of this kind \emph{strongly local measures}. Notice that if $\mathscr M$ is an Hilbert module, then the writing above (and  Riesz's theorem for Hilbert modules) means that we can represent a strongly local measure $\nvect$ as ${\sf v}|\nvect|$ for some ${\sf v}\in\mathscr M$ with $|{\sf v}|=1$ $|\nvect|$-a.e., i.e.\ that
\[
\nvect(B)(v)=\int_Bv\cdot{\sf v}\,\dd|\nvect|.
\]
This closely corresponds to the usual intuition that wants a `vector valued measure' to be writable via polar decomposition as its mass times a vector field of norm 1.

\subsection*{$\RCD$ theory} In this part of the paper we focus the attention on $\RCD$ spaces, whose study actually motived this manuscript: here the starting  observation is about a `coincidence' occurring when handling certain non-smooth objects. It is indeed the case that:
\begin{itemize}
\item[-] In all of the three examples mentioned at the beginning of the introduction, the measure does not see sets of null 2-capacity: this is easily seen for Examples A and B at least if  the underlying space is the Euclidean one, for the Ricci curvature tensor on $\RCD$ spaces we refer to \cite{Gigli14}. To be more concrete: the polar decomposition of the differential $\DIFF f$ of a real valued BV function on the Euclidean space takes the form ${\sf v}|\DIFF f|$ for some Borel vector field ${\sf v}$ of norm 1 $|\DIFF f|$-a.e.\ and some non-negative Radon measure $|\DIFF f|$ giving 0 mass to sets of null 2-capacity. Notice that in fact the total variation measure of a function of bounded variation is well defined on arbitrary metric measure spaces \cite{MIRANDA2003} and does not see  sets of null 2-capacity  \cite{BGBV}. 
\item[-] On $\RCD$ spaces vector fields can be defined up to sets of null 2-capacity (unlike general metric measure spaces, where they typically are only defined up to sets of measure zero - at least in the axiomatization proposed in \cite{Gigli14}). More precisely, the analysis carried out in \cite{debin2019quasicontinuous} shows that Sobolev vector fields have a unique `quasi-continuous representative' defined up to sets of null 2-capacity: the analogy here is with the well known case of Sobolev functions, but in this case all the concepts are built upon functional analytic foundation, as no topology on the tangent module is given, so one cannot speak of continuity of vector fields in the classical sense.
\end{itemize}
The coincidence is in the fact that the best known regularity for vector fields matches the known regularity for the relevant (mass of the) measure, suggesting the existence of a good duality theory. Of course, we don't think this is a  coincidence at all, but rather an instance of the not infrequent phenomenon that sees the presence of solid analytic foundations for geometrically relevant objects.

This second part of the work is devoted to exploring  the generalities of the theory in this setting and to tailoring it to the study of the differential of BV functions. In particular we shall mostly rely on the concept of strongly local measure discussed in Section \ref{normedmodules}. We cover also the case of vector valued BV functions, that presents additional difficulties and seems intractable over general metric measure spaces.  The outcome of the analysis is that such distributional differential $\DIFF f$ can always be represented as  ${\sf v}|\DIFF f|$, where ${\sf v}$ is a suitable quasi-continuous vector field of norm 1 $|\DIFF f|$-a.e.\  and  $|\DIFF f|$ is the classical total variation measure of $f$ as introduced in \cite{MIRANDA2003}, at least in the scalar case. This description, finer than the one available on arbitrary spaces, is consistent with all the recent fine calculus tools and integration by parts developed on finite dimensional $\RCD$ spaces. For instance, the Gauss-Green formula established in \cite{bru2019rectifiability} can now be interpreted as the representation, in the sense above, of the distributional differential of the characteristic function of a set of finite perimeter. In this sense also the Leibniz rule for the product of two bounded (but not necessarily continuous) BV functions recently obtained in \cite{BGBV} naturally fits in this framework, meaning that we have
\[
\DIFF (fg)=\bar f\DIFF g+\bar g\DIFF f,\qquad\forall f\in {\mathrm{BV}}(\XX)\cap \Lp^\infty(\XX),
\]
where $\bar f:=\frac{f^\vee+f^\wedge}2$  is  the precise representative of $f$  (here $f^\vee(x)\defeq\aplimsup_{y\rightarrow x} f(y)$ and $f^\wedge(x)\defeq\apliminf_{y\rightarrow x} f(y)$, see \eqref{veeandwedge}), similarly for $g$ and the differentials are defined as strongly local vector measures in the sense discussed here. We recall this fact in Proposition \ref{leibnizrcd}.

\bigskip

In this manuscript we are not going to study  the Hessian of convex functions and the Ricci curvature tensor: this will be the main goal of an upcoming paper.

\subsection*{Acknowledgments}
The first author acknowledges the support of the PRIN 2017 project `Gradient flows, Optimal Transport and Metric Measure Structures'.

\section{The theory for Polish spaces}
\subsection{Definitions and results}\label{sectinit}
In the first part of this note, our framework is a Polish space $(\XX,\tau)$, which means that there exists a distance $\dist:\XX\times\XX\rightarrow\RR$ inducing the topology $\tau$ and such that $(\XX,\dist)$ is a complete and separable metric space. For simplicity of exposition, we fix such distance $\dist$ and hence we will often consider the Polish space $(\XX,\tau)$  as the complete and separable metric space $(\XX,\dist)$. We point out that for what concerns this part of the work, the choice of the distance is immaterial and what is important is the topology $\tau$. However, having a fixed distance  is natural if one reads the first part of the work in perspective of the second part, where we will work with metric measure spaces, which are triplets $(\XX,\dist,\mass)$ where $(\XX,\dist)$ is a complete separable distance on $\XX$ and $\mass$ is a Borel measure, finite on balls.
 
We denote the Borel $\sigma$-algebra of $\XX$ by $\mathcal{B}(\XX)$ and we adopt the standard notation for the various function spaces. 

\bigskip

In the following definition, we introduce the concept of normed $\Cb(\XX)$-module. Recall that from the algebraic perspective there is a well defined concept of module over the commutative ring with unity  $\Cb(\XX)$, it being a commutative group with unity $\VV$ equipped with an operation $\cdot:\Cb(\XX)\times\VV\to\VV$ satisfying
\[
\begin{split}
f\cdot(v+w)&=f\cdot v+f\cdot w,\\
(f+g)\cdot v&=f\cdot v+g\cdot v,\\
(fg)\cdot v&=f\cdot(g\cdot v),\\
{\bf 1}\cdot v&=v,
\end{split}
\]
for any $f,g\in \Cb(\XX)$ and $v,w\in\VV$. Here ${\bf 1}$ is the multiplicative identity of $\Cb(\XX)$, i.e.\ the function identically equal to 1. In what follow we shall omit the `dot' and write the product of $f$ and $v$ simply as $fv$.

The space $\Cb(\XX)$ is also a Banach space when equipped with its natural `$\sup$' norm $\Vert \,\cdot\,\Vert_\infty$. We shall then consider normed spaces that are also modules over $\Cb(\XX)$ in the algebraic sense and for which a certain compatibility between the normed and algebraic structure is present:
\begin{defn}[Normed $\Cb(\XX)$-modules]\label{defnmodules}
 Let $(\VV,\Vert\,\cdot\,\Vert)$ be a normed space that is also a module over the ring $\Cb(\XX)$ in the algebraic sense. We say that $(\VV,\Vert\,\cdot\,\Vert)$ is a  normed $\Cb(\XX)$-module if
\begin{equation}\label{compeq}
		\Vert  f_1 v_1+\dots+f_n v_n\Vert\le \max_{i} \Vert f_i\Vert_{\infty} \max_{i}\Vert v_i\Vert,
\end{equation}
whenever $\{f_i\}_{i=1,\dots,n}\subseteq\Cb(\XX)$ have pairwise disjoint supports and $\{v_i\}_{i=1,\dots,n}\subseteq\VV$. 
\end{defn}
Even though the expression `normed $\Cb(\XX)$-module' has an algebraic meaning (i.e.\ a normed vector space that is also a module over $\Cb(\XX)$), in what follows  when we write `normed $\Cb(\XX)$-module', \textbf{we always refer to the notion introduced in the definition above.}

Simple examples of normed $\Cb(\XX)$-modules are:
\begin{itemize}
\item[-] The space $(\Cb(\XX),\Vert \,\cdot\,\Vert_\infty)$ itself.
\item[-] Let $(\XX,\dist,\mass)$ be a metric measure space and $\mathscr M$  a  $\Lpo$-normed $\Lpo$-module. Then any subspace   $\VV$ of $\mathscr M$  closed under multiplication by $\Cb(\XX)$ and made of elements with pointwise norm in $\Lpi$ is a normed  $\Cb(\XX)$-module once endowed with the norm $\Vert\abs{\,\cdot\,}\Vert_{\Lpi}$.
\end{itemize}

Here are two simple consequences of our standing assumptions for $\Cb(\XX)$-modules that we are going to  use with no reference.
\begin{rem}
	The following properties hold:
	\begin{enumerate}
		\item if $f(x)=\lambda\in\RR$ for every $x\in\XX$, then $f v=\lambda v$ for every $v\in\VV$. Indeed,  this  holds for $\lambda=1$ by the algebraic definition of module, then  extends  to $\lambda\in\mathbb{Q}$ by algebra and then to $\lambda\in\RR$, by the continuity granted by \eqref{compeq};
		\item if $\{f_i\}_{i=1,\dots,n}\subseteq\Cb(\XX)$ have pairwise disjoint support and $\{v_i\}_{i=1,\dots,n}\subseteq\VV$, then 
\[
			\Vert  f_1 v_1+\dots+f_n v_n\Vert\le \max_{i} \left(\Vert f_i\Vert_{\infty} \Vert v_i\Vert\right).
\]		To show this, just divide each non zero $f_i$ by $\Vert f_i\Vert_{\infty}$ and multiply each $v_i$ by the same quantity.\fr
	\end{enumerate}
\end{rem}
Occasionally we shall consider modules over appropriate subrings of $\Cb(\XX)$:
\begin{defn}[Normed $\mathcal R$-modules]\label{def:Rap}
Let $\mathcal R\subseteq\Cb(\XX)$ be a subring (hence, contains the unity). We say that $\mathcal R$ \textbf{approximates open sets} if $\mathcal R$ is a lattice closed under multiplication by constant functions and such that  for every open subset $A\subseteq \XX$, there exists a sequence $\{f_k\}_k\subseteq\mathcal R$ such that $f_k(x)\nearrow \chi_A(x)$ for every $x\in\XX$. 

A normed $\mathcal R$-module is a normed vector  space $(\VV,\Vert\,\cdot\,\Vert)$ that is also a module over $\mathcal R$ in the algebraic sense such that property \eqref{compeq} holds for any $\{v_i\}_{i=1,\dots,n}\subseteq\VV$ and $\{f_i\}_{i=1,\dots,n}\subseteq\mathcal R$ with pairwise disjoint supports.

When speaking about normed $\mathcal R$-modules we shall always refer to this sort of structure and always assume that $\mathcal R$ approximates open sets.
\end{defn}
\begin{ex}
$\Cb(\XX)$ itself and the subring generated by $\LIPbs(\XX)$ (i.e.\ $\dist$-Lipschitz functions with bounded support) and the constant functions approximate open sets.\fr
\end{ex}
In the following remark we state some properties of subrings that approximates open sets that we are going to exploit in the sequel.
\begin{rem}\label{rem:Rpropr}
Let $\mathcal R\subseteq\Cb(\XX)$ that approximates open sets and let $A\subseteq \XX$ open. Then:
\begin{enumerate}
	\item by the lattice property  of $\mathcal R$ we can find $\{f_k\}_k\subseteq\mathcal R$ such that $f_k(x)\nearrow \chi_A(x)$ for every $x\in\XX$ and also $f_k(x)\in [0,1]$ for every $x\in\XX$ and $k\in\NN$. Also, we can assume that $\supp f_k\subseteq A$ for every $k$, using again the properties of $\mathcal R$;
	\item if $K\subseteq A$ is compact, we can modify $\{f_k\}_k$, still remaining in $\mathcal R$, in such a way that, in addition to the properties stated in $(1)$, it holds that for every $k\in\NN$ $ f_k=1$ on a neighbourhood of $K$. This is due to Dini's monotone convergence Theorem and the lattice property of $\mathcal R$.  In particular we have
\begin{equation}
\label{eq:separK}
\begin{split}
&\text{Let $K\subseteq \XX$ compact and $A\subseteq\XX$ open with $K\subseteq A$. Then there exists a function}\\
&\text{ $\varphi\in \mathcal R$ valued in $[0,1]$ such that $\varphi=1$ on a neighbourhood of $K$ and $\supp\varphi\subseteq A$.}
\end{split}
\end{equation}		
	\item 	For every $f\in\Cb(\XX)$ there exists a sequence $\{f_n\}_n\subseteq\Rr$ with $f_n\nearrow f$. Notice that by Dini's monotone convergence Theorem, this approximation is uniform on compact sets.
	
	Indeed, take $f\in\Cb(\XX)$, say $f(x)\in[0,1]$ for every $x\in\XX$.
	Take $k\in\NN$. For every $j=0,\dots,k-1$, consider a sequence $\{\chi_{j,k}^n\}_n\subseteq\Rr$ taking values in $[0,1]$ and such that $\chi_{j,k}^n\nearrow\chi_{\{f>\frac{{j+1}}k\}}$, then let $$g_k^n\defeq\tfrac1k\sum_{j=0}^{k-1} \chi_{j,k}^n\in\Rr.$$
	Notice that $g^n_k\leq f$ for every $k,n$, then let $(n_i,k_i),_{i\in\NN}$ be an enumeration of $\NN^2$ and, finally, set $f_0\defeq 0$ and for $i\ge 1$, $f_i\defeq g_{k_i}^{n_i}\vee f_{i-1}$. It is easy to verify that  $\{f_i\}_i\subseteq\Rr$ provides a suitable approximating sequence.
	\fr
\end{enumerate}
\end{rem}
%
We recall that functions in $\Cb(\XX)$ have the following  separation property, stronger than \eqref{eq:separK}: 
\begin{equation}
\label{eq:separ}
\begin{split}
&\text{Let $C\subseteq \XX$ closed and $A\subseteq\XX$ open with $C\subseteq A$. Then there exists a function}\\
&\text{ $\varphi\in \Cb(\XX)$ valued in $[0,1]$ such that $\varphi=1$ on a neighbourhood of $C$ and $\supp\varphi\subseteq A$.}
\end{split}
\end{equation}
This is an instance of Urysohn's lemma in the normal space $(\XX,\tau)$. More concretely, using the distance $\dist$ we can post-compose the function $\frac{\dist(x,\XX\setminus A)}{\dist(x,C)+\dist(x,\XX\setminus A)}$ with a continuous function $\psi:\RR\to[0,1]$ identically 1 on a neighbourhood of 1 and identically 0 on a neighbourhood of 0.

\bigskip

The presence of both a norm and a product with  functions allows to localize the concept of norm and to give some notion of `support' as follows:
\begin{defn}[Local seminorms] Let $\VV$ be a normed  $\mathcal R$-module. Then for  $A\subseteq\XX$  open and $v\in\VV$, we define
\[
	\Vert v\Vert_{| A}\defeq\sup\left\{\Vert f v\Vert: f\in\mathcal R,\ \supp f\subseteq A\text{ and }\Vert f\Vert_\infty \le 1 \right\}.
\]
Then define the germ seminorm of $v$, $\abs{v}_g:\XX\rightarrow\RR$, by
\[
		\abs{v}_g(x)\defeq\inf_A\Vert v\Vert_{| A},
\]	where the infimum is taken among all open neighbourhoods of $x$.
\end{defn}
\begin{defn}[`Supports']\label{def:supp} Let $\VV$ be a normed  $\mathcal R$-module. Then for  $C\subseteq\XX$  closed and $v\in\VV$  we say   $\supp v \subseteq C$ provided $\|v\|_{\XX\setminus C}=0$.

More generally, for $B\subseteq\XX$ Borel we say that   $\supp v\subseteq B$ provided $\supp v\subseteq C$ for some $C\subseteq B$ closed.
\end{defn}

Let us collect few simple properties of these definitions:
\begin{enumerate}
\item The concepts of local seminorm, germ seminorm and support all depend also on the ring $\mathcal R$, so that if $\VV$ is both a normed $\mathcal R$-module and $\mathcal R'$-module, then the associated notions of seminorm and support may depend on which ring we are using in the definitions above. Nevertheless, this will not make much difference when considering local vector measures, see Remark \ref{nonimportaR}.
\item For every $A\subseteq\XX$ open and $x\in\XX$ both $\Vert\,\cdot\,\Vert_{| A}$ and $|{\,\cdot\,}|_g(x)$ are seminorms on $\VV$.
\item If $\{x_n\}_n$ is converging to $x$, then eventually $x_n$ belongs to any given neighbourhood of $x$, hence
\[
\abs{v}_g(x)\defeq\inf_A\Vert v\Vert_{| A}\geq \inf_A\limsup_{n}\abs{v}_g(x_n)=\limsup_{n}\abs{v}_g(x_n).
\]
Thus $\abs{v}_g$ is upper semicontinuous, hence Borel measurable. 
\item For every $\varphi\in\mathcal R$, $v\in\VV$ we have
\begin{equation}
\label{eq:locnorm}
\abs{\varphi v}_g=|\varphi|\abs{v}_g\qquad\text{ on }\XX.
\end{equation}
To see this, let $\varepsilon>0$, $x\in\XX$ and $A\subseteq\XX$ be an open neighbourhood of  $x$ so small that $\Vert v\Vert_{| A}\leq \abs{v}_g(x)+\varepsilon$ and $|\varphi-\varphi(x)|\leq\varepsilon$ on $A$. Then letting $f$ vary in the set of functions in $\mathcal R$ with support in $A$ we get
\[
\begin{split}
\Vert \varphi v\Vert_{| A}=\sup_f\Vert f\varphi v\Vert\leq \sup_f\Big(\Vert (\varphi-\varphi(x))fv\Vert +|\varphi|(x)\Vert fv\Vert \Big)\leq \varepsilon\Vert v\Vert +|\varphi|(x)\Vert  v\Vert_{| A},
\end{split}
\]
where in the last inequality we used the fact that $\|(\varphi-\varphi(x))f\|_\infty\leq \varepsilon$ and the compatibility condition \eqref{compeq}.  This proves $\leq$ in \eqref{eq:locnorm}. The opposite inequality follows along the same lines starting from $ \Vert f\varphi v\Vert\geq  -\Vert (\varphi-\varphi(x))fv\Vert +|\varphi|(x)\Vert fv\Vert $.
\item A direct consequence of Definition \ref{def:supp} is that
\begin{equation}
\label{eq:suppprod}
\supp v\subseteq B\qquad\Rightarrow\qquad \supp(\varphi v)\subseteq\supp \varphi \cap B.
\end{equation}
Indeed the inclusion $\supp(\varphi v)\subseteq B$ is obvious from the definition. On the other hand for any $f\in \mathcal R$ with $\supp f\subseteq\XX\setminus\supp \varphi$ we have $f(\varphi v)=0$, which  is the same as to say  $\|\varphi v\|_{\XX\setminus\supp\varphi }=0$.
\item The inclusion \eqref{eq:suppprod} gives
\begin{equation}
\label{eq:c3}
f\in\mathcal R\ \text{ and }\ \supp v\subseteq\text{\{interior of $\{f=1\}$\}}\qquad\Rightarrow\qquad fv=v.
\end{equation}
Indeed, the conclusion is equivalent to $(1-f)v=0$, i.e.\ to $\supp((1-f)v)=\emptyset$. Now let $C\subseteq\{\text{interior of $\{f=1\}$}\}$ be closed such that $\supp v\subseteq C$, notice that  $C\cap \supp(1-f)=0$ and conclude by \eqref{eq:suppprod}.
\item If $\VV$ is a normed $\Cb(\XX)$-module (i.e.\ if $\mathcal R=\Cb(\XX)$) we have
\begin{equation}
\label{eq:disjsupp}
\{v_i\}_{i=1,\dots,n}\subseteq \VV\text{ with disjoint supports }\qquad\Rightarrow\qquad \Vert v_1+\dots+v_n\Vert= \max_i \Vert v_i\Vert,
\end{equation}
where having disjoint supports means that there are pairwise disjoint closed sets $C_i$ such that $\supp v_i\subseteq C_i$ for every $i$.

Indeed, we use the metric $\dist$ to see that $(\XX,\tau)$ is normal, and hence find (iteratively) $A_1,\dots,A_n$ open and pairwise disjoint such that $C_i\subseteq A_i$ for every $i$. Then, take $f_i$ for $C_i\subseteq A_i$ as in \eqref{eq:separ} and use \eqref{compeq} to conclude that $\leq$ holds in \eqref{eq:disjsupp}.   On the other hand, with this choice of functions $f_i$ we see that \eqref{eq:suppprod} and \eqref{eq:c3} imply $f_j\sum_iv_i=v_j$ and therefore $\|v_j\|\leq \|f_j\|_\infty\|\sum_iv_i\|$ by \eqref{compeq} (applied with $n=1$).
\item 	Definition \ref{def:supp} does not identify the support of an element of $\VV$ as a subset of $\XX$, but rather defines when an element of $\VV$ has support contained in a set. It is tempting to set
\begin{equation}\label{strangesupp}
	\supp v\defeq\overline{\{\abs{v}_g>0 \}}
\end{equation}
The problem with this definition is that it might be that
\begin{equation}
\label{eq:strano}
\Vert v\Vert_{\XX\setminus\supp v}>0,
\end{equation}
which violates the intuitive idea behind concept of support and of locality of the norm, see Example \ref{localityvv}.

Nevertheless, we point out that on one hand that in all the practical examples we have in mind, the definition of support as in \eqref{strangesupp} works as well as the one in Definition \ref{def:supp} and on the other that in order for an example like the one below one needs $\XX$ to be non compact and to use some version of the Axiom of Choice strictly stronger than the Axiom of Countable Dependent choice.
\end{enumerate}

\begin{ex}\label{localityvv}
Let $\XX\defeq\NN$ be endowed with the compete and separable distance $\dist(n,m)\defeq 1-\delta_{n}^m$, notice that $\Cb(\XX)=\ell^\infty$ and let $\VV:=\Cb(\XX)'=(\ell^\infty)'$. It is readily verified that the natural product operation defined as $f\cdot L(g):=L(fg)$ for any $L\in \VV$, $f,g\in\ell^\infty=\Cb(\XX)$ endow $\VV$ with the structure of normed $\Cb(\XX)$-module.

Let $W\subseteq\ell^\infty$ the subspace of sequences having limit and $L\in\VV$ be an element of the dual of $\Cb(\XX)$ that is obtained by extending - via Hahn-Banach -  the functional that associates to $f\in W$  its limit.

We claim that 
\begin{equation}
\label{eq:claimL}
|L|_g(x)=0\qquad\forall x\in \XX.
\end{equation}
Indeed, since the singleton $\{x\}$ is open, it is sufficient to prove that $f\cdot L=0$ for every $f\in \ell^\infty$ that is identically 0 outside $\{x\}$. But this is trivial by definition of product, because for any such $f$ and any $g\in\ell^\infty$ we have $fg\in W$ with limit 0, hence $f\cdot L(g)=0$ proving our claim \eqref{eq:claimL}.

It follows that the support as defined in \eqref{strangesupp} is empty and thus, since clearly $L\neq 0$, that \eqref{eq:strano} holds.
\fr
\end{ex}
Let now $(\VV',\Vert\,\cdot\,\Vert')$ denote the dual space of $(\VV,\Vert\,\cdot\,\Vert)$ (as normed vector space). It is well known that $(\VV',\Vert\,\cdot\,\Vert')$ is a Banach space.

We recall now the definition of ($\sigma$-additive) vector valued measure, see e.g.\  \cite[Chapters 1 and 2]{DiestelUhl77}. 
\begin{defn}
	A $\VV'$-valued measure is a map 
	$$
	\nvect:\mathcal{B}(\XX)\rightarrow \VV'
	$$
	that is \emph{$\sigma$-additive}, in the sense that if $\{A_k\}_{k\in\NN}$ is a sequence of pairwise disjoint sets in $\mathcal{B(\XX)}$, then
		$$\nvect\left(\bigcup_{k} A_k\right)=\sum_{k} \nvect(A_k),$$ where the convergence of the series has to be understood as convergence in norm in $\VV'$,
	\end{defn}
Notice that in particular the convergence of the series in the above equation is unconditional i.e.\ it is independent of the ordering of the terms.

\bigskip

In this paper we are concerned with a particular type of such measures, where the measure and the module structure interact in the following way:
\begin{defn}[Local vector measures]\label{mvmdef} Let $\VV$ be a normed $\mathcal R$-module.
A local vector measure defined on $\VV$ is a $\VV'$-valued measure $\nvect:\mathcal{B}(\XX)\rightarrow \VV'$
that is \emph{weakly local}, in the sense that
$$\nvect(A)(v)=0\quad\text{for every $A\subseteq\XX$ open and }v\in\VV\text{ such that }\Vert v\Vert_{|A}=0. $$
We denote the set of such local vector measures by $\MM_\VV$.
\end{defn}
  In what follows, we are going to consider local vector measures defined on the space $\VV$ fixed, unless it is specified otherwise. Notice that one may always assume that $\VV$ is complete, as the dual of the completion coincides with $\VV'$,  the completion naturally carries the structure of normed $\Cb(\XX)$-module and such structure remains compatible with the weak locality imposed above.

If $\nvect$ is a local vector measure and $v\in\VV$, we will often consider the finite signed Borel measure $$v\,\cdot\,\nvect\defeq \nvect(\,\cdot\,)(v).$$ 
By the the regularity of $v\,\cdot\,\nvect$ it trivially follows that
\begin{equation}
\label{eq:c1}
A\subseteq \XX\text{ open},\ 
B\subseteq A\text{ Borel},\ 
\Vert v\Vert_{|A}=0
\qquad\Rightarrow\qquad \nvect(B)(v)=0
\end{equation}
and this further implies that
\begin{equation}
\label{eq:c4}
 \supp v\subseteq B\subseteq B'\text{ with $B,B'\subseteq\XX$ Borel}\qquad\Rightarrow\qquad \nvect(B)(v)=\nvect(B')(v).
\end{equation}
Indeed, for $C\subseteq B$ closed such that $\supp v\subseteq C$ we have $\|v\|_{\XX\setminus C}=0$. Since $\XX\setminus C$ is open, the weak locality of $\nvect$ gives $\nvect(\XX\setminus C)(v)=0$ and thus \eqref{eq:c1} and the trivial inclusion $B'\setminus B\subseteq \XX\setminus C$ imply \eqref{eq:c4}.

We also notice that  for  $B\subseteq \XX$ Borel, $w\in\VV$ and  $f\in \mathcal R$ equal to 1 on an open neighbourhood $A$ of $B$, we have $\|(1-f)w\|_{|A}=0$ and thus $\nvect(A)(fw)=\nvect(A)(w)$ by weak locality. Hence  \eqref{eq:c1} gives
\begin{equation}
\label{eq:c2}
 \nvect(B)(w)=\nvect(B)(f w).
 \end{equation}

%
%
%

\bigskip

We recall the definition of total variation of a vector valued measure, that we are going to use specialized to the case of local vector measures.
\begin{defn} Let $\VV$ be a normed $\mathcal R$-module and   $\nvect$ be a $\VV'$-valued measure. Its total variation is the (countably additive) extended real valued Borel measure defined by $$\abs{\nvect}(E)\defeq\sup_{\pi}\sum_{A\in\pi}\Vert \nvect(A)\Vert'$$ where the supremum is taken over all finite Borel partitions $\pi$ of $E$. If $\abs{\nvect}(\XX)<\infty$, we say that $\nvect$  has bounded variation.
\end{defn}

We will see with the following Proposition \ref{localisfiniteabs} that the local vector measures we have just defined have automatically bounded variation. From the definition of total variation, it immediately follows 
\begin{equation}
\label{eq:trivialvar}
\abs{\nvect(E)(v)}\le\abs{\nvect}(E)\Vert v\Vert\qquad\text{ for every $E$ Borel and $v\in\VV$}.
\end{equation}

\begin{rem}
	 We remark that there is no effort in taking the (measure theoretic) completion of $\nvect$, defining it to be $0$ on all the subsets of $\abs{\nvect}$-negligible sets. Therefore, if we write $$\mathcal{N}\defeq\{Z\subseteq\XX:\text{there exists $Z'\subseteq\XX$ Borel such that $\abs{\nvect}(Z')=0$ and $Z\subseteq Z'$}\},$$
	 $\nvect$ is well defined on the $\sigma$-algebra generated by the union of $\mathcal{B(\XX)}$ and $\mathcal{N}$. 
	 \fr
\end{rem}

\begin{lem} Let $\VV$ be a normed $\mathcal R$-module and   $\nvect$ be a local vector measure on it.
	
	Then for every $A\subseteq\XX$ open we have
	\begin{equation}\label{nearliplintmp}
		\abs{\nvect(A)(v)}\le \abs{\nvect}(A)\Vert {v}\Vert_{|A}\quad{\text{for every }v\in\VV},
	\end{equation}
	where the right hand side is intended as 0 in the case $\infty\cdot 0$.
\end{lem}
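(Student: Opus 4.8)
The plan is to reduce the inequality \eqref{nearliplintmp} to a combination of the definition of the local seminorm $\|v\|_{|A}$ and a covering argument that lets us pass from functions $f$ with compact-ish support inside $A$ to the set $A$ itself. The starting observation is that by \eqref{eq:trivialvar} we have $|\nvect(E)(w)|\le|\nvect|(E)\,\|w\|$ for every Borel $E$ and $w\in\VV$. If we could write $\nvect(A)(v)$ as $\nvect(A)(fv)$ for some $f\in\mathcal R$ supported in $A$ with $\|f\|_\infty\le1$, then combining this with $|\nvect|(A)$ in place of $|\nvect|(E)$ and the very definition $\|fv\|\le\|v\|_{|A}$ would finish the proof; the issue is that no single such $f$ equals $1$ on all of $A$ in general, so one has to exhaust $A$.

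First I would use the fact (Remark \ref{rem:Rpropr}(1), applied to the open set $A$) that there is a sequence $\{f_k\}_k\subseteq\mathcal R$ with $\supp f_k\subseteq A$, $f_k(x)\in[0,1]$, and $f_k(x)\nearrow\chi_A(x)$ for every $x$. Fix $v\in\VV$ and set $v_k:=f_kv$. By \eqref{eq:c2} applied with $B$ an open neighbourhood of $\supp f_k$ contained in $A$ — or more directly by weak locality since $\|(1-f_k)v\|_{|A_k}=0$ on a suitable neighbourhood — one sees that $\nvect(A)(v_k)=\nvect(A)(v)$ is \emph{not} quite true, so instead I would argue as follows: since $\supp v_k\subseteq\{f_k=0\}^c\cap\supp f_k$ is a closed subset of $A$, \eqref{eq:c4} gives $\nvect(A)(v_k)=\nvect(\XX)(v_k)$ — again not directly what we want. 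The cleaner route is: for each $k$ pick (using \eqref{eq:separK} or the lattice property) a set $B_k$ with $\supp f_k\subseteq B_k\subseteq A$, note $\|(1-g)v_k\|_{|A}=0$ where $g\in\mathcal R$ equals $1$ on a neighbourhood of $\supp f_k$, and then $\nvect(A)(v_k)=\nvect(A)(v_k)$. So the efficient argument is the direct estimate
\[
|\nvect(A)(v_k)|=|\nvect(A)(f_kv)|\le|\nvect|(A)\,\|f_kv\|\le|\nvect|(A)\,\|v\|_{|A},
\]
using \eqref{eq:trivialvar} in the first inequality and the definition of $\|\cdot\|_{|A}$ (with $\|f_k\|_\infty\le1$, $\supp f_k\subseteq A$) in the second.

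It then remains to show $\nvect(A)(f_kv)\to\nvect(A)(v)$. Write $\nvect(A)(v)-\nvect(A)(f_kv)=\nvect(A)((1-f_k)v)$ and split: by \eqref{eq:c2}, replacing $A$ by $A$ itself, we may insert a cutoff, but the genuinely needed fact is that $(1-f_k)v$ has germ seminorm tending to $0$ on $A$ and support moving out to $\partial A$. I would instead use $\sigma$-additivity directly: choose the $f_k$ increasing and consider the increasing open sets $A_k:=\{f_k>0\}^{\mathrm{int}}$... Actually the simplest closure: since $\nvect(A)$ is a \emph{fixed} element of $\VV'$ and $f_kv\to v$ would need to hold in $\VV$, which it need not. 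So the robust argument is to combine the bound just obtained with the observation that, by \eqref{eq:c1} and regularity of the scalar measure $v\cdot\nvect$, $\nvect(A)(v)=\lim_k\nvect(C_k)(v)$ along closed sets $C_k\nearrow A$ from inside; on each such $C_k$ choose $f\in\mathcal R$ with $f=1$ on a neighbourhood of $C_k$ and $\supp f\subseteq A$, so that $\nvect(C_k)(v)=\nvect(C_k)(fv)$ by \eqref{eq:c2}, and estimate $|\nvect(C_k)(fv)|\le|\nvect|(C_k)\|fv\|\le|\nvect|(A)\|v\|_{|A}$; letting $k\to\infty$ gives the claim. The case $|\nvect|(A)<\infty$ but $\|v\|_{|A}=0$ is covered since then every $fv$ with $\supp f\subseteq A$, $\|f\|_\infty\le1$ has norm $0$, so $\nvect(C_k)(v)=\nvect(C_k)(fv)=0$ for all $k$, whence $\nvect(A)(v)=0=\infty\cdot0$ by convention; and if $|\nvect|(A)=\infty$ the inequality is vacuous.

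The main obstacle is precisely the passage from the "local" bound valid for $fv$, $f$ supported compactly-in-$A$, to the "global" statement about $\nvect(A)(v)$: one must exhaust $A$ from inside by closed sets $C_k$, use \eqref{eq:c2} to replace $v$ by $fv$ on each $C_k$ without changing the pairing, and then use continuity/$\sigma$-additivity of the scalar measure $v\cdot\nvect$ to pass to the limit — while carefully handling the $\infty\cdot0$ convention. Everything else is a direct application of \eqref{eq:trivialvar} and the definition of $\|\cdot\|_{|A}$.
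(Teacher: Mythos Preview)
Your final ``robust argument'' is essentially the paper's proof, but there is a real technical gap: you approximate $A$ from inside by \emph{closed} sets $C_k$ and then claim to choose $f\in\mathcal R$ with $f=1$ on a neighbourhood of $C_k$ and $\supp f\subseteq A$. For a general subring $\mathcal R$ that merely approximates open sets, this separation is only guaranteed when the inner set is \emph{compact}: that is exactly \eqref{eq:separK}, and the reason is that Dini's theorem is what upgrades ``$f_n\nearrow\chi_A$ pointwise'' to ``some $f_n=1$ on a neighbourhood of $K$'', which needs compactness of $K$. With $C_k$ merely closed the required cutoff need not exist in $\mathcal R$, so \eqref{eq:c2} cannot be invoked.

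The fix is immediate and is precisely what the paper does. After disposing of $\|v\|_{|A}=0$ by weak locality and of the case $|\nvect|(A)=\infty$ with $\|v\|_{|A}>0$ as vacuous, use inner regularity of the finite measure $|\nvect|\mres A$ (or, equivalently and without the case split, of the finite signed measure $v\cdot\nvect$) to find a \emph{compact} $K\subseteq A$ with $|\nvect|(A\setminus K)\le\varepsilon$, take $\varphi\in\mathcal R$ from \eqref{eq:separK} for $K\subseteq A$, and combine \eqref{eq:c2} with \eqref{eq:trivialvar}:
\[
|\nvect(A)(v)|\le |\nvect(K)(v)|+\varepsilon\|v\|=|\nvect(K)(\varphi v)|+\varepsilon\|v\|\le |\nvect|(A)\,\|v\|_{|A}+\varepsilon\|v\|.
\]
All the exploratory detours in your write-up (trying to make $\nvect(A)(f_kv)\to\nvect(A)(v)$, etc.) are symptoms of the same issue; once you go through a compact set the argument is two lines.
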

\begin{proof} If $\Vert {v}\Vert_{|A}=0$ the conclusion follows by weak locality. Thus we can assume $\Vert {v}\Vert_{|A}>0$ and then $\abs{\nvect}(A)<\infty$ (as otherwise the conclusion is obvious). Then the restriction of $|\nvect|$ to $A$ is inner regular and thus for any fixed $\varepsilon>0$ we can find $K\subseteq A$ compact set such that $\abs{\nvect}(A\setminus K)\le\varepsilon$.  take  $\varphi\in\Cb(\XX)$ as in \eqref{eq:separK}  for $K\subseteq A$. 
	Then, $$\abs{\nvect(A)(v)}\le \abs{\nvect(K)(v)}+\varepsilon \Vert {v}\Vert\stackrel{\eqref{eq:c2}}=\abs{\nvect(K)(\varphi v)}+\varepsilon \Vert {v}\Vert\stackrel{\eqref{eq:trivialvar}}\le \abs{\nvect}(K)\Vert \varphi v\Vert+\varepsilon \Vert {v}\Vert$$
	and, as $\varepsilon>0$ is arbitrary, the claim is proved recalling that $\Vert \varphi v\Vert\le\Vert{v}\Vert_{|A} $.
\end{proof}
We then have the following general result:
\begin{prop}\label{localisfiniteabs}
Let $\VV$ be a normed $\mathcal R$-module and   $\nvect$ be a local vector measure on it. 

Then $\nvect$ has bounded variation. More precisely
\begin{equation}\label{conicidence}
\abs{\nvect}(A)=\Vert\nvect(A)\Vert'\quad\text{for any $A\subseteq\XX$ Borel}.
\end{equation}

\end{prop}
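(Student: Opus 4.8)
I want to prove the identity $|\nvect|(A) = \|\nvect(A)\|'$ for every Borel $A$, which in particular shows $|\nvect|(\XX) = \|\nvect(\XX)\|' < \infty$ so that $\nvect$ has bounded variation. The inequality $\|\nvect(A)\|' \le |\nvect|(A)$ is immediate from the definition of total variation (take the trivial partition $\{A\}$), so the entire content is in the reverse inequality $|\nvect|(A) \le \|\nvect(A)\|'$. The subtlety is that a priori we don't even know $|\nvect|(A)$ is finite, so one must be careful not to argue as though it were.

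\textbf{Key steps.} Fix a Borel set $A$ and a finite Borel partition $A = A_1 \sqcup \dots \sqcup A_n$; I need to bound $\sum_{i=1}^n \|\nvect(A_i)\|'$ by $\|\nvect(A)\|'$. Fix $\varepsilon > 0$. For each $i$, choose $v_i \in \VV$ with $\|v_i\| \le 1$ and $\|\nvect(A_i)\|' \le \nvect(A_i)(v_i) + \varepsilon/n$ (possible by definition of the operator norm, and we may even assume $\nvect(A_i)(v_i) \ge 0$). The idea is now to glue the $v_i$'s into a single element of $\VV$ of norm $\le 1$ whose pairing against $\nvect(A)$ recovers $\sum_i \nvect(A_i)(v_i)$. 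To do this I first pass from the partition by Borel sets to a situation where I can use the module structure: since $|\nvect|$ restricted to $A$ is inner regular (it is a Borel measure on a Polish space, and we may localize as in the previous lemma) — actually, since we don't know finiteness, I instead fix a further reduction: it suffices to prove the inequality when $n = 2$ and then iterate, or more cleanly, to work with the signed measures $v_i \cdot \nvect$ directly.

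\textbf{The gluing.} Here is the cleaner route. By inner and outer regularity of the finite signed measures $v_i \cdot \nvect$ and of the (finite on each $A_i$? — not known) measures, I instead argue as follows: using normality of $\XX$ and inner regularity of the \emph{signed} measures $v_i\cdot\nvect$ (which \emph{are} finite, being $\VV'$-valued pairings), approximate each $A_i$ from inside by a compact $K_i$ and then separate the disjoint compacts $K_1,\dots,K_n$ by disjoint open sets $U_i \supseteq K_i$; pick $\varphi_i \in \mathcal R$ (or $\Cb(\XX)$) valued in $[0,1]$, equal to $1$ on a neighborhood of $K_i$, with $\supp \varphi_i \subseteq U_i$, so the $\varphi_i$ have pairwise disjoint supports. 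Then $w \defeq \varphi_1 v_1 + \dots + \varphi_n v_n$ satisfies $\|w\| \le \max_i \|\varphi_i\|_\infty \|v_i\| \le 1$ by \eqref{compeq}. Now compute $\nvect(A)(w) = \sum_{i,j} \nvect(A_j)(\varphi_i v_i)$; using \eqref{eq:c2} (with $f = \varphi_i$, noting $\varphi_i = 1$ near $K_i$) one gets $\nvect(K_i)(\varphi_i v_i) = \nvect(K_i)(v_i)$, and the cross terms $\nvect(A_j)(\varphi_i v_i)$ for $i \ne j$ are controlled: by \eqref{eq:c1}-type reasoning $\varphi_i v_i$ is supported near $K_i \subseteq A_i$ which is disjoint from $A_j$ up to a $|\nvect|$-small... — and this is where the care is needed. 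Rather than chase supports, I bound $|\nvect(A_j)(\varphi_i v_i)| \le (v_i\cdot\nvect)$-measure of $A_j \cap \supp\varphi_i$, which is small because $A_j \cap U_i$ has small $|v_i\cdot\nvect|$-measure once $K_i$ exhausts $A_i$ and $U_i$ shrinks to $K_i$ (both doable by regularity). Collecting: $\nvect(A)(w) \ge \sum_i \nvect(K_i)(v_i) - C\varepsilon \ge \sum_i \nvect(A_i)(v_i) - C'\varepsilon \ge \sum_i \|\nvect(A_i)\|' - C''\varepsilon$, and since $\nvect(A)(w) \le \|\nvect(A)\|'$, letting $\varepsilon \to 0$ gives $\sum_i \|\nvect(A_i)\|' \le \|\nvect(A)\|'$. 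Taking the supremum over partitions $\pi$ of $A$ yields $|\nvect|(A) \le \|\nvect(A)\|'$, completing the proof.

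\textbf{Main obstacle.} The delicate point is the bookkeeping around the cross terms and the fact that finiteness of $|\nvect|$ is the conclusion, not a hypothesis — so every regularity/exhaustion argument must be phrased in terms of the \emph{finite} signed measures $v_i \cdot \nvect$ (and possibly the restriction of $|\nvect|$ to $A_i$, which one can first assume finite by the previous lemma applied with $v = v_i$, or handle by a truncation) rather than $|\nvect|$ itself. Once one is disciplined about only invoking regularity of honest finite signed measures, the gluing via \eqref{compeq} and \eqref{eq:c2} is routine.
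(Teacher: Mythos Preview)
Your gluing argument is essentially correct, but it proves a weaker statement than you claim. After choosing compacts $K_i\subseteq A_i$, disjoint opens $U_i\supseteq K_i$, and cutoffs $\varphi_i$, what the computation actually gives (using only \eqref{eq:c1}, \eqref{eq:c2} and \eqref{compeq}) is
\[
\sum_i \nvect(K_i)(v_i)=\nvect\Big(\bigcup_i K_i\Big)(w)\le\Big\|\nvect\Big(\bigcup_i K_i\Big)\Big\|',
\]
hence $\sum_i\|\nvect(A_i)\|'\le\|\nvect(\bigcup_i K_i)\|'+O(\varepsilon)$. To reach $\|\nvect(A)\|'$ you must control the remainder $\nvect(A\setminus\bigcup_i K_i)(w)$, and this is exactly where your bookkeeping breaks down: your claimed bound $|\nvect(A_j)(\varphi_i v_i)|\le |v_i\cdot\nvect|(A_j\cap\supp\varphi_i)$ is tantamount to $(\varphi_i v_i)\cdot\nvect=\varphi_i\,(v_i\cdot\nvect)$, which is Lemma~\ref{foutside} --- proved \emph{after} this proposition and relying on it. Without that lemma you have no control on the signed measure $(\varphi_i v_i)\cdot\nvect$ in terms of $v_i\cdot\nvect$, and since $\varphi_i$ is chosen only after $K_i,U_i$, you cannot pre-select $K_i,U_i$ to make $|(\varphi_i v_i)\cdot\nvect|(U_i\setminus K_i)$ small.

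The paper resolves this by first proving $|\nvect|(\XX)<\infty$ via a separate contradiction argument (its Step~2): your gluing does show that whenever $|\nvect|(A)=\infty$ one can find $B\subseteq A$ with $\|\nvect(B)\|'$ arbitrarily large; iterating this and splitting $A_k=B_k\cup(A_k\setminus B_k)$ produces a sequence $\{B_k\}$ with $\|\nvect(B_k)\|'\to\infty$ that is either eventually nested or has a pairwise disjoint subsequence, and both alternatives contradict the norm-convergent $\sigma$-additivity of $\nvect$. Once finiteness is in hand, regularity of $|\nvect|$ itself (rather than of the $v_i\cdot\nvect$) supplies the missing remainder estimate and your gluing finishes the job. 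So the missing idea is precisely this intermediate finiteness step; the rest of your outline is sound.
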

\begin{proof}
	We divide the proof in several steps.\\
	\textsc{Step 1}.
	Let $A\subseteq \XX$ Borel with $\abs{\nvect}(A)=\infty$ let and $m\in\RR$. Taking into account the very definition of total variation, the definition of the dual norm and the inner regularity of the finite signed measure $v\,\cdot\,\nvect$ for $v\in\VV$, we can take $\{v_l\}_{l=1,\dots,L}\subseteq\VV$ with $\Vert v_l\Vert\le 1$ and $\{K_l\}_{l=1,\dots,L}$ pairwise disjoint compact subsets of $A$ such that $$m< \sum_{l=1}^L \nvect(K_l)(v_l).$$ 
	Then by \eqref{eq:separK}  we can take $\{\psi_l\}_{l=1,\dots,L}\subseteq\mathcal R$ with values in $[0,1]$ such that $\psi_l=1$ on a neighbourhood of $K_l$ and $\{\supp\psi_l\}_{l}$ are pairwise disjoint.
	We set then $v\defeq\sum_{l=1}^L \psi_l v_l\in\VV$ and we notice that by weak locality $\nvect(K_l)(v_l)=\nvect(K_l)(v)$. Hence, if we set $B\defeq\bigcup_{l=1}^L K_l$, $$m< \sum_{l=1}^L \nvect(K_l)(v_l)= \sum_{l=1}^L \nvect(K_l)(v)= \nvect(B)(v)\le \Vert\nvect(B)\Vert', $$
	where the last inequality follows as $\Vert v\Vert\le 1$ by \eqref{compeq}.
	To sum up, we have proved in this step that given $A\subseteq\XX$ Borel with $\abs{\nvect}(A)=\infty$ and $m\in\RR$, we can find a (compact) set $B\subseteq A$ such that $\Vert \nvect(B)\Vert'\ge m$.
	\\\textsc{Step 2}. Assume by contradiction $\abs{\nvect}(\XX)=\infty$. We construct two sequences $\{A_k\}_k$ and $\{B_k\}_k$ of Borel subsets of $\XX$ iteratively, starting from $A_0\defeq\XX$ and $B_0\defeq\XX$. 
	Precisely, given $k\in\NN$, $ k\ge 1$, assume that we have defined $A_0,\dots,A_k$ and $B_0,\dots, B_{k}$. Notice that by the construction we are going to do, $\abs{\nvect}(A_k)=\infty$ and $B_k\subseteq A_k$. Then, by additivity, either $\abs{\nvect}(B_k)=\infty$ or $\abs{\nvect}(A_k\setminus B_k)=\infty$. In the former case we set $A_{k+1}\defeq B_k$, in the latter $A_{k+1}\defeq A_k\setminus B_k$. In either case we then take $B_{k+1}\subseteq A_{k+1}$ such that $\Vert\nvect(B_{k+1})\Vert'\ge {k+1}$, using step 1. 
	
	Now notice that for every $k$, $\Vert\nvect(B_{k+1})\Vert'\ge {k+1}$ and either $B_{k+1}\subseteq B_k$ or $B_k\cap B_{l}=\emptyset$ for every $l\in\NN$, $l\ge k+1$ (this possibility may depend on $k$). If the second possibility occurs infinitely often, we can find a subsequence of pairwise disjoint subsets of $\XX$, $\{B_{k_l}\}_l$ such that $\Vert\nvect(B_{k_l})\Vert'\rightarrow\infty$. Otherwise, we can find a decreasing sequence of Borel subsets of $\XX$, $\{B_{k_l}\}_l$, such that $\Vert\nvect(B_{k_l})\Vert'\rightarrow\infty$. 
	But in both cases this leads to a contradiction with the countable additivity of $\nvect$ (that involves the convergence in norm).
	\\\textsc{Step 3}. We show \eqref{conicidence}. By regularity, there is no loss of generality in assuming $A$ open. Let $\varepsilon>0$. With the same notation as in step 1, we find $$\abs{\nvect}(A)-\varepsilon\le \sum_{l=1}^L \nvect(K_l)(v_l).$$ 
	Being $\abs{\nvect}$ a finite measure, we can find pairwise disjoint open sets $\{A_l\}_l$ such that $K_l\subseteq A_l\subseteq A$  and $\abs{\nvect}(A_l\setminus K_l)<\varepsilon/L$ for every $l$. Take then $\{\psi_l\}_l\subseteq\Cb(\XX)$ as in step 1, but with the additional constraint that $\supp\psi_l\subseteq A_l$ for every $l$. Then, by weak locality,
	$$  \sum_{l=1}^L \nvect(K_l)(\psi_l v_l)\le\sum_{l=1}^L \nvect(A_l)(\psi_l v_l)+\varepsilon=\sum_{l=1}^L \nvect(A)(\psi_l v_l)+\varepsilon=\nvect(A)(v)+\varepsilon$$
	where $v\defeq\sum_{l=1}^L \psi_l v_l\in\VV$. Being $\varepsilon>0$ arbitrary, we can conclude the proof.
\end{proof}
From the finiteness of the total variation we easily get the following:
\begin{prop}\label{vectvalbanach} Let $\VV$ be a normed $\mathcal R$-module.
	Then the space $(\MM_\VV,\abs{\,\cdot\,}(\XX))$ is a Banach space.
\end{prop}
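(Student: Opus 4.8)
The plan is to show that $(\MM_\VV, |\cdot|(\XX))$ is complete, since the vector-space structure and the fact that $|\cdot|(\XX)$ is a norm are routine (bounded variation is guaranteed by Proposition \ref{localisfiniteabs}, and the triangle inequality and homogeneity for total variation are standard; that $|\nvect|(\XX)=0$ forces $\nvect=0$ follows from \eqref{conicidence}, or just from $\|\nvect(A)\|' \le |\nvect|(\XX)$). So let $\{\nvect_k\}_k$ be a Cauchy sequence in $\MM_\VV$. First I would note that for every Borel $A$ the sequence $\{\nvect_k(A)\}_k$ is Cauchy in $\VV'$, because $\|\nvect_k(A)-\nvect_j(A)\|' \le |\nvect_k-\nvect_j|(\XX)$; since $\VV'$ is a Banach space it has a limit, which I call $\nvect(A)$.

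Next I would check that $\nvect:\mathcal B(\XX)\to\VV'$ so defined is a genuine $\VV'$-valued (i.e. $\sigma$-additive) measure. Finite additivity is immediate from passing to the limit in $\nvect_k(A\cup B)=\nvect_k(A)+\nvect_k(B)$ for disjoint $A,B$. For countable additivity the key point is a uniform smallness estimate on tails: given $\varepsilon>0$, pick $K$ with $|\nvect_k-\nvect_j|(\XX)\le\varepsilon$ for $k,j\ge K$; then for any pairwise disjoint $\{A_i\}_{i}$ and any finite subcollection we have $\|\sum_{i\in F}\nvect_k(A_i)\|' = \|\nvect_k(\bigcup_{i\in F}A_i)\|' \le \|\nvect_K(\bigcup_{i\in F}A_i)\|' + \varepsilon$, and since $\nvect_K$ is a measure the right-hand side is controlled uniformly in $F$ by the convergence of $\sum_i \nvect_K(A_i)$. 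Letting $k\to\infty$ this transfers to $\nvect$, giving that $\sum_i\nvect(A_i)$ converges in $\VV'$ with the correct sum; I would phrase this as: the limit is uniform, so $\sigma$-additivity of each $\nvect_k$ passes to $\nvect$. Weak locality is trivially preserved in the limit: if $\|v\|_{|A}=0$ then $\nvect_k(A)(v)=0$ for all $k$, hence $\nvect(A)(v)=0$. Thus $\nvect\in\MM_\VV$.

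Finally I would verify $|\nvect_k-\nvect|(\XX)\to 0$. Fix $\varepsilon>0$ and $K$ as above so that $|\nvect_k-\nvect_j|(\XX)\le\varepsilon$ for $k,j\ge K$. For any finite Borel partition $\pi$ of $\XX$,
\[
\sum_{A\in\pi}\|\nvect_k(A)-\nvect(A)\|' = \lim_{j\to\infty}\sum_{A\in\pi}\|\nvect_k(A)-\nvect_j(A)\|' \le \limsup_{j\to\infty}|\nvect_k-\nvect_j|(\XX)\le\varepsilon
\]
for $k\ge K$ (using that $\pi$ is finite to interchange the finite sum with the limit in $j$); taking the supremum over all finite partitions $\pi$ gives $|\nvect_k-\nvect|(\XX)\le\varepsilon$ for $k\ge K$. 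This completes the proof.

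I expect the main obstacle to be the $\sigma$-additivity of the limit measure $\nvect$: norm convergence of vector-valued series is delicate, and one must be careful that the convergence $\nvect_k(A)\to\nvect(A)$ is uniform enough over disjoint families to carry the series convergence through the limit. The Cauchy estimate $\|\nvect_k(A)-\nvect_j(A)\|'\le|\nvect_k-\nvect_j|(\XX)$, which holds uniformly in $A$, is exactly what makes this work, so the point is to deploy it at the right place rather than to overcome a genuine difficulty.
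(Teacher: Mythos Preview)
Your proof is correct and follows essentially the same route as the paper's: define the limit $\nvect(A)$ pointwise (the paper does it scalarwise as $\lim_n \nvect_n(B)(v)$, you do it directly in $\VV'$, but this is cosmetic), check $\sigma$-additivity by comparing tails to those of a fixed $\nvect_K$ via the uniform Cauchy bound, observe that weak locality passes to the limit trivially, and then bound $|\nvect_k-\nvect|(\XX)$ by $\limsup_j|\nvect_k-\nvect_j|(\XX)$ using finite partitions. The only place your write-up is slightly loose is the phrase ``controlled uniformly in $F$'' in the $\sigma$-additivity step---what you really mean is that the bound is uniform in $k$ and goes to zero as $F$ moves into the tail (e.g.\ $F\subseteq\{N+1,N+2,\dots\}$ with $N\to\infty$), which is exactly how the paper phrases it; but the mathematics is the same.
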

\begin{proof}
	It is trivial to verify that $\abs{\,\cdot\,}(\XX)$ is indeed a norm. Let now $\{\nvect_n\}_n$ be a Cauchy sequence. 
	Then, if $B$ is Borel and $v\in\VV$, also $\{\nvect_n(B)(v)\}_n\subseteq\RR$ is a Cauchy sequence, so that we can define (notice that in this way we get immediately weak locality)
	\begin{equation}\label{deflimit}
		\nvect(B)(v)\defeq\lim_n \nvect_n(B)(v).
	\end{equation}
	We have $$\abs{\nvect(B)(v)}=\lim_n\abs{\nvect_n(B)(v)}\le\liminf_n \abs{\nvect_n}(B)\Vert v\Vert$$
	for every $B$ Borel. In particular, $\nvect(B)\in\VV'$ and
	$$\Vert\nvect(B)\Vert'\le\liminf_n \abs{\nvect_n}(B).$$
	Clearly, $\nvect$ is finitely additive, but the equation above implies also $\sigma$-additivity: indeed, if $\{B_k\}_k$ is a sequence of pairwise disjoint Borel sets, setting $B^k\defeq\bigcup_{j=1}^k B_j$ we can compute, for any $m$,
	$$ \Vert\nvect(B^\infty)-\nvect(B^k)\Vert'\le\liminf_n\abs{\nvect_n}(B^\infty\setminus B^k)\le\liminf_n \abs{\nvect_n-\nvect_m}(\XX)+ \abs{\nvect_m}(B^\infty\setminus B^k)$$
	and notice that taking first $m$ big enough and then $k$ big enough, the right hand side of the above inequality converges to $0$. Therefore $\nvect$ is a local vector measure.
	
	Now if $B$ is Borel and $v\in\VV$ with $\Vert v\Vert\le 1$, similar computations as above show that $$\Vert (\nvect-\nvect_n)(B)\Vert'\le \liminf_m\abs{\nvect_m-\nvect_n}(B)$$ and then, thanks to the definition of total variation and the super additivity of the $\liminf$, $$\abs{\nvect-\nvect_n}(\XX)\le\liminf_m\abs{\nvect_m-\nvect_n}(\XX).$$
	This implies convergence in norm of $\nvect_n$ to $\nvect$.
\end{proof}
Justified by this proposition,  here and below, when we write $\MM_\VV$, we mean the Banach space $(\MM_\VV,\abs{\,\cdot\,}(\XX))$.

 In view of the following definition, we briefly recall the definition of Bartle integral that we take from \cite[P. 5]{DiestelUhl77} (see also the original article \cite{Bar}). If $\nvect$ is a vector valued measure and $f=\sum_{i=1}^n c_i\chi_{A_i}$ is a simple function, where $\{c_i\}_{i}\subseteq\RR$ and $\{A_i\}_i$ are pairwise disjoint Borel subsets, then we consider the map 
 \begin{equation}
\label{eq:defbart}
 A\mapsto \int_A f\dd{\nvect}\defeq\sum_{i=1}^n c_i \nvect(A\cap A_i)\in\VV'.
 \end{equation}
 It is clear that for any such $f$ and at most countable disjoint family $\{B_j\}_j\subseteq\XX$ we have
 \[
 \Big\|\sum_j\int_{B_j}f\dd{\nvect}\Big\|'\leq \|f\|_{\Lp^\infty(|\nvect|)} \sum_j|\nvect|(B_j)= \|f\|_{\Lp^\infty(|\nvect|)}|\nvect|(\cup_jB_j),
 \]
 having used also \eqref{conicidence}. This inequality shows that \eqref{eq:defbart} defines a linear and continuous map from the space of simple functions (endowed with the supremum norm) to the space of $\VV'$-valued measures, which therefore can be extended to a linear and continuous map from $\Lp^\infty(|\nvect|)$ to the space of $\VV'$-valued measures (see also Bartle's bounded convergence theorem  \cite[Theorem 2.4.1]{DiestelUhl77}). Also, recalling \eqref{eq:c1} for the case of simple $f$'s and then arguing by approximation we see that the measures in the image are weakly local, i.e.\ are local vector measures defined on $\VV$.
 
We summarize all this in  the following definition:
%
%
%
\begin{defn}\label{bartdef} Let $\VV$ be a normed $\mathcal R$-module, $\nvect$ be a local vector measure defined on it and let $f:\XX\rightarrow\RR$ be a bounded Borel function. We define $f\nvect$ as the local vector measure given by $$f \nvect(A)\defeq \int_A f\dd{\nvect}$$
	where the integral has to be understood as a Bartle integral, i.e.\ in the sense described above.
\end{defn}
We point out that Bartle's bounded convergence theorem (see e.g.\ \cite[Theorem 2.4.1]{DiestelUhl77}) ensures that
\begin{equation}
\label{eq:bbct}
\left.\begin{array}{l}
\sup_n\|f_n\|_{\Lp^\infty(|\nvect|)}<\infty,\\ 
f_n\to f\quad |\nvect|\text{-a.e.}
\end{array}\right\}
\qquad\Rightarrow\qquad f_n\nvect(A)\to f\nvect(A)\quad in\ \VV'\qquad\forall A\subseteq\XX\ Borel.
\end{equation}
We  notice the following general fact:
\begin{prop}\label{absfoutside} Let $\VV$ be a normed $\mathcal R$-module,  $\nvect$ be a local vector measure defined on it and let $f:\XX\rightarrow\RR$ be a bounded Borel function. Then it holds, as measures,
	\begin{equation}\label{letotvar}
		\abs{f\nvect}=\abs{f}{\abs{\nvect}}.
	\end{equation}
\end{prop}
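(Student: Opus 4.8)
The plan is to establish the two inequalities $|f\nvect|\le |f|\,|\nvect|$ and $|f|\,|\nvect|\le |f\nvect|$ between Borel measures (both positive and finite, by Proposition \ref{localisfiniteabs}), the second one being the delicate point. Everything hinges on the following auxiliary identity, which I would prove first: for every local vector measure $\mu$ on $\VV$, every bounded Borel function $g$ and every $v\in\VV$ one has $v\cdot(g\mu)=g(v\cdot\mu)$ as finite signed measures, i.e.\ $(g\mu)(A)(v)=\int_A g\,\dd(v\cdot\mu)$ for all Borel $A$. For simple $g$ this is immediate from the definition \eqref{eq:defbart} of the Bartle integral together with the finite additivity of the signed measure $v\cdot\mu$; for a general bounded Borel $g$ one approximates it pointwise by uniformly bounded simple functions and passes to the limit, using Bartle's bounded convergence \eqref{eq:bbct} on the left-hand side and the dominated convergence theorem for the finite signed measure $v\cdot\mu$ on the right-hand side.

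Two consequences follow. First, an associativity rule: for bounded Borel $f,g$ one has $(gf)\nvect=g(f\nvect)$. Indeed, the auxiliary identity with $\mu=\nvect$ gives $v\cdot(f\nvect)=f(v\cdot\nvect)$, hence, applying it once more with $\mu=f\nvect$, for every Borel $A$ and $v\in\VV$
\[
(g(f\nvect))(A)(v)=\int_A g\,\dd(f(v\cdot\nvect))=\int_A gf\,\dd(v\cdot\nvect)=((gf)\nvect)(A)(v),
\]
and since an element of $\VV'$ is determined by its action on $\VV$, the two measures coincide. Second, the easy inequality: from the auxiliary identity and the bound $|v\cdot\nvect|\le\|v\|\,|\nvect|$ (a direct consequence of the definitions of total variation and dual norm) we get, for $\|v\|\le 1$, that $|(f\nvect)(A)(v)|\le\int_A|f|\,\dd|v\cdot\nvect|\le\int_A|f|\,\dd|\nvect|$; taking the supremum over such $v$ and recalling that $|\nu|(A)=\|\nu(A)\|'$ for any local vector measure $\nu$ (Proposition \ref{localisfiniteabs}), this yields $|f\nvect|\le|f|\,|\nvect|$.

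For the reverse inequality the key observation is that, although $\VV$ is merely a $\Cb(\XX)$-module and one cannot divide a section by $f$, one \emph{can} integrate bounded Borel functions against the measure $f\nvect$. Fix $n\in\NN$, put $E_n\defeq\{|f|\ge 1/n\}$ and let $g_n\defeq\chi_{E_n}/f$, a bounded Borel function. By the associativity rule $g_n(f\nvect)=(g_nf)\nvect=\chi_{E_n}\nvect$, and since $\chi_{E_n}\nvect(A)=\nvect(A\cap E_n)$, Proposition \ref{localisfiniteabs} gives $|\chi_{E_n}\nvect|=\chi_{E_n}|\nvect|$. Applying the inequality already proven, with $f\nvect$ in place of $\nvect$ and $g_n$ in place of $f$, we obtain $\chi_{E_n}|\nvect|\le|g_n|\,|f\nvect|=\tfrac{\chi_{E_n}}{|f|}|f\nvect|$ as measures; multiplying by the nonnegative bounded Borel function $|f|$ gives $\chi_{E_n}|f|\,|\nvect|\le\chi_{E_n}|f\nvect|\le|f\nvect|$. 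Letting $n\to\infty$ we have $\chi_{E_n}\nearrow\chi_{\{f\ne0\}}$, so by monotone convergence $\chi_{E_n}|f|\,|\nvect|\nearrow\chi_{\{f\ne0\}}|f|\,|\nvect|=|f|\,|\nvect|$, and therefore $|f|\,|\nvect|\le|f\nvect|$. Combining the two inequalities proves \eqref{letotvar}. The main obstacle is precisely this last step: the passage ``divide by $f$'' must be carried out at the level of measures, through associativity of the Bartle integral, since it is unavailable at the level of the module $\VV$.
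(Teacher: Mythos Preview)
Your proof is correct, and it takes a genuinely different route from the paper's for the hard inequality $|f|\,|\nvect|\le|f\nvect|$. The paper reduces to simple $f=\sum_j c_j\chi_{B_j}$ (the approximation being justified by the easy inequality, already proved) and then computes directly:
\[
\int_B|f|\,\dd|\nvect|=\sum_j|c_j|\,|\nvect|(B_j)=\sum_j|c_j\nvect|(B_j)=\sum_j|f\nvect|(B_j)=|f\nvect|(B),
\]
the penultimate equality holding because $f\nvect$ and $c_j\nvect$ agree on subsets of $B_j$. You instead establish the auxiliary identity $v\cdot(g\mu)=g(v\cdot\mu)$ for all bounded Borel $g$, derive the associativity $(gf)\nvect=g(f\nvect)$, and then ``divide by $f$'' at the level of measures via $g_n=\chi_{\{|f|\ge 1/n\}}/f$, obtaining $\chi_{E_n}|\nvect|=|g_n(f\nvect)|\le|g_n|\,|f\nvect|$ and concluding by multiplying by $|f|$ and letting $n\to\infty$.

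What each buys: the paper's argument is shorter and fully self-contained. Your argument is longer but more structural; in particular it yields as byproducts the identity $v\cdot(g\mu)=g(v\cdot\mu)$ for \emph{all} bounded Borel $g$ (the paper proves this only for $g\in\mathcal R$, as part of Lemma~\ref{foutside}, and only \emph{after} Proposition~\ref{absfoutside}) and the Bartle associativity $(gf)\nvect=g(f\nvect)$, both of which are natural tools to have in hand. Your use of Proposition~\ref{localisfiniteabs} to identify $|\chi_{E_n}\nvect|$ with $\chi_{E_n}|\nvect|$ is legitimate and does not introduce any circularity.
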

\begin{proof}
	We show first that $	\abs{f\nvect}\le\abs{f}{\abs{\nvect}}$. Clearly it suffices to show that if $B$ is Borel, then $\Vert f \nvect(B)\Vert'\le \int_B\abs{f}\dd{\abs{\nvect}}$. This follows from the triangle inequality if $f$ is a simple function and then the claim is proved by approximation. 
	
	Therefore, we conclude if we show that $$\int_\XX\abs{ f}\dd{\abs{\nvect}}\le \abs{f\nvect}(\XX). $$ As Proposition \ref{localisfiniteabs} shows that $\abs{\nvect}(\XX)<\infty$, an approximation argument justified by the inequality  in \eqref{letotvar} that we have just proved yields that we can assume that $f$ is simple, say $f=\sum_j c_j\chi_{B_j}$, where $c_j\in\RR$ for every $j$ and $\{B_j\}_j$ is a finite Borel partition of $B$. 
	Now we can compute \begin{equation*}\notag
		\int_B \abs{f}\dd{\abs{\nvect}}=\sum_j \abs{c_j}\abs{\nvect}(B_j)=\sum_j \abs{c_j\nvect}(B_j)\stackrel{\eqref{eq:defbart}}=\sum_j\abs{f\nvect}(B_j)=\abs{ f\nvect}(B).\qedhere
	\end{equation*}

\end{proof}
\begin{lem}\label{foutside} Let $\VV$ be a normed $\mathcal R$-module, and let  $\nvect$ be a local vector measure.
	
	Then, for every $f\in\mathcal R $ and $v\in\VV$, we have
	\begin{equation}\label{claimfoutside}
		(f v)\,\cdot\,\nvect=f(v\,\cdot\,\nvect)=v\,\cdot\,(f \nvect).
	\end{equation} 
\end{lem}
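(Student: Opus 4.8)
The plan is to establish separately the two equalities $f(v\cdot\nvect)=v\cdot(f\nvect)$ and $(fv)\cdot\nvect=f(v\cdot\nvect)$. The first one in fact holds for every bounded Borel $f$ and uses no regularity: all objects involved are finite signed Borel measures (recall $\abs{\nvect}(\XX)<\infty$ by Proposition \ref{localisfiniteabs}, whence $\abs{v\cdot\nvect}(\XX)\le\abs{\nvect}(\XX)\Vert v\Vert<\infty$), and when $f=\sum_ic_i\chi_{A_i}$ is simple, formula \eqref{eq:defbart} gives at once $f\nvect(B)(v)=\sum_ic_i\nvect(B\cap A_i)(v)=\sum_ic_i(v\cdot\nvect)(B\cap A_i)=\int_Bf\,\dd(v\cdot\nvect)$. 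The general bounded Borel case follows by approximating $f$ uniformly by simple functions and passing to the limit using \eqref{eq:bbct} on the $\VV'$-side and dominated convergence (recall $\abs{v\cdot\nvect}$ is finite) on the scalar side.

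The substance of the lemma is the remaining identity $\nvect(B)(fv)=\int_Bf\,\dd(v\cdot\nvect)$ for every Borel $B$ and every $f\in\mathcal R$. Since $fv$ only makes sense for $f$ continuous, one cannot reduce to simple $f$, so I would run a Riemann-sum scheme adapted to the topology. The key preliminary is a localized strengthening of \eqref{nearliplintmp}: \emph{if $A\subseteq\XX$ is open and $B\subseteq A$ is Borel, then $\abs{\nvect(B)(w)}\le\abs{\nvect}(A)\Vert w\Vert_{|A}$ for every $w\in\VV$}. To prove it, write $\abs{w\cdot\nvect}(A)=\sup\{\abs{w\cdot\nvect}(K):K\subseteq A\text{ compact}\}$ by inner regularity; for each such $K$ choose $\varphi\in\mathcal R$ valued in $[0,1]$ with $\varphi\equiv1$ on an open neighbourhood of $K$ and $\supp\varphi\subseteq A$, as in \eqref{eq:separK}; then for any Borel partition $\{E_i\}$ of $K$ one has $\sum_i\abs{\nvect(E_i)(w)}\stackrel{\eqref{eq:c2}}=\sum_i\abs{\nvect(E_i)(\varphi w)}\stackrel{\eqref{eq:trivialvar}}\le\abs{\nvect}(K)\Vert\varphi w\Vert\le\abs{\nvect}(A)\Vert w\Vert_{|A}$, and one concludes by taking the supremum over $K$ and using $B\subseteq A$ together with $\abs{\nvect(B)(w)}=\abs{(w\cdot\nvect)(B)}\le\abs{w\cdot\nvect}(A)$.

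Granted this, fix $f\in\mathcal R$, $v\in\VV$, a Borel set $B$, and put $\lambda:=v\cdot\nvect$. Given $\delta>0$, partition an interval $[a,b)$ containing the (bounded) range of $f$ into half-open intervals $I_1,\dots,I_N$ of length $\le\delta$ with left endpoints $s_j$, and set $A_j:=f^{-1}(I_j)$ and $W_j:=f^{-1}\big((s_j-\delta,s_j+\delta)\big)$. Then $W_j$ is open, $A_j\subseteq W_j$, the $A_j$ partition $\XX$, and every point of $\XX$ lies in at most two of the $W_j$, so $\sum_j\abs{\nvect}(W_j)\le2\abs{\nvect}(\XX)$. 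Using finite additivity of $\nvect$ and that $f-s_j\in\mathcal R$ (as $\mathcal R$ contains the constants), decompose $\nvect(B)(fv)=\sum_{j}\nvect(B\cap A_j)\big((f-s_j)v\big)+\sum_{j}s_j\,\nvect(B\cap A_j)(v)$. Since $\abs{f-s_j}<\delta$ on $W_j$ (hence on $\{g\ne0\}\subseteq W_j$ for $g\in\mathcal R$ with $\supp g\subseteq W_j$), the compatibility \eqref{compeq} gives $\Vert(f-s_j)v\Vert_{|W_j}\le\delta\Vert v\Vert$, so the preliminary estimate applied on $W_j\supseteq B\cap A_j$ bounds the first sum by $\delta\Vert v\Vert\sum_j\abs{\nvect}(W_j)\le2\delta\Vert v\Vert\abs{\nvect}(\XX)$. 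The second sum equals $\int_B\big(\sum_js_j\chi_{A_j}\big)\,\dd\lambda$, which differs from $\int_Bf\,\dd\lambda$ by at most $\delta\,\abs{\lambda}(\XX)$ since $\abs{\sum_js_j\chi_{A_j}-f}\le\delta$ everywhere. Letting $\delta\downarrow0$ yields $\nvect(B)(fv)=\int_Bf\,\dd(v\cdot\nvect)$, i.e.\ $(fv)\cdot\nvect=f(v\cdot\nvect)$, and combined with the first equality this proves the claim.

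The main obstacle is precisely the step where the local information ``$f-s_j$ is small on the open set $W_j$'' has to be converted into a genuine numerical bound on $\nvect(B\cap A_j)\big((f-s_j)v\big)$ for a \emph{generic Borel} (not open) subset $B\cap A_j$ of $W_j$: the plain inequality \eqref{eq:trivialvar} only controls things through the global norm $\Vert(f-s_j)v\Vert$, which is not small, which is why the localized estimate of the second paragraph is indispensable. Apart from that, the argument is routine, but one must take care to set up the Riemann sums with open enlargements $W_j$ of the level sets $A_j$ and with bounded overlap, and to note that $f\in\mathcal R$ (not merely Borel) is used twice: to make $fv$ and $(f-s_j)v$ elements of $\VV$ at all, and — through the continuity of $f$ — to make $\Vert(f-s_j)v\Vert_{|W_j}$ small.
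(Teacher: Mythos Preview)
Your proof is correct and follows the same overall strategy as the paper's: both reduce the nontrivial identity $\nvect(\cdot)(fv)=f\nvect(\cdot)(v)$ to a Riemann-sum argument that slices $\XX$ by level sets of $f$ and uses that $f$ is nearly constant on each slice. The tactical execution differs, though, and the comparison is worth recording.

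The paper reduces by regularity to an \emph{open} set $A$ and then chooses the partition points $t_i$ of the range of $f$ so that $\abs{\nvect}(\{f=t_i\})=0$; this makes the slices $A_i=A\cap\{t_{i-1}<f<t_i\}$ themselves open, so \eqref{nearliplintmp} applies directly, and the estimate on the ``other'' error term uses \eqref{letotvar}. You instead work on an arbitrary Borel $B$, take Borel slices $A_j=f^{-1}(I_j)$, and compensate by first proving the sharpened bound $\abs{\nvect(B)(w)}\le\abs{\nvect}(A)\Vert w\Vert_{|A}$ for Borel $B\subseteq A$ open (a clean extension of \eqref{nearliplintmp} via \eqref{eq:c2}), then enclosing each $A_j$ in an open $W_j$ with bounded overlap. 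Your route avoids the atom-avoidance trick at the cost of the preliminary lemma; the paper's route avoids the preliminary lemma at the cost of the atom-avoidance trick and the appeal to \eqref{letotvar}. Both are of comparable length and difficulty.

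One small imprecision: the overlap bound ``every point lies in at most two of the $W_j$'' requires the left endpoints $s_j$ to be spaced at least $\delta$ apart, which does not follow merely from ``intervals of length $\le\delta$''. Just take the partition with step exactly $\delta$ (so $s_{j+1}-s_j=\delta$); then a point $y$ lies in $W_j=f^{-1}((s_j-\delta,s_j+\delta))$ only if $\abs{f(y)-s_j}<\delta$, which holds for at most two $j$. With that fix the argument is complete.
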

\begin{proof}
	The second equality in \eqref{claimfoutside} follows directly from the definition of $f\nvect$ (and an approximation argument), so it is enough to prove the equality of the first and last term.
	By regularity of the signed measures $(f v)\,\cdot\,\nvect$ and $ v\,\cdot\,(f \nvect)$ we can just show that $$\nvect(A)(f v)=f\nvect(A)(v) $$
	when $A$ is open. Recall that Proposition \ref{localisfiniteabs} implies that $\nvect$ and $f\nvect$ have bounded variation. As $f$ is bounded, we can assume that $\abs{f(x)}\le 1$ for every $x$. Fix now $\varepsilon>0$ and let $$-1-\epsilon=t_0<t_1<\dots<t_n=1+\epsilon$$ be a collection of real numbers such that for every $i=1,\dots,n$, $t_i-t_{i-1}\le \varepsilon$ and
	\begin{equation}\label{liplintmp}
		\abs{\nvect }(\{f=t_i\})=0 \quad\text{and} \quad\abs{f \nvect }(\{f=t_i\})=0)\qquad\text{for every }i=0,\dots,n.
	\end{equation}
	Consider now the open sets $\{A_i\defeq A\cap\{f\in(t_{i-1},t_i)\}\}_{i=1,\dots,n}$ and notice that, for every $i=1,\dots,n$, $$f\nvect(A_i)(v)=t_i\nvect(A_i)(v)+(f-t_i)\nvect(A_i)(v)$$
	and also $$\nvect(A_i)(f v)=t_i\nvect(A_i)(v)+\nvect(A_i)(f v-t_i v).$$
	Subtracting these two equalities term by term and summing over $i$, recalling \eqref{liplintmp},
	\begin{equation}\label{nnearliplintmp}
		\abs{\nvect(A)(f v)-f\nvect(A)(v )}\le \sum_{i=1}^n \abs{\nvect(A_i) (f v - t_i v)}+\abs{(f-t_i)\nvect(A_i)(v)}.
	\end{equation}
	We then use \eqref{letotvar} and \eqref{nearliplintmp} to deduce from \eqref{nnearliplintmp} that
	$$ \abs{\nvect(A)(f v)-f\nvect(A)(v )}\le \sum_{i=1}^n 2\varepsilon\abs{\nvect }(A_i)\Vert v\Vert\le 2\varepsilon \abs{\nvect}(\XX)\Vert v\Vert.$$
	As $\varepsilon>0$ is arbitrary, this concludes the proof.
\end{proof}

Having a notion of `total variation' naturally leads to the following definition:
\begin{defn}[Polar decomposition] Let $\VV$ be a normed $\mathcal R$-module, and let  $\nvect$ be a local vector measure. 
	We say that  $\nvect$ admits a polar decomposition if there exists a weakly$^*$ $\abs{\nvect}$-measurable map $L_\nvect:\XX\rightarrow\VV'$ such that 	$\nvect=L_\nvect \abs{\nvect}$, in the sense that for every $v\in\VV$, we have $L_\nvect(v)\in\Lp^1(\abs{\nvect})$ and
	\begin{equation}\label{reprpolarabs}
		\nvect(A)(v)= \int_A  L_\nvect(v)\dd{\abs{\nvect}}\quad\text{for every $A\subseteq\XX$ Borel},
	\end{equation}
	where here and in what follows we write $L_\nvect(v)$ for the map $x\mapsto L_\nvect(x)(v)$.  We require moreover that for every $x\in\XX$ it holds that
	\begin{equation}\label{inequalitygerm}
		\abs{L_\nvect(x)(v)}\le \abs{v}_g (x)\quad\text{for every }v\in\VV.
	\end{equation}
\end{defn}
It easily follows from the above definition that the polar decomposition is `weakly unique', in the sense that if $\nvect=L_\nvect\abs{\nvect}=L'_\nvect\abs{\nvect}$, then for every $v\in\VV$, $$L_\nvect(v)=L'_\nvect(v)\quad\abs{\nvect}\text{-a.e.}$$
Also, if $f:\XX\rightarrow\RR$ is a bounded Borel function, then 
$$ 
f\nvect(A)(v)=\int_A f L_\nvect(v)\dd{\abs{\nvect}}.
$$
{Finally, we remark that if $L_\nvect$ satisfies \eqref{inequalitygerm}, then for every $x\in\XX$, $L_\nvect(x)$ is tight.}

In view of the following proposition, it is useful to recall the definition of essential supremum (see e.g.\ \cite[Lemma 3.2.1]{GP19} for the well known proof of existence and uniqueness and \cite[Section 2.4]{MNP91}  for much more on the topic).
\begin{lem}
	Let $(\XX,\mu)$	be a measure space with $\mu$ $\sigma$-finite.
  If $\{f_\alpha\}_{\alpha\in A}$ is a collection of extended real valued $\mu$-measurable functions, then there exists a unique (up to equality $\mu$-a.e.) $\mu$-measurable function $g:\XX\rightarrow\{\pm\infty\}$, called the essential supremum of $\{f_\alpha\}_{\alpha\in A}$ and denoted by $\esssup_{\alpha\in A} f_\alpha$ (or $\mu-\esssup_{\alpha\in A} f_\alpha$ when we want to stress the dependence on the measure), such that 
  \begin{enumerate}[label=\roman*)]
  	\item $f_\alpha\le g\ \mu$-a.e.\ for every $\alpha\in A$,
  	\item if $f_\alpha\le h\ \mu$-a.e.\ for every $\alpha\in A$, then $h\ge g\ \mu$-a.e.
    \end{enumerate} 
\end{lem}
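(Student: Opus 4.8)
The plan is to follow the classical argument: first reduce to the case of a finite measure and of uniformly bounded functions, and then realize the essential supremum as the (at most countable) supremum of a maximizing sequence of at most countable subfamilies.

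\emph{Reduction.} Since $\mu$ is $\sigma$-finite, I would write $\XX=\bigcup_n X_n$ with the $X_n$ pairwise disjoint and $\mu(X_n)<\infty$, and set $\tilde\mu(B)\defeq\sum_n c_n\,\mu(B\cap X_n)$ with $c_n\defeq 2^{-n}(1+\mu(X_n))^{-1}$, so that $\tilde\mu$ is a finite measure with exactly the same negligible sets as $\mu$; in particular ``$\mu$-measurable'' and ``$\mu$-a.e.''\ agree with their $\tilde\mu$-counterparts, hence it is enough to construct the essential supremum relative to $\tilde\mu$. Next fix an increasing homeomorphism $\phi\colon[-\infty,+\infty]\to[-1,1]$ (e.g.\ the continuous extension of $t\mapsto t/(1+\lvert t\rvert)$). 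Being continuous and strictly increasing, $\phi$ satisfies $\phi\big(\sup_{\alpha\in F}f_\alpha(x)\big)=\sup_{\alpha\in F}\phi\big(f_\alpha(x)\big)$ for every $x$ and every $F\subseteq A$, and likewise for $\phi^{-1}$; therefore, if $h$ is the essential supremum of $\{\phi\circ f_\alpha\}_{\alpha\in A}$, then $\phi^{-1}\circ h$ (which is valued in $[-\infty,+\infty]$) is the essential supremum of $\{f_\alpha\}_{\alpha\in A}$. Thus I may assume from now on that $\mu$ is finite and that each $f_\alpha$ takes values in $[-1,1]$.

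\emph{Construction.} For any at most countable $F\subseteq A$ the function $f_F\defeq\sup_{\alpha\in F}f_\alpha$ is $\mu$-measurable with $\lvert f_F\rvert\le 1$, so $\int_\XX f_F\,\dd\mu\in[-\mu(\XX),\mu(\XX)]$. I set $s\defeq\sup\big\{\int_\XX f_F\,\dd\mu: F\subseteq A\text{ at most countable}\big\}$ and choose at most countable $F_n\subseteq A$ with $\int_\XX f_{F_n}\,\dd\mu\to s$. Then $F_\infty\defeq\bigcup_n F_n$ is at most countable, and I put $g\defeq f_{F_\infty}$, a $\mu$-measurable function valued in $[-1,1]$. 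Since $g\ge f_{F_n}$ pointwise one gets $\int_\XX g\,\dd\mu\ge s$, and since $F_\infty$ is at most countable one also gets $\int_\XX g\,\dd\mu\le s$; hence $\int_\XX g\,\dd\mu=s$.

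\emph{Verification and uniqueness.} For (i), fix $\beta\in A$ and suppose $\mu(\{f_\beta>g\})>0$: then $g\vee f_\beta=f_{F_\infty\cup\{\beta\}}$ with $F_\infty\cup\{\beta\}$ at most countable, while $\int_\XX g\vee f_\beta\,\dd\mu>\int_\XX g\,\dd\mu=s$ because $\mu$ is finite and the functions are bounded, contradicting the definition of $s$; thus $f_\beta\le g$ $\mu$-a.e. For (ii), if $f_\alpha\le h$ $\mu$-a.e.\ for every $\alpha\in A$, then this holds in particular for the (at most countably many) $\alpha\in F_\infty$, whence $g=\sup_{\alpha\in F_\infty}f_\alpha\le h$ $\mu$-a.e. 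Finally, for uniqueness: if $g_1,g_2$ both satisfy (i) and (ii), then by (i) for $g_2$ one may apply (ii) for $g_1$ with $h\defeq g_2$ to get $g_1\le g_2$ $\mu$-a.e., and symmetrically $g_2\le g_1$ $\mu$-a.e. I do not expect a serious obstacle here; the only point requiring attention throughout is to keep all suprema indexed by at most countable sets, so that measurability is preserved and all the integrals above are meaningful.
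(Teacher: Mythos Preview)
Your argument is correct and is exactly the standard proof of this classical fact. The paper does not actually provide its own proof of this lemma: it merely states the result and refers to \cite[Lemma 3.2.1]{GP19} and \cite[Section 2.4]{MNP91}, where the same approach (reduce to a finite measure, squash via a monotone homeomorphism, and extract a maximizing countable subfamily) is given.
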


\begin{prop}\label{allhaspolarabs} Let $\VV$ be a normed $\mathcal R$-module, and let  $\nvect$ be a local vector measure. 

Then $\nvect$ admits the (`weakly unique') polar decomposition $L_\nvect{\abs{\nvect}}$, where $L_\nvect$ satisfies the `weak' identity
	\begin{equation}\notag
	L_\nvect(v)=\dv{(v\,\cdot\,\nvect)}{\abs{\nvect}}\quad\abs{\nvect}\text{-a.e.\ for every }v\in\VV.
	\end{equation} 
Moreover, it holds that
\begin{equation}\label{essusp}
\abs{\nvect}-\esssup_{v\in\VV, \Vert v\Vert\le 1} L_\nvect(v)=1.
\end{equation}

\end{prop}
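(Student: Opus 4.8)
The plan is to build $L_\nvect$ in two stages: first apply the Radon--Nikodym theorem to each $v\in\VV$ separately, then glue the resulting scalar densities into a single $\VV'$-valued map by means of a \emph{strong} lifting of $\Lp^\infty(\abs{\nvect})$; the two displayed identities will then follow by soft arguments. For the first stage, note that for every $v$ the finite signed Borel measure $v\,\cdot\,\nvect=\nvect(\,\cdot\,)(v)$ is absolutely continuous with respect to $\abs{\nvect}$ by \eqref{eq:trivialvar}, which also yields $\abs{v\,\cdot\,\nvect}\le\Vert v\Vert\,\abs{\nvect}$; write $g_v\defeq\dv{(v\,\cdot\,\nvect)}{\abs{\nvect}}\in\Lp^1(\abs{\nvect})$, so $\abs{g_v}\le\Vert v\Vert$ $\abs{\nvect}$-a.e. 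The quantitative input that will later produce \eqref{inequalitygerm} is the localized bound: $\abs{g_v}\le\Vert v\Vert_{|A}$ holds $\abs{\nvect}$-a.e.\ on $A$, for every open $A\subseteq\XX$. I would obtain it from \eqref{nearliplintmp} --- which gives $\abs{\nvect(A')(v)}\le\abs{\nvect}(A')\,\Vert v\Vert_{|A'}\le\abs{\nvect}(A')\,\Vert v\Vert_{|A}$ for every open $A'\subseteq A$ by monotonicity of the local seminorm --- together with outer regularity of the finite measure $\abs{\nvect}+\abs{v\,\cdot\,\nvect}$ on the Polish space $\XX$ to pass from open $A'$ to arbitrary Borel $B\subseteq A$, and then unravelling the definition of $g_v$.

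For the gluing, I would pass to the completion of $\abs{\nvect}$ and fix a strong lifting $\ell$ of $\Lp^\infty(\abs{\nvect})$, that is, a linear, multiplicative, positive, unital map sending each class to one of its representatives and with $\chi_A\le\ell(\chi_A)$ pointwise for every open $A$; such a lifting exists because $\abs{\nvect}$ is a finite Borel (hence Radon) measure on a Polish space. Then I would set $L_\nvect(x)(v)\defeq\ell(g_v)(x)$. The map $v\mapsto g_v$ is additive and homogeneous only $\abs{\nvect}$-a.e., but linearity of $\ell$ on classes promotes this to genuine linearity of $L_\nvect(x)$ for \emph{every} $x$; the estimate $\abs{\ell(g_v)(x)}\le\Vert g_v\Vert_{\Lp^\infty(\abs{\nvect})}\le\Vert v\Vert$ gives $L_\nvect(x)\in\VV'$ with $\Vert L_\nvect(x)\Vert'\le 1$, while $x\mapsto\ell(g_v)(x)$ being measurable gives weak$^*$ measurability of $L_\nvect$. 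Since $\ell(g_v)=g_v$ $\abs{\nvect}$-a.e., integrating over a Borel set $A$ recovers $\nvect(A)(v)=\int_A L_\nvect(v)\dd\abs{\nvect}$, i.e.\ \eqref{reprpolarabs}, and the weak identity $L_\nvect(v)=\dv{(v\,\cdot\,\nvect)}{\abs{\nvect}}$ $\abs{\nvect}$-a.e.\ is built in.

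The heart of the matter is \eqref{inequalitygerm}. Fix $v$ and an open $A$. By the localized bound, $\big(\Vert v\Vert_{|A}-\abs{g_v}\big)\chi_A\ge 0$ $\abs{\nvect}$-a.e.; applying $\ell$ and using that it is a multiplicative lattice homomorphism, $\big(\Vert v\Vert_{|A}-\abs{\ell(g_v)}\big)\,\ell(\chi_A)\ge 0$ everywhere on $\XX$. Since $\ell(\chi_A)$ is idempotent, hence $\{0,1\}$-valued, and $\ell(\chi_A)\ge\chi_A$ by the strong lifting property, we have $\ell(\chi_A)(x)=1$ for $x\in A$, whence $\abs{L_\nvect(x)(v)}=\abs{\ell(g_v)(x)}\le\Vert v\Vert_{|A}$ for every $x\in A$; taking the infimum over open neighbourhoods $A\ni x$ yields \eqref{inequalitygerm} at every point, for every $v$ (and off $\supp\abs{\nvect}$ one may simply redefine $L_\nvect\equiv 0$, which is harmless). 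I expect this to be the main obstacle: an ordinary lifting would only give the germ bound $\abs{\nvect}$-a.e.\ with an exceptional set depending on $v$, which is useless here because \eqref{inequalitygerm} is demanded at \emph{every} point simultaneously for \emph{all} $v$ and $\VV$ is in general non-separable --- it is precisely the topologically consistent strong lifting, available on Polish spaces, that repairs this.

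Finally, for \eqref{essusp}: the inequality $\le$ is immediate since $\Vert L_\nvect(x)\Vert'\le 1$ forces $L_\nvect(v)\le 1$ everywhere whenever $\Vert v\Vert\le 1$. For $\ge$, I would argue by contradiction: if $\abs{\nvect}\text{-}\esssup_{\Vert v\Vert\le 1}L_\nvect(v)\le 1-\varepsilon$ on some Borel $P$ with $\abs{\nvect}(P)>0$, then, applying the essential-supremum bound to both $v$ and $-v$, $\abs{L_\nvect(v)}\le 1-\varepsilon$ $\abs{\nvect}$-a.e.\ on $P$ for every $v$ with $\Vert v\Vert\le 1$, so by \eqref{reprpolarabs} $\abs{\nvect(P)(v)}\le(1-\varepsilon)\abs{\nvect}(P)$; taking the supremum over $\Vert v\Vert\le 1$ gives $\Vert\nvect(P)\Vert'\le(1-\varepsilon)\abs{\nvect}(P)$, contradicting $\Vert\nvect(P)\Vert'=\abs{\nvect}(P)$ from Proposition \ref{localisfiniteabs}.
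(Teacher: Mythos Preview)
Your proof is correct and takes a genuinely different route from the paper's. Both begin by setting $g_v=\dv{(v\,\cdot\,\nvect)}{\abs{\nvect}}$ and proving the localized bound $\abs{g_v}\le\Vert v\Vert_{|A}$ $\abs{\nvect}$-a.e.\ on $A$; the divergence is in how the scalar densities are glued into a single $\VV'$-valued map. The paper builds, in its Appendix (Corollary~\ref{thmapp}), a martingale-based ``linear choice of Borel representatives'' $\borrep$: this gives, for each $x$, a subspace $\VV_x\subseteq\VV$ (of full measure in $v$) on which $L_\nvect(x)$ is defined and linear, with the germ bound~\eqref{inequalitygerm} coming from the control~\eqref{buoncontrollo}; it then invokes Hahn--Banach at each $x$ to extend $L_\nvect(x)$ from $\VV_x$ to $\VV$ while preserving the germ bound. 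You instead use a strong lifting of $\Lp^\infty(\abs{\nvect})$ and set $L_\nvect(x)(v)=\ell(g_v)(x)$: linearity at every $x$ and the pointwise germ bound both fall out of the algebraic properties of $\ell$ and the strong-lifting inequality $\ell(\chi_A)\ge\chi_A$, with no extension step needed.

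What each buys: your argument is shorter and self-contained once one accepts the existence of strong liftings for Radon measures on Polish spaces; it also yields $L_\nvect(x)\in\VV'$ defined on all of $\VV$ for \emph{every} $x$ in one stroke. The paper's route is deliberately more constructive: the Appendix construction needs only Countable Dependent Choice, and the Remark following the proof points out that the Hahn--Banach step is dispensable if one accepts $L_\nvect(x)$ defined only on $\VV_x$, giving a Choice-free core with Borel (rather than merely $\abs{\nvect}$-measurable) regularity. Strong liftings, by contrast, require a form of Choice at least as strong as BPI. Your final contradiction argument for~\eqref{essusp} via $\Vert\nvect(P)\Vert'=\abs{\nvect}(P)$ is essentially the same as the paper's direct computation.
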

\begin{proof}
	For every $v\in\VV$ \eqref{nearliplintmp} grants that $v\,\cdot\,\nvect\ll\abs{\nvect}$,
	so that we can define $$\rho_v(x)\defeq  \dv{(v\,\cdot\,\nvect)}{\abs{\nvect}}\in\Lp^1(\abs{\nvect})$$ and notice that \begin{equation}\label{pretrivbound}
	\nvect(A)(v)=\int_A\rho_v\dd{\abs{\nvect}}
\end{equation} whenever  $A\subseteq\XX $ is Borel. 
	
	To define the map $L_\nvect$ we follow the argument given in \cite[Lemma 3.2]{GigliHan13}.
	Let the maps $\leb:\Lploc^1(\abs{\nvect})\rightarrow\mathcal{B(\XX)}$ and $\borrep:\Lploc^1(\abs{\nvect})\rightarrow\{f:\XX\rightarrow\RR \text{ Borel}\}$ be given by Corollary \ref{thmapp} in the Appendix. 
	For every $x\in\XX$, define $$\VV_x\defeq\left\{v\in\VV: x\in\leb(\rho_v)\right\}.$$ Property $iii)$ in Corollary \ref{thmapp} and the linearity of the map $v\mapsto\rho_v$ grant that for every $x\in\XX$, $\VV_x$ is a vector subspace of $\VV$ and that $$L_\nvect(x)(v)\defeq \borrep(\rho_v)(x)\quad\text{for }v\in \VV_x$$
	is linear in $v$. Now, \eqref{nearliplintmp}, \eqref{pretrivbound} and an immediate approximation argument easily yield that for every $x\in\XX$
	$$
	\Vert\rho_v\Vert_{\Lp^\infty(|\nvect|\mressmall B_r(x))}\le \Vert v\Vert_{|B_r(x)}
	$$
	so that, using also \eqref{buoncontrollo} we get that for every $x\in\XX$,
	\begin{equation}\label{trivbound}
		\abs{L_\nvect(x)(v)}\le\abs{ v}_g(x)\quad\text{for every } v\in \VV_x.
	\end{equation}  Fix $x\in\XX$ and consider the equivalence relation on $\VV$ given by the seminorm $\abs{\,\cdot\,}_g(x)$. Let then ${\WW}_x$ be the quotient space, endowed with the factorization of $\abs{\,\cdot\,}_g(x)$. Also, by \eqref{trivbound}, $L_\nvect(x)$ factorizes to  the projection of $\VV_x$ in ${\WW}_x$. Then, using Hahn-Banach Theorem, we can extend the factorization of $L_\nvect(x)$ to ${\WW}_x$ and then, taking the composition with the projection, we have an extension of $L_\nvect(x)$ to a map defined on $\VV$ which still satisfies \eqref{trivbound} for every $v\in\VV$.
	
	Property $i)$ in Corollary \ref{thmapp} implies that for any $v\in\VV$, we have $x\in\leb(\rho_v)$ for $\abs{\nvect}$-a.e.\ $x$, so that, using also property $ii)$, $$L_\nvect(v)=\rho_v\quad\abs{\nvect}\text{-a.e.}$$ This implies for every $v\in\VV$, $L_\nvect(v)$ is $\abs{\nvect}$-measurable and satisfies $$\nvect(A)(v)=\int_A \rho_v\dd{\abs{\nvect}}=\int_A L_\nvect(v)\dd{\abs{\nvect}}\quad \text{for every $A\subseteq\XX$ Borel}.$$
	
	Now we prove the last claim. Set for brevity $g\defeq \esssup_{v\in\VV, \Vert v\Vert\le 1}  L(v)$.
	As for every $B\subseteq \XX$ Borel and $v\in\VV$ it holds $${\int_B L(v)\dd{\abs{\nvect}}}=\abs{\nvect(B)(v)}\le \abs{\nvect}(B)\Vert v\Vert,$$
	we see that $g\le 1\ \abs{\nvect}$-a.e.
Now, let $B\subseteq\XX$ Borel such that $\abs{\nvect}(B)<\infty$ and let $\pi$ be a finite Borel partition of $B$.
	If $\varepsilon>0$, we can find $\{v_A\}_{A\in\pi}\subseteq\VV$ such that $\Vert v_A\Vert\le 1$ for every $A\in\pi$ and $$\sum_{A\in\pi}\Vert \nvect(A)\Vert'\le\sum_{A\in\pi} \nvect(A)(v_A)+\varepsilon=\sum_{A\in\pi}\int_A L_\nvect(v_A)\dd{\abs{\nvect}}+\varepsilon\le \sum_{A\in\pi}\int_A g\dd{\abs{\nvect}}+\varepsilon.$$
	Being $\varepsilon$ and $\pi$ arbitrary, it follows
	$$ \abs{\nvect}(B)\le\int_B g \dd{\abs{\nvect}}$$
	that, combined with $g\le 1\ \abs{\nvect}$-a.e.\ and the fact that $\abs{\nvect}$ is finite, implies $g =1\ \abs{\nvect}$-a.e.
\end{proof}
\begin{rem} As the proof shows, rather than imposing \eqref{inequalitygerm} to hold, we could only ask for the bound
\begin{equation}
\notag
\abs{L_\nvect(x)(v)}\le \|v\|\qquad\abs{\nvect}\text{-a.e.\ }x\in\XX
\end{equation}
to hold for every $v\in\VV$. Nevertheless, even with this weaker requirement, all the conclusions of the proposition remain in place, up to the fact that   \eqref{inequalitygerm}
has to be interpreted as 
$$
|L_\nvect(x)(v)|\le |v|_g(x)\qquad|\nvect|\text{-a.e.\ $x$, for every $v\in\VV$},
$$
where this last inequality is a consequence of \eqref{buoncontrollo} of Corollary \ref{thmapp} in the Appendix, as in the proof of Proposition \ref{allhaspolarabs}.

We also point out that the use of Hahn-Banach theorem is not really required in the proof. In fact, the rather constructive argument gives a collection $(\VV_x,L_\nvect(x))$ indexed by $x\in\XX$ with $\VV_x$ subspace of $\VV$ and $L_\nvect(x)\in \VV_x'$ for every $x\in\XX$ with the following properties:
\begin{itemize}
\item[i)] For every $v\in\VV$ the set of $x\in\XX$ such that $v\in \VV_x$ is Borel and with complement $|\nvect|$-negligible,
\item[ii)] The  map $x\mapsto L_\nvect(x)(v)$ set, say, to 0 if $v\notin\VV_x$ is Borel and bounded in norm by $\|v\|$,
\item[iii)] The identity \eqref{reprpolarabs} (that makes sense thanks to the above and $|\nvect|(\XX)<\infty$) holds for every $v\in\VV$.
\end{itemize}
These properties could also be used as definition of polar decomposition: the price one pays for doing so is to keep track of the subspaces $\VV_x$ where the operators $L_\nvect(x)$ are defined, but in doing so it gains the Borel regularity stated in $\rm i)-ii)$ above, in place of $|\nvect|$-measurability, and a Choice-free proof. Notice in particular that the Axiom of Choice  in a form stronger than Countable Dependent Choice - i.e.\ in the form of the general Hahn-Banach theorem  - is used in the proof once for each point $x\in\XX$ in order to extend the operators $L_\nvect(x)$ to regions where almost surely they won't be applied and that such extensions are irrelevant from the perspective of the defining formula  \eqref{reprpolarabs}.
\fr\end{rem}

	\begin{rem}\label{nonimportaR} Let $\VV$ be both a normed $\Rr$-module and a normed $\Rr'$-module, where $\Rr$ and $\Rr'$ approximate open sets. Assume also $\Rr\subseteq\Rr'$.
		Let $\nvect:\mathcal B(\XX)\rightarrow \VV'$ be a $\VV'$-valued measure.
		Then the following assertions are equivalent (notice that the local seminorm $\Vert\,\cdot\,\Vert_{|A}$ may not be independent of the choice of the subring $\Rr$ or $\Rr'$):
		\begin{enumerate}
			\item $\nvect$ is weakly local, considering $\VV$ as a normed $\Rr$-module, 
			\item $\nvect$ is weakly local, considering $\VV$ as a normed $\Rr'$-module. 
		\end{enumerate}
		We prove now this assertion. Let us denote with $\Vert\,\cdot\,\Vert_{A,\Rr}$ and $\Vert\,\cdot\,\Vert_{A,\Rr'}$ the local seminorms induced by the structure of normed $\Rr$-module and normed $\Rr'$-module, respectively.
		Clearly, $\Vert\,\cdot\,\Vert_{A,\Rr}\le\Vert\,\cdot\,\Vert_{A,\Rr'}$ as $\Rr\subseteq\Rr'$, so that it is immediate to see that $(1)\Rightarrow (2)$. Conversely, assume that $\nvect$ is weakly local, considering $\VV$ as a normed $\Rr'$-module. Take $v\in\VV$ with $\Vert v\Vert_{A,\Rr}=0$, for some open set $A$. Now we write, exploiting Proposition \ref{allhaspolarabs},
		$\nvect(A)(v)=\int_A L_\nvect(v)\dd{|\nvect|}$. Take now $\{f_k\}_k\subseteq \Rr$ as in item $(1)$ of Remark \ref{rem:Rpropr} and we use Lemma \ref{foutside} together with dominated convergence to compute
		$$
		\nvect(A)(v)=\lim_k \int_A f_k L_\nvect(v)\dd{|\nvect|}=\lim_k \int_A L_\nvect( f_k v)\dd{|\nvect|}=\lim_k \nvect(A)(f_k v)=0,
		$$
		where we used that $\Vert v\Vert_{A,\Rr}=0$ in the last equality. The conclusion follows.		
		
		Notice that we indeed showed what follows: if $\nvect$ satisfies one of the equivalent conditions above, then, for every $A\subseteq\XX$ open,
		$$
		|\nvect(A)(v)|\le \abs{\nvect}(A)\Vert v\Vert_{A,\Rr}\le\abs{\nvect}(A)\Vert v\Vert_{A,\Rr'}
		$$
		(the total variation of $\nvect$ is by definition independent of the choice of the subring $\Rr$ or $\Rr'$). This is due to the fact that we assume that $\Rr$ approximates open sets. 
		
		We point out that if $\nvect=L_\nvect|\nvect|$ is the polar decomposition given by Proposition \ref{allhaspolarabs} for the $\Rr'$-normed module $\VV$, it may be false that (with the obvious notation) for every $x\in\XX$,
		$$
		|L_\nvect(x)(v)|\le |v|_{g,\Rr}(x)\quad\text{for every $v\in\VV$.}
		$$ 
		Clearly, this issue can be immediately solved building  the polar decomposition for the $\Rr$-normed module $\VV$ instead of considering the polar decomposition for the $\Rr'$-normed module $\VV$.
		\fr
	\end{rem}

If $\mu$ is a finite (signed) measure on the Polish space $\XX$, then
\begin{equation}
\label{eq:riesz}
\varphi\quad\mapsto\quad F(\varphi):=\int \varphi\,\dd\mu
\end{equation}
is a continuous linear functional on $C(\XX)$ with operator norm equal to $|\mu|(\XX)$. The classical and celebrated Riesz-Markov-Kakutani theorem ensures that  if $\XX$ is compact, then all linear functionals on $C(\XX)$ can be represented this way. The non-compact case is more delicate, and handled by the Daniell-Stone theorem: if $\XX$ is Polish then  finite measures correspond, via the map \eqref{eq:riesz}, to those functionals $F:\Cb(\XX)\to\RR$ such that
\begin{equation}
\label{eq:tightF}
F(\varphi_n)\to 0\quad\text{ whenever $(\varphi_n)\subseteq\Cb(\XX)$ is such that $\varphi_n(x)\searrow 0$ for every $x\in\XX$.}
\end{equation}
We shall call property \eqref{eq:tightF}  \emph{tightness} (in the literature it is also called `continuity in 0'). Given that we are now going to investigate the validity of a Riesz-like theorem in our setting, it is natural to look for a counterpart of tightness in the framework we are working now. We propose the following:
\begin{defn}[Tightness]\label{def:tight} Let $\VV$ be a normed $\mathcal R$-module, and  $F\in\VV'$. We say that $F$ is tight with respect to $\mathcal{R}$ if for every $v\in\VV$ and every sequence
\begin{equation}\label{defsequ}
\{\varphi_n\}_n\subseteq\mathcal{R}\quad\text{with }\varphi_n(x)\searrow 0\text{ for every }x\in\XX,
\end{equation}
we have $F(\varphi_n v)\rightarrow 0$.

We will dispense with specifying the ring $\mathcal{R}$ if it is clear from the context and in the case $\mathcal{R}=\Cb(\XX)$.
\end{defn}
\begin{rem}\label{tightrem}
	Notice that, if every $v\in\VV$ has compact support (i.e.\ contained in a compact set - this occurs in particular if the space is compact), then every functional $F\in\VV'$ is tight. This follows easily from Dini's monotone convergence theorem. 
	
	In general, already the case $\VV=\Cb(\XX)$ shows that (if one assumes a sufficiently strong version of Choice, then)  not every functional $F\in\VV'$ is tight: see e.g.\   the functional $\ell\in\VV'$ defined in Example \ref{localityvv}.\fr
\end{rem}
The link between the concept of tightness and that of measure (positive real-valued for the moment - but this will later be further clarified) is given in the following key lemma. Notice that here the assumption that $\mathcal R$ approximates open sets is crucial.
\begin{lem}\label{bbtightiffmeas}
Let $\VV$ be a normed $\Rr$-module and  $F\in\VV'$. Then $F$ is tight with respect to $\mathcal{R}$ if and only if 
\begin{equation}\label{mudef}
	\mu(A)\defeq\sup
	\left\{F(v):v\in\VV,\ \Vert v\Vert \le 1\text{ and } 
	\supp v\subseteq A \right\}
\end{equation}
is the restriction to open sets of a finite Borel measure.
\end{lem}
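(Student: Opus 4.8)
The plan is to prove the two implications separately. Write $\Vert\cdot\Vert'$ for the dual norm; note first that the set function $\mu$ of \eqref{mudef} is monotone on open sets, that $\mu(\emptyset)=0$ (since $\supp v\subseteq\emptyset$ forces $\Vert v\Vert_{|\XX}=0$, hence $v=0$), and that $\mu(A)\le\Vert F\Vert'$ for every open $A$, so any Borel extension of it is automatically finite. For the implication ``$\mu$ is the restriction to open sets of a finite Borel measure $\Rightarrow$ $F$ is tight'', take $v\in\VV$ and $\{\varphi_n\}_n\subseteq\mathcal{R}$ as in \eqref{defsequ}; rescaling (using that $\mathcal{R}$ is closed under multiplication by constants) we may assume $0\le\varphi_n\le 1$. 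Fix $\varepsilon\in(0,1)$ and split $\varphi_n=a_n+b_n$ with $a_n\defeq\varphi_n\wedge\varepsilon\in\mathcal{R}$ and $b_n\defeq(\varphi_n-\varepsilon)\vee 0\in\mathcal{R}$. Then $\Vert a_nv\Vert\le\varepsilon\Vert v\Vert$ by \eqref{compeq}, so $|F(a_nv)|\le\varepsilon\Vert F\Vert'\Vert v\Vert$; and $\supp(b_nv)\subseteq\supp b_n\subseteq\{\varphi_n\ge\varepsilon\}=:C_n$, where $\{C_n\}_n$ is a decreasing family of closed sets with $\bigcap_nC_n=\emptyset$, so $\mu(C_n)\to 0$ by continuity from above of the finite measure $\mu$. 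Picking, by outer regularity, open $U_n\supseteq C_n$ with $\mu(U_n)\le\mu(C_n)+2^{-n}$, the definition of $\mu$ gives $|F(b_nv)|\le\mu(U_n)\Vert v\Vert$. Hence $\limsup_n|F(\varphi_nv)|\le\varepsilon\Vert F\Vert'\Vert v\Vert$, and letting $\varepsilon\downarrow 0$ yields $F(\varphi_nv)\to 0$.

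For the converse, assume $F$ tight. I would build the Borel extension by the Carathéodory method: set $\mu^*(E)\defeq\inf\{\mu(A):E\subseteq A\text{ open}\}$ for $E\subseteq\XX$, and reduce the statement to
\begin{itemize}
\item[(i)] $\mu$ is countably subadditive on open sets, and
\item[(ii)] $\mu(A_1\cup A_2)\ge\mu(A_1)+\mu(A_2)$ when $A_1,A_2$ are disjoint open sets.
\end{itemize}
Indeed, (i) together with monotonicity makes $\mu^*$ a finite outer measure with $\mu^*=\mu$ on open sets; and if $\dist(E,E')>0$, any open $U\supseteq E\cup E'$ decomposes as the union of the disjoint open sets $U\cap\{\dist(\cdot,E)<\delta\}\supseteq E$ and $U\cap\{\dist(\cdot,E')<\delta\}\supseteq E'$ with $\delta<\tfrac12\dist(E,E')$, so (i) and (ii) give $\mu^*(E)+\mu^*(E')\le\mu^*(E\cup E')$. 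Thus $\mu^*$ is a metric outer measure, Carathéodory's criterion makes every Borel set $\mu^*$-measurable, and the restriction of $\mu^*$ to $\mathcal{B}(\XX)$ is a finite Borel measure restricting to $\mu$ on open sets.

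It remains to establish (i) and (ii), and this is where the module structure and the tightness of $F$ enter. For (ii): given $v_i$ ($i=1,2$) with $\Vert v_i\Vert\le 1$, $\supp v_i\subseteq A_i$ and $F(v_i)$ close to $\mu(A_i)$, choose disjoint closed $C_i$ with $\supp v_i\subseteq C_i\subseteq A_i$ and a partition of unity $f_1,f_2\in\mathcal{R}$ with disjoint supports and $f_i\equiv 1$ on a neighbourhood of $C_i$ (as in the argument for \eqref{eq:disjsupp}); then $f_iv_i=v_i$ by \eqref{eq:c3}, so $\Vert v_1+v_2\Vert=\Vert f_1v_1+f_2v_2\Vert\le\max_i\Vert v_i\Vert\le 1$ by \eqref{compeq} and $\mu(A_1\cup A_2)\ge F(v_1+v_2)=F(v_1)+F(v_2)$. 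For (i) it suffices to prove finite subadditivity (an entirely analogous splitting) together with continuity from below, $A_k\uparrow A$ open $\Rightarrow\mu(A_k)\to\mu(A)$, since then $\mu(\bigcup_nA_n)=\lim_k\mu(A_1\cup\dots\cup A_k)\le\sum_n\mu(A_n)$. To prove continuity from below, let $v$ satisfy $\Vert v\Vert\le 1$ and $\supp v\subseteq C$ with $C$ closed $\subseteq A=\bigcup_kA_k$; a diagonal argument over the $\mathcal{R}$-approximations of the $A_k$'s from Remark \ref{rem:Rpropr} (take $h_k$ to be the pointwise maximum of the $k$-th members of the approximating sequences for $A_1,\dots,A_k$) produces $h_k\in\mathcal{R}$ with $0\le h_k\le 1$, $h_k\uparrow\chi_A$ pointwise and $\supp h_k\subseteq A_k$, whence $\Vert h_kv\Vert\le 1$, $\supp(h_kv)\subseteq\supp h_k\subseteq A_k$ and $F(h_kv)\le\mu(A_k)$; so it is enough to show $F((1-h_k)v)\to 0$. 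Now $1-h_k$ decreases not to $0$ but to $\chi_{\XX\setminus A}$, so the hypothesis does not apply to it directly; the device is to pick $\rho\in\mathcal{R}$ with $0\le\rho\le 1$, $\rho\equiv 1$ on a neighbourhood of $C$ and $\supp\rho\subseteq A$ --- available from \eqref{eq:separ} when $\mathcal{R}=\Cb(\XX)$; the general $\mathcal{R}$ case needs a more delicate variant using \eqref{eq:separK} and an approximation, which I would treat separately --- for then $\rho v=v$ by \eqref{eq:c3}, hence $(1-h_k)v=\rho(1-h_k)v$, while $\{\rho(1-h_k)\}_k\subseteq\mathcal{R}$ decreases to $0$ \emph{at every point} of $\XX$ (it vanishes outside $\supp\rho\subseteq A$ and tends to $0$ on $A$). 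Therefore $F((1-h_k)v)=F(\rho(1-h_k)v)\to 0$ by tightness.

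The main obstacle is precisely this last step. Without the tightness hypothesis $\mu$ is in general only a finitely additive content on open sets, and the content of the lemma is exactly that tightness promotes it to a genuine finite Borel measure; the technical heart is the cutoff $\rho$, which turns the ``bad'' sequence $1-h_k$ (decreasing to $\chi_{\XX\setminus A}$, not to $0$) into one of the admissible form in \eqref{defsequ}, so that the hypothesis can be applied. Everything else --- the Carathéodory machinery, the partition-of-unity splittings for (ii) and for finite subadditivity, and the elementary properties of $\mu$ --- is routine once the module-theoretic tools \eqref{compeq}, \eqref{eq:suppprod}, \eqref{eq:c3} and \eqref{eq:separK} are in hand.
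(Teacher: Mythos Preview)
Your argument for ``$\mu$ extends to a finite Borel measure $\Rightarrow$ $F$ is tight'' is correct; it is a neat alternative to the paper's route (the paper finds a compact $K$ with $\mu(\XX\setminus K)<\varepsilon$ and applies Dini's theorem on $K$, whereas your splitting $\varphi_n=(\varphi_n\wedge\varepsilon)+(\varphi_n-\varepsilon)^+$ works via continuity from above of $\mu$).

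For the converse your overall scheme (Carath\'eodory via monotonicity, superadditivity, finite subadditivity and continuity from below) coincides with the paper's, but the gap you yourself flag is real and is exactly the missing idea. For a general subring $\mathcal R$ you cannot in general produce $\rho\in\mathcal R$ with $\rho\equiv 1$ on a neighbourhood of an arbitrary closed $C$ (property \eqref{eq:separK} only guarantees this for compact $K$), so your argument for continuity from below --- and the ``entirely analogous'' argument for finite subadditivity, and even your appeal to \eqref{eq:disjsupp} in (ii), which is stated only for $\mathcal R=\Cb(\XX)$ --- does not go through as written.

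The paper's device, which replaces your $\rho$, is this: since $\XX\setminus C$ is open and $\mathcal R$ approximates open sets, take $\psi_k\in\mathcal R$ with $0\le\psi_k\le 1$, $\psi_k\nearrow\chi_{\XX\setminus C}$ and $\supp\psi_k\subseteq\XX\setminus C$, and set $\xi_k\defeq\psi_k\vee h_k$. Because $C\subseteq A$ one has $\xi_k\nearrow\chi_{\XX\setminus C}\vee\chi_A=1$, so $1-\xi_k\searrow 0$ genuinely and tightness gives $F(v)=\lim_kF(\xi_kv)$. On the other hand, writing $\xi_k=h_k+(\psi_k-h_k)^+$ and noting $\supp((\psi_k-h_k)^+)\subseteq\supp\psi_k\subseteq\XX\setminus C$, one gets $(\psi_k-h_k)^+v=0$ (as $\supp v\subseteq C$), hence $\xi_kv=h_kv$ and $F(\xi_kv)\le\mu(A_k)$ as you wanted. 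The same $\psi$-trick (taking maxima with approximations of $\XX\setminus C$) handles finite subadditivity, and a minor variant of it also closes the gap in (ii).
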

\begin{proof}
Assume that $\mu$ as in \eqref{mudef} is the restriction to open sets of a finite Borel measure. We prove that $F$ is tight with respect to $\mathcal{R}$. Fix then $v\in\VV$ and let $\{\varphi_n\}_n$ be as in \eqref{defsequ}. Fix also $\varepsilon>0$. We use  the regularity of $\mu$ to find a compact set $K\subseteq\XX$ such that $\mu(\XX\setminus K)\le\varepsilon$. By Dini's monotone convergence Theorem, up to discarding finitely many functions in $\{\varphi_n\}_n$, we can assume that ${\varphi_{{n}}(x)}< \varepsilon$ for every $x\in K$. By \eqref{eq:separK}  we can take a sequence $\{\phi_n\}_{n}\in\mathcal R$ valued in $[0,1]$ such that for every $n$, $\supp\phi_n\subseteq\{\varphi_n<\varepsilon\}$ and $\phi_n=1$ on a neighbourhood of $K$. Then,
$$ \abs{F(\varphi_n v)}\le\abs{F(\varphi_n(1-\phi_n) v)}+\abs{F(\varphi_n\phi_n v)}\le \Vert v\Vert\left(\mu(\XX\setminus K)+\varepsilon\right)\le 2\varepsilon\Vert v\Vert,$$
where we used the definition of $\mu$ as $\supp (\varphi_n(1-\phi_n) v)\subseteq\XX\setminus K$.  Then the conclusion follows as $\varepsilon>0$ is arbitrary.

Conversely, assume that $F$ is tight with respect to $\mathcal{R}$. We prove that $\mu$ as defined in \eqref{mudef} is the restriction to open sets of a finite Borel measure. To this aim,
we can use Carathéodory criterion (see e.g.\ \cite{AFP00}) and is then enough to verify (all the sets in consideration are assumed to be open):
\begin{enumerate}
	\item $\mu(A)\le\mu(B)$ if $A\subseteq B$,
	\item $\mu(A\cup B)\ge\mu(A)+\mu(B)$ if $\dist(A,B)>0$,
	\item $\mu(A)=\lim_k\mu(A_k)$ if $A_k\nearrow A$,
	\item$\mu(A\cup B)\le \mu(A)+\mu(B)$.
\end{enumerate}
We notice that $(1)$ and $(2)$ follow trivially from the definition of $\mu$ and that $(2)$ does not even need the sets to be well separated (so that we do not even need to consider the distance $\dist$). 

We prove now property $(3)$. Take $v\in\VV$ with $\Vert v\Vert \le 1$ and $\supp v\subseteq A$. Let then $C\subseteq A$ be a closed set with $\supp v\subseteq C$.  
By the fact that $\mathcal R$ approximates open sets, take $\{\psi_n\}_n\subseteq \mathcal R$ and, for every $k$, $\{\varphi^k_n\}_n\subseteq \mathcal R$ that are valued in $[0,1]$ such that $\psi_n \nearrow \chi_{\XX\setminus C}$ and such that for every $k$, $\varphi_n^k\nearrow\chi_{A_k}$. We can, and will, assume that $\supp\psi_n\subseteq \XX\setminus C$ and that $\supp\varphi_n^k\subseteq A_n^k$ for every $n,k$. Let now $(k_i,n_i)_{i\in\NN}$ be an enumeration of $\NN^2$ and define
\[
\xi_i:=\psi_i\vee\hat \varphi_i\in\mathcal R\qquad\text{ where }\qquad \hat\varphi_j:= \bigvee_{j\leq i} \varphi^{k_j}_{n_j}.
\]
It is clear that $\xi_i\nearrow 1$, hence by tightness we have
\begin{equation}
\label{eq:dat}
F(v)=\lim_i F(\xi_i v).
\end{equation}
Also, the identity $\xi_i=\hat\varphi_i+(\psi_i-\hat\varphi_i)^+$, the inclusion $\supp((\psi_i-\hat\varphi_i)^+)\subseteq\supp \psi_i \subseteq \XX\setminus C$ and \eqref{eq:suppprod} give  $\supp (\xi_i v)\subseteq  A_{\hat k_i}$, where $\hat k_i:=\max_{j\leq i}k_j$. Therefore recalling  \eqref{eq:dat}   we have
$$F(v)=\lim_i F(\xi_i v)\le \limsup_i \mu(A_{\hat k_i})= \lim_k\mu(A_k)$$
and since this holds for every $v\in\VV$ with $\|v\|\leq 1$, we proved
$\mu(A)\le \lim_k \mu(A_k)$ and  the claim.

We pass to $(4)$. Take $v\in\VV$ with $\supp v\subseteq A\cup B$, say $\supp v\subseteq C\subseteq A\cup B$ for some $C\subseteq\XX$ closed. As $\mathcal R$ approximates open sets, take functions $\{\psi^A_n\}_n,\{\psi^B_n\}_n\subseteq\mathcal R$ that are valued in $[0,1]$, such that $\psi^A_n\nearrow A$ and for every $n$, $\supp\psi^A_n\subseteq A$ and such that the corresponding properties hold for $\{\psi^B_n\}_n$. Take $\{\psi_n\}_n\subseteq\mathcal R$ valued in $[0,1]$ such that $\psi_n\nearrow \chi_{\XX\setminus C}$ and for every $n$, $\supp\psi_n\subseteq\XX\setminus C$.
Finally, let $\{\xi_n\}_n\subseteq\mathcal R$ be defined by $\xi_n\defeq \psi_n\vee\psi_n^A\vee \psi_n^B$. It is easy to verify that $\xi_n\nearrow 1$ and - arguing as before - that for every $n$, we can write $$\xi_n v=(\psi_n^A \vee \psi_n^B) v+ (\psi_n-(\psi_n^A \vee \psi_n^B))^+ v=\psi_n^A v+(\psi_n^B -\psi_n^A)^+ v.$$ Hence
$$F(v)=\lim_n F(\gamma_n v)\le \limsup_n F(\psi_n^A v)+F((\psi_n^B -\psi_n^A)^+ v)\le \mu(A)+\mu(B)$$
and   taking the supremum among all $v$ as above we conclude.
\end{proof}
We then have the following version of the Riesz representation theorem (compare e.g.\ with  \cite[Section 7.10]{Bogachev07}):
\begin{thm}[Riesz's theorem for local vector measures]\label{weakder} Let $\VV$ be a normed $\Rr$-module and  $F\in\VV'$ be tight. 
There exists a unique local vector measure $\nvect_F$ defined on $\VV$ such that
\begin{equation}\label{defining}
		\nvect_F(\XX)(v)=F(v)\quad\text{for every }v\in\VV.
\end{equation}
Moreover, it holds that $\abs{\nvect_F}=\mu$, where $\mu$ is the finite Borel measure given by Lemma \ref{bbtightiffmeas}.
\end{thm}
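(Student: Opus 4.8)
The plan is to build $\nvect_F$ by prescribing its action on each Borel set through a Bartle-type integration, using the finite measure $\mu$ from Lemma \ref{bbtightiffmeas} as the controlling reference measure. First I would fix $v\in\VV$ and consider the map $A\mapsto F(\varphi v)$ where $\varphi\in\mathcal R$ ``approximates'' $\chi_A$; the key point is that, by tightness and the locality-type estimates already available, this is well-posed up to the approximation and depends only on $A$ (and $v$) in a $\sigma$-additive, $\mu$-absolutely-continuous way. More precisely, I expect to first define, for $v\in\VV$ with $\supp v$ compact and contained in an open set, a candidate $\nvect_F(A)(v)$, and then to check the bound $|\nvect_F(A)(v)|\le \mu(A)\|v\|$ (or the sharper $\mu(A)\|v\|_{|A}$); this bound is what makes $\nvect_F(A)$ extend to a bounded functional on $\VV$ and what will eventually give $|\nvect_F|\le\mu$.

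The key steps, in order: \emph{(i)} Using that $\mathcal R$ approximates open sets together with Remark \ref{rem:Rpropr} and tightness, show that for an open set $A$ and a sequence $\{f_k\}_k\subseteq\mathcal R$ with $f_k\nearrow\chi_A$ and $\supp f_k\subseteq A$, the limit $\lim_k F(f_k v)$ exists and is independent of the chosen sequence; call it $\nvect_F(A)(v)$. \emph{(ii)} Extend the definition to arbitrary Borel sets: for $C$ closed write $\nvect_F(C)(v):=F(v)-\nvect_F(\XX\setminus C)(v)$ after first handling $\nvect_F(\XX)(v):=F(v)$, and then use a monotone-class / Carathéodory argument paralleling the proof of Lemma \ref{bbtightiffmeas}(3)--(4) to get a genuinely $\sigma$-additive $\VV'$-valued measure. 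The estimate $|\nvect_F(A)(v)|\le\mu(A)\|v\|$ proved along the way gives $\sigma$-additivity in operator norm (the tail $\mu(A\setminus A_k)\to 0$ controls $\|\nvect_F(A)-\nvect_F(A_k)\|'$), so $\nvect_F$ is a bona fide $\VV'$-valued measure. \emph{(iii)} Weak locality: if $\|v\|_{|A}=0$ then for $f\in\mathcal R$ with $\supp f\subseteq A$ we have $fv=0$, hence $F(f_k v)=0$ along the approximating sequence, so $\nvect_F(A)(v)=0$; this is immediate. \emph{(iv)} The defining identity \eqref{defining} holds by construction. \emph{(v)} Uniqueness: if $\nvect_1,\nvect_2$ both satisfy \eqref{defining}, then $\nvect:=\nvect_1-\nvect_2$ is a local vector measure with $\nvect(\XX)(v)=0$ for all $v$; apply Lemma \ref{foutside} and \eqref{eq:c2} to show $\nvect(B)(v)=\nvect(B)(f v)$ and then, using that $\supp(f v)$ can be squeezed between Borel sets (via \eqref{eq:c4}) together with \eqref{eq:c2} and the approximation by elements of $\mathcal R$, deduce $\nvect(A)(v)=0$ for all open $A$ and hence for all Borel $B$ by regularity of $v\cdot\nvect$; more directly, $\nvect(A)(v)=\lim_k \nvect(A)(f_k v)=\lim_k \nvect(\XX)(f_k v)=0$ using weak locality of $\nvect$ applied on $\XX\setminus A$ style arguments. \emph{(vi)} Finally, $|\nvect_F|=\mu$: the inequality $|\nvect_F|\le\mu$ follows from $\|\nvect_F(A)\|'\le\mu(A)$ plus Proposition \ref{localisfiniteabs}; for the reverse, \eqref{mudef} exhibits, for each open $A$ and $\varepsilon>0$, some $v$ with $\|v\|\le 1$, $\supp v\subseteq A$ and $F(v)>\mu(A)-\varepsilon$, and then $\nvect_F(A)(v)=F(v)$ (by \eqref{eq:c4}, since $\supp v\subseteq A$) gives $\|\nvect_F(A)\|'\ge\mu(A)-\varepsilon$; combined with the identity $|\nvect_F|(A)=\|\nvect_F(A)\|'$ of Proposition \ref{localisfiniteabs} this yields $|\nvect_F|(A)\ge\mu(A)$ on open sets, hence everywhere by regularity.

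The main obstacle I anticipate is step \emph{(ii)}, i.e.\ promoting the ``values on open sets'' construction to a genuinely $\sigma$-additive $\VV'$-valued measure on all Borel sets, while simultaneously controlling everything by $\mu$; this is exactly where the Carathéodory-type bookkeeping of Lemma \ref{bbtightiffmeas} has to be upgraded from a scalar (sub/super)measure to a vector-valued one, and where one must be careful that convergence is in operator norm in $\VV'$ (which is what makes $\nvect_F$ a vector measure in the required sense). A clean way around much of this is to sidestep an explicit set-function construction: once one has, for each fixed $v$, the finite signed Radon measure $\nu_v(A):=\lim_k F(f_k v)$ on open sets extended canonically to a Radon measure (using that $|\nu_v|\le\|v\|\,\mu$ and $\mu$ is Radon), one defines $\nvect_F(B)(v):=\nu_v(B)$; bilinearity in $v$ and the uniform bound $|\nu_v(B)|\le\mu(B)\|v\|$ give $\nvect_F(B)\in\VV'$, and $\sigma$-additivity in norm is then extracted from $\|\nvect_F(B)-\nvect_F(B_k)\|'=\sup_{\|v\|\le1}|\nu_v(B\setminus B_k)|\le\mu(B\setminus B_k)\to 0$. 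With this reorganization the only genuinely delicate point is the well-posedness and absolute continuity in step \emph{(i)}, which is precisely the content guaranteed by tightness together with the fact that $\mathcal R$ approximates open sets.
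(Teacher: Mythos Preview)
Your approach is correct but takes a somewhat different route from the paper's. You build $\nvect_F$ first on open sets via $\nvect_F(A)(v):=\lim_k F(f_k v)$ with $f_k\nearrow\chi_A$ in $\mathcal R$, then extend to Borel sets---most cleanly, as you note at the end, by invoking the \emph{scalar} Riesz/Daniell theorem for each fixed $v$ (the functional $\varphi\mapsto F(\varphi v)$ on $\mathcal R$ is tight, hence represented by a signed Radon measure $\nu_v$ with $|\nu_v|\le\|v\|\mu$). The paper instead defines $\nvect_F(B)(v)$ \emph{directly for every Borel} $B$: using inner/outer regularity of $\mu$ it sandwiches $K_\varepsilon\subseteq B\subseteq A_\varepsilon$ with $\mu(A_\varepsilon\setminus K_\varepsilon)<\varepsilon$, picks a cutoff $\varphi_\varepsilon\in\mathcal R$ for $K_\varepsilon\subseteq A_\varepsilon$, and sets $\nvect_F(B)(v):=\lim_{\varepsilon\searrow 0}F(\varphi_\varepsilon v)$; the Cauchy estimate $|F(\varphi_\varepsilon v)-F(\varphi_{\varepsilon}' v)|\le 2\varepsilon\|v\|$ follows immediately from the definition of $\mu$, so no extension step is needed and the bound $\|\nvect_F(B)\|'\le\mu(B)$ (hence $\sigma$-additivity in norm) is automatic. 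Your route has the virtue of reducing to the classical theorem and of making the signed measures $v\cdot\nvect_F$ the primary objects; the paper's is shorter and sidesteps exactly the extension you flag as the main obstacle. Two remarks: in your step (i) the existence and sequence-independence of $\lim_k F(f_kv)$ is not automatic from tightness alone---you need inner regularity of $\mu$ (take compact $K\subseteq A$ with $\mu(A\setminus K)<\varepsilon$, use Dini on $K$ to make $|f_k-f_j|$ small there, and split $F((f_k-f_j)v)$ into a piece supported near $K$ and one supported in $A\setminus K$)---which is precisely the bookkeeping the paper's sandwich trick absorbs; and for uniqueness, once Proposition~\ref{localisfiniteabs} is available the paper uses the one-liner $|\nvect_1-\nvect_2|(\XX)=\|(\nvect_1-\nvect_2)(\XX)\|'=0$, simpler than your step (v).
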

\begin{proof}
Let $\mu$ be the finite Borel measure given by Lemma \ref{bbtightiffmeas}.
Let $B\subseteq\XX$ Borel. Given $\varepsilon>0$, take $K_\varepsilon\subseteq B\subseteq A_\varepsilon$, with $K_\varepsilon$ compact and $A_\varepsilon$ open such that $\mu(A_\varepsilon\setminus K_\varepsilon)\le \varepsilon$. Let $\varphi_\varepsilon\in\mathcal R$ be as in \eqref{eq:separK}  for $K_\varepsilon\subseteq A_\varepsilon$.
Notice that
\begin{equation}\label{riesztmp}
	\abs{F(\varphi_\varepsilon v)}\le\mu(A_\varepsilon)\Vert \varphi_\varepsilon v\Vert\le\mu(A_\varepsilon)\Vert  v\Vert.
\end{equation} 

We define the linear functional 
\begin{equation}
\label{eq:apprN}
\nvect_F(B)(v)\defeq\lim_{\varepsilon\searrow 0} F(\varphi_\varepsilon v)
\end{equation}
where $\varphi_\varepsilon$ is any function as above for that given $\varepsilon>0$. We claim that this limit exists and is independent of the choice of $\{K_\varepsilon,A_\varepsilon,\varphi_\varepsilon\}_{\varepsilon>0}$. 
Indeed, take $\varepsilon>0$ and let $\varphi_\varepsilon$ and $\varphi_\varepsilon'$ as above. Then, with the obvious notation, $\varphi_\varepsilon=\varphi_{\varepsilon}'=1$ on a neighbourhood of $K_\varepsilon\cap K_{\varepsilon}'$. Thus $\supp( \varphi_\varepsilon-\varphi_{\varepsilon}')\subseteq( (A_\varepsilon\cup A_{\varepsilon}')\setminus( K_\varepsilon\cap K_{\varepsilon}'))$ implies
\begin{equation}\label{rieszfund}
	\abs{F(\varphi_\varepsilon v)- F(\varphi_{\varepsilon} 'v)} \le \Vert v\Vert \mu((A_\varepsilon\cup A_{\varepsilon}')\setminus( K_\varepsilon\cap K_{\varepsilon}'))\le 2 \Vert v\Vert \varepsilon.
\end{equation} 
Notice that the inequality above holds in particular if $\varphi_\varepsilon'=\varphi_{\varepsilon'}$ with $0<\varepsilon'<\varepsilon$. Also, by \eqref{riesztmp}, $\nvect_F(B)\in\VV'$ for every $B$ Borel. 

Notice that  the very definition of $\nvect_F$ implies that \eqref{defining} holds (too see this, just argue as for \eqref{rieszfund} with $1$ in place of $\varphi_{\varepsilon}'$). 
We show now that $\nvect_F$, as just defined, is indeed a local vector measure. By \eqref{riesztmp}, it holds $$\abs{\nvect_F(B)(v)}\le\mu(B)\Vert v\Vert\quad\text{for every $B\subseteq$ Borel},$$
so that  $\nvect_F(B)\in\VV'$ with 
\begin{equation}\label{fjsdnodwq}
	\Vert \nvect_F(B)\Vert'\le \mu(B)\qquad\text{for every }B\subseteq\XX\text{ Borel}.
\end{equation}
Finite additivity of $\nvect_F(\,\cdot\,)$ follows easily from its definition using a suitable choice of cut-off functions, while to prove $\sigma$-additivity one only has to use \eqref{fjsdnodwq} noticing that if $\{B_k\}_k$ is a sequence of pairwise disjoint Borel sets, it holds  that $\sum_{k=n}^\infty\mu( B_k)\rightarrow 0$ as $n\rightarrow\infty$.
Therefore $\nvect_F$ is a $\VV'$-valued measure.

To show weak locality we pick $A\subseteq\XX$ open, $v\in\VV$ with $\|v\|_{|A}=0$ and notice that in the construction above we can pick $A_\varepsilon=A$ for every $\varepsilon>0$. With this choice we have $\varphi_\varepsilon v=0$ and thus $\nvect(A)(v)=0$, as desired.

Also, by \eqref{fjsdnodwq}, $\abs{\nvect_F}\le \mu$ and, by \eqref{defining}, it is clear that $\mu(\XX)\le\abs{\nvect_F}(\XX)$ so that we have indeed $\mu=\abs{\nvect_F}$.

Uniqueness follows immediately from \eqref{defining} and \eqref{conicidence} as they grant that
\[
|\nvect-\tilde\nvect|(\XX)=\|\nvect-\tilde\nvect\|'(\XX)=\sup_{\|v\|\leq 1}(\nvect-\tilde\nvect)(\XX)(v)=0
\]
for every $\nvect,\tilde\nvect$ satisfying the conclusion.
\end{proof}

\begin{rem} The standard proof of Riesz's theorem typically starts taking $L\in C(\XX)'$ (say $\XX$ compact), decomposes it in its positive and negative parts to reduce to the case of positive functionals, then by monotonicity finds the value of the measure on open and/or compact sets and finally by approximation the value of the measure on any set. Inspecting our arguments, we see that the mathematical principles behind the proof of Theorem \ref{weakder} are similar, even though the lack of an order on $\VV$ forces us to avoid arguments by monotonicity in favour of those based on approximation and domination (as in \eqref{eq:apprN}, \eqref{fjsdnodwq}).

Let us illustrate how - somehow conversely - one can recover from our statement the classical Riesz's theorem about the dual of $\Cc(\XX)$ for $\XX$ locally compact metric space. Start noticing that  $\VV=\Cc(\XX)$ is a normed $\Cb(\XX)$-module and that, as seen in Remark \ref{tightrem},  any $F\in\Cc(\XX)'$ is automatically tight.  Thus by Theorem \ref{weakder} and Proposition \ref{allhaspolarabs} we can  represent $F$ as $L\abs{\nvect}$, so that 
$$
F(f)=\int_\XX L(f)\dd{\abs{\nvect}}\quad\text{for every }f\in\Cc(\XX).
$$ 
Using local compactness and then separability, we build a countable sequence $\{\varphi_n\}_n\subseteq\Cc(\XX)$ such that $\varphi_n(x)\in[0,1]$ for every $x\in\XX$ and the interiors of $\{\varphi_n=1\}$ cover  $\XX$ (to show this use e.g.\ the Lindel\"{o}f property of $\XX$). Then we define  $\sigma:\XX\rightarrow\RR$ as $$\sigma(x)\defeq L(x)(\varphi_n)\quad\text{on the interior of $\{\varphi_n=1\}$}$$ (such function is  well defined up to $\nvect$-negligible sets and is a  $|\nvect|$-measurable map). Building upon Lemma \ref{foutside}, it is easy to verify that the identity $L(f)(x)= f(x)\sigma(x)$ holds $\abs{\nvect}$-a.e.\ for every $f\in \Cc(\XX)$. On the other hand, the identity \eqref{essusp} in this case reads as 
 $$
\esssup_{f\in \Cc(\XX),\ \Vert f\Vert\le 1} L(f)=1,$$
which in turn easily implies $\sigma(x)\in\{\pm 1\}$ $\abs{\nvect}$-a.e.. Collecting what observed so far we see that the measure $\mu\defeq\abs{\nvect}\mres \{\sigma=1\}-\abs{\nvect}\mres \{\sigma=-1\},$ satisfies
$$ F(f)=\int_\XX f\dd{\mu}\quad\text{for every }f\in\Cc(\XX),$$
as desired.
\fr
\end{rem}
A direct consequence of this last result is the following isomorphism of Banach spaces:
\begin{cor}\label{Rieszcor}  Let $\VV$ be a normed $\Rr$-module and  consider the Banach space 
\begin{align}
{\sf T}&\defeq\left(\left\{F\in\VV': \text{$F$ is tight w.r.t.\ $\mathcal R$}\right\},\Vert\,\cdot\,\Vert'\right).\notag
\end{align}
Then the map 
\[
\Psi:\MM_\VV\rightarrow {\sf T}\quad\text{defined as}\quad \nvect\mapsto \nvect(\XX)
\]
is a bijective isometry.
\end{cor}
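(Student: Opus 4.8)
The plan is to show that $\Psi$ is a well-defined linear map into ${\sf T}$, that it is an isometry (hence injective) thanks to the coincidence \eqref{conicidence}, and that it is surjective by the Riesz-type Theorem \ref{weakder}; the completeness of ${\sf T}$ will then come for free from Proposition \ref{vectvalbanach}.

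First I would check that $\Psi$ is well-defined, i.e.\ that $\nvect(\XX)\in{\sf T}$ for every $\nvect\in\MM_\VV$. That $\nvect(\XX)\in\VV'$ is immediate from Proposition \ref{localisfiniteabs}, which also gives $\Vert\nvect(\XX)\Vert'=\abs{\nvect}(\XX)<\infty$. For tightness with respect to $\mathcal R$, fix $v\in\VV$ and a sequence $\{\varphi_n\}_n\subseteq\mathcal R$ with $\varphi_n(x)\searrow 0$ for every $x$. Lemma \ref{foutside} gives the identity of finite signed measures $(\varphi_n v)\,\cdot\,\nvect=\varphi_n\,(v\,\cdot\,\nvect)$, whence
\[
\nvect(\XX)(\varphi_n v)=\int_\XX\varphi_n\,\dd(v\,\cdot\,\nvect).
\]
Since $0\le\varphi_n\le\varphi_1$ with $\varphi_1$ bounded, while $v\,\cdot\,\nvect$ is a finite signed measure, the dominated convergence theorem yields $\nvect(\XX)(\varphi_n v)\to 0$. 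Hence $\nvect(\XX)$ is tight and $\Psi$ maps $\MM_\VV$ into ${\sf T}$.

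Next, linearity of $\Psi$ is obvious, and the identity \eqref{conicidence} gives $\Vert\Psi(\nvect)\Vert'=\Vert\nvect(\XX)\Vert'=\abs{\nvect}(\XX)$, i.e.\ $\Psi$ is a linear isometry; in particular it is injective (equivalently, injectivity is the uniqueness clause already contained in Theorem \ref{weakder}). For surjectivity, take $F\in{\sf T}$, so that $F\in\VV'$ is tight; Theorem \ref{weakder} produces $\nvect_F\in\MM_\VV$ with $\nvect_F(\XX)(v)=F(v)$ for every $v\in\VV$, that is $\Psi(\nvect_F)=F$. Thus $\Psi$ is onto.

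Finally, since $(\MM_\VV,\abs{\,\cdot\,}(\XX))$ is a Banach space by Proposition \ref{vectvalbanach} and $\Psi$ is a surjective linear isometry onto ${\sf T}$, the space ${\sf T}$ is complete and $\Psi$ is the asserted bijective isometry. There is no real obstacle in this argument, as the corollary is essentially a repackaging of Theorem \ref{weakder} together with \eqref{conicidence}; the only point that is not a direct citation is the well-definedness of $\Psi$, i.e.\ the tightness of $\nvect(\XX)$, which is handled by the dominated convergence argument above combined with Lemma \ref{foutside}.
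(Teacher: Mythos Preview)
Your proof is correct and follows the same overall architecture as the paper's: linearity is obvious, the isometry comes from \eqref{conicidence}, surjectivity from Theorem \ref{weakder}, and completeness of ${\sf T}$ is inherited via the isometry with $\MM_\VV$.

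The one genuine difference is in the tightness argument for $\nvect(\XX)$. The paper argues directly: given $\varepsilon>0$ it picks a compact $K$ with $\abs{\nvect}(\XX\setminus K)<\varepsilon$, applies Dini's theorem to get $\varphi_n<\varepsilon$ on $K$ (hence on a neighbourhood $A_n$), and then bounds $\abs{\nvect(\XX)(\varphi_n v)}$ via \eqref{nearliplintmp} by splitting into $A_n$ and $\XX\setminus A_n$. You instead invoke Lemma \ref{foutside} to rewrite $\nvect(\XX)(\varphi_n v)=\int_\XX\varphi_n\,\dd(v\,\cdot\,\nvect)$ and then apply dominated convergence to the finite signed measure $v\,\cdot\,\nvect$. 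Your route is shorter and cleanly reuses a lemma already in place; the paper's route is more self-contained and avoids appealing to Lemma \ref{foutside}. Both are perfectly valid.
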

\begin{proof}
It is easy to show that $\sf T$ is a Banach space. Also, we know that $\Psi$ is linear, takes values in $\VV'$,  preserves the norm (by \eqref{conicidence}) and that its image contains ${\sf T}$ thanks to Theorem \ref{weakder}.

Thus it only remains to prove that  $\Psi(\nvect)$ is a tight element of $\VV'$ for any $\nvect\in \MM_\VV$. Thus fix $\nvect$, let $v\in\VV$ and let $\{\varphi_n\}_n$ be as in \eqref{defsequ}. Also, let $\varepsilon>0$ and take, by regularity of $|\nvect|$, a compact set $K\subseteq\XX$ such that $\abs{\nvect}(\XX\setminus K)<\varepsilon$. By Dini's monotone convergence Theorem, up discarding finitely many functions of $\{\varphi_n\}_n$, we can assume that $\varphi_n<\varepsilon$ on $K$ for every $n$, By continuity,  $\varphi_n<\varepsilon$ on an open neighbourhood of $K$, say $A_n$, for every $n$. Set also $S\defeq\sup_n\Vert \varphi_n\Vert_{\infty}<\infty.$
We can then compute, recalling \eqref{nearliplintmp} and using the trivial bound $\|\varphi_n v\|_{|A_n}\leq\varepsilon\|v\|$,
$$\abs{\nvect(\XX)(\varphi_nv)}\le \abs{ \nvect(A_n)(\varphi_n v)}+\abs{\nvect(\XX\setminus A_n)}(\varphi_nv)\le\varepsilon\abs{\nvect}(A_n)\Vert v\Vert +S\varepsilon\Vert v\Vert\le\varepsilon \Vert v\Vert  \left(\abs{\nvect}(\XX)+S\right),$$
so that the claim follows as $\varepsilon>0$ is arbitrary.
\end{proof}
Another direct consequence of Theorem \ref{weakder}, this time in conjunction with the classical theorem by Alexandrov about weak sequential completeness of the space of measures, is the following result. Notice that in order to apply Alexandrov's theorem we need to work in the case $\mathcal R=\Cb(\XX)$.
\begin{cor}[Alexandrov's theorem for local vector measures]\label{alex} Let $\VV$ be a normed $\Cb(\XX)$-module and $\{\nvect_n\}_n\in \MM_\VV$ be a sequence such that for every $v\in\VV$ the sequence $n\mapsto\nvect_n(\XX)(v)\in\RR$ is Cauchy.

Then there exists $\nvect\in \MM_\VV$ such that
\[
\nvect(\XX)(v)=\lim_n\nvect_n(\XX)(v)\qquad\forall v\in\VV.
\]
\end{cor}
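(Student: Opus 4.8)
The plan is to define the candidate limit functional $F(v)\defeq\lim_n\nvect_n(\XX)(v)$ on $\VV$, to verify that $F\in\VV'$ and that $F$ is tight with respect to $\Cb(\XX)$, and then to let $\nvect\defeq\nvect_F$ be the local vector measure furnished by Theorem \ref{weakder} (equivalently, to apply the bijective isometry $\Psi$ of Corollary \ref{Rieszcor}). The defining identity \eqref{defining} will then read $\nvect(\XX)(v)=F(v)=\lim_n\nvect_n(\XX)(v)$ for every $v\in\VV$, which is the assertion.

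That $F$ is well defined and linear is clear. To see that $F\in\VV'$, observe that for each fixed $v$ the scalar sequence $\nvect_n(\XX)(v)$ is Cauchy, hence bounded; since $\VV$ may be assumed complete (see the discussion following Definition \ref{mvmdef}), the uniform boundedness principle applied to the functionals $\nvect_n(\XX)\in\VV'$ gives $C\defeq\sup_n\Vert\nvect_n(\XX)\Vert'<\infty$, whence $|F(v)|\le C\Vert v\Vert$.

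The core of the argument is the tightness of $F$, and this is where the $\Cb(\XX)$-module structure and Alexandrov's theorem enter. Fix $v\in\VV$ and set $\lambda_n\defeq v\cdot\nvect_n=\nvect_n(\,\cdot\,)(v)$; by Proposition \ref{localisfiniteabs} each $\lambda_n$ is a finite signed Borel measure, as $|\lambda_n|(\XX)\le|\nvect_n|(\XX)\Vert v\Vert<\infty$. For every $f\in\Cb(\XX)$ one has $fv\in\VV$, and Lemma \ref{foutside}, evaluated on $\XX$, yields $\nvect_n(\XX)(fv)=\int_\XX f\dd{\lambda_n}$; since the left-hand side converges to $F(fv)$, the sequence $\int_\XX f\dd{\lambda_n}$ converges for every $f\in\Cb(\XX)$. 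Thus $\{\lambda_n\}_n$ is a sequence of finite signed Borel measures on the Polish space $\XX$ against which every bounded continuous function integrates to a convergent sequence, so Alexandrov's theorem on the weak sequential completeness of the space of finite signed Borel measures provides a finite signed Borel measure $\lambda_v$ with $\int_\XX f\dd{\lambda_n}\to\int_\XX f\dd{\lambda_v}$ for all $f\in\Cb(\XX)$; in particular $F(fv)=\int_\XX f\dd{\lambda_v}$ for every $f\in\Cb(\XX)$. Finally, if $\{\varphi_k\}_k\subseteq\Cb(\XX)$ satisfies $\varphi_k(x)\searrow0$ for every $x\in\XX$, then $0\le\varphi_k\le\varphi_1\in\Lp^1(|\lambda_v|)$, so dominated convergence applied to the positive and negative parts of $\lambda_v$ gives $F(\varphi_k v)=\int_\XX\varphi_k\dd{\lambda_v}\to0$. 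Hence $F$ is tight with respect to $\Cb(\XX)$, and Theorem \ref{weakder} produces the desired (and, by Corollary \ref{Rieszcor}, unique) $\nvect$.

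I expect the main obstacle to be precisely the step just described: although the hypothesis only supplies scalar convergence of $\nvect_n(\XX)(v)$ for each individual $v$, testing against $fv$ for arbitrary $f\in\Cb(\XX)$ and using Lemma \ref{foutside} upgrades it to convergence of $\int_\XX f\dd{(v\cdot\nvect_n)}$ for all $f\in\Cb(\XX)$, which is exactly the input that Alexandrov's theorem requires. The remaining ingredients — the uniform boundedness argument, the finiteness of the $\lambda_n$ via Proposition \ref{localisfiniteabs}, and the closing dominated-convergence estimate — are routine.
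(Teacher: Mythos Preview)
Your proof is correct and follows essentially the same route as the paper: define $F(v)=\lim_n\nvect_n(\XX)(v)$, check $F\in\VV'$ via Banach--Steinhaus, then for each fixed $v$ reduce to a sequence of scalar signed measures on $\XX$ and invoke the classical Alexandrov theorem to obtain a limiting measure and hence tightness of $F$, after which Theorem \ref{weakder} applies. The only cosmetic difference is that you use the already-available measures $\lambda_n=v\cdot\nvect_n$ together with Lemma \ref{foutside} to get $\nvect_n(\XX)(fv)=\int_\XX f\,\dd\lambda_n$, whereas the paper defines $G_n(f)\defeq\nvect_n(\XX)(fv)$ and then appeals to the scalar Riesz theorem to produce the representing measures; your path is marginally more direct but the two arguments are the same in substance.
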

\begin{proof} Define $F\in\VV'$ as $F(v):=\lim_n\nvect_n(\XX)(v)$ (linearity of $F$ is obvious, continuity follows from Banach-Steinhaus Theorem). To conclude it is enough, by Theorem \ref{weakder}, to show that  $F$ is tight.  

Thus fix $v$ and define $G_n,G:\Cb(\XX)\to\RR$ as $G_n(\,\cdot\,)\defeq F_n(\,\cdot\,v)$ and $G(\,\cdot\,)\defeq F(\,\cdot\,v)$:  then the conclusion follows by  the classical Alexandrov's Theorem. We  give the details. By Riesz's Theorem (\cite[Theorem 7.10.1]{Bogachev07}), as $G_n$ is tight, it is induced by a Baire measure $\mu_n$, that is 
$$ G_n(f)=\int_\XX f\dd\mu_n\quad\text{for every }f\in\Cb(\XX).$$
Recall that $\mu_n$ is a Borel measure, as Baire measures and Borel measures coincide for metric (hence Polish) spaces, e.g.\ \cite{Bogachev07}.
 Now, $\mu_n$ is weakly fundamental (\cite[Definition 2.2.2]{bogaweak}) as 
$$
\int_\XX f\dd\mu_n=G_n(f)\rightarrow G( f),
$$
hence by \cite[Theorem 2.3.9]{bogaweak}, we see that $\mu_n$ converges weakly to a Borel measure $\mu$, so that $G(f)=\int_\XX f\dd\mu$. It follows that $G$ is tight, by \cite[Theorem 7.10.1]{Bogachev07} again.
\end{proof}
We also notice that another byproduct of Theorem \ref{weakder}, and in particular of the equality $|\nvect_F|=\mu$ is that
\begin{equation}
\label{eq:locmass}
A\subseteq\XX \text{ open and $\nvect(A)(v)=0$ for every $v\in\VV$ with $\supp v\subseteq A$}\qquad\Rightarrow\qquad |\nvect|(A)=0.
\end{equation}

\bigskip

In classical measure theory it often happens that one first defines a measure via its action on  a certain class of regular functions (say Lipschitz) and then, once the measure is constructed, its action on more general functions (say continuous) is uniquely defined by some density argument.

The following proposition establishes a construction of this sort in our setting. In the statement below notice that \eqref{eq:extn2} is not a direct consequence of \eqref{eq:extn} because in computing the total variation of $\hat\nvect$ and $\nvect$ we use the norm in $\VV'$, $\WW'$ respectively.
\begin{prop}\label{lipbscb}
	Let $\VV$ be a normed $\Cb(\XX)$-module and let $\WW\subseteq\VV$ be  a subspace that, with the inherited product and norm, is a normed $\mathcal R$-module. Assume also that  the space $\Cb(\XX)\cdot\WW$ of $\Cb(\XX)$-linear combinations of elements of $\WW$ is dense in $\VV$ and let $\nvect$ be a local vector measure defined on $\WW$.
	
	Then there is a unique local vector measure $\hat\nvect$ defined on $\VV$ that extends $\nvect$, i.e.\ such that
\begin{equation}
\label{eq:extn}
\hat\nvect(B)(v)=\nvect(B)(v)\qquad\forall v\in\WW,\ B\subseteq\XX\ Borel
\end{equation}
and such measure $\hat\nvect$ also satisfies
\begin{equation}
\label{eq:extn2}
\begin{split}
|\hat\nvect|(B)=|\nvect|(B),\qquad\forall B\subseteq\XX\ Borel.
\end{split}
\end{equation}
More explicitly, for every $A\subseteq\XX$ open and $v\in\VV$,
	\begin{equation}\label{eq:extn3}
		|\hat\nvect(A)(v)|\le |\nvect|(A)\Vert v\Vert_{A},
	\end{equation}
where the local seminorm at the right hand side is with respect the structure of $\Rr$-normed module for $\VV$.

%
%
%
%
\end{prop}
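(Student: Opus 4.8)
\emph{Strategy.} The plan is to manufacture $\hat\nvect$ from the polar decomposition of $\nvect$. By Proposition \ref{allhaspolarabs} we may write $\nvect=L_\nvect\abs{\nvect}$, with $L_\nvect\colon\XX\to\WW'$ weakly$^*$ $\abs{\nvect}$-measurable and, for \emph{every} $x\in\XX$,
\[
\abs{L_\nvect(x)(w)}\le\abs{w}_g(x)\qquad\text{for all }w\in\WW,
\]
the germ seminorm being relative to the $\Rr$-module structure of $\WW$. I would then extend, pointwise in $x$, the functional $L_\nvect(x)$ to some $\hat L(x)\in\VV'$ still obeying $\abs{\hat L(x)(v)}\le\abs{v}_g(x)$ for all $v\in\VV$ (with $\VV$ seen as an $\Rr$-module), and finally set $\hat\nvect(B)(v)\defeq\int_B\hat L(x)(v)\dd{\abs{\nvect}}$. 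Two compatibility facts make this possible. First, since $\WW$ carries the restriction of the norm of $\VV$ and is $\Rr$-invariant, for $w\in\WW$ and $A\subseteq\XX$ open the local seminorm $\Vert w\Vert_{|A}$ is the same whether computed in $\WW$ or in $\VV$; hence $\abs{w}_g$ does not depend on the ambient module. Second, for $f\in\Cb(\XX)$, $w\in\WW$ and $x\in\XX$ one has $\abs{(f-f(x))w}_g(x)=0$: this is the analogue of \eqref{eq:locnorm} for continuous (rather than $\Rr$) multipliers, and it is here that the \emph{$\Cb(\XX)$}-module structure of $\VV$ (not just the $\Rr$-one) enters — choosing $A\ni x$ with $\abs{f-f(x)}\le\varepsilon$ on $A$ and replacing $f-f(x)$ by its truncation at level $\varepsilon$ (which agrees with $f-f(x)$ on the support of any cut-off tested in the definition of $\Vert\,\cdot\,\Vert_{|A}$), one bounds the relevant norms by $\varepsilon\Vert w\Vert$ via \eqref{compeq} with $n=1$.

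\emph{Construction of $\hat L(x)$ — the key step.} On the dense subspace $\Cb(\XX)\cdot\WW\subseteq\VV$ I would set
\[
\hat L(x)\Big(\sum_k f_kw_k\Big)\defeq\sum_k f_k(x)\,L_\nvect(x)(w_k)=L_\nvect(x)\Big(\sum_k f_k(x)w_k\Big),
\]
the last expression being legitimate because $\sum_k f_k(x)w_k$ is a finite real combination of the $w_k$, hence lies in $\WW$. This is well defined: if $\sum_k f_kw_k=0$ in $\VV$, then by the two facts above $\abs{\sum_k f_k(x)w_k}_g(x)=\abs{\sum_k f_kw_k}_g(x)=0$, so $L_\nvect(x)$ annihilates $\sum_k f_k(x)w_k$. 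The same identities yield $\abs{\hat L(x)(v)}\le\abs{v}_g(x)\le\Vert v\Vert$ for $v\in\Cb(\XX)\cdot\WW$, so $\hat L(x)$, being $\Vert\,\cdot\,\Vert$-continuous on a dense subspace, extends uniquely to $\VV$, still with $\abs{\hat L(x)(v)}\le\abs{v}_g(x)$ (the germ seminorm is $\Vert\,\cdot\,\Vert$-Lipschitz), and $\hat L(x)(w)=L_\nvect(x)(w)$ for $w\in\WW$. Finally $x\mapsto\hat L(x)(v)$ is $\abs{\nvect}$-measurable: this is clear for $v\in\Cb(\XX)\cdot\WW$ (finite sums of products of continuous functions with the measurable maps $L_\nvect(\,\cdot\,)(w_k)$) and passes to general $v\in\VV$ by writing $\hat L(x)(v)$ as the pointwise limit of $\hat L(x)(v_n)$ along $v_n\to v$ in $\VV$, using $\abs{\hat L(x)(v)-\hat L(x)(v_n)}\le\Vert v-v_n\Vert$.

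\emph{Verification.} With $\hat\nvect(B)(v)\defeq\int_B\hat L(x)(v)\dd{\abs{\nvect}}$, the bound $\abs{\hat\nvect(B)(v)}\le\int_B\abs{v}_g\dd{\abs{\nvect}}\le\abs{\nvect}(B)\Vert v\Vert$ shows $\hat\nvect(B)\in\VV'$ with $\Vert\hat\nvect(B)\Vert'\le\abs{\nvect}(B)$, and $\sigma$-additivity of $\hat\nvect$ in the norm of $\VV'$ follows from this bound and the finiteness of $\abs{\nvect}$, exactly as in the proof of Theorem \ref{weakder}. Weak locality holds: if $\Vert v\Vert_{|A}=0$ then $\abs{v}_g(x)=0$ for $x\in A$, hence $\hat L(\,\cdot\,)(v)\equiv0$ on $A$ and $\hat\nvect(A)(v)=0$ (this covers the $\Cb(\XX)$-structure too, the $\Rr$-germ being dominated by the $\Cb(\XX)$-germ). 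For $w\in\WW$ one has $\hat\nvect(B)(w)=\int_B L_\nvect(x)(w)\dd{\abs{\nvect}}=\nvect(B)(w)$, which is \eqref{eq:extn}; and since $\abs{v}_g(x)\le\Vert v\Vert_{|A}$ for $x\in A$, we get $\abs{\hat\nvect(A)(v)}\le\int_A\abs{v}_g\dd{\abs{\nvect}}\le\abs{\nvect}(A)\Vert v\Vert_{|A}$, i.e.\ \eqref{eq:extn3}. As for \eqref{eq:extn2}, $\Vert\hat\nvect(B)\Vert'\le\abs{\nvect}(B)$ gives $\abs{\hat\nvect}(B)\le\abs{\nvect}(B)$, while Proposition \ref{localisfiniteabs} applied to $\hat\nvect$ and to $\nvect$ gives
\[
\abs{\hat\nvect}(B)=\Vert\hat\nvect(B)\Vert'_{\VV'}\ge\sup_{w\in\WW,\,\Vert w\Vert\le1}\abs{\hat\nvect(B)(w)}=\sup_{w\in\WW,\,\Vert w\Vert\le1}\abs{\nvect(B)(w)}=\Vert\nvect(B)\Vert'_{\WW'}=\abs{\nvect}(B),
\]
so equality holds.

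\emph{Uniqueness, and the main difficulty.} If $\hat\nvect'\in\MM_\VV$ also satisfies \eqref{eq:extn}, then for $f\in\Cb(\XX)$ and $w\in\WW$ Lemma \ref{foutside} (applied to the $\Cb(\XX)$-module $\VV$) gives $(fw)\cdot\hat\nvect'=f\,(w\cdot\hat\nvect')=f\,(w\cdot\nvect)$, which does not depend on $\hat\nvect'$; hence $\hat\nvect'$ and $\hat\nvect$ agree on $\Cb(\XX)\cdot\WW$, and as both have bounded variation (Proposition \ref{localisfiniteabs}) the functionals $\hat\nvect'(B),\hat\nvect(B)\in\VV'$ are $\Vert\,\cdot\,\Vert$-continuous and thus coincide by density, for every Borel $B$. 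I expect the genuine obstacle to be the well-posedness of $\hat L(x)$ on $\Cb(\XX)\cdot\WW$: one has to dominate the $\WW$-germ seminorm of $\sum_k f_k(x)w_k$ by the $\VV$-germ seminorm of $\sum_k f_kw_k$, which forces one through the mixed locality identity $\abs{(f-f(x))w}_g(x)=0$ for merely continuous $f$ — delicate precisely because $\WW$ is only assumed to be an $\Rr$-module while the truncation argument that proves it needs the full $\Cb(\XX)$-module structure of $\VV$.
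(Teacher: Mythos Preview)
Your proof is correct and complete, but it follows a genuinely different route from the paper's.

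The paper works directly at the level of the measure: it defines $\hat\nvect(B)\big(\sum_i f_iv_i\big)\defeq\sum_i f_i\nvect(B)(v_i)$ via the Bartle integral and proves the decisive bound
\[
\Big|\sum_i f_i\nvect(B)(v_i)\Big|\le\abs{\nvect}(B)\Big\|\sum_i f_iv_i\Big\|
\]
by approximating the $f_i$'s uniformly on a large compact $K_\varepsilon$ by functions $\varphi_i^\varepsilon\in\Rr$ (item (3) of Remark \ref{rem:Rpropr}), together with a cut-off $\psi\in\Rr$ supported where the approximation is good. Weak locality is then checked by hand via an auxiliary cut-off $\varphi$ with $\supp v\subseteq\{\varphi=1\}$.

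Your argument instead works at the level of the polar density: you extend $L_\nvect(x)$ pointwise to $\hat L(x)$ on $\Cb(\XX)\cdot\WW$ by freezing the $\Cb(\XX)$-coefficients at $x$, the well-posedness and the germ bound both resting on the single identity $\abs{(f-f(x))w}_g(x)=0$ (which you correctly derive from continuity of $f$ together with \eqref{compeq} in the ambient $\Cb(\XX)$-module). This buys you weak locality and \eqref{eq:extn3} for free from the germ inequality, at the cost of invoking the full polar decomposition (Proposition \ref{allhaspolarabs}) as input. The paper's approach is more self-contained and makes the Bartle-integral structure (needed anyway for uniqueness, where both arguments coincide) explicit from the start; yours is cleaner once the polar machinery is in hand, since the whole extension problem collapses to a pointwise one governed by a local seminorm.
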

\begin{proof}\ First notice that \eqref{eq:extn3} is a `self-improvement' due to Remark \ref{nonimportaR}, once we have proved the remaining parts of the statement.\\
{\bf Existence} Recalling Definition \ref{bartdef} we wish to define
\begin{equation}
\label{eq:defext}
\hat\nvect(B)\big(\sum_if_iv_i\big):=\sum_if_i\nvect(B)(v_i)
\end{equation}
for any choice of $n\in\NN$, $f_i\in\Cb(\XX)$, $v_i\in\WW$, $i=1,\ldots,n$ and $B\subseteq \XX$ Borel.

To prove that this is a well posed definition we claim that, with the same notations, it holds
\begin{equation}
\label{eq:claimxext}
\big|\sum_if_i\nvect(B)(v_i)\big|\leq |\nvect|(B)\Big\|\sum_{i=1}^n f_i v_i\Big\|.
\end{equation}
To see this we fix  $\varepsilon>0$ and find  $K_\varepsilon\subseteq \XX$ compact so that  $|\nvect|(\XX\setminus K_\varepsilon)\leq\varepsilon$. Then we use item (3)  of Remark \ref{rem:Rpropr} to find,  for any $i=1,\dots,n$, a function $\varphi^\varepsilon_i\in\mathcal R$ with $|f_i-\varphi^\varepsilon_i|<\varepsilon$ on $K_\varepsilon$ and thus  on some neighbourhood $A_\varepsilon$ of $K_\varepsilon$ independent on $i$. Then  find $\psi \in\Rr$ taking values in $[0,1]$ with support in $A_\varepsilon$ and identically equal to 1  on  $K_\varepsilon$.

With these choices we have 
\[
\begin{split}
\big|\sum_if_i\nvect(B)(v_i)\big|&\leq\big|\sum_i(1-\psi)f_i\nvect(B)(v_i)\big|+\big|\sum_i\psi(f_i-\varphi^\varepsilon_i)\nvect(B)(v_i)\big|+\big|\nvect(B)\big(\sum_i\psi\varphi^\varepsilon_i v_i\big )\big|\\
&\leq \sum_i\big(|(1-\psi)f_i||\nvect|\big)(\XX) \|v_i\|+\sum_i\big(|\psi(f_i-\varphi^\varepsilon_i)||\nvect|\big)(\XX)(v_i)+|\nvect|(B)\big\|\sum_i\psi\varphi^\varepsilon_i v_i\big\|\\
&\leq \varepsilon\sum_i\|f_i\|_\infty\|v_i\|+\varepsilon|\nvect|(\XX)\sum_i\|v_i\|+|\nvect|(B)\big\|\sum_i\psi\varphi^\varepsilon_i v_i\big\|
\end{split}
\]
and 
\[
\begin{split}
\big\|\sum_i\psi\varphi^\varepsilon_i v_i\big\|&\leq \sum_i\|\psi(f_i-\varphi^\varepsilon_i) v_i\|+\big\|\psi\sum_i f_i v_i\big\|\leq \varepsilon  \sum_i\|v_i\|+\big\|\sum_i f_i v_i\big\|.
\end{split}
\]
These last two inequalities and the arbitrariness of $\varepsilon$ give the claim \eqref{eq:claimxext}.

In turn, the bound  \eqref{eq:claimxext} ensures that the right hand side of \eqref{eq:defext} depends only on $v:=\sum_{i=1}^n f_i v_i$ and not on the way $v$ is written as such sum. In particular, the definition \eqref{eq:defext} is well posed and we have
\begin{equation}
\label{eq:boundhatn}
|\hat\nvect(B)(v)|\leq  |\nvect|(B)\|v\|.
\end{equation}
for every $B\subseteq\XX$ and  $v\in \Cb(\XX)\cdot \WW$. The fact that $\hat\nvect$ is linear on the space of such $v$'s is obvious by definition and this last inequality shows continuity: together with the density assumption this ensures that $\hat\nvect(B)$ can be uniquely extended to an element of $\VV'$, still denoted  $\hat\nvect(B)$. It is clear by definition that the map $B\mapsto   \hat\nvect(B)$ is additive; $\sigma$-additivity follows  trivially from the bound \eqref{eq:boundhatn} and the $\sigma$-additivity of $|\nvect|$.

To prove weak locality of $\hat\nvect$ we pick $A\subseteq\XX$ open and $v\in \VV$ with $\|v\|_{|A}=0$. Then by the very definition of support we can find   $\varphi\in \Cb(\XX)$ with support in $\XX\setminus A$ and $\supp v\subseteq \{\varphi=1\}$. Then we pick  $\{v_j\}_j\in \Cb(\XX)\cdot \WW$ converging to $v$ and notice that  $\varphi v_j\to\varphi v=v$ (recall \eqref{eq:c3}). On the other hand, writing $v_j=\sum_if_{ij}v_{ij}$ with $f_{ij}\in\Cb(\XX)$ and $v_{ij}\in\WW$ we see that
\[
\sum_i\varphi f_{ij}\nvect(A)(v_{ij})=\sum_if_{ij}\nvect(A)(\varphi v_{ij})=0\qquad\forall i\in\NN
\]
by weak locality of $\nvect$ and the fact that $\supp(\varphi v_{ij})\subseteq \supp \varphi \subseteq \XX\setminus A$. Hence $\hat\nvect(\varphi v_i)=0$ for any  $i$ and letting $i\to\infty$ we conclude, by the arbitrariness of $A,v$, that $\hat\nvect$ is weakly local, as desired.

It is then clear that \eqref{eq:extn} holds and that inequality \eqref{eq:boundhatn} gives $\leq$ in \eqref{eq:extn2}. The opposite inequality is trivial because, recalling \eqref{conicidence}, we see that $|\hat\nvect|(B)$ is the operator norm of $\hat\nvect(B)$ in $\VV'$, whereas $|\nvect (B)|$ is the operator norm of $\nvect (B)$ in $\WW'$ i.e.\ of the restriction of $\hat\nvect(B)$ to $\WW$.

\noindent{\bf Uniqueness} Let $\hat\nvect$ be an extension of $\nvect$, $f\in \Cb(\XX)$, $\varphi\in\mathcal R$ and $v\in\WW$. Then for any $B\subseteq \XX$ Borel we have
\[
|\hat\nvect(B)(fv)-\varphi\nvect(B)(v)|\stackrel{\eqref{claimfoutside}}=|(f-\varphi)\hat\nvect(B)(v)|\stackrel{\eqref{letotvar}}\leq\|v\|(|f-\varphi||\hat\nvect|)(B)\leq \|v\|\|f-\varphi\|_{\Lp^1(|\nvect|)}.
\] 
Now observe that since $|\nvect|$ is a finite measure, item (3) of Remark \ref{rem:Rpropr}  ensures that for  any $f\in\Cb(\XX)$ there is $\{\varphi_n\}_n\in\mathcal R$ uniformly bounded converging to $f$ pointwise. Thus the convergence is also in $\Lp^1(|\nvect|)$, hence the above and \eqref{eq:bbct} imply that any extension $\hat\nvect$ must satisfy $\hat\nvect(fv)=f\nvect(v)$ for any $v\in\WW$ and $f\in\Cb(\XX)$.  By linearity and the density of $\Cb(\XX)\cdot\WW$ in $\VV$ it follows that any extension $\hat\nvect$, if it exists, must satisfy \eqref{eq:defext}. Since such equation defines the value of $\hat\nvect$ on $\Cb(\XX)\cdot\WW$ and this is dense in $\VV$, uniqueness is proved. 
\end{proof}

\bigskip

We conclude the section describing some operations on local vector measures. We start with the push forward through continuous maps and start with the following definition:
\begin{defn}[Push-forward module]\label{defpfmod}
Let $(\XX,\dist)$ and $(\YY,\rho)$ be two complete and separable metric spaces, $\VV$  a normed $\Cb(\XX)$-module and $\varphi\in  C(\XX,\YY)$.

The normed $\Cb(\YY)$-module  $\varphi_* \VV$ is defined as $\VV$ as normed vector space and equipped  with the structure of  algebraic module over $\Cb(\YY)$ by
\begin{equation}\notag
	f v\defeq f\circ\varphi\, v\quad\text{for every }f\in\Cb(\YY)\text{ and }v\in\VV.
\end{equation}
\end{defn}
It is easy to verify that $\varphi_*\VV$ is a normed $\Cb(\YY)$-module: we just have to notice that if $\{f_i\}_{i=1,\dots,n}\subseteq\Cb(\YY)$ have pairwise disjoint support, then also $\{f_i\circ\varphi\}_{i=1,\dots,n}\subseteq\Cb(\XX)$ have the same property.

The following proposition shows how if we have a local vector measure on $\VV$ we can naturally build a local vector measure on $\varphi_*\VV$ via a push-forward operation:
\begin{prop}[Push-forward of local vector measures] With the same notation and assumptions as in Definition \ref{defpfmod} the following holds. Let $\nvect$ be a local vector measure defined on $\VV$. Define a map by
\begin{equation}\label{definingpushf}
	\varphi_*\nvect(B)(v)\defeq\nvect(\varphi^{-1}(B))(v)\quad\text{for every $B\subseteq\YY$ Borel and $v\in\varphi_*\VV$}.
\end{equation}
Then $\varphi_*\nvect$ is a local vector measure defined on $\varphi_*\VV$ and $\abs{\varphi_*\nvect}=\varphi_*\abs{\nvect}$.
\end{prop}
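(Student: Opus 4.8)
The plan is to check, in order, that $\varphi_*\nvect$ is a well-defined $(\varphi_*\VV)'$-valued measure, that it is weakly local, and finally that $|\varphi_*\nvect|=\varphi_*|\nvect|$; the only genuinely delicate point is weak locality.

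First I would record that $\varphi_*\VV$ is, by Definition \ref{defpfmod}, the same normed vector space as $\VV$, so $(\varphi_*\VV)'=\VV'$ isometrically; since $\varphi$ is continuous, $\varphi^{-1}(B)\in\mathcal B(\XX)$ whenever $B\in\mathcal B(\YY)$, hence $\varphi_*\nvect(B):=\nvect(\varphi^{-1}(B))$ is a well-defined element of $(\varphi_*\VV)'$. As $\varphi^{-1}$ preserves disjointness and commutes with countable unions, $\sigma$-additivity of $\varphi_*\nvect$ (in the norm of $\VV'=(\varphi_*\VV)'$) is inherited immediately from that of $\nvect$.

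For weak locality, fix $A\subseteq\YY$ open and $v\in\varphi_*\VV$ with $\|v\|_{\varphi_*\VV,|A}=0$; the goal is $\nvect(\varphi^{-1}(A))(v)=0$. The naive comparison of local seminorms runs the wrong way, so instead I would argue by inner regularity, available since $|\nvect|$ is finite by Proposition \ref{localisfiniteabs}. Given $\varepsilon>0$, pick $K\subseteq\varphi^{-1}(A)$ compact with $|\nvect|(\varphi^{-1}(A)\setminus K)\le\varepsilon$. Then $\varphi(K)\subseteq A$ is compact, so by Urysohn's lemma \eqref{eq:separ} applied in $\YY$ there is $g\in\Cb(\YY)$ valued in $[0,1]$ with $g=1$ on an open set $U\supseteq\varphi(K)$ and $\supp g\subseteq A$. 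Then $g\circ\varphi\in\Cb(\XX)$ equals $1$ on the open neighbourhood $\varphi^{-1}(U)$ of $K$, while $\supp(g\circ\varphi)\subseteq\varphi^{-1}(\supp g)\subseteq\varphi^{-1}(A)$ because $\varphi^{-1}(\supp g)$ is closed. Splitting $\nvect(\varphi^{-1}(A))(v)=\nvect(K)(v)+\nvect(\varphi^{-1}(A)\setminus K)(v)$, the last term is bounded by $\varepsilon\|v\|$ via \eqref{eq:trivialvar}, whereas \eqref{eq:c2} gives $\nvect(K)(v)=\nvect(K)\big((g\circ\varphi)v\big)$; but $(g\circ\varphi)v$ is precisely the product $gv$ computed in $\varphi_*\VV$, and $\|gv\|\le\|v\|_{\varphi_*\VV,|A}=0$, so this term vanishes. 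Letting $\varepsilon\searrow 0$ gives $\nvect(\varphi^{-1}(A))(v)=0$, i.e.\ weak locality of $\varphi_*\nvect$.

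Finally, for the total variation I would invoke the identity $|\cdot|(\cdot)=\|\cdot(\cdot)\|'$ of Proposition \ref{localisfiniteabs}, which now applies to the local vector measure $\varphi_*\nvect$ just constructed: for $B\in\mathcal B(\YY)$,
\[
|\varphi_*\nvect|(B)=\|\varphi_*\nvect(B)\|'_{(\varphi_*\VV)'}=\sup_{\|v\|\le 1}\big|\nvect(\varphi^{-1}(B))(v)\big|=\|\nvect(\varphi^{-1}(B))\|'_{\VV'}=|\nvect|(\varphi^{-1}(B))=\varphi_*|\nvect|(B),
\]
where the third equality again uses that $\varphi_*\VV$ and $\VV$ share the same underlying normed vector space. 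The main obstacle, as anticipated, is the weak-locality step: one cannot control $\|\cdot\|_{\varphi_*\VV,|A}$ by $\|\cdot\|_{\VV,|\varphi^{-1}(A)}$ in the useful direction, so the compact-exhaustion-and-cutoff detour through \eqref{eq:c2} is essential.
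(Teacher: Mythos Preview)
Your proof is correct and essentially identical to the paper's: both argue weak locality via inner regularity of $|\nvect|$, pushing a compact $K\subseteq\varphi^{-1}(A)$ forward to build a cutoff $g\in\Cb(\YY)$ supported in $A$, then using \eqref{eq:c2} and the vanishing of $(g\circ\varphi)v=gv$ in $\varphi_*\VV$; and both read off $|\varphi_*\nvect|=\varphi_*|\nvect|$ directly from \eqref{conicidence}. Your remark that the seminorm comparison ``runs the wrong way'' is exactly the reason the paper takes the same compact-exhaustion detour.
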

\begin{proof}
We show that $\varphi_*\nvect$ is indeed a local vector measure. The only non trivial verification to be done is weak locality. Take then an open set $A\subseteq\YY$ and $v\in\varphi_*\VV$ with $\Vert v\Vert_{| A}=0$; in other words, $f\circ\varphi\, v=0$ for every $f\in\Cb(\YY)$ with $\supp f\subseteq A$. We have to show that 
$$ \nvect(\varphi^{-1}(A) )(v)=0.$$
Take any $\varepsilon>0$, then, by regularity, take a compact set $K\subseteq \varphi^{-1}(A)\subseteq\XX$ with \begin{equation}\label{solita}
	\abs{\nvect}(\varphi^{-1}(A)\setminus K)<\varepsilon.
\end{equation} 
Then $\varphi(K)$ is compact and contained in $A$, hence there is  $f\in\Cb(\YY)$ with $\supp f\subseteq A$ and $f(y)=1$ on a neighbourhood of $\varphi(K)$.  Therefore, by \eqref{solita} and weak locality of $\nvect$,
\begin{equation*}
	\abs{\nvect (\varphi^{-1}(A))(v)}\le \varepsilon\Vert v\Vert+ \abs{\nvect (K)(v)}\le \varepsilon\Vert v\Vert+\abs{\nvect(K)(f\circ\varphi v)}= \varepsilon\Vert v\Vert,
\end{equation*}
where we used the fact that $\supp f\subseteq A$ for the last equality. Being $\varepsilon>0$ arbitrary, this proves the claim.

By \eqref{definingpushf} and \eqref{conicidence}, we conclude that for every $B\subseteq\YY$ Borel, 
\begin{equation*}
	\abs{\varphi_*\nvect}(B)= \abs{\nvect}(\varphi^{-1}(B))=\varphi_*\abs{\nvect }(B).\qedhere
\end{equation*}
\end{proof}

\bigskip 

It may happen that we have a normed $\Cb(\XX)$-module $\VV$ and we want to consider its Cartesian product with itself. To do this, first we have to endow  $\VV^k$ with a norm, then the normed $\Cb(\XX)$-module operations will be defined component-wise. 
Let $k\in\NN$, $k\ge 1$, endow $\VV^k$ with a norm equivalent to the norm $$\Vert (v_1,\dots,v_k)\Vert\defeq \Vert (\Vert v_i\Vert)_{i=1,\dots,k}\Vert_e\quad\text{for every }v=(v_1,\dots,v_k)\in\VV.$$
Notice that the canonical map $\Phi:(\VV')^k\rightarrow (\VV^k)'$ defined by $$\Phi(\phi_1,\dots,\phi_k)(v_1,\dots,v_k)=\phi_1(v_1)+\cdots+\phi_k(v_k)$$
is an isomorphism.

If one has in mind that $\VV$ is some space of vector fields over a manifold, then $\VV^k$ would correspond to some related tensor field. Then a little bit of matrix operation is possible over corresponding local vector measures:
\begin{defn}\label{intmatrix}
	Let $\nvect$ be a local vector measure defined on  $\VV^n$.
	Let moreover $m\in\NN$, $m\ge 1$ and let $f=(f_{i,j})_{1\le i\le m, 1\le j\le n}:\XX\rightarrow\RR^{m\times n}$ be a bounded Borel map. We define $f \nvect$ as the local vector measure defined on  $\VV^m$ by
	\begin{equation}\notag
		f \nvect(A)\defeq\int_A f\dd{\nvect}\defeq\left(\sum_{j=1}^n\int_A f_{i,j}\dd{\nvect_j}\right)_i\quad\text{if $A\subseteq\XX$ is Borel},
	\end{equation} 
	where we exploited the canonical identification $(\VV')^k\simeq(\VV^k)'$.
\end{defn}
Notice that if $f:\XX\rightarrow\RR$ is a bounded Borel function and $\nvect$ is a local vector measure defined on $\VV^n$, then
$$ f{\nvect}= (f \Id_{n\times n})\nvect.$$

\subsection{Examples}
\subsubsection{Currents in metric spaces}\label{se:curr}
For this section, $(\XX,\dist)$ is a complete and separable metric space (here the distance matters) and  we fix $n\in\NN$, $n\ge 1$ (the case $n=0$ being trivial).

The following notions are extracted from \cite{AmbrosioKirchheim00}.
\begin{defn}\label{corrdefn}
A $n$-current is a multilinear map $$T:\LIPb(\XX)\times  \LIP(\XX)^n\rightarrow\RR$$ such that 
\begin{enumerate}[label=\roman*)]
\item  there exists a finite (positive) measure $\mu$ such that 
\begin{equation}\label{eqmass}
\abs{T(f,\pi_1,\dots,\pi_n)}\le \prod_{j=1}^n\Lip(\pi_j)\int_\XX \abs{f}\dd{\mu}
\end{equation}	
for every $f\in\LIPb(\XX)$ and $\pi_1,\dots,\pi_n\in \LIP(\XX)$. The minimal measure $\mu$ satisfying \eqref{eqmass} (that can be proved to exist) will be called the mass of $T$ and denoted by $\Vert T\Vert_{\AK}$;
\item if $f\in\LIPb(\XX)$ and for $j=1,\dots,n$, $\{\pi_j^i\}_i\subseteq\LIP(\XX)$ is a sequence of equi-Lipschitz functions such that $\pi_j^i\rightarrow\pi_j$ pointwise, then 
\[
\lim_i T(f , \pi_1^i,\dots,\pi^i_n)=T(f , \pi_1,\dots,\pi_n);
\]
\item if $f\in\LIPb(\XX)$ and for some $j=1,\dots,n$ we have that $\pi_j$ is constant on a neighbourhood of $\{f\ne 0\}$, then $$T(f , \pi_1,\dots,\pi_n)=0.$$
\end{enumerate}
\end{defn}
When the dimension $n$ is clear from the context, we call $n$-currents simply currents.
It is clear  from  \eqref{eqmass} that, if $T$ is a current, then it can be uniquely extended to a map $$T:\Cb(\XX)\times  \LIP(\XX)^n\rightarrow\RR$$
still satisfying \eqref{eqmass} for every $f\in\Cb(\XX)$ and $\pi_1,\dots,\pi_n\in \LIP(\XX)$. As such extension is unique, we will not introduce a different notation for it.
By \cite[(3.2)]{AmbrosioKirchheim00}, we have that a current is alternating in the last $n$ entries, so that we can set (all the algebraic operations are with respect to the field of real numbers $\RR$)
\begin{equation}\notag
	\DD^n(\XX)\defeq \Cb(\XX)\otimes \bigwedge^n \LIP(\XX)
\end{equation}
and consider a current $T$ as a linear map $T:	\DD^n(\XX)\rightarrow \RR$.
We also have a natural map $$ \Cb(\XX)\times \LIP(\XX)^n\rightarrow \DD^n(\XX)$$
and we write (just as a notation) $f\diff \pi_1\wedge\cdots\wedge \pi_n$ for the image of $(f,\pi_1,\dots,\pi_n)$ through such map. Therefore, by $T(f\diff\pi_1\wedge\cdots\wedge\diff\pi_n)$, we mean $T(f,\pi_1,\dots,\pi_n)$.

Notice also that if $T$ is a current and $f\in\Cb(\XX)$, then $f T$ defines a current by (see the discussion below \cite[(2.5)]{AmbrosioKirchheim00})  $$f T(v)\defeq T(f v)\quad\text{for every $v\in \DD^n(\XX)$}$$
and by \eqref{eqmass} (cf.\ the key result \cite[(2.5)]{AmbrosioKirchheim00} that encodes locality) it holds, as measures,
\begin{equation}\label{massmult}
\Vert f T\Vert_{\AK}\le \abs{f}\Vert T\Vert_{\AK}.
\end{equation}

We define a seminorm on $\DD^n(\XX)$ as follows:
\begin{equation}\label{defnsemi}
\Vert v\Vert\defeq\sup_T T(v)\quad\text{for every }v\in\DD^n(\XX)
\end{equation}
where the supremum is taken among all currents $T$ with $\Vert T\Vert_{\AK}(\XX)\le 1$. We claim that if  $v\in\DD^n(\XX) $ is so that $\Vert v\Vert=0$, then $\Vert f v\Vert=0$ for any $f\in\Cb(\XX)$. Indeed
$$
\Vert f v\Vert=\sup_{T:\Vert T\Vert_{\AK}\le 1} T(f v)=\sup_{T:\Vert T\Vert_{\AK}(\XX)\le 1} (f T)( v)\le \sup_{\tilde T:\Vert \tilde T\Vert_{\AK}(\XX)\le \Vert f\Vert_\infty} \tilde T(v)=0
$$
where the inequality above is due to \eqref{massmult}.
We then identify elements of $\DD^n(\XX)$  that are equal up to an element of zero norm, so that we have a normed vector space $(\DD^n(\XX),\Vert\,\cdot\,\Vert)$. Notice that our claim  grants that the structure of algebraic module over $\Cb(\XX)$ descends to the quotient.

We show now that $(\DD^n,\Vert\,\cdot\,\Vert)$ is a normed $\Cb(\XX)$-module. The only non trivial verification is that \eqref{compeq} holds and this in turn follows from \eqref{massmult}.
 Indeed, take $\{f_i\}_{i=1,\dots,m}\subseteq\Cb(\XX)$ with pairwise disjoint supports and $\{v_i\}_{i=1,\dots,m}\subseteq\DD^n(\XX)$. We have to show that, for every current $T$ with $\Vert T\Vert_{\AK}(\XX)\le 1$, it holds that $$T(f_1 v_1+\cdots+f_m v_m)\le \max_j \Vert f_j\Vert_\infty\max_j\Vert v_j\Vert.$$
Now, using the definition of norm on $\DD^n(\XX)$ and \eqref{massmult} we have
\begin{align*}
T\bigg(\sum_{j=1}^m  f_j v_j\bigg)&\le\sum_{j=1}^n \abs{f_j T( v_j)}\le\sum_{j=1}^m \Vert f_j T\Vert_{\AK}(\XX)\Vert v_j \Vert \\&\le \max_j \Vert v_j\Vert \sum_j \left({\abs{f_j}} \Vert T\Vert_{\AK}\right)(\XX)\le \max_j\Vert v_j\Vert \max_j \Vert f_j\Vert_\infty\Vert T\Vert_{\AK}(\XX),
\end{align*}
so that the claim follows.

The following proposition shows how metric currents fit in the framework of local vector measures: we show that metric currents are precisely those local vector measures defined on $\DD^n(\XX)$ that satisfy the weak continuity property stated in item $ii)$ of Definition \ref{corrdefn} and moreover that the two concepts of mass (the one for metric currents defined in \cite{AmbrosioKirchheim00} and the one for local vector measures) coincide. 
\begin{prop}\label{currismvm}
Let $T$ be a current. Then $T$ is a tight element of $\DD^n(\XX)'$. In particular, $T$ induces   a unique local vector measure $\nvect_T$ defined on $\DD^n(\XX)$ such that 
\begin{equation}\label{rieszcorr}
		\nvect_T(A)(v)=T(v)\quad\text{for every }A\subseteq \XX\text{ open and }v\in\DD^n(\XX)\text{ with }\supp v\subseteq A.
\end{equation}
Moreover, it holds that $\Vert T\Vert_{\AK}=\abs{\nvect_T}$. 

Conversely, every tight element of $\DD^n(\XX)'$ satisfying item $ii)$ of Definition \ref{corrdefn} is a current; in other words, every local vector measure $\nvect$ defined on $\DD^n(\XX)$ such that $\nvect(\XX)$ satisfies item $ii)$ of Definition \ref{corrdefn} is given by a current, in the sense that \eqref{rieszcorr} holds.
\end{prop}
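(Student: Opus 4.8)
The plan is to show that an arbitrary current $T$ is a tight element of $\DD^n(\XX)'$ and then feed it into Theorem~\ref{weakder}. First I would record the boundedness estimate $\abs{T(v)}\le\Vert T\Vert_{\AK}(\XX)\,\Vert v\Vert$ for every $v\in\DD^n(\XX)$: this is trivial if $\Vert T\Vert_{\AK}(\XX)=0$ (then $T=0$ by \eqref{eqmass}), and otherwise follows by applying the defining inequality $S(v)\le\Vert v\Vert$ of \eqref{defnsemi} to the competitors $S=\pm T/\Vert T\Vert_{\AK}(\XX)$; in particular $T\in\DD^n(\XX)'$ and $\Vert T\Vert'\le\Vert T\Vert_{\AK}(\XX)$. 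For tightness with respect to $\Cb(\XX)$, write $v=\sum_h f_h\,\dd\pi_1^h\wedge\cdots\wedge\dd\pi_n^h$ and take $\{\varphi_k\}_k\subseteq\Cb(\XX)$ with $\varphi_k(x)\searrow 0$ for every $x$; then \eqref{eqmass} gives $\abs{T(\varphi_k v)}\le\sum_h\big(\prod_j\Lip(\pi_j^h)\big)\int_\XX\abs{\varphi_k f_h}\,\dd\Vert T\Vert_{\AK}$, and since each $\abs{\varphi_k f_h}\le\abs{\varphi_1 f_h}\in\Lp^1(\Vert T\Vert_{\AK})$ (the mass being finite) and tends to $0$ pointwise, dominated convergence yields $T(\varphi_k v)\to 0$. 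Theorem~\ref{weakder} then produces a local vector measure $\nvect_T$ on $\DD^n(\XX)$ with $\nvect_T(\XX)(v)=T(v)$ for all $v$ and $\abs{\nvect_T}=\mu$, the finite Borel measure of Lemma~\ref{bbtightiffmeas}. Since $\supp v\subseteq\XX$ always, \eqref{eq:c4} upgrades this to \eqref{rieszcorr}; and uniqueness of $\nvect_T$ subject to \eqref{rieszcorr} reduces, by taking $A=\XX$, to the uniqueness in Theorem~\ref{weakder}.

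\textbf{The mass identity.} The key device is a pointwise factorization of the polar decomposition $\nvect_T=L_{\nvect_T}\abs{\nvect_T}$ of Proposition~\ref{allhaspolarabs}: for all $f\in\Cb(\XX)$, $v\in\DD^n(\XX)$, $x\in\XX$ one has $L_{\nvect_T}(x)(fv)=f(x)\,L_{\nvect_T}(x)(v)$, because $\abs{(f-f(x))v}_g(x)=\abs{f-f(x)}(x)\,\abs{v}_g(x)=0$ by \eqref{eq:locnorm}, hence $\abs{L_{\nvect_T}(x)((f-f(x))v)}\le\abs{(f-f(x))v}_g(x)=0$ by \eqref{inequalitygerm}. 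Combined with the elementary bounds $\abs{w}_g\le\Vert w\Vert$ on $\XX$ and $\Vert\dd\pi_1\wedge\cdots\wedge\dd\pi_n\Vert\le\prod_j\Lip(\pi_j)$ (both immediate from the definitions and \eqref{eqmass}), this gives
\[
\abs{T(f\,\dd\pi_1\wedge\cdots\wedge\dd\pi_n)}=\Big|\int_\XX f(x)\,L_{\nvect_T}(x)(\dd\pi_1\wedge\cdots\wedge\dd\pi_n)\,\dd\abs{\nvect_T}(x)\Big|\le\prod_j\Lip(\pi_j)\int_\XX\abs{f}\,\dd\abs{\nvect_T},
\]
so $\abs{\nvect_T}$ satisfies \eqref{eqmass} for $T$, whence $\Vert T\Vert_{\AK}\le\abs{\nvect_T}$ by minimality of the mass. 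Conversely, \eqref{conicidence} and the boundedness estimate above give $\abs{\nvect_T}(\XX)=\Vert\nvect_T(\XX)\Vert'=\Vert T\Vert'\le\Vert T\Vert_{\AK}(\XX)$. Two finite Borel measures with $\Vert T\Vert_{\AK}\le\abs{\nvect_T}$ and $\abs{\nvect_T}(\XX)\le\Vert T\Vert_{\AK}(\XX)$ coincide, so $\Vert T\Vert_{\AK}=\abs{\nvect_T}$.

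\textbf{The converse.} Let $\nvect\in\MM_{\DD^n(\XX)}$ with $F\defeq\nvect(\XX)$ satisfying item~$ii)$ of Definition~\ref{corrdefn}; recall $F$ is automatically tight (Corollary~\ref{Rieszcor}), and through the natural map $\Cb(\XX)\times\LIP(\XX)^n\to\DD^n(\XX)$ it is multilinear on $\LIPb(\XX)\times\LIP(\XX)^n$. I would check $F$ is a current by verifying items $i)$ and $iii)$. For $i)$, repeating the germ argument for $\nvect$ in place of $\nvect_T$ gives $\abs{F(f\,\dd\pi_1\wedge\cdots\wedge\dd\pi_n)}\le\prod_j\Lip(\pi_j)\int_\XX\abs{f}\,\dd\abs{\nvect}$, with $\abs{\nvect}$ finite by Proposition~\ref{localisfiniteabs}. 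For $iii)$, if some $\pi_j$ is constant on an open neighbourhood $U$ of $\{f\ne 0\}$, then $\abs{\dd\pi_1\wedge\cdots\wedge\dd\pi_n}_g\equiv 0$ on $U$: for any open $A$ with $x\in A\subseteq U$ and $g\in\Cb(\XX)$ with $\supp g\subseteq A$, axiom $iii)$ applied to each competitor $S$ in \eqref{defnsemi} gives $S(g\,\dd\pi_1\wedge\cdots\wedge\dd\pi_n)=0$, hence $\Vert g\,\dd\pi_1\wedge\cdots\wedge\dd\pi_n\Vert=0$ and $\Vert\dd\pi_1\wedge\cdots\wedge\dd\pi_n\Vert_{|A}=0$. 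Thus $f(x)\,L_{\nvect}(x)(\dd\pi_1\wedge\cdots\wedge\dd\pi_n)=0$ for every $x$ (trivially where $f(x)=0$, and where $f(x)\ne0$ because then $x\in U$ and \eqref{inequalitygerm} applies), so $F(f\,\dd\pi_1\wedge\cdots\wedge\dd\pi_n)=0$. Hence $F$ is a current; applying the forward part to $T\defeq F$ and using the uniqueness of Theorem~\ref{weakder} (equivalently, injectivity of $\Psi$ in Corollary~\ref{Rieszcor}) gives $\nvect=\nvect_F$, which is \eqref{rieszcorr}.

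\textbf{Expected main obstacle.} The technical heart is the mass identity. Converting the purely functional-analytic polar decomposition into the concrete factorization $L_{\nvect_T}(x)(fv)=f(x)L_{\nvect_T}(x)(v)$ — the step that makes \eqref{eqmass} visible — is where the locality \eqref{eq:locnorm} of the germ seminorm and the pointwise bound \eqref{inequalitygerm} must be used with some care; and the matching reverse inequality $\abs{\nvect_T}\le\Vert T\Vert_{\AK}$ rests on the almost tautological but essential fact that the norm on $\DD^n(\XX)$ was defined precisely so that $\Vert S\Vert'\le\Vert S\Vert_{\AK}(\XX)$ for every current $S$. Everything else — tightness of $T$, well-definedness of $F$ on the quotient, the verification of item~$iii)$ — is routine once these are in place.
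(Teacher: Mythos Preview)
Your proof is correct and the forward direction plus the inequality $\abs{\nvect_T}(\XX)\le\Vert T\Vert_{\AK}(\XX)$ match the paper's argument. The genuine difference is in how you obtain $\Vert T\Vert_{\AK}\le\abs{\nvect_T}$ and in the converse.

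The paper proves $\Vert T\Vert_{\AK}(A)\le\abs{\nvect_T}(A)$ for each open $A$ by quoting \cite[Proposition~2.7]{AmbrosioKirchheim00}, which expresses $\Vert T\Vert_{\AK}(A)$ as a supremum of $\sum_i T(f_i\,\dd\pi_1^i\wedge\cdots\wedge\dd\pi_n^i)$ over families with disjointly supported $f_i$'s and $1$-Lipschitz $\pi_j^i$'s; it then bounds that supremum by $\abs{\nvect_T}(A)$ using \eqref{rieszcorr}, the compatibility condition \eqref{compeq}, and the estimate \eqref{normdiffs}. For the converse, the paper verifies item~$iii)$ in one line (the norm of such a form is zero by definition) and item~$i)$ via Lemma~\ref{foutside} and Proposition~\ref{absfoutside}, writing $\nvect(\XX)(f\,\dd\pi_1\wedge\cdots)=f\nvect(\XX)(1\,\dd\pi_1\wedge\cdots)$.

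Your route instead extracts from the polar decomposition the pointwise factorization $L_{\nvect_T}(x)(fv)=f(x)L_{\nvect_T}(x)(v)$ via \eqref{eq:locnorm} and \eqref{inequalitygerm}, uses it to show directly that $\abs{\nvect_T}$ is a competitor in \eqref{eqmass}, and concludes by minimality of the mass. You then recycle the same device for both items~$i)$ and~$iii)$ in the converse. This avoids the external input \cite[Proposition~2.7]{AmbrosioKirchheim00} and stays entirely within the machinery of the paper (germ seminorms and polar decompositions), at the cost of a slightly longer argument for item~$iii)$ than the paper's one-liner. Both approaches are clean; yours is more self-contained, while the paper's is perhaps more transparent about the combinatorial structure of the mass.
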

\begin{proof}
The fact that $T\in \DD^n(\XX)'$ follows (artificially) from the definition of norm on $\DD^n(\XX)$. Tightness is an immediate consequence of \eqref{eqmass} together with dominated convergence. Therefore we can apply Theorem \ref{weakder} and obtain a local vector measure $\nvect_T$ satisfying \eqref{rieszcorr}. 

By the very definition of norm on $\DD^n(\XX)$, for every $v\in\DD^n(\XX)$ we have that $|T(v)|\le \Vert T\Vert_{\AK}(\XX)\Vert v\Vert$ so that, using \eqref{conicidence} and \eqref{rieszcorr}, $$\abs{\nvect_T}(\XX)=\Vert T\Vert '\le \Vert T\Vert_{\AK}(\XX).$$ 
Take then an open set $A\subseteq\XX$. By \cite[Proposition 2.7]{AmbrosioKirchheim00} and the regularity of the measure $\Vert T\Vert_{\AK}$, we can show that 
\begin{equation}\label{rieszcorrproof}
\Vert T\Vert_{\AK}(A)=\sup \sum_i T(f_i \diff\pi_1^i\wedge\cdots\wedge\diff\pi_n^i)
\end{equation}
where the supremum is taken among all finite collections $\{f_i\}_i\subseteq\Cb(\XX)$ with pairwise disjoint support, with $\supp f_i\subseteq A$ and $\Vert f_i\Vert_\infty\le 1$ for every $i$, and finite families  $\{\pi_j^i\}_i\subseteq\LIP(\XX)$ of $1$-Lipschitz functions, for $j=1,\dots,n$.
Now notice that if $\pi_1,\dots,\pi_n$ are $1$-Lipschitz, then for every current $T$ it holds $$T(1 \diff\pi_1\wedge\cdots\wedge\diff\pi_n)\le \Vert T\Vert_{\AK}(\XX)$$ 
 so that \begin{equation}\label{normdiffs}
 	\Vert 1 \diff\pi_1\wedge\cdots\wedge\diff\pi_n\Vert\le 1.
 \end{equation}
We can now bound the right hand side of \eqref{rieszcorrproof} using \eqref{rieszcorr}, \eqref{compeq} and what we have noticed above, by $$\nvect_T(A)\left(\sum_i f_i \diff\pi_1^i\wedge\cdots\wedge\diff\pi_n^i\right)\le \abs{\nvect_T}(A)\Big\Vert \sum_if_i \diff\pi_1^i\wedge\cdots\wedge\diff\pi_n^i\Big\Vert\le\abs{\nvect_T}(A),$$
so that $\Vert T\Vert_{\AK}\le \abs{\nvect_T}$ as measures. Then, as we have already proved $\abs{\nvect_T}(\XX)\le\Vert T\Vert_{\AK}(\XX)$, we have $\Vert T\Vert_{\AK}= \abs{\nvect_T}$ as measures.

Finally, let $\nvect$ be a local vector measure defined on $\DD^n(\XX)$ such that $\nvect(\XX)$ satisfies item $ii)$ of Definition \ref{corrdefn}.  If $f\diff\pi_1\wedge\cdots\wedge\diff\pi_n$ is as in  item $iii)$ of Definition \ref{corrdefn}, then $S(f\diff\pi_1\wedge\cdots\wedge\diff\pi_n)=0$ for every current $S$, then $\Vert f\diff\pi_1\wedge\cdots\wedge\diff\pi_n\Vert=0$ and hence $\nvect(\XX)(f\diff\pi_1\wedge\cdots\wedge\diff\pi_n)=0$.
Let now $f\in\LIPb(\XX)$ and let $\pi_1,\dots,\pi_n\subseteq\LIP(\XX)$ be $1$-Lipschitz. Notice that the module structure of $\DD^n(\XX)$ ensures that $f\diff\pi_1\wedge\cdots\wedge\diff\pi_n=f(1\diff\pi_1\wedge\cdots\wedge\diff\pi_n)$ and thus  Lemma \ref{foutside}, Proposition \ref{absfoutside} and \eqref{normdiffs} give
\begin{align*}
	 \nvect(\XX)(f\diff\pi_1\wedge\cdots\wedge\diff\pi_n)= f\nvect(\XX)(1\diff\pi_1\wedge\cdots\wedge\diff\pi_n)\leq |f||\nvect|(\XX)= \int_\XX \abs{f}\dd{\abs{\nvect}}.
\end{align*}
This proves that item $i)$ in Definition \ref{corrdefn} holds and concludes the proof that $\nvect(\XX)$ is a current. 
\end{proof}
\begin{rem}
The push forward operator $\varphi_*$ defined in the previous section has nothing to do with the push forward for currents defined in \cite[Definition 2.4]{AmbrosioKirchheim00} (notice that in particular the latter is only defined for $\varphi$ Lipschitz as it defines the push-forward of a current by duality with the pullback of Lipschitz functions via the map $\varphi$).\fr
\end{rem}
\begin{rem}
Proposition \ref{currismvm} allows us to consider $n$-currents as local vector measures defined on $\DD^n(\XX)$. Notice that, in general, not every element of $\DD^n(\XX)$ is tight, hence not every element of $\DD^n(\XX)$ corresponds to a current. 

Moreover, not every tight functional of $\DD^n(\XX)'$ is given by a current, this is to say that there are local vector measures defined on $\DD^n(\XX)$ that are not given by currents (which is not a surprise). A counterexample to this lack of coincidence is given in Example \ref{countcurr} below, in which we exhibit a tight functional of $\DD^n(\XX)'$ that does not satisfy  axiom $ii)$ of Definition \ref{corrdefn}. Notice however that, by the proof of Proposition \ref{currismvm}, every tight functional of $\DD^n(\XX)'$  satisfies  axioms $i)$ and $iii)$ of Definition \ref{corrdefn}.
\fr
\end{rem}
\begin{ex}\label{countcurr}
Let $(\XX,\dist,\mass)\defeq([-1,1],\dist_e,\LL^1)$.
It is easy to see that for $l\in \Lpu$ the map
$$\DD^1(\XX)=\Cb(\XX)\otimes \LIP(\XX)\rightarrow\RR\quad\text{defined by}\quad f\diff g\mapsto\int_\XX l f g'\dd{\mass}$$
defines a current (see \cite[Example 3.2]{AmbrosioKirchheim00}). Thus,
if $f\diff g\in\Cb(\XX)\otimes C^1(\XX)\subseteq \DD^1(\XX) $, it holds that $\Vert f\diff g\Vert \ge \abs{f(0)g'(0)}$.
 Consider now the map $$\DD^1(\XX)\supseteq\Cb(\XX)\otimes C^1(\XX)\rightarrow\RR\quad\text{defined by}\quad f\diff g\mapsto f(0)g'(0).$$
 Using Hahn-Banach Theorem, we can extend the map above to a functional of $\DD^1(\XX)'$, that is automatically tight, as $\XX$ is compact (Remark \ref{tightrem}).
 However, we see that the map above can not be a current: indeed,  axiom $ii)$ of Definition \ref{corrdefn} is clearly violated.\fr
\end{ex}
We want to think to  the space $\DD^n(\XX)$ defined above as the space of $n$-forms on the metric space $(\XX,\dist)$ and Proposition \ref{currismvm} corroborates this  in showing that there is a natural duality between appropriate operators on $\DD^n(\XX)$ and currents. Still, in the classical smooth setting the space of differential forms has a natural algebra structure and it is natural to wonder whether the same holds in our setting. We are going to show that this is indeed the case.

Thus let  $n,m\in\NN$, $n,m\ge 1$ and notice that we have a natural exterior product operation
$$ \wedge: \DD^n(\XX)\times \DD^m(\XX)\rightarrow \DD^{n+m}(\XX)$$
defined as $$(f \diff\pi_1\wedge\cdots\wedge\diff\pi_n,g \diff\pi_{n+1}\wedge\cdots\wedge\diff\pi_{n+m})\mapsto f g \diff\pi_1\wedge\cdots\wedge\diff\pi_{n}\wedge\diff\pi_{n+1}\wedge\cdots\wedge\diff\pi_{n+m}$$
and extended by bilinearity. We also write $$v\wedge w\defeq \wedge(v,w)\quad\text{for every }v\in\DD^n(\XX), w \in \DD^m(\XX).$$

This `algebraic' structure is compatible with the norm on $\DD^n(\XX)$ imposed via `analytic' considerations:
\begin{prop}\label{propwedge}
For every $v\in\DD^n(\XX),w \in \DD^m(\XX)$, it holds that 
$$ \Vert v\wedge w\Vert\le \Vert v\Vert \Vert w\Vert.$$
\end{prop}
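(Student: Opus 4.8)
The plan is to exploit the duality definition \eqref{defnsemi} of the norm on $\DD^{n+m}(\XX)$: it is enough to prove that $S(v\wedge w)\le\Vert v\Vert\,\Vert w\Vert$ for every $(n+m)$-current $S$ with $\Vert S\Vert_{\AK}(\XX)\le 1$, and then pass to the supremum over such $S$. Fixing $S$ and $v$, I would introduce the ``contraction of $S$ by $v$'', namely the linear functional $T_v$ on $\DD^m(\XX)$ determined by $T_v(g\,\diff\pi_1\wedge\cdots\wedge\diff\pi_m)\defeq S\big(v\wedge(g\,\diff\pi_1\wedge\cdots\wedge\diff\pi_m)\big)$, and reduce the statement to the two claims: (a) $T_v$ is an $m$-current; (b) $\Vert T_v\Vert_{\AK}(\XX)\le\Vert v\Vert$. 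Indeed, granting these, for any $w\in\DD^m(\XX)$ the defining property \eqref{defnsemi} of $\Vert w\Vert$ yields $S(v\wedge w)=T_v(w)\le\Vert T_v\Vert_{\AK}(\XX)\,\Vert w\Vert\le\Vert v\Vert\,\Vert w\Vert$.

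For (a) I would expand $v=\sum_{k}f_k\,\diff\sigma_1^k\wedge\cdots\wedge\diff\sigma_n^k$ with $f_k\in\Cb(\XX)$ and $\sigma_j^k\in\LIP(\XX)$, so that $T_v(g\,\diff\pi_1\wedge\cdots\wedge\diff\pi_m)=\sum_k S(f_kg,\sigma_1^k,\dots,\sigma_n^k,\pi_1,\dots,\pi_m)$. Feeding $\sigma_1^k,\dots,\sigma_n^k$ into the first $n$ Lipschitz slots of $S$ produces an $m$-current $S_k$ (the three axioms of Definition \ref{corrdefn} descend from those of $S$) which, by inspection of \eqref{eqmass}, satisfies $\Vert S_k\Vert_{\AK}\le\prod_j\Lip(\sigma_j^k)\Vert S\Vert_{\AK}$ as measures; hence $T_v=\sum_k f_kS_k$ is a finite $\Cb(\XX)$-combination of $m$-currents and so is an $m$-current of finite mass. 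This part is routine.

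The heart of the argument is (b), and here the main obstacle is that the natural ``partial contractions'' of $S$ one wants to use depend on the Lipschitz tuple one contracts against. I would use the characterization of mass from \cite[Proposition 2.7]{AmbrosioKirchheim00} (already used in the proof of Proposition \ref{currismvm}): $\Vert T_v\Vert_{\AK}(\XX)$ is the supremum of $\sum_i T_v(h_i\,\diff\pi_1^i\wedge\cdots\wedge\diff\pi_m^i)$ over finite families $\{h_i\}\subseteq\Cb(\XX)$ with pairwise disjoint supports and $\Vert h_i\Vert_\infty\le 1$, and $1$-Lipschitz functions $\pi_j^i$. For such a family, contracting $S$ against $(\pi_1^i,\dots,\pi_m^i)$ in its last $m$ Lipschitz slots gives an $n$-current $R_i$ with $\Vert R_i\Vert_{\AK}\le\Vert S\Vert_{\AK}$ as measures (the $\pi_j^i$ being $1$-Lipschitz), and by the module identities $T_v(h_i\,\diff\pi_1^i\wedge\cdots\wedge\diff\pi_m^i)=R_i(h_iv)$ with $h_iv\in\DD^n(\XX)$. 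Now I would glue the $R_i$ into a single $n$-current using the disjointness of the supports: choose pairwise disjoint closed sets $C_i\supseteq\supp h_i$ and then (by normality of $(\XX,\dist)$, exactly as in the argument around \eqref{eq:disjsupp}) pairwise disjoint open $U_i\supseteq C_i$ with $U_i\cap C_j=\emptyset$ for $i\ne j$, together with cut-offs $\chi_i\in\Cb(\XX)$ valued in $[0,1]$ that equal $1$ on a neighbourhood of $C_i$ and are supported in $U_i$; set $R\defeq\sum_i\chi_iR_i$. Since the $\chi_i$ have disjoint supports and $\sum_i\chi_i\le 1$, estimate \eqref{massmult} gives $\Vert R\Vert_{\AK}\le(\sum_i\chi_i)\Vert S\Vert_{\AK}\le\Vert S\Vert_{\AK}$ as measures, so $\Vert R\Vert_{\AK}(\XX)\le 1$; and because $\chi_jh_i=0$ for $j\ne i$ while $\chi_ih_i=h_i$, one gets $R(h_iv)=R_i(h_iv)$. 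Consequently, using module distributivity, \eqref{compeq} with a single term, and $\Vert\sum_ih_i\Vert_\infty\le 1$,
\[
\sum_iT_v(h_i\,\diff\pi_1^i\wedge\cdots\wedge\diff\pi_m^i)=\sum_iR(h_iv)=R\Big(\big(\textstyle\sum_ih_i\big)v\Big)\le\Vert R\Vert_{\AK}(\XX)\,\Big\Vert\big(\textstyle\sum_ih_i\big)v\Big\Vert\le\Vert v\Vert .
\]
Taking the supremum over all such families gives $\Vert T_v\Vert_{\AK}(\XX)\le\Vert v\Vert$, which together with (a) and the initial reduction finishes the proof. The technical points that need care are the measure-level (not merely total-mass) bounds $\Vert S_k\Vert_{\AK}\le\prod_j\Lip(\sigma_j^k)\Vert S\Vert_{\AK}$ and $\Vert R_i\Vert_{\AK}\le\Vert S\Vert_{\AK}$, which come from rereading \eqref{eqmass} for the contracted currents, and the verification that $\chi_jh_i$ vanishes for $i\ne j$ — which is precisely where the separation of supports enters.
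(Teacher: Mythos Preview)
Your proof is correct and follows the same overarching strategy as the paper: reduce to showing $\Vert S\mres v\Vert_{\AK}(\XX)\le\Vert v\Vert$ for any $(n+m)$-current $S$ with $\Vert S\Vert_{\AK}(\XX)\le 1$, then apply the definition of $\Vert w\Vert$. The difference lies only in how that mass bound is obtained.

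The paper works with the test sets $B^k\defeq\{\sum_i f_i\,\diff\pi_1^i\wedge\cdots\wedge\diff\pi_k^i:\sum_i|f_i|\le 1,\ \Lip(\pi_j^i)\le 1\}$ and the characterization $\Vert T\Vert_{\AK}(\XX)=\sup_{u\in B^k}T(u)$ from \cite[Proposition~2.7]{AmbrosioKirchheim00}. The key observation is the purely algebraic inclusion $B^n\wedge B^m\subseteq B^{n+m}$, which turns the chain
\[
\Vert S\mres v\Vert_{\AK}(\XX)=\sup_{p\in B^m}S(v\wedge p)=\sup_{p\in B^m}(S\mres p)(v)\le\Vert v\Vert\sup_{p\in B^m}\sup_{q\in B^n}S(p\wedge q)\le\Vert v\Vert\sup_{r\in B^{n+m}}S(r)=\Vert v\Vert
\]
into a two-line computation with no constructions. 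Your route instead takes the disjoint-support version of the mass formula and glues the partial contractions $R_i$ into a single $n$-current $R=\sum_i\chi_iR_i$ via cut-offs; the mass bound for $R$ then follows from \eqref{massmult} and $\sum_i\chi_i\le 1$. This is more hands-on but equally valid; what you gain is that the argument stays entirely within the $\Cb(\XX)$-module formalism developed in the paper (cut-offs, disjoint supports, \eqref{compeq}), while the paper's version trades that for the cleaner ``$B^k$ absorbs wedge products'' trick, avoiding any separation-of-supports step.
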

\begin{proof}
We define, for $k\in\NN$, $k\ge 1$, the set $B^k\subseteq\DD^k(\XX)$ as 
$$ B^k\defeq\left\{\sum_{i\in\NN} f_i \diff \pi_1^i\wedge\cdots\wedge\diff\pi^i_k:\{f_i\}_i\subseteq\Cb(\XX), \sum_{i\in\NN} \abs{f_i}\le 1,  \Lip(\pi_j^i)\le 1\right\}.$$
Clearly, $B^n\wedge B^m\subseteq B^{n+m}$, in the sense that $\wedge(B^n,B^m)\subseteq B^{n+m}$.

By \cite[Proposition 2.7]{AmbrosioKirchheim00}, it holds that for $k\in\NN$, $k\ge 1$, 
\begin{equation}\label{AKprop27}
	\Vert T\Vert_{\AK}(\XX)=\sup_{v\in B^k} T(v)\quad\text{for every $k$-current $T$.}
\end{equation}
Let now $T$ be a $(n+m)$-current. If $v\in \DD^n(\XX)$, we can define a $m$-current $T\mres v$ (see the discussion below \cite[(2.5)]{AmbrosioKirchheim00})  by
$$ T\mres v(w)\defeq T(v\wedge w)\quad\text{for every }w\in\DD^m(\XX),$$
where we notice that the following discussion implies that this definition is well posed even after taking the quotient on $\DD^n(\XX)$ with respect to the seminorm defined by \eqref{defnsemi}.

Using \eqref{AKprop27} repeatedly and what noticed above, we have that for $v\in\DD^n(\XX)$,
\begin{align*}
\Vert T\mres v\Vert_{\AK}(\XX)&=\sup_{p\in B^m}	{T\mres v(p)}=\sup_{p\in B^m}{T(v\wedge p)}=\sup_{p\in B^m}{T(p\wedge v)}\\&= \sup_{p\in B^m}{ T\mres p(v)}\le \Vert v\Vert\sup_{p\in B^m}\Vert{T\mres p}\Vert_{\AK}(\XX)\\&=\Vert v\Vert
\sup_{p\in B^m}\sup_{q\in B^n} T\mres p(q)=\Vert v\Vert\sup_{p\in B^m}\sup_{q\in B^n} T (p\wedge q)\\&\le \Vert v\Vert\sup_{r\in B^{n+m}} T(r)=\Vert v\Vert \Vert T\Vert_{\AK}(\XX). 
\end{align*}
Now, for $v,w$ as in the statement,
$$ \Vert v\wedge w\Vert=\sup_T T(v\wedge w)=\sup_T (T\mres v) (w)\le \sup_T \Vert T\mres v\Vert_{\AK}(\XX) \Vert w\Vert,$$
where the suprema are taken among all $(n+m)$-currents $T$ with $\Vert T\Vert_{\AK}(\XX)\le 1$.
Together with what just remarked, this concludes the proof.
\end{proof}
\begin{rem}
	Notice that we have a natural surjective linear map
	$$
	\bigwedge\nolimits^n   \DD^1(\XX)\rightarrow \DD^n(\XX)
	$$
	where the domain has to be seen as algebraic wedge product. Moreover, if we endow the domain with the projective norm i.e.\ 
	$$
	\bigwedge\nolimits^n \DD^1(\XX)  \ni v\mapsto \Vert v\Vert\defeq\inf\left\{ \sum_{i=1}^k \Vert v_1^i\Vert_{\DD^1(\XX)} \cdots\Vert v_n^i\Vert_{\DD^1(\XX)}: v=\sum_{i=1}^k v_1^i\wedge\cdots\wedge v_n^i    \right\}
	$$
	and take then the quotient of $\bigwedge\nolimits^n   \DD^1(\XX)$, we see that this map descends to the quotient and has norm bounded by $1$ thanks to Proposition \ref{propwedge}.
	\fr
\end{rem}
\begin{rem}
Notice that in the case $n=1$, the space $\DD^1(\XX)$ can be seen as a sort of metric cotangent module. Indeed, we have a natural map $$\diff: \LIP(\XX)\rightarrow\DD^1(\XX)\quad f\mapsto 1 \diff f$$
satisfying $$\Vert \diff f\Vert\le \Lip(f)\quad\text{for every }f\in\LIP(\XX)$$
(here equality in general does not hold). Also, the Leibniz rule 
$$ \diff (f g)=f\diff g+g\diff f\quad\text{for every }f,g\in\LIPb(\XX)$$ holds by \cite[Theorem 3.5]{AmbrosioKirchheim00} and, with a similar argument, we can prove that the chain rule
$$ \diff (\phi\circ f)=\phi'\circ f\diff f\quad\text{for every }\phi\in C^1(\RR)\cap\LIP(\RR)\text{ and }f\in\LIP(\XX)$$
holds. Finally if $\{f_i\}_i$ is a sequence of equi-Lipschitz functions pointwise convergent to $f$,
then 
$$\diff f_i\stackrel{*}{\rightharpoonup} \diff f$$
thanks to the requirement $ii)$ of Definition \ref{corrdefn}, where we isometrically embedded $\DD^1(\XX)$ into the dual space of the space of $1$-currents.\fr
\end{rem}
In this section we have seen how currents on metric spaces can be seen as elements of the dual of a suitable normed $\Cb(\XX)$-module.
In literature, there have been other attempts to describe a pre-dual space of the space of currents (e.g.\ \cite{PankkaSoultanis,SchioppaCurrents,WilliamsCurrents}). We now compare briefly our approach to the one in \cite{PankkaSoultanis}. First, let us clarify that we do not exhibit a pre-dual space of the space of currents, as not every element of $\DD^n(\XX)'$ is a current, (indeed, in order to be a current, an element of $\DD^n(\XX)'$ must be tight and satisfy item $ii)$ of Definition \ref{corrdefn} - we have not been able to find a norm on $\DD^n(\XX)$ which is compatible with such notions of convergence). On the other hand in \cite{PankkaSoultanis} the space of currents is identified with the sequentially continuous dual of the space $\bar{\Gamma}_c^n(\XX)$ (see \cite{PankkaSoultanis} for the relevant definitions). 

We show now that the map $\widehat{\,\cdot\,}$ in  \cite[Theorem 1.1]{PankkaSoultanis} is compatible with the notions developed here. In order to do so, the reader is assumed to be familiar with the machinery used and developed in \cite{PankkaSoultanis}. Let $T$ be a current, which then induces a local vector measure $\nvect_T={L}_T\abs{\nvect_T}$.
Take then $\widehat{T}\in \bar{\Gamma}_c^n(\XX)^*$.
Now, the proofs of \cite[Theorem 7.1 and Theorem 1.1]{PankkaSoultanis} show that if $\omega\in\bar{\Gamma}_c^n(\XX) $ then we have
$$ \widehat{T}(\omega)\defeq \int_\XX \bar{L}_T(x)(\omega(x))\dd{\abs{\nvect_T}(x)}$$
where the measurability of the integrand is part of the statement. The map $\bar{L}_T(x)$ is obtained by first considering the unique extension of $L_T(x)$ to the space $\overline{\rm Poly}^n(U)$, where $U$ is an open neighbourhood of $x$, and then by considering the induced map on the stalk over $x$, as by weak locality $L(x)(v)$ depends only on the germ of $v$ at $x$.

\subsubsection{Differential of Sobolev functions}\label{se:diffsob}

Recall that a metric measure space is a triplet $(\XX ,\dist,\mass)$ where $\XX$ is a set, $\dist$ is a (complete and separable) distance on $\XX$ and $\mass$ is a non negative Borel measure that is finite on balls.  
We adopt the convention that metric measure spaces have full support, that is to say that for any $x\in\XX$, $r>0$, we have $\mass(B_r(x))>0$.

The Cheeger energy (see \cite{Cheeger00,Shanmugalingam00,AmbrosioGigliSavare11,AmbrosioGigliSavare11-3}) associated to a metric measure space $(\XX,\dist,\mass)$ is the convex and lower semicontinuous functional defined on $\Lpt$ as
\[
	\Ch(f)\defeq\frac{1}{2}\inf\left\{\liminf_k \int_\XX\lip(f_k)^2\dd{\mass}:f_k\in\LIPb(\XX)\cap\Lpt,\ f_k\rightarrow f \text{ in }\Lpt\right\}
\]
where $\lip(f)$ is the so called local Lipschitz constant
$$ \lip(f)(x)\defeq \limsup_{y\rightarrow x}\frac{\abs{f(y)-f(x)}}{\dist(y,x)},$$
which has to be understood to be $0$ if $x$ is an isolated point.
The finiteness domain of the Cheeger energy will be denoted as ${\rm W}^{1,2}(\XX)$ and will be endowed with the complete norm $\Vert f\Vert^2_{{\rm W}^{1,2}(\XX)}\defeq\Vert f\Vert_{\Lpt}^2+2 \Ch(f)$. 
It is possible to identify a canonical object $\abs{\diff f}\in\Lpt$, called minimal relaxed slope, providing the integral representation  \begin{equation}\label{cdsnjoa}
	\Ch(f)=\frac{1}{2}\int_\XX\abs{\diff f}^2\dd{\mass}\quad\text{for every }f\in{\rm W}^{1,2}(\XX).
\end{equation}

We assume the reader familiar with the concepts of $\Lp^\infty/\Lp^0$-normed modules as developed in \cite{Gigli14}. Here we recall that one of the main results in  \cite{Gigli14} is about existence and uniqueness of a `cotangent module' and of an associated notion of `differential of a Sobolev function', meaning that: there exists a unique (up to unique isomorphism) couple $(\cotX,\diff)$ where $\cotX$ is a $\Lp^2$-normed $\Lp^\infty$-module and $\diff:{\rm W}^{1,2}(\XX)\rightarrow\cotX$ is linear and such that 
\begin{enumerate}[label=\roman*)]
	\item $\abs{\diff f}$ (as just above \eqref{cdsnjoa}) coincides with the pointwise norm of $\diff f$ $\mass$-a.e.\ for every $f\in{\rm W}^{1,2}(\XX)$,
	\item $\cotX$ is generated (in the sense of modules) by $\left\{\diff f:f\in{\rm W}^{1,2}(\XX)\right\}$.
\end{enumerate}
We define the tangent module $\tanX$ as the dual (in the sense of modules) of $\cotX$.
We define $\cotanXzero$ as the $\Lp^0$-completion of the cotangent module $\cotX$ and also (this definition is canonically equivalent to the previous one if $p=2$)
\begin{equation}\notag
	\cotanXp\defeq\left\{v\in\cotanXzero:\abs{v}\in\Lpp\right\}\quad\text{for $p\in[1,\infty]$}.
\end{equation}
Similarly, we define $\tanXzero$ as the $\Lp^0$-completion of $\tanX$ and
\begin{equation}
\label{eq:deftp}
  \tanXp\defeq\left\{v\in\tanXzero:\abs{v}\in\Lpp\right\}\quad\text{for $p\in[1,\infty]$}.
\end{equation}

\bigskip

In this manuscript we proposed an axiomatization of the concept of module (that aims at being an abstract approach to the space of sections of a given bundle) and duality different from the one in \cite{Gigli14}. It is therefore natural to wonder whether even in this new approach we have an existence \& uniqueness result like the above. The answer is `yes under mild conditions' and is given in the theorem below. We notice that:
\begin{itemize}
\item[i)] If $\mathscr M$ is an $\Lp^2$-normed module, then the subspace $\VV:=\{v\in\mathscr M:|v|\in \Lp^\infty(\mass)\}$ equipped with the norm $\|v\|:=\||v|\|_{\Lp^\infty}$ is a normed $\Cb(\XX)$-module, the product operation being the one inherited from the $\Lp^\infty(\mass)$-module structure.
\item[ii)] Is perfectly natural to assume that the reference measure is finite, in order to have the integrability of $|\diff f|$ for every $f\in{\rm W}^{1,2}(\XX)$. The alternative would be to develop a theory for local vector measures with locally finite mass - and thus acting in duality with objects with bounded support. This is viable but we won't proceed in this direction.
\item[iii)] The situation here - in particular for what concerns uniqueness - is more complicated than the one in \cite{Gigli14} because we have to build not only the local vector measure $\DIFF f$, but also  the module $\VV$ on which it acts (as opposed to the construction of the differential in  \cite{Gigli14} that `stands on its own'). Assuming $\cotX$ to be reflexive helps in getting the desired uniqueness.
\end{itemize}
\begin{thm}\label{exunSobo}
Let $(\XX,\dist,\mass)$ be a metric measure space with finite mass and assume that $\cotX$ is reflexive. Then there exists a unique couple $(\DIFF,\VV)$, where $\DIFF:{\rm W}^{1,2}(\XX)\rightarrow\MM_\VV$ is linear and $\VV$ is a normed $\Cb(\XX)$-module such that:
\begin{enumerate}[label=\roman*)]
	\item $\abs{\DIFF f}=\abs{\diff f}\mass$ as measures,
	\item for every $v\in\VV$ we have
	\begin{equation}\label{sdcjkasd}
		\Vert v\Vert =\sup \nvect(\XX)(v),
	\end{equation}
where the supremum is taken among all local vector measures $\nvect$ belonging to the $\Cb(\XX)$ module generated by the image of $\DIFF$ with $\abs{\nvect}(\XX)\le 1$,
\item if $\{v_k\}_k\subseteq\VV$ is bounded and such that for every $A$ Borel and $f\in{\rm W}^{1,2}(\XX)$, $\DIFF f(A)(v_k)$ has a finite limit, then there exists $v\in\VV$ such that $\DIFF f(A)(v_k)\rightarrow\DIFF f(A)(v)$. 
\end{enumerate}
Uniqueness is intended up to unique isomorphism in the following sense: whenever $(\tilde{\DIFF},\tilde{\VV})$ is another such couple, there exists a unique couple of $\Cb(\XX)$ linear (bijective) isometries $(\Phi,\Psi)$ where $\Phi:\MM_\VV\rightarrow\MM_{\tilde{\VV}}$ and $\Psi: \VV\rightarrow\tilde{\VV}$ are such that $\Phi\circ{\DIFF}=\tilde{\DIFF}$ and $\Psi(v)\,\cdot\,\Phi(\nvect)=v\,\cdot\,\nvect$. 

Finally, the definitions \begin{equation}\label{defnvv}
	\VV\defeq\left\{v\in\tanX:\abs{v}\in\Lpi\right\}=\tanXinf
\end{equation} and \begin{equation}\label{defndd}
	\DIFF f(A)(\,\cdot\,)\defeq \int_A\diff f(\,\cdot\,)\dd{\mass}
\end{equation}   provide a realization of the unique couple as above.
\end{thm}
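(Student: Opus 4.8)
The plan is to establish existence by checking that the explicit couple of \eqref{defnvv}--\eqref{defndd} satisfies i)--iii), and then to deduce uniqueness by showing that an arbitrary couple $(\DIFF,\VV)$ as in the statement is isomorphic to this explicit one $(\DIFF_0,\VV_0)$; uniqueness up to unique isomorphism then follows by composition. Throughout I use the duality theory of $\Lp^p$-normed modules from \cite{Gigli14}, in particular that $\tanX=(\cotX)^*$, the module Hahn--Banach theorem, and the identification $(\cotanXp[1])^*=\tanXinf$ (this last one being where reflexivity of $\cotX$ enters).

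\textbf{Existence.} With $\VV_0\defeq\tanXinf$ and $\|v\|\defeq\||v|\|_{\Lpi}$ one gets a normed $\Cb(\XX)$-module by remark i) before the statement. Since $\mass$ is finite, $\diff f(v)\in\Lpt\subseteq\Lpu$ for $v\in\VV_0$, so $\DIFF_0 f(A)(v)\defeq\int_A\diff f(v)\dd\mass$ is well defined; $\DIFF_0f(A)\in\VV_0'$ because $|\DIFF_0f(A)(v)|\le\|v\|\,\||\diff f|\|_{\Lpu}$, $\sigma$-additivity is dominated convergence, and weak locality holds because $\|v\|_{|A}=0$ forces $|v|=0$ $\mass$-a.e.\ on $A$ (using that $\Cb(\XX)$ approximates open sets) and hence $\diff f(v)=0$ $\mass$-a.e.\ there; linearity of $f\mapsto\DIFF_0 f$ is clear. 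For i), $|\DIFF_0f|(A)=\|\DIFF_0f(A)\|'=\sup_{\|v\|\le1}\int_A\diff f(v)\dd\mass=\int_A|\diff f|\dd\mass$ by Proposition \ref{localisfiniteabs} and the pointwise norming property of $\tanX$. The $\Cb(\XX)$-module generated by the $\DIFF_0 f$'s is $\{\nvect_\omega:\omega\in\cotX_0\}$, where $\nvect_\omega(A)(v)\defeq\int_A\omega(v)\dd\mass$ and $\cotX_0$ is the $\Cb(\XX)$-submodule of $\cotX$ generated by $\{\diff f\}$, and $|\nvect_\omega|=|\omega|\mass$ by the same computation; since $\cotX$ is generated by $\{\diff f\}$, the space $\cotX_0$ is $\|\cdot\|_{\Lpu}$-dense in $\cotanXp[1]$ (approximate $\Lpi$-coefficients by $\Cb(\XX)$-functions via Lusin and dominated convergence, $\mass$ being finite), so the supremum in \eqref{sdcjkasd} equals $\sup\{\int_\XX\omega(v)\dd\mass:\omega\in\cotanXp[1],\,\||\omega|\|_{\Lpu}\le1\}=\|v\|$, which is ii). For iii), given a bounded $\{v_k\}_k$ with $\int_A\diff f(v_k)\dd\mass$ convergent for all Borel $A$ and all $f$, the bounded functionals $F_k(w)\defeq\int_\XX w(v_k)\dd\mass$ on $\cotanXp[1]$ converge on $\{\chi_A\diff f\}$, whose span is $\|\cdot\|_{\Lpu}$-dense, hence by uniform boundedness $F_k$ converges pointwise to some $F\in(\cotanXp[1])^*=\tanXinf$ represented by $v\in\VV_0$, and $\DIFF_0 f(A)(v_k)\to\DIFF_0 f(A)(v)$.

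\textbf{An intrinsic identity.} For \emph{any} couple $(\DIFF,\VV)$ as in the statement,
\[
\Big|\sum_i g_i\DIFF f_i\Big|(B)=\int_B\Big|\sum_i g_i\diff f_i\Big|\dd\mass\qquad\forall\,B\subseteq\XX\text{ Borel},\ g_i\in\Cb(\XX),\ f_i\in{\rm W}^{1,2}(\XX),
\]
together with its bounded-Borel-coefficients version. When the $g_i$ are constant this is i) applied to $\sum_i g_i f_i$ (using linearity of $\DIFF$ and of $\diff$); the general case follows by localization: partition $B$ into Borel pieces $E$ on which each $g_i$ oscillates by less than $\varepsilon$, pick $x_E\in E$, and on each $E$ compare $\sum_i g_i\DIFF f_i$ with $\sum_i g_i(x_E)\DIFF f_i$ (and likewise for $\diff$) using $|f\nvect|=|f||\nvect|$ (Proposition \ref{absfoutside}) and the triangle inequality for total variations, with error $\lesssim\varepsilon\int_E\sum_i|\diff f_i|\dd\mass$, then sum over $E$ and let $\varepsilon\to0$. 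This identity says the normed module $\mathcal{N}\subseteq\MM_\VV$ generated by the image of $\DIFF$, together with the map $\DIFF$, is canonical.

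\textbf{Uniqueness.} Let $(\DIFF,\VV)$ be arbitrary, $\mathcal{N}$ as above and $\bar{\mathcal{N}}$ its closure. By the intrinsic identity (for $(\DIFF,\VV)$ and for $(\DIFF_0,\VV_0)$), $\sum_i g_i\DIFF f_i\mapsto\sum_i g_i\DIFF_0 f_i$ is a well-defined $\Cb(\XX)$-linear isometry $\mathcal{N}\to\mathcal{N}_0$, extending to a bijective isometry $\bar\Phi\colon\bar{\mathcal{N}}\to\bar{\mathcal{N}_0}=\cotanXp[1]$ (and to the Bartle-modules, with $\bar\Phi(\chi_A\DIFF f)=\chi_A\DIFF_0 f$). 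For $v\in\VV$ the functional $\nvect\mapsto\nvect(\XX)(v)$ on $\bar{\mathcal{N}}$ has norm $\le\|v\|$ by \eqref{eq:trivialvar}--Proposition \ref{localisfiniteabs}; transporting through $\bar\Phi^{-1}$ and using $(\cotanXp[1])^*=\tanXinf$ gives $\Psi(v)\in\VV_0$ with $\|\Psi(v)\|\le\|v\|$ and $\nvect(\XX)(v)=\bar\Phi(\nvect)(\XX)(\Psi(v))$ for all $\nvect\in\bar{\mathcal{N}}$; taking $\nvect=\chi_A\DIFF f$ yields $\DIFF f(A)(v)=\DIFF_0 f(A)(\Psi(v))$ for all $A,f$. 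Property ii) for both couples (plus $\bar\Phi|_{\mathcal{N}}\colon\mathcal{N}\to\mathcal{N}_0$ bijective isometric) gives $\|\Psi(v)\|=\|v\|$, and $\Cb(\XX)$-linearity of $\Psi$ follows since the measures $A\mapsto\DIFF_0 f(A)(\,\cdot\,)$ separate points of $\VV_0$. For surjectivity: $\Psi(\VV)$ is a norm-closed $\Cb(\XX)$-submodule of $\VV_0$, it is $\sigma(\VV_0,\cotanXp[1])$-dense (if $\nvect_0\in\cotanXp[1]$ pairs to $0$ with all of $\Psi(\VV)$ then $\bar\Phi^{-1}(\nvect_0)(\XX)(v)=0$ for all $v\in\VV$, so $\bar\Phi^{-1}(\nvect_0)=0$), and its intersection with each ball is $\sigma(\VV_0,\cotanXp[1])$-sequentially closed by iii) for $(\DIFF,\VV)$; since $\cotanXp[1]$ is separable (so bounded sets are weak${}^*$-metrizable), Krein--Smulian forces $\Psi(\VV)$ to be weak${}^*$-closed, hence equal to $\VV_0$. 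Interchanging the two couples and composing shows these maps are mutually inverse, so $(\DIFF,\VV)\simeq(\DIFF_0,\VV_0)$ and any two couples are isomorphic; $\Phi\colon\MM_\VV\to\MM_{\tilde\VV}$ is then the map induced on measures by the adjoint of $\Psi$, and uniqueness of the pair $(\Phi,\Psi)$ is elementary (if $\Psi_1,\Psi_2$ both intertwine then $\tilde\DIFF f(A)(\Psi_1(v)-\Psi_2(v))=0$ for all $A,f$, so $\Psi_1=\Psi_2$ by ii), and $\Phi$ is then forced by $\Psi(v)\cdot\Phi(\nvect)=v\cdot\nvect$ and surjectivity of $\Psi$).

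\textbf{Main obstacle.} The hard part is the surjectivity of $\Psi$: property iii) only provides a sequential form of weak${}^*$-completeness of $\VV$ inside the dual of $\bar{\mathcal{N}}$, and turning this into genuine surjectivity needs weak${}^*$-metrizability of bounded sets, i.e.\ separability of $\cotanXp[1]$ (equivalently of ${\rm W}^{1,2}(\XX)$), plus the Krein--Smulian theorem. The identification $(\cotanXp[1])^*=\tanXinf$, which is used in every step, is exactly what the reflexivity hypothesis on $\cotX$ buys us.
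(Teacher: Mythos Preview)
Your proof is correct but takes a genuinely different route from the paper's, particularly in the uniqueness step. The paper constructs the map $\VV\to\tanXinf$ \emph{pointwise}: from the polar decompositions $\DIFF f=L_f|\DIFF f|$ it defines $\hat v\in\tanX$ via the formula $\diff f(\hat v)=L_f(v)|\diff f|$ (well-posedness coming from the module Hahn--Banach theorem \cite[Proposition~1.4.8]{Gigli14}), proves density of $\{\hat v:v\in\VV\}$ in $\tanX$ by a separation argument using reflexivity in the form $(\tanX)^*=\cotX$, and finally closes up to all of $\tanXinf$ using property iii) together with stability of the image under multiplication by characteristic functions. You instead first establish the isometry $\bar{\mathcal N}\cong\Lp^1(T^*\XX)$ (your ``intrinsic identity''), define $\Psi$ by transposition through the Banach-space duality $(\Lp^1(T^*\XX))^*=\tanXinf$, and obtain surjectivity via Krein--Smulian: weak$^*$ density of $\Psi(\VV)$, weak$^*$ sequential closedness on balls (from iii)), and weak$^*$ metrizability of balls (from separability of $\Lp^1(T^*\XX)$). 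Your approach is conceptually tidy---uniqueness becomes a standard dual-space computation---but rests throughout on the identification $(\Lp^1(T^*\XX))^*=\tanXinf$, which you assert without proof; it does follow from reflexivity of $\cotX$ and finiteness of $\mass$ (restrict $F\in(\Lp^1(T^*\XX))^*$ to $\cotX\hookrightarrow\Lp^1(T^*\XX)$, represent via reflexivity as some $v\in\tanX$, then test against $\chi_E\omega$ with $|\omega|\le1$ to force $|v|\le\|F\|$ $\mass$-a.e.), but since this is the linchpin of your argument a line of justification is warranted. The paper's route stays entirely within the $\Lp^2$ module framework and never invokes the $\Lp^1/\Lp^\infty$ duality, trading your heavier classical functional analysis (Krein--Smulian) for more hands-on module manipulations and the explicit pointwise norm $|v|_*\defeq\mass\text{-}\esssup_f L_f(v)\chi_{\{\diff f\ne0\}}$. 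One minor remark: your side claim that $\Psi(\VV)$ is norm-closed is neither justified ($\VV$ need not be complete) nor actually used in your Krein--Smulian argument.
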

\begin{proof}
We divide the proof in three steps.
\\\textsc{Step 1}. We verify that the couple $(\DIFF,\VV)$ given by  \eqref{defnvv} and \eqref{defndd} satisfies the requirements. It is clear that $\DIFF f$ is a local vector measure for every $f\in{\rm W}^{1,2}(\XX)$ whose total variation is bounded from above by $\abs{\diff f}\mass$. The equality $\abs{\DIFF f}=\abs{\diff f}\mass$ follows from \cite[Corollary 1.2.16]{Gigli14}. Also, ${\rm W}^{1,2}(\XX)\ni f\mapsto\DIFF f\in\MM_\VV$ is linear by linearity of $f\mapsto\diff f$. Notice that \eqref{sdcjkasd} is a consequence of the density (in the sense of $\Lp^p$-normed $\Lp^\infty$-modules) of the image of the map $\diff:{\rm W}^{1,2}(\XX)\rightarrow\cotX$ together with the definition of pointwise norm for $\tanX$ (\cite[Proposition 1.2.14]{Gigli14}) and an immediate approximation argument. We prove now item $iii)$, take $\{v_k\}_k\subseteq\VV$ as in the statement and notice that since $\mass(\XX)<\infty$ such sequence if also bounded in $\tanX$. Since such space is reflexive, there is a non-relabelled subsequence weakly converging to a limit $v\in \tanX$ (e.g.\ by Eberlein-Smulian's Theorem, but in fact in our setting the reflexivity of $\cotX$ implies its separability - because it trivially implies the reflexivity of ${\rm W}^{1,2}(\XX)$, that in turn implies separability of ${\rm W}^{1,2}(\XX)$ - see  \cite[Proposition 42]{ACM14} - that in turn trivially implies the separability of $\cotX$, so there is no need of the deep Eberlein-Smulian's Theorem). Now notice that the $\tanXinf$-norm is $\tanX$-lower semicontinuous to conclude that $v\in\VV$ as well.

Now notice that for $f\in {\rm W}^{1,2}(\XX)$ and $A\subseteq\XX$ Borel we have $\chi_A\dd f\in \cotX$, hence the weak convergence implies
$$
\lim_k \DIFF f(A)(v_k)=\lim_k \int_{A}\diff f(v_k)\,\dd\mass=\int_A \diff f(v)\,\dd\mass=\DIFF f(A)(v),
$$
as desired.
\\\textsc{Step 2}. We prove that the maps $(\Phi,\Psi)$, if they exist, are unique.  Recall that we require $\Phi\circ{\DIFF}=\tilde\DIFF$ and $\Psi(v)\,\cdot\,\Phi(\nvect)=v\,\cdot\,\nvect$ for any $v\in\VV$. Then, taken $\{g_i\}_{i=1,\dots,n}\subseteq\Cb(\XX)$ and $\{f_i\}_{i=1,\dots,n}\subseteq{\rm W}^{1,2}(\XX)$ we have that for any $v\in\VV$ it holds 
$$\sum_{i=1}^n g_i\tilde{\DIFF} f_i(\XX)(\Psi (v)) =\Phi\left(\sum_{i=1}^n g_i\DIFF f_i\right)(\XX)(\Psi(v))=\sum_{i=1}^n g_i\DIFF f_i(\XX)(v),$$
which, thanks to item $ii)$, forces the uniqueness of $\Psi$.
Uniqueness of $\Phi$ follows immediately from the request $\Psi(v)\,\cdot\,\Phi(\nvect)=v\,\cdot\,\nvect$, as $\Psi$ is required to be surjective.
\\\textsc{Step 3}. We take a couple $(\DIFF,\VV)$ verifying items $i)$, $ii)$ and $iii)$ and we prove existence of the maps $(\Phi,\Psi)$ as in the statement, provided that the other couple verifying items $i)$, $ii)$ and $iii)$  is the canonical one given by \eqref{defnvv} and \eqref{defndd}. This will be clearly enough. Both maps will be denoted with $\hat\cdot$.

For every $v\in\VV$, we define
\begin{equation}\notag
	\abs{v}_*\defeq\mass-\esssup_{f\in{\rm W}^{1,2}(\XX)} L_f(v)\chi_{\{\diff f\ne 0\}},
\end{equation}
where $\DIFF f=L_f\abs{\DIFF f}$ is the polar decomposition, notice that $\abs{\,\cdot\,}_*$ is well defined as $|\DIFF f|\ll\mass$ by item $i)$.
Notice now that  item $ii)$ together with an easy approximation argument  based on  Proposition \ref{absfoutside}  yields that 
\begin{equation}\notag
	\Vert v\Vert=\sup \nvect(\XX)(v),
\end{equation}
where the supremum is taken among the local vector measures $\nvect$ with $\abs{\nvect(\XX)}\le 1$ and 
$$
\nvect\in\left\{\sum_{i} \chi_{A_i}\DIFF f_i: \{A_i\}_i \text{ is a Borel partition of $\XX$ and $\{f_i\}_i\subseteq{\rm W}^{1,2}(\XX)$}\right\}.
$$
It then follows   that $\Vert \abs{v}_*\Vert_{\Lpi}=\Vert v\Vert$. 

Given $v\in\VV$, we consider the map 
$$
\cotX\ni\sum_i \chi_{A_i}\diff f_i \mapsto \sum_i\chi_{A_i} L_{f_i}(v)\abs{\diff f_i}= \sum_i\chi_{A_i} \dv{(v\,\cdot\,\DIFF f_i)}{\mass}(v)\in\Lpu,
$$
where the equality is due to item $i)$. By the trivial bound $ \sum_i\chi_{A_i}|\diff f_i|\Vert v\Vert$ for the right hand side, the fact that $\mass$ is finite and \cite[Proposition 1.4.8]{Gigli14}, we see that it defines an element of $\tanX$, that we call $\hat{v}$, and which satisfies, by the definition of $\abs{\,\cdot\,}_*$, the identity $\abs{\hat{v}}=\abs{v}_*\ \mass$-a.e. Notice that the map $v\mapsto\hat{v}$ is $\Cb(\XX)$ linear and satisfies 
\begin{equation}
\label{eq:ugu}
\DIFF f(A)(v)=\int_A \diff f(\hat{v})\dd{\mass}\qquad\text{ or equivalently }\qquad v\cdot \DIFF f=\diff f(\hat{v})
\end{equation}
for every $A\subseteq\XX$ Borel and $v\in\VV$. Also, if $\sum_i \chi_{A_i}\diff f_i\in\cotX$, it holds that 
\begin{equation}\label{pointwiseess}
	\bigg|{\sum_i \chi_{A_i}\diff f_i}\bigg|=\mass-\esssup_{v\in\VV,\Vert v\Vert\le 1} \sum_i \chi_{A_i}\diff f_i(\hat{v}),
\end{equation}
as, if $f\in{\rm W}^{1,2}(\XX)$, $$\abs{\diff f}=\mass-\esssup_{v\in\VV,\Vert v\Vert\le 1}L_f(v)\abs{\diff f}=\mass-\esssup_{v\in\VV,\Vert{v}\Vert\le 1}\diff f(\hat{v}),$$ where, as above, the second equality comes  from  item $i)$ (or, which is the same, from \eqref{eq:ugu}).

We set now $M\defeq\left\{\hat{v}:v\in\VV\right\}$ and we claim that $M=\tanXinf$. We prove first that $M\subseteq\tanX$ is dense. If by contradiction $M$ was not dense, we could find a functional $Q\in(\tanX)^*=\cotX$ (by  \cite[Proposition 1.2.13]{Gigli14} and the assumption that $\cotX$ is reflexive) such that $Q\ne 0$ but $Q(\hat{v})=0\ \mass$-a.e.\ for every $v\in\VV$. By density, we take $\{Q_k\}_k\subseteq \cotX$ with $Q_k\rightarrow Q$ in $\cotX$ and also $\abs{Q-Q_k}\rightarrow 0\ \mass$-a.e.\ and $Q_k$ is of the form $\sum_i \chi_{A_i^k}\diff f_i^k$. Now, if $v\in\VV$,
$$\abs{Q_k(\hat{v})}\le \abs{Q_k-Q}(\hat{v})+ \abs{Q(\hat{v})}\le \abs{Q_k-Q}\Vert v\Vert \quad\mass\text{-a.e.}$$
so that, taking into account \eqref{pointwiseess}, 
$$ \abs{Q_k}\le \abs{Q-Q_k}\quad\mass\text{-a.e.}$$
which implies that $Q=0$, a contradiction. Therefore we have proved that $M\subseteq\tanX$ is dense.

Take now $v\in\tanX$ with $\abs{v}\in\Lpi$. By density, we take a sequence $\{v_n\}_n\subseteq\VV$ such that $\hat{v}_n\rightarrow v$ in $\tanX$ and also $\abs{\hat{v}_n-v}\rightarrow 0\ \mass$-a.e. As $M$ is stable under multiplication by characteristic functions of Borel subsets of $\XX$ (thanks to item $iii)$ and the $\Cb(\XX)$ linearity of the map $v\mapsto\hat{v}$) we can further assume that $\{v_n\}_n\subseteq\VV$ is bounded. Now, thanks to item $iii)$, we see that $v\in M$.

Now, to any $\nvect\in\MM_\VV$ we associate $\hat{\nvect}\in\MM_M$ by $\hat{v}\,\cdot\,\hat{\nvect}\defeq v\,\cdot\, \nvect$. As $\VV$ is isometric to $$\left(M,\Vert\abs{\,\cdot\,}\Vert_{\Lpi}\right)$$ (via the $\Cb(\XX)$ linear isometry $\hat\cdot$), we see that the map $\nvect\mapsto\hat{\nvect}$ is a $\Cb(\XX)$ linear isometry. Also, $\hat{\DIFF f}(A)=\int_A\diff f(\,\cdot\,)\dd{\mass}$. Indeed, if $v\in\VV$, then $\hat{v}\,\cdot\,\hat{\DIFF f}=v\,\cdot\,\DIFF f=L_f(v)\abs{\diff f}=\diff f(\hat{v})$.
\end{proof}
\begin{rem}
Theorem \ref{exunSobo} can be easily adapted to integrability exponents different from $2$ within the range $(1,\infty)$.
\fr
\end{rem}

\subsubsection{Differential of BV functions}\label{sectBV}
In this section, we build local vector measures that describe the distributional derivatives of functions of bounded variation. We study here the case of an arbitrary metric measure space and a real valued function of bounded variation. Then, in the setting of an $\RCD(K,\infty)$ space, improve considerably the result, see Section  \ref{sectBVRCD}.

\bigskip

We assume that the reader is familiar with the theory of functions of bounded variation in metric measure spaces developed in \cite{amb00,amb01,MIRANDA2003}.
We recall now the main notions. For   $A\subseteq\XX$ open, $\LIPloc(A)$  denotes the space of Borel functions that are Lipschitz in a neighbourhood of $x$, for any $x\in A$. If $(\XX,\dist)$ is locally compact, $\LIPloc(A)$ coincides with the space of functions that are Lipschitz on compact subsets of $A$.

Fix a metric measure space $(\XX,\dist,\mass)$. 
Given $f\in\Lpu$, we define, for any $A\subseteq\XX$ open,
\[
	\abs{\DIFF f}(A)\defeq\inf\left\{\liminf_k \int_\XX\lip(f_k)\dd{\mass} :f_k\in\LIPloc(A)\cap\Lpu,\ f_k\rightarrow f \text{ in }\Lpu\right\}.
\]
We say that $f$ is a function of bounded variation, i.e.\ $f\in\BVv$, if $f\in\Lpu$ and $\abs{\DIFF f}(\XX)<\infty$. 
If this is the case, $\abs{\DIFF f}(\,\cdot\,)$ turns out to be the restriction to open sets of a finite Borel measure  that we denote with the same symbol and we call total variation.
Notice that, by its very definition, the total variation $\abs{\DIFF f}(A)$ is lower semicontinuous with respect to $\Lpu$ convergence for $A$ open, is subadditive and $\abs{\DIFF (\phi \circ f)}\le L \abs{\DIFF f}$ whenever $f\in\BVv$ and $\phi$ is $L$-Lipschitz.

Several classical results concerning BV calculus have been generalized to the abstract framework of metric measure spaces.
Among them, the Fleming-Rishel coarea formula, which states that given $f\in\BVv$, the set $\{f>r\}$ has finite perimeter for $\LL^1$-a.e.\ $r\in\RR$ and
\[
	\int_\XX h\dd{\abs{\DIFF f}}=\int_\RR\dd{r} \int_\XX h\dd{\per(\{f>r\},\,\cdot\,)}\quad\text {for any Borel function $h:\XX\rightarrow[0,\infty].$}
\]
In particular,
\begin{equation}
	\label{coareaeqdiff}
	{\abs{\DIFF f}}(A)=\int_\RR\dd{r}  {\per(\{f>r\},A)}\quad\text {for any $A\subseteq \XX$ Borel}.
\end{equation}

Now we need the definition of divergence (\cite{Gigli14,buffa2020bv}). Notice that in the definition below the module $\mathrm{L}^\infty(T\XX)$ is defined as in \eqref{eq:deftp}, i.e.\ starting from the modules $\cotX$, $\tanX$ and algebraic operations; in particular, no notion of Sobolev function other than ${\rm W}^{1,2}(\XX)$ is required.
\begin{defn}\label{divedefn}
	Let $p\in\{2,\infty\}$. For $v\in\mathrm{L}^p(T\XX)$ we say that $v\in D(\dive^p)$ if there exists a function $g \in\Lpp$ such that 
	\begin{equation}\label{divedefneq}
		\int_\XX \diff f(v)\dd{\mass}=-\int_\XX  f g \dd{\mass}\quad\text{for every }f\in{\rm W}^{1,2}(\XX)\text{ with bounded support},
	\end{equation}
	and such $g$, that is uniquely determined, will be denoted by $\dive\, v$.
\end{defn}
Notice that if $v\in D(\dive^2)\cap D(\dive^\infty)$, then the the two objects $\dive\,v$ as above coincide, in particular, $\dive\,v\in\Lpt\cap\Lpi$.
From \eqref{divedefneq} it follows that $\supp(\dive\,v)\subseteq C$ for any $C\subseteq\XX$ such that $\supp v\subseteq C$.

Another direct consequence of the definition is that if $v\in D(\dive^p)$ has bounded support (i.e.\ support contained in a bounded set) then
\begin{equation}
\label{eq:intdiv}
\int_\XX \dive\,v\,\dd\mass=0
\end{equation}
as it can be checked by picking $f$ in \eqref{divedefneq} identically equal to 1 on a set containing the support of $v$. Also, the following version of the Leibniz rule holds: if $v\in D(\dive^\infty)$ and $f\in\LIPb(\XX)$, then $f v\in D(\dive^\infty)$ and
\begin{equation}\label{calcdiveeq}
	\dive(f v)=\diff f(v)+f\dive\, v.
\end{equation}  This follows from \eqref{divedefneq} and the fact that if $g\in{\rm W}^{1,2}(\XX)$ has bounded support and $f\in\LIPb(\XX)$, then $f g\in{\rm W}^{1,2}(\XX)$ has bounded support and satisfies $\diff(f g)=f\diff g+g\diff f$.
In the case $p=2$, again from the algebra properties of bounded Sobolev functions together with an easy approximation argument, we have that if $v\in D(\dive^2)\cap\tanXinf$ and $f\in\Ss\cap\Lpi$, then $f v\in D(\dive^2)$ and the calculus rule above holds. In the case $p=2$, we will often omit to write the superscript $2$ for what concerns the divergence.

\bigskip


The following representation formula is  basically proved in  \cite{DiMarino14} (see also \cite{buffa2020bv} and \cite[Proposition 2.1]{BGBV} for what concerns this formulation).
\begin{prop}[Representation formula]\label{reprfordiffregularpre2}

	Let $(\XX,\dist,\mass)$ be a metric measure space and $f\in\BVv$. Then, for every $A$ open subset of $\XX$, it holds that
	\begin{equation}\label{intagainst}
		\abs{\DIFF f}(A)=\sup\left\{\int_A f \dive\, v\dd{\mass}\right\},
	\end{equation}
	where the supremum is taken among all $v\in\mathcal{W}_A$, where
	\begin{equation}\label{qualivect1}
		\mathcal{{W}}_A\defeq\left\{v\in D(\dive^\infty):\abs{v}\le 1\ \mass\text{-a.e.\ }\supp v\subseteq A\right\}.
	\end{equation}
\end{prop}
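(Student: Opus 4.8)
The plan is to establish the two inequalities separately: the bound $\sup\{\int_A f\dive\,v\dd{\mass}\}\le\abs{\DIFF f}(A)$ is elementary and follows from integration by parts against Lipschitz approximations, while the reverse inequality is the substantial one, for which I would rely on the results of \cite{DiMarino14} (see also \cite{buffa2020bv,BGBV}).

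\emph{The inequality $\le$.} Fix $v\in\mathcal{W}_A$ and, given $\delta>0$, an approximating sequence $f_k\in\LIPloc(A)\cap\Lpu$ with $f_k\to f$ in $\Lpu$ and $\liminf_k\int_\XX\lip(f_k)\dd{\mass}\le\abs{\DIFF f}(A)+\delta$. Since $\dive\,v\in\Lpi$ is concentrated on $\supp v\subseteq A$, one has $\int_\XX f_k\dive\,v\dd{\mass}\to\int_A f\dive\,v\dd{\mass}$. After multiplying $f_k$ by a suitable bounded Lipschitz cut-off equal to $1$ on a neighbourhood of $\supp v$ and with bounded support contained in $A$ — which produces an admissible test function in ${\rm W}^{1,2}(\XX)$ with bounded support, using that $f_k$ is locally Lipschitz on $A$ and that bounded sets have finite $\mass$-measure — the defining property \eqref{divedefneq} of $D(\dive^\infty)$ together with the Leibniz rule \eqref{calcdiveeq} and the locality of the differential give $\int_\XX f_k\dive\,v\dd{\mass}=-\int_\XX\diff f_k(v)\dd{\mass}$. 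Hence
\[
\Big|\int_\XX f_k\dive\,v\dd{\mass}\Big|\le\int_\XX\abs{\diff f_k}\,\abs{v}\dd{\mass}\le\int_\XX\lip(f_k)\dd{\mass},
\]
using $\abs{v}\le 1$ $\mass$-a.e.\ and $\abs{\diff f_k}\le\lip(f_k)$ $\mass$-a.e. Passing to the limit along the subsequence realizing the $\liminf$ and letting $\delta\downarrow 0$ gives $\abs{\int_A f\dive\,v\dd{\mass}}\le\abs{\DIFF f}(A)$, and taking the supremum over $v\in\mathcal{W}_A$ (which is invariant under $v\mapsto -v$) proves the bound.

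\emph{The inequality $\ge$.} This is where the genuine content lies. One convenient route is to reduce, via the coarea formula \eqref{coareaeqdiff}, to the corresponding statement for sets $E$ of finite perimeter, namely $\per(E,A)\le\sup\{\int_A\chi_E\dive\,v\dd{\mass}:v\in\mathcal{W}_A\}$, and then to recombine the vector fields associated with the superlevel sets $\{f>r\}$ into a single competitor, exploiting that $A\mapsto\sup\{\int_A f\dive\,v\dd{\mass}\}$ is superadditive on disjoint open sets (since $\dive(v_1+v_2)=\dive\,v_1+\dive\,v_2$ and $v_1+v_2\in\mathcal{W}_{A_1\cup A_2}$ whenever $v_i\in\mathcal{W}_{A_i}$ and $A_1\cap A_2=\emptyset$). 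Equivalently, one argues directly with $f$: the functional $f\mapsto\sup\{\int_\XX f\dive\,v\dd{\mass}:v\in D(\dive^\infty),\ \abs{v}\le 1\}$ is convex and $\Lpu$-lower semicontinuous, it agrees with $\int_\XX\abs{\diff g}\dd{\mass}$ on ${\rm W}^{1,2}(\XX)\cap\Lpu$ \emph{provided} the $D(\dive^\infty)$-vector fields of pointwise norm $\le 1$ are dense in the unit ball of $\tanXinf$ for a suitable weak/a.e.\ topology, and one then concludes for arbitrary $f\in\BVv$ from the relaxation definition of $\abs{\DIFF f}$ and its lower semicontinuity, localizing to $A$ by the standard cut-off procedure.

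The main obstacle is precisely this density of divergence-bounded vector fields — equivalently, the fact that the `divergence' definition of the total variation is not strictly smaller than the relaxed one of \cite{MIRANDA2003} — which allows one to replace the optimal `duality' vector field for $\diff g$ (which lies in $\tanXinf$ but a priori not in $D(\dive^\infty)$) by admissible competitors. This step requires genuinely non-formal input, for instance via the heat semigroup or via Cheeger's differentiable structure, and it is exactly the content borrowed from \cite{DiMarino14}; everything else amounts to bookkeeping with the divergence identity, the Leibniz rule, and the coarea formula.
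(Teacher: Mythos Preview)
Your proposal is essentially correct and matches the paper's treatment: the paper does not give a self-contained proof of this proposition but cites \cite{DiMarino14} (with \cite{buffa2020bv} and \cite[Proposition 2.1]{BGBV} for this precise formulation), and your identification of the hard inequality $\abs{\DIFF f}(A)\le\sup$ as requiring the equivalence of the relaxation and derivation definitions of total variation is exactly the paper's point. Two minor remarks. First, in your direct argument for $\sup\le\abs{\DIFF f}(A)$ the claim that $\phi f_k\in{\rm W}^{1,2}(\XX)$ needs a bit more care: in a general complete separable metric space a bounded set need not be compact, so $f_k\in\LIPloc(A)$ does not immediately make $\phi f_k$ globally Lipschitz or square-integrable; a preliminary truncation of $f$ (and hence of $f_k$), or restricting to compactly supported cut-offs, fixes this. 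Second, the coarea route you sketch for the converse does not work as written, since superadditivity of $A\mapsto\sup_{\mathcal W_A}$ on disjoint \emph{open sets} does not help combine vector fields across different levels $\{f>r\}$, which are nested rather than disjoint; but your second route via lower semicontinuity and the density of $D(\dive^\infty)$-fields is the correct one, and it is precisely what \cite{DiMarino14} provides.
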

This statement might appear surprising because it characterizes $\BVv$ functions via duality with vector fields that, in turn, are defined in duality with functions in ${\rm W}^{1,2}(\XX)$ (as discussed before Definition \ref{divedefn}). We thus make the following observations:
\begin{itemize}
\item[i)] Approaching Sobolev/BV functions via integration by parts in general metric measure setting has been one of the main achievements in  \cite{DiMarino14}. In such reference, the definition is given in duality with the notion of \emph{derivation} which is there defined as suitable map from Lipschitz functions to $\Lp^0(\mass)$.
\item[ii)] Since Lipschitz functions are always Sobolev, at least locally, vector fields as considered in \eqref{qualivect1} are included in the class of derivations as used in \cite{DiMarino14} to define BV functions. In particular, it is obvious a priori from the definitions in \cite{DiMarino14} that the inequality $\geq$ holds in \eqref{intagainst}.
\item[iii)] The opposite inequality follows from the results in \cite{DiMarino14}. Specifically, it is trivial to notice that `$\Lp^\infty$ derivations with divergence in $\Lp^\infty$' are also `$\Lp^2$ derivations with divergence in $\Lp^2$' (at least locally) and these latter ones   can be used - thanks to \cite{DiMarino14} - to define ${\rm W}^{1,2}$ functions.  It then follows by abstract machinery that these sort of $\Lp^2$ derivation are (or better, uniquely induce) a vector field in $\tanX$ and if we actually start with an $\Lp^\infty$ derivations with divergence in $\Lp^\infty$, the corresponding vector field will be in $\mathrm{L}^\infty(T\XX)\cap  D(\dive^\infty)$ with the same pointwise norm and divergence of the original derivation (see also \cite[Lemma 3.12]{buffa2020bv}). This line of though gives $\leq$ in \eqref{intagainst}.
\end{itemize}

With this said, we have the following result:
\begin{thm}\label{difffloccaomp}
	Let $(\XX,\dist,\mass)$ be a metric measure space and let $\VV$ be the subspace of $\tanXinf$ made of $\Cb(\XX)$-linear combinations of vector fields in $D( \dive^\infty)$.
	
	Then for every $f\in \BVv$ there exists a unique local vector measure $\DIFF f$ defined on $ \VV$ such that 
	\begin{equation}\label{difffloccaompeq}
		\DIFF f(\XX)(v)=-\int_\XX f\dive \,v\,\dd\mass\qquad\text{for every }v\in D(\dive^\infty).
	\end{equation}
\end{thm}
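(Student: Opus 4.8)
The plan is to build $\DIFF f$ first on the smaller space $\WW\defeq D(\dive^\infty)$, regarded as a subspace of $\tanXinf$ with the norm $\Vert v\Vert\defeq\Vert\abs{v}\Vert_{\Lpi}$, by means of the abstract Riesz theorem (Theorem \ref{weakder}), and then to extend it to $\VV$ via Proposition \ref{lipbscb}. To this end I would work with the subring $\mathcal R\subseteq\Cb(\XX)$ generated by $\LIPbs(\XX)$ and the constant functions; it approximates open sets in the sense of Definition \ref{def:Rap} and is contained in $\LIPb(\XX)$. The Leibniz rule \eqref{calcdiveeq}, together with linearity of the divergence, shows that $\WW$ is closed under multiplication by elements of $\mathcal R$, so $\WW$ is a normed $\mathcal R$-module; moreover $\VV=\Cb(\XX)\cdot\WW$ is a normed $\Cb(\XX)$-module and $\Cb(\XX)\cdot\WW$ is (trivially) dense in $\VV$, so the hypotheses of Proposition \ref{lipbscb} are met once we have a local vector measure on $\WW$.

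On $\WW$ I would consider the linear functional $F_0(v)\defeq-\int_\XX f\dive\,v\,\dd\mass$. The representation formula (Proposition \ref{reprfordiffregularpre2}) gives at once that, for $\Vert v\Vert\le 1$, we have $\pm v\in\mathcal{W}_\XX$ in the notation of \eqref{qualivect1}, hence $\abs{F_0(v)}\le\abs{\DIFF f}(\XX)$; by homogeneity $F_0\in\WW'$. The crucial point is to check that $F_0$ is tight with respect to $\mathcal R$: given $v\in\WW$ and $\{\varphi_n\}_n\subseteq\mathcal R$ with $\varphi_n(x)\searrow 0$ for every $x$ (without loss of generality $0\le\varphi_n\le\varphi_1$), one cannot simply bound $\Vert\varphi_n v\Vert$, since $\Vert\varphi_n\Vert_\infty$ need not vanish. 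Instead, fix $\varepsilon>0$, pick a compact $K\subseteq\XX$ with $\abs{\DIFF f}(\XX\setminus K)<\varepsilon$ (inner regularity of the finite measure $\abs{\DIFF f}$), use Dini's theorem to arrange $\varphi_n<\varepsilon$ on $K$ for $n$ large, and split $\varphi_n v=(\varphi_n\wedge\varepsilon)v+(\varphi_n-\varepsilon)^+v$, with both summands again in $\WW$ (note $(\varphi_n\wedge\varepsilon),(\varphi_n-\varepsilon)^+\in\mathcal R$ by the lattice property). The first summand satisfies $\abs{(\varphi_n\wedge\varepsilon)v}\le\varepsilon\Vert v\Vert$ $\mass$-a.e., so $\pm(\varphi_n\wedge\varepsilon)v/(\varepsilon\Vert v\Vert)\in\mathcal{W}_\XX$ and $\abs{F_0((\varphi_n\wedge\varepsilon)v)}\le\varepsilon\Vert v\Vert\,\abs{\DIFF f}(\XX)$; the second summand, and hence also its divergence, is supported in the open set $\XX\setminus K$ (because $\varphi_n<\varepsilon$ on $K$ and $\varphi_n$ is continuous), so $\pm(\varphi_n-\varepsilon)^+v/(\Vert\varphi_1\Vert_\infty\Vert v\Vert)\in\mathcal{W}_{\XX\setminus K}$ and $\abs{F_0((\varphi_n-\varepsilon)^+v)}\le\Vert\varphi_1\Vert_\infty\Vert v\Vert\,\abs{\DIFF f}(\XX\setminus K)<\varepsilon\Vert\varphi_1\Vert_\infty\Vert v\Vert$. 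Summing and letting $\varepsilon\to 0$ yields $F_0(\varphi_n v)\to 0$.

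Granted tightness, Theorem \ref{weakder} produces a unique local vector measure $\nvect_0$ on $\WW$ with $\nvect_0(\XX)(v)=F_0(v)$, and combining Lemma \ref{bbtightiffmeas} with Proposition \ref{reprfordiffregularpre2} one identifies $\abs{\nvect_0}$ with the classical total variation measure $\abs{\DIFF f}$ of $f$. Proposition \ref{lipbscb} then extends $\nvect_0$ to a local vector measure on $\VV$, which we name $\DIFF f$; by construction $\DIFF f(\XX)(v)=-\int_\XX f\dive\,v\,\dd\mass$ for $v\in D(\dive^\infty)$, which is \eqref{difffloccaompeq}, and its total variation is $\abs{\DIFF f}$, consistently with the notation. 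For uniqueness, suppose $\nvect$ is a local vector measure on $\VV$ with $\nvect(\XX)(v)=0$ for all $v\in D(\dive^\infty)$. For $\varphi\in\mathcal R$ and $v\in D(\dive^\infty)$ we have $\varphi v\in D(\dive^\infty)$, so by Lemma \ref{foutside} $\int_\XX\varphi\,\dd(v\,\cdot\,\nvect)=((\varphi v)\,\cdot\,\nvect)(\XX)=\nvect(\XX)(\varphi v)=0$; since $\mathcal R$ approximates open sets this forces the finite signed measure $v\,\cdot\,\nvect$ to vanish. Applying Lemma \ref{foutside} once more, now with the ring $\Cb(\XX)$, gives $(g v)\,\cdot\,\nvect=g\,(v\,\cdot\,\nvect)=0$ for every $g\in\Cb(\XX)$, hence $\nvect$ vanishes on $\VV=\Cb(\XX)\cdot D(\dive^\infty)$, so $\nvect=0$. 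The main obstacle is precisely the tightness verification: the naive $\Lp^\infty$-bound is useless, and one is forced to exploit the \emph{local} form of the representation formula through the compact-exhaustion-plus-truncation device described above.
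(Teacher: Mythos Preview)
Your argument is correct and follows the same architecture as the paper: verify that $F_0$ is a tight functional on $D(\dive^\infty)$ (viewed as a normed $\mathcal R$-module with $\mathcal R\subseteq\LIPb(\XX)$), invoke Theorem \ref{weakder}, and then extend to $\VV$ via Proposition \ref{lipbscb}.

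The only substantive difference is in how tightness is checked. The paper bypasses your Dini-plus-truncation computation entirely by appealing to the \emph{converse} direction of Lemma \ref{bbtightiffmeas}: Proposition \ref{reprfordiffregularpre2} says precisely that the set function
\[
A\ \longmapsto\ \sup\big\{F_0(v):v\in D(\dive^\infty),\ \Vert v\Vert\le 1,\ \supp v\subseteq A\big\}
\]
equals $\abs{\DIFF f}(A)$ on open sets, and the latter is already known (from BV theory) to be the restriction of a finite Borel measure. Lemma \ref{bbtightiffmeas} then gives tightness for free, and simultaneously identifies $\abs{\nvect_0}$ with $\abs{\DIFF f}$. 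This is shorter and more conceptual than your hands-on estimate, though your route is perfectly valid and has the merit of making explicit where the local representation formula enters.

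Your uniqueness argument is also a bit more explicit than the paper's, which simply cites the uniqueness clauses of Theorem \ref{weakder} and Proposition \ref{lipbscb}; both are fine.
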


\begin{proof} Start noticing that \eqref{calcdiveeq} grants that $\dive^\infty$ is a normed $\LIPb(\XX)$-module (we equip $\dive^\infty$ with the norm of $\tanXinf$) and that $\LIPb(\XX)$ is a subring of $\Cb(\XX)$ that approximates open sets in the sense of Definition \ref{def:Rap}. Now define $\FF:D(\dive^\infty)\rightarrow\RR$ as 
\[		\FF(v)\defeq -\int_\XX f\dive\,v\dd{\mass}.
\]
	Notice that Proposition \ref{reprfordiffregularpre2} shows that $\FF\in(D(\dive^\infty))'$ with $\Vert \mathcal F\Vert'=\abs{\DIFF f}(\XX)$ and that $$\sup
	\left\{\FF(v):v\in D(\dive^\infty),\ \Vert v\Vert \le 1,\ \supp v\subseteq A\right\} =\abs{\DIFF f}(A).$$
	In particular, the set function defined by the supremum in the left hand side of the equation above
	is the restriction to open sets of a finite Borel measure, so that by Lemma \ref{bbtightiffmeas} the functional $\FF$ is tight. 
	
	Therefore by Theorem \ref{weakder} there is a unique local vector measure $\DIFF f$ defined on $\dive^\infty$ for which \eqref{difffloccaompeq} holds and by Proposition \ref{lipbscb} such measure can be uniquely extended to a local vector measure on $\VV$.
\end{proof}
\begin{rem}
	Given $f\in\BVv$, we can take the polar decomposition of its distributional derivative $\DIFF f=L\abs{\DIFF f}$ given by Theorem \ref{difffloccaomp}. Therefore, taking into account also Lemma \ref{foutside}, we have that
	for every $g\in\Cb(\XX)$ and $v\in D(\dive^\infty)$ such that $g v\in D(\dive^\infty)$, we have
\[
		\int_\XX f\dive(g v)\dd{\mass}=-\int_\XX g L(v)\dd{\abs{\DIFF f}}
\]
	where, in particular, $\Vert L(v)\Vert_{\Lp^\infty(\abs{\DIFF f})}\le \Vert v\Vert$ by \eqref{inequalitygerm}.
	We can see this result as a particular case of \cite[Theorem 4.13]{buffa2020bv}. Following similar arguments it also possible to obtain the full result of \cite[Theorem 4.13]{buffa2020bv}, working with local vector measures defined on the $\Cb(\XX)$ module generated by $\mathcal{D M^\infty}(\XX)$ (\cite[Definition 4.1]{buffa2020bv}).\fr
	
	%
	%
	%
	
\end{rem}
\bigskip
We prove now the basic calculus rules for \textbf{continuous} functions of bounded variation. In the framework of $\RCD(K,N)$ spaces, we will have a much more powerful result, see Theorem \ref{volprop}.
\begin{prop}[Chain rule]\label{propchaincont}
Let $(\XX,\dist,\mass)$ be a metric measure space, let $f\in\BVv\cap C(\XX)$ and  let $\phi\in\LIP(\RR)$ be such that $\phi(0)=0$. Then \begin{equation}\label{abspushf}
	|\DIFF f|(f^{-1}(N))=0\quad\text{for every Borel set $N\subseteq\RR$ such that $\LL^1(N)=0$}.
\end{equation} In particular, $\phi$ is differentiable at $f(x)$ for $\abs{\DIFF f}$-a.e.\ $x$.
Moreover $\phi\circ f\in\BVv$ and
\begin{equation}\label{chaincont}
	\DIFF (\phi\circ f)=\phi'\circ f\DIFF f.
\end{equation}

\end{prop}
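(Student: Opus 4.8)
The plan is to reduce all three assertions to the coarea formula \eqref{coareaeqdiff} together with the following elementary consequence of the continuity of $f$: for $\LL^1$-a.e.\ $r\in\RR$ the perimeter measure $\per(\{f>r\},\,\cdot\,)$ is concentrated on $\{f=r\}$ (indeed $\{f>r\}$ and $\{f<r\}$ are open, $\chi_{\{f>r\}}$ is locally constant on each, and $|\DIFF\chi_{\{f>r\}}|=\per(\{f>r\},\,\cdot\,)$ charges no open set on which $\chi_{\{f>r\}}$ is locally constant, by locality of the total variation; on bounded open sets this also follows from Proposition \ref{reprfordiffregularpre2} and \eqref{eq:intdiv}, and the general case by inner approximation). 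Granting this, \eqref{abspushf} is immediate: by \eqref{coareaeqdiff} applied to the Borel set $f^{-1}(N)$ one gets $|\DIFF f|(f^{-1}(N))=\int_\RR\per(\{f>r\},f^{-1}(N))\,\dd r$; by the concentration property, for a.e.\ $r$ we have $\per(\{f>r\},f^{-1}(N))=\per(\{f>r\},f^{-1}(N)\cap\{f=r\})$, and $f^{-1}(N)\cap\{f=r\}$ is $\{f=r\}$ when $r\in N$ and empty otherwise, so the integrand equals $\chi_N(r)\per(\{f>r\})$ a.e.; since $r\mapsto\per(\{f>r\})$ lies in $\Lp^1(\RR)$ (with integral $|\DIFF f|(\XX)<\infty$) and $\LL^1(N)=0$, the integral vanishes. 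The differentiability statement then follows at once from Rademacher's theorem on $\RR$ (giving an $\LL^1$-negligible Borel $Z$ off which $\phi$ is differentiable) and \eqref{abspushf} (giving $|\DIFF f|(f^{-1}(Z))=0$); from now on $\phi'$ denotes a fixed bounded Borel function on $\RR$ agreeing with the derivative of $\phi$ where it exists, so $\phi'\circ f$ is bounded Borel.

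For \eqref{chaincont}, note first that $\phi\circ f\in\BVv$: $|\phi\circ f|\le(\Lip\phi)|f|\in\Lp^1(\mass)$ and $|\DIFF(\phi\circ f)|(\XX)\le(\Lip\phi)|\DIFF f|(\XX)<\infty$, and $\phi\circ f$ is continuous, so $\DIFF(\phi\circ f)$ is provided by Theorem \ref{difffloccaomp}. Both sides of \eqref{chaincont} are local vector measures on $\VV$, the right-hand one in the sense of Definition \ref{bartdef}; by Remark \ref{nonimportaR}, the uniqueness of extensions in Proposition \ref{lipbscb} and the identity \eqref{conicidence}, it suffices to prove $-\int_\XX(\phi\circ f)\dive v\,\dd\mass=\int_\XX\phi'\circ f\,\dd(v\,\cdot\,\DIFF f)$ for every $v\in D(\dive^\infty)$. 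Since a Lipschitz $\phi$ is absolutely continuous, one has the layer‑cake identities $f=\int_0^\infty\chi_{\{f>s\}}\,\dd s-\int_{-\infty}^0\chi_{\{f\le s\}}\,\dd s$ and $\phi\circ f=\int_0^\infty\phi'(s)\chi_{\{f>s\}}\,\dd s-\int_{-\infty}^0\phi'(s)\chi_{\{f\le s\}}\,\dd s$; moreover for $s>0$ the set $\{f>s\}$, and for $s<0$ the set $\{f\le s\}$, has finite measure and, for a.e.\ such $s$, finite perimeter, so the corresponding $\DIFF\chi_{\{f>s\}}$, $\DIFF\chi_{\{f\le s\}}$ are defined.

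The core step is the slicing identity $v\,\cdot\,\DIFF f=\int_0^\infty\bigl(v\,\cdot\,\DIFF\chi_{\{f>s\}}\bigr)\,\dd s-\int_{-\infty}^0\bigl(v\,\cdot\,\DIFF\chi_{\{f\le s\}}\bigr)\,\dd s$ of finite signed measures. I would prove it by testing both members against $h\in\LIPb(\XX)$: using the Leibniz rule \eqref{calcdiveeq}, the defining relation \eqref{difffloccaompeq} for $\DIFF f$ and for $\DIFF\chi_{\{f>s\}}$, Lemma \ref{foutside}, and a Fubini interchange (legitimate since the relevant integrands are bounded and $f\in\Lp^1(\mass)$), both sides collapse to $-\int_\XX(\diff h(v)+h\,\dive v)f\,\dd\mass$; since two finite Borel measures that agree on $\LIPb(\XX)$ coincide (approximate $\chi_C$ for $C$ closed by bounded Lipschitz functions), the identity follows. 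Now for a.e.\ $s$ the measure $v\,\cdot\,\DIFF\chi_{\{f>s\}}$ is absolutely continuous with respect to $\per(\{f>s\},\,\cdot\,)$, hence concentrated on $\{f=s\}$, where $\phi'\circ f\equiv\phi'(s)$; therefore $\int_\XX\phi'\circ f\,\dd(v\,\cdot\,\DIFF\chi_{\{f>s\}})=\phi'(s)\,\DIFF\chi_{\{f>s\}}(\XX)(v)=-\phi'(s)\int_\XX\chi_{\{f>s\}}\dive v\,\dd\mass$, and similarly with $\{f\le s\}$. Integrating in $s$, applying Fubini once more and using the layer‑cake formula for $\phi\circ f$, the right‑hand side becomes $-\int_\XX(\phi\circ f)\dive v\,\dd\mass$, which is the required identity.

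The main obstacle is this slicing identity for $v\,\cdot\,\DIFF f$: since $\DIFF f$ is prescribed only through its total mass paired with divergences, one must recover its action on arbitrary Borel sets and match it with the coarea decomposition, which is what forces the somewhat delicate $\LIPb(\XX)$‑testing and Fubini argument. The other place where continuity of $f$ is genuinely used is the concentration of $\per(\{f>s\},\,\cdot\,)$ on $\{f=s\}$; both \eqref{abspushf} and the clean chain rule \eqref{chaincont} fail for discontinuous BV functions, which is precisely why the finer analysis of Section \ref{sectBVRCD} is needed in that generality.
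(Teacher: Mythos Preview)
Your argument is correct, but it follows a genuinely different route from the paper's. After proving \eqref{abspushf} in essentially the same way, the paper proceeds by successive approximations in $\phi$: first mollify to reduce to $\phi\in C^1(\RR)\cap\LIP(\RR)$, then approximate by piecewise affine functions, and finally check \eqref{chaincont} for piecewise affine $\phi$ by localizing on the open sets $f^{-1}(I)$ (with $I$ an interval of affinity) and using the locality property \eqref{eq:locmass} together with the trivial identity $\DIFF(af+b)=a\,\DIFF f$ on such sets. No slicing of $v\cdot\DIFF f$ is ever needed.

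Your approach instead leaves $\phi$ untouched and slices in the values of $f$: you establish the identity $v\cdot\DIFF f=\int_\RR v\cdot\DIFF\chi_{\{f>s\}}\,\dd s$ (with the sign convention for $s<0$) by testing against $h\in\LIPb(\XX)$ and invoking \eqref{calcdiveeq}, \eqref{difffloccaompeq} and layer--cake, and then exploit the concentration of each slice on $\{f=s\}$ to replace $\phi'\circ f$ by the constant $\phi'(s)$. This is more conceptual and closer in spirit to the classical Vol'pert chain rule, but it requires some care that you gloss over: to pass from testing against $h\in\LIPb(\XX)$ to integrating the bounded Borel function $\phi'\circ f$, you need to know that $s\mapsto\int h\,\dd(v\cdot\DIFF\chi_{\{f>s\}})$ is measurable (clear from your formula $-\int\chi_{\{f>s\}}\dive(hv)\,\dd\mass$) and then run a monotone class argument, using the domination $|v\cdot\DIFF\chi_{\{f>s\}}|(\XX)\le\|v\|\,\per(\{f>s\})\in\Lp^1(\RR)$ to justify the limit passages. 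The paper's reduction to piecewise affine $\phi$ avoids this entirely, trading the slicing identity for two elementary approximation steps; your argument, on the other hand, gives a one--shot proof that makes the role of coarea more transparent.
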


\begin{proof}
Take $N\subseteq\RR$ with $\LL^1(N)=0$. Then we use \eqref{coareaeqdiff} and the fact that the perimeter of a set is concentrated on its topological boundary to compute 
\begin{equation*}
		\abs{\DIFF f} (f^{-1}(N))=\int_\RR \per({\{f>t\}},f^{-1}(N)) \dd{t} =\int_N \per({\{f>t\}},f^{-1}(N)) \dd{t}=0.
\end{equation*}
In particular, by Rademacher's Theorem, we have that $\phi$ is differentiable at $f(x)$ for $\abs{\DIFF f}$-a.e.\ $x$.
	
With an easy approximation argument, we see that we can assume $\phi\in\LIP(\RR)\cap C^1(\RR)$ with $\phi(0)=0$. Indeed, let $\{\rho_n\}_n$ be a family of Friedrich mollifiers and define $$\phi_n\defeq \phi\ast\rho_n-  (\phi\ast\rho_n)(0)\in\LIP(\RR)\cap C^1(\RR).$$ For any $v\in D(\dive^\infty)$, we have on the one hand
$$
\DIFF(\phi_n\circ f)(\XX)(v)=-\int_\XX \phi_n\circ f \dive\,v\dd{\mass}\rightarrow -\int_\XX \phi\circ f \dive\,v\dd{\mass}=\DIFF(\phi \circ f)(\XX)(v)
$$
and on the other hand 
$$ \phi_n'\circ f\DIFF f(\XX)(v)=\int_\XX \phi_n'\circ f L_f(v)\abs{\DIFF f}\rightarrow \int_\XX \phi'\circ f L_f(v)\abs{\DIFF f}= \phi'\circ f\DIFF f(\XX)(v),$$
where we used that $\phi_n'\rightarrow\phi\ \LL^1$-a.e.\ so that by \eqref{abspushf} it holds $\phi_n'\circ f\rightarrow\phi\circ f\ \abs{\DIFF f}$-a.e.
Then, if the chain rule holds for $\phi_n$, by the characterization of the differential given in Theorem \ref{difffloccaomp} above we obtain that it holds for $\phi$.

We thus proved that it suffices to prove the claim under the assumption $\phi\in \LIP(\RR)\cap C^1(\RR)$ with $\phi(0)=0$. If this is the case, we can take an approximating sequence $\{\tilde{\phi}_n\}_n$ as follows: for every $n$, $\tilde{\phi}_n$ is piecewise affine, at its points of non-differentiability $\tilde{\phi}_n$ coincides with $\phi$, $\tilde{\phi}_n\rightarrow\phi$ uniformly and $|{\tilde{\phi}_n'-\phi'}|\rightarrow 0\ \LL^1$-a.e. Let now $\{\hat{\phi}_n\}_n$ be defined as $\hat{\phi}_n\defeq\tilde{\phi}_n-\tilde{\phi}_n(0)$. Arguing as above, we see that it suffices to check that \eqref{chaincont} holds for any $\hat{\phi}_n$ to conclude the proof.

To conclude then we prove the chain rule under the assumption that $\phi\in\LIP(\RR)$ is piecewise affine and $\phi(0)=0$. Thus let $\{A_i\}_i\subseteq\XX$ be the at most countable collection of open sets of the form $f^{-1}(I)$ for $I\subseteq\RR$ interval where $\phi$ is affine. Then \eqref{abspushf} ensures that $|\DIFF f|(\XX\setminus\cup_iA_i)=0$ and then an argument based on the fact that $|\DIFF f|(\XX)<\infty$ shows that $\chi_{\cup_{i<n}A_i}\DIFF (\phi\circ f)(\XX)(v)\to \DIFF (\phi\circ f)(\XX)(v)$ and $\chi_{\cup_{i<n}A_i}\phi'\circ f\DIFF f (\XX)(v)\to\phi'\circ f \DIFF f (\XX)(v)$ as $n\to\infty$ for any $v\in D(\dive^\infty)$.

Hence to conclude it is  enough to check that  $\chi_{A_i}\DIFF(\phi\circ f)=\chi_{A_i}\phi'\circ f\DIFF f$ for any $i$, and then again by an argument based on $|\DIFF f|(\XX)<\infty$  that it is sufficient to prove that $\chi_{B}\DIFF(\phi\circ f)=\chi_{B}\phi'\circ f\DIFF f$ for any bounded open set $B$ contained in some of the $A_i$'s. In turn, \eqref{eq:locmass} (applied with $\VV:=D(\dive)$ but then we use Proposition \ref{lipbscb}) and \eqref{eq:defbart}  show that to prove this latter statement it is sufficient to prove that
\[
\DIFF(\phi\circ f)(B)(v)=\phi'\circ f\DIFF f(B)(v)
\]
for any $B$ as before and  $v\in D(\dive^\infty)$ with $\supp v\subseteq B$. To see this notice that 
\[
\begin{split}
	\DIFF(\phi\circ f) (B)(v)&=\DIFF(\phi\circ f)(\XX)(v)=-\int_\XX \phi\circ f\dive\,v\dd{\mass}\stackrel{\eqref{eq:intdiv}}=-\phi'_{| I}\int_\XX f\dive\,v\dd{\mass}=\phi'\circ f\DIFF f (B)(v).
\end{split}
\]
The conclusion follows.
\end{proof}
The Leibniz rule is simply obtained by polarization of the chain rule with $\phi(t)=t^2$.
\begin{prop}[Leibniz rule]\label{leibnizcontprop}
Let $(\XX,\dist,\mass)$ be a metric measure space and $f,g\in\BVv\cap \Cb(\XX)$. Then $f g \in\BVv$ and
\[
	\DIFF (f g)=f\DIFF f+g\DIFF f.
\]
In particular, 
\begin{equation}\label{csdcasdc}
	 \abs{\DIFF(f g)}\le \abs{f}\abs{\DIFF g}+\abs{g}\abs{\DIFF f}.
\end{equation}
\end{prop}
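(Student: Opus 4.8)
The plan is to obtain the Leibniz rule by polarization from the chain rule of Proposition \ref{propchaincont}, mimicking the classical proof with $\phi(t)=t^2$. Two preliminary remarks are in order. First, since $\abs{\DIFF(\cdot)}$ is subadditive we have $f+g\in\BVv$, and of course $f+g\in\Cb(\XX)$. Second, the map $h\mapsto\DIFF h$ is linear: indeed $h\mapsto-\int_\XX h\dive\,v\,\dd\mass$ is linear in $h$, and by Corollary \ref{Rieszcor} a local vector measure on $\VV$ is determined by its total value $\nvect(\XX)$, while Proposition \ref{lipbscb} guarantees uniqueness of the extension from $D(\dive^\infty)$ to $\VV$; hence two candidates for $\DIFF(\alpha f+\beta g)$ and $\alpha\DIFF f+\beta\DIFF g$ that agree via \eqref{difffloccaompeq} must coincide. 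In particular $\DIFF(f+g)=\DIFF f+\DIFF g$.

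Next I would apply the chain rule to the square of each of $f$, $g$, $f+g$. Since $t\mapsto t^2$ is not globally Lipschitz, I fix $M\defeq\max\{\Vert f\Vert_\infty,\Vert g\Vert_\infty,\Vert f+g\Vert_\infty\}<\infty$ and pick $\phi\in\LIP(\RR)$ with $\phi(0)=0$ and $\phi(t)=t^2$ for $\abs{t}\le M$ (extend affinely outside $[-M,M]$; this $\phi$ is in fact $C^1$). Then for $h\in\{f,g,f+g\}$ we have $\phi\circ h=h^2$ and $\phi'\circ h=2h$ pointwise on $\XX$, so Proposition \ref{propchaincont} yields $h^2\in\BVv\cap\Cb(\XX)$ and, as local vector measures on $\VV$,
\[
\DIFF(h^2)=\phi'\circ h\,\DIFF h=2h\,\DIFF h ,
\]
the right-hand side being a Bartle integral (Definition \ref{bartdef}), well defined because $2h$ is a bounded Borel function.

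Now I would polarize. From $fg=\tfrac12\big((f+g)^2-f^2-g^2\big)$ and the fact that $\BVv$ is a vector space, $fg\in\BVv$; using linearity of $\DIFF$, the identity $\DIFF(f+g)=\DIFF f+\DIFF g$, and the bilinearity of the multiplication $(h,\nvect)\mapsto h\nvect$ of a bounded Borel function with a local vector measure, one computes
\[
\DIFF(fg)=\tfrac12\Big(2(f+g)(\DIFF f+\DIFF g)-2f\,\DIFF f-2g\,\DIFF g\Big)=f\,\DIFF g+g\,\DIFF f,
\]
which is the asserted Leibniz rule (if preferred, one checks this identity tested on $v\in D(\dive^\infty)$ through \eqref{difffloccaompeq} and then invokes Corollary \ref{Rieszcor} to promote it to an equality of local vector measures). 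Finally \eqref{csdcasdc} follows by passing to total variations: subadditivity of the total variation gives $\abs{\DIFF(fg)}\le\abs{f\,\DIFF g}+\abs{g\,\DIFF f}$, and Proposition \ref{absfoutside} identifies $\abs{f\,\DIFF g}=\abs{f}\abs{\DIFF g}$ and $\abs{g\,\DIFF f}=\abs{g}\abs{\DIFF f}$ as measures.

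I expect the only mildly delicate points to be the truncation step that repairs the non-Lipschitzness of $t\mapsto t^2$ (made possible precisely because $f$ and $g$ are bounded, hence so is $f+g$) and the verification that $\DIFF$ is linear into $\MM_\VV$; both are routine once one uses that \eqref{difffloccaompeq} is linear in the function and that, by Corollary \ref{Rieszcor}, a local vector measure is determined by its value on $\XX$. No new idea beyond the chain rule of Proposition \ref{propchaincont} is needed.
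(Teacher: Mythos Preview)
Your proof is correct and follows essentially the same route as the paper: apply the chain rule of Proposition \ref{propchaincont} with a Lipschitz $\phi$ coinciding with $t\mapsto t^2$ on a large enough interval to get $\DIFF(h^2)=2h\,\DIFF h$ for $h\in\{f,g,f+g\}$, then polarize using the linearity of $\DIFF$. Your write-up is simply more detailed, making explicit the truncation of $t^2$, the linearity of $\DIFF$, and the derivation of \eqref{csdcasdc} via Proposition \ref{absfoutside}.
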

\begin{proof}
Using the chain rule with $\phi\in\LIP(\RR)$ that coincides with $t\mapsto t^2$ on a sufficiently large neighbourhood of $0$, we see that 
\[
\begin{split}
\DIFF (f+g)^2&=2(f+g)\DIFF (f+g),\\
\DIFF f^2&=2 f\DIFF f,\\
\DIFF g^2&=2 g\DIFF g.
\end{split}
\]
The conclusion easily follows from the linearity of the differential.
\end{proof}
\begin{rem}
We wish to point out that the language of local vector measures is not necessary to achieve the inequality \eqref{csdcasdc}. We sketch an alternative proof.
Take first $\phi\in\LIP(\RR)$ bi-Lipschitz (hence strictly monotone, say strictly increasing) and assume that $\phi(0)=0$. For $A\subseteq\XX$ open, we compute, by \eqref{coareaeqdiff} and the change of variables formula,
\begin{align*}
|\DIFF(\phi\circ f)|(A)&=\int_\RR \per(\{\phi\circ f>t,A\})\dd{t}=\int_\RR \per(\{ f>s,A\})\phi'(s)\dd{s}\\&=\int_\RR \int_A \phi'(f(x))\dd\per(\{ f>s,A\})(x)\dd{s}=\int_A  \phi'(f(x))\dd{\abs{\DIFF f}},
\end{align*}
so that\begin{equation}\label{cdsvdscas}
	 \abs{\DIFF (\phi\circ f)}\le \abs{\phi'\circ f}\abs{\DIFF f}.
\end{equation}

Now we notice that, using \eqref{coareaeqdiff}, \eqref{abspushf} and the regularity of the measures involved, we see that it is enough to check \eqref{csdcasdc}
on $A$, where $A\subseteq\XX$ is a bounded open set such that $f,g\in( c,C)$ for some $c,C\in(0,\infty)$. Up to scaling, we can assume $c=2$.
We compute then, on $A$,
\begin{align*}
	\abs{\DIFF (f g)}=|{\DIFF e^{\log(f g)}}|\le e^{\log (f g)}\abs{\DIFF (\log f+\log g)}\le  fg\abs{\DIFF (\log f)}+ fg\abs{\DIFF (\log g)}\le g\abs{\DIFF f}+f\abs{\DIFF g},
\end{align*}
where we used \eqref{cdsvdscas} twice.

If $f,g\in\BVv\cap\Lpi$ are not continuous, other versions of the inequality investigated are available: if one denotes with $\bar{f},\bar{g}$ the precise representatives of $f,g$ (e.g.\ \eqref{veeandwedge} and the equation below \eqref{veeandwedge}) a reasonable claim would be
$$
\abs{\DIFF (fg)}\le |{\bar{f}}|\abs{\DIFF g}+\abs{\bar{g}}\abs{\DIFF f}
$$
which is exactly what one obtains in the smooth context.
On metric measure spaces this property may fail (see e.g.\ \cite{lahti2018sharp}, where also an optimal bound on $\abs{\DIFF (f g)}$ was provided for $\PI$ spaces), whereas for finite dimensional $\RCD$ spaces the sharp version has been proved in \cite{BGBV}.
\end{rem}

\subsubsection{Strongly local measures}\label{normedmodules} 

Let $(\XX,\dist,\mass)$ be a metric measure space (complete, separable, with measure finite on bounded sets), $\mathscr M$ an $\Lp^p(\mass)$-normed $\Lp^\infty(\mass)$-module over it and $\VV\subseteq{\mathscr M}$ be a $\Cb(\XX)$ submodule (in the algebraic sense) such that for every $v\in\VV$, $\abs{v}\in\Lp^\infty(\mass)$. We have already noticed that  $\VV$ equipped with the norm
$$
\Vert v\Vert\defeq\Vert \abs{v}\Vert_{\Lp^\infty(\mass)}.
$$
is $(\VV,\Vert\,\cdot\,\Vert)$ is a normed $\Cb(\XX)$-module. Local vector measures $\nvect$ defined on $\VV$ are, by definition, weakly local, i.e.\ they satisfy
\begin{equation}
\label{eq:wl}
\nvect(A)(v)=0,\qquad\text{ for every $A\subseteq\XX$ open and $v\in\VV$ with $\|v\|_{|A}=0$.}
\end{equation}
In some sense, due to the nature of the definition of general normed $\Cb(\XX)$-modules, this is the most we ask for when speaking about locality. In the current setting,  however, the  elements of $\VV$ are also elements of $\mathscr M$ and thus are `defined $\mass$-a.e.', in a sense (see also discussion in \cite{Gigli14}). In practice, not only we can say whether $\|v\|_{|A}=0$ for any open set $A$, but we can also ask whether $|v|=0$ $\mass$-a.e.\ on $B$ for $B\subseteq\XX$ Borel (and this certainly occurs if $B$ is open and $\|v\|_{|B}=0$). Because of this, we ask whether a given local vector measure $\nvect$ is local in the following sense, that we shall call \emph{strong locality}:
\begin{equation}
\label{eq:sl}
\nvect(B)(v)=0,\qquad\text{ for every $B\subseteq\XX$ Borel and $v\in\VV$ with $|v|=0$ $\mass$-a.e.\ on $B$.}
\end{equation}
What just said ensures that \eqref{eq:sl} implies \eqref{eq:wl}. There are two reasons for which it might happen that the converse implication fails:
\begin{itemize}
\item[1)] It might be that $|\nvect|\not\ll\mass$. In this case picking $B$ with $\mass(B)=0$ and $|\nvect|(B)>0$ we see that \eqref{eq:sl} cannot hold.
\item[2)] It might be that $|\nvect|\ll\mass$ but still  \eqref{eq:sl} fails, so that we can't improve the locality information from open sets to Borel ones. In investigating this matter it might be worth to notice that the germ seminorm $|v|_g$ coincides with the $\mass$-essential upper semicontinuous envelope of the pointwise norm $|v|$ (because $\Vert v\Vert_{| A}=\Vert \abs{v}\chi_A\Vert_{\Lp^\infty(\mu)}$ for any $A\subseteq\XX$ open).
\end{itemize}
Using Hahn-Banach on $\Lp^\infty$ it is easy to build examples where these can actually occur:

\begin{ex}
Let $(\XX,\dist,\mass)$ be the unit interval $[0,1]$ equipped with the usual distance and measure and $\VV:=\Lp^\infty(\mass)$. Also, let $V\subseteq\VV$ be the subspace of those functions that are $\mathcal L^1$-a.e.\ constant in a neighbourhood of 0 and $L:V\to\RR$ be the functional assigning to $f\in V$ the value it a.e.\ assumes in such neighbourhood. Then clearly $L$ has norm 1 and can be extended, via Hahn-Banach, to a functional with norm 1 on $\VV$, still denoted $L$. 

Now we define $\nvect:=L\delta_0$, i.e.\ we put $\nvect(B)(f):=\delta_0(B)L(f)$ for every $B\subseteq [0,1]$ Borel and $f\in \VV$. It is clear that $\nvect$ is a vector valued measure on $\VV'$; to check weak locality we notice that for $A\subseteq[0,1]$ open we have $\|f\|_{|A}=0$ if and only if $f=0$ $\mathcal L^1$-a.e.\ on $A$, whence the conclusion follows from the very definition of $L$. Since, rather clearly, we have $|\nvect|=\delta_0$, we have an example where $(1)$ above holds.

A variation of this construction also gives an example where $(2)$ holds. Namely, let  $(\XX,\dist,\mass)$ be the unit interval $[0,1]$ equipped with the usual distance and the measure $\mass:=\delta_0+\mathcal L^1$ and let $\VV:=\Lp^\infty(\mass)$. Also, let $V\subseteq\VV$ be the subspace of those functions that are $\mathcal L^1$-a.e.\ constant in a neighbourhood of 0 and $L:V\to\RR$ be the functional assigning to $f\in V$ the value it a.e.\ assumes in such neighbourhood (notice that $L(f)$ might be different from $f(0)$). Then, as before,  $L$ has norm 1 and can be extended, via Hahn-Banach, to a functional with norm 1 on $\VV$, still denoted $L$. 

As before, we define $\nvect:=L\delta_0$ and notice that the same arguments as above ensure that $\nvect$ is a local vector measure defined on $\VV$ with $|\nvect|=\delta_0\ll\mass$. To see that \eqref{eq:sl} fails let $f\in \VV$ be identically 1 on $(0,1]$ and $f(0)=0$. Then the pointwise norm of $f$ in 0 is $0$ (notice that, as already discussed, the pointwise norm is not the same as the germ seminorm)  so that if \eqref{eq:sl} is in place we should have $\nvect(\{0\})(f)=0$, but the fact that $f$ is in $V$ gives $L(f)=1$, so that  $\nvect(\{0\})(f)=1$.
\fr
\end{ex}
With this said, our main result in this section, namely Proposition \ref{polreprmodules} below, concerns characterization of strongly local  vector measures and extension of such measures initially defined only on appropriate subspaces, i.e.\ we are going to adapt Proposition \ref{lipbscb} to the strongly local case.

Before coming to the actual statement, let us point out an easy  to spot  class of strongly   local vector measures. Take $M\in\mathscr M^*$ (the dual in the sense of modules) with $|M|\in \Lp^\infty(\mass)$ and define $\nvect$ as
\begin{equation}
\label{eq:reprm}
\nvect(A)(v)\defeq\int_A M(v)\dd\mass\quad\text{for $A\subseteq\XX$ Borel and $v\in\VV$,}
\end{equation}
where $\VV \defeq\{v\in\mathscr M: |v|\in\Lpi\}$.
It is then easy to see  that $\nvect$ is a local vector measure satisfying \eqref{eq:sl} and that $|\nvect|=|M|\mass$ (one would also like to say that the polar decomposition $\nvect=L|\nvect|$ of $\nvect$ is given by $\nvect=\frac{M}{|M|}|\nvect|$ but this requires a bit of technical care because $M$ is not a map from $\XX$ to $\VV'$ but rather a `local' map from $\VV$ to $\Lp^1(\mass)$). There are strict  links between the two notions - relying on Corollary \ref{thmapp} - but we won't discuss this topic further and rather refer to the upcoming \cite{GLP22}]). One of the conclusions of Proposition \ref{polreprmodules} below is that - perhaps not surprisingly - in fact all strongly local  vector measures are of this form.

A more interesting question concerns the possibility of extending a strongly local  vector measure that initially is defined only on some normed $\mathcal R$-module $\WW$ dense in $\VV$ in a suitable sense.
We point out that for $v\in\VV$ ($\VV$ seen as a normed $\Cb(\XX)$-module) or $v\in\WW$ ($\WW$ seen as a normed $\Rr$-module) it holds$$\Vert v\Vert_{|A}=\Vert \chi_A|v|\Vert_{\Lp^\infty(\mass)}\qquad\text{for every $A\subseteq\XX$ open},$$
in particular the local seminorm $\Vert\,\cdot\,\Vert_{|A}$ (and hence the notion of germ seminorm and support) is independent of the choice of $\Rr$.

We have seen in Proposition \ref{lipbscb} that  all (weakly) local vector measure admit a unique extension, thus uniqueness is also in place in the strongly local case. We have not been able to achieve an equally general conclusion for what concerns existence, nor to find counterexamples; this is the same as to say that we don't know whether the extension of a strongly local vector measure given by Proposition \ref{lipbscb} is still strongly local. Still, we identified a sufficient condition on $\WW$ for this to hold: it amounts at asking that
\begin{equation}\label{stabilityfrac}
	\frac{1}{1\vee\abs{v}}v\in\WW\quad\text{for every }v\in\WW.
\end{equation}
A typical example of a situation when this happens is for $\WW=\Lp^\infty(\XX)\cap {\rm W}^{1,2}(\XX)$ (in our applications in the $\RCD$ setting we will pick a space of bounded Sobolev vector fields, see Section \ref{se:RCD}). Notice also that the map $v\mapsto \frac{1}{1\vee\abs{v}}v$ is a sort of `truncation' operation, as it leaves $v$ unchanged on $\{|v|\leq 1\}$ and normalizes it to $\frac{v}{|v|}$ on $\{|v|>1\}$.

Finally, we notice that once a representation like \eqref{eq:reprm} holds, one can easily extend the measure from the completion of $\Cb(\XX)\cdot\WW$ to $\{v\in\mathscr M:|v|\in\Lpi\}$: we shall use this observation in identifying `polar' and `representable' measures in Section \ref{se:RCD}, see Proposition \ref{equivrepr}.

\bigskip

With this said, our main result here is:
\begin{prop}\label{polreprmodules} Let $(\XX,\dist,\mass)$ be a metric measure space, $\mathscr M$ an $\Lp^p(\mass)$-normed $\Lp^\infty(\mass)$-module and $\VV\subseteq\mathscr M$ the normed $\Cb(\XX)$-module made of elements of $\mathscr M$ with pointwise norm in $\Lp^\infty(\mass)$. Also, let $\WW\subseteq \VV$ be a subspace that, with the inherited structure, is also  normed $\mathcal R$-module for some subring $\mathcal R\subseteq\Cb(\XX)$ that approximates open sets (Definition \ref{def:Rap}).

Assume also that $\WW$ satisfies \eqref{stabilityfrac} and that $\WW$ generates, in the sense of modules, $\mathscr M$.

Let $\nvect$ be a local vector measure defined on $\WW$ such that $|\nvect|\ll\mass$. Then the following assertions are equivalent:
\begin{itemize}
	\item[i)] $\nvect$ is strongly local on $\WW$, i.e.\ for any $v\in\WW$ and $B\subseteq\XX$ Borel we have
\[
		\nvect(B)(v)=0\quad\text{whenever }\abs{v}=0\ \abs{\nvect}\text{-a.e.\ on $B$}.
\]
		\item[ii)] there exists $M_\nvect$ in ${\mathscr M}^*$ (the dual in the sense of modules) such that $\abs{M_\nvect}=1\ \abs{\nvect}$-a.e.\ and for every $v\in\WW$, it holds
\begin{equation}\label{repr0}
		 L_\nvect(x)(v)=M_\nvect(v)(x)\quad\text{for $\abs{\nvect}$-a.e.\ $x\in\XX$},
\end{equation}
\end{itemize}
Moreover, if these holds the formula
\begin{equation}
\label{eq:extsl}
\hat \nvect(B)(v):=\int_BM_\nvect(v)\,\dd|\nvect|,\qquad\forall B\subseteq\XX,\ Borel,\ v\in\VV,
\end{equation}
provides the unique extension of $\nvect$ to a strongly  local vector measure defined on $\VV$.
\end{prop}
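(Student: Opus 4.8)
The equivalence i)$\Leftrightarrow$ii) together with the extension formula is the content of the Proposition, so I organize the argument in three blocks: (a) ii)$\Rightarrow$i) and the verification that \eqref{eq:extsl} defines a strongly local measure extending $\nvect$; (b) i)$\Rightarrow$ii), which is the heart of the matter; (c) uniqueness of the strongly local extension. Throughout I use the polar decomposition $\nvect = L_\nvect|\nvect|$ from Proposition \ref{allhaspolarabs}, the hypothesis $|\nvect|\ll\mass$, and Lemma \ref{foutside}.

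\medskip

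\noindent\textbf{Step (a): ii)$\Rightarrow$i) and the extension.} Assuming ii), for $v\in\WW$ with $|v|=0$ $|\nvect|$-a.e.\ on $B$ we get $M_\nvect(v)=0$ $\mass$-a.e.\ on $B$ (since $|M_\nvect(v)|\le|M_\nvect||v|=|v|$ $\mass$-a.e.), hence $\nvect(B)(v)=\int_B L_\nvect(v)\,\dd|\nvect| = \int_B M_\nvect(v)\,\dd|\nvect| = 0$, using $|\nvect|\ll\mass$; this is strong locality. Next I check that \eqref{eq:extsl} is well posed on $\VV$ and defines a local vector measure: for $v\in\VV$, $M_\nvect(v)\in\Lp^1(\mass)$ with $|M_\nvect(v)|\le|v|\in\Lp^\infty(\mass)$, so $B\mapsto\int_B M_\nvect(v)\,\dd|\nvect|$ is a finite signed measure, and $\hat\nvect(B)\in\VV'$ with $\|\hat\nvect(B)\|'\le|\nvect|(B)$, giving $\sigma$-additivity by the usual domination/tail argument (as in Proposition \ref{vectvalbanach}); weak locality is immediate since $\|v\|_{|A}=0$ means $|v|=0$ $\mass$-a.e.\ on $A$, and strong locality of $\hat\nvect$ follows just as above. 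That $\hat\nvect$ extends $\nvect$, i.e.\ $\hat\nvect(B)(v)=\nvect(B)(v)$ for $v\in\WW$, is exactly \eqref{repr0} integrated against $|\nvect|$ on $B$. Finally $|\hat\nvect|=|\nvect|$ because $|M_\nvect|=1$ $|\nvect|$-a.e.\ forces $|\hat\nvect|\le|\nvect|$ while evaluating on suitable $v\in\WW$ with $\|v\|\le 1$ realizing $M_\nvect(v)$ close to $1$ on prescribed Borel sets recovers the opposite inequality (here I use that $\WW$ generates $\mathscr M$).

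\medskip

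\noindent\textbf{Step (b): i)$\Rightarrow$ii), the main obstacle.} This is the crux. The natural candidate for $M_\nvect$ is the $\Lp^\infty(\mass)$-module morphism $\mathscr M\to\Lp^1(\mass)$ determined by $v\mapsto L_\nvect(v)$ for $v\in\WW$, but three things must be checked: (1) that $v\mapsto L_\nvect(v)$ (as an element of $\Lp^1(|\nvect|)=\Lp^1(\mass\mres|\nvect|)$, say identified with an $\Lp^1$-function via the density $\dd|\nvect|/\dd\mass$) is $\Lp^\infty(\mass)$-linear \emph{on $\WW$}, not merely $\Rr$-linear — this is precisely where strong locality enters, since $\mass$-a.e.\ equality of vector fields on a Borel set must be matched by equality of the associated densities, and weak locality alone only handles open sets; (2) that this morphism, defined a priori on the submodule generated by $\WW$, extends to all of $\mathscr M$ — here I use that $\WW$ generates $\mathscr M$ and the pointwise bound $|L_\nvect(v)|\le|v|$ $|\nvect|$-a.e.\ (a consequence of \eqref{nearliplintmp} and $\|v\|_{|A}=\|\chi_A|v|\|_{\Lp^\infty(\mass)}$) to pass to $\Lp^0$-limits of $\Lp^\infty$-combinations of elements of $\WW$, invoking \cite[Proposition 1.4.8]{Gigli14}; (3) that $|M_\nvect|=1$ $|\nvect|$-a.e., which comes from \eqref{essusp} of Proposition \ref{allhaspolarabs} once one knows $M_\nvect$ agrees with $L_\nvect$ on $\WW$ and that the essential sup over $\|v\|\le1$, $v\in\WW$, of $L_\nvect(v)$ equals $1$ — for the latter I restrict the essential sup in \eqref{essusp} to $\WW$, which suffices because $\WW$ generates $\mathscr M$ and the stability property \eqref{stabilityfrac} lets me replace an arbitrary test element by a truncated one in $\WW$ of norm $\le 1$ with the same sign. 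Step (1) is where I expect to spend the most care: strong locality on $\WW$ gives, for $v,w\in\WW$ and $B$ Borel with $\chi_B v=\chi_B w$ $\mass$-a.e., that $\nvect(B)(v-w)=0$, hence $L_\nvect(v)=L_\nvect(w)$ $|\nvect|$-a.e.\ on $B$; applying this with $B=\{g>t\}$ for $g\in\Lp^\infty(\mass)$ a simple function (so $\chi_B(gv)$ matches a constant multiple of $\chi_B v$) and passing to general $g\in\Lp^\infty(\mass)$ by the usual simple-function approximation yields $L_\nvect(gv)=g\,L_\nvect(v)$ $|\nvect|$-a.e. — but $gv$ need not lie in $\WW$, so one first establishes this with $g\in\Rr$ (where $gv\in\WW$ by the module structure) via Lemma \ref{foutside}, and then upgrades to $g\in\Lp^\infty(\mass)$ using that $M_\nvect$ has already been extended to $\mathscr M$ in step (2); there is a mild circularity to untangle here by doing (2) before completing (1), i.e.\ first build $M_\nvect$ as an $\Lp^0(\mass)$-morphism on $\mathscr M$ from the $\Rr$-linear data, then read off strong locality as the statement $M_\nvect(v)=L_\nvect(v)$ on $\WW$.

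\medskip

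\noindent\textbf{Step (c): uniqueness.} If $\nvect_1,\nvect_2$ are both strongly local vector measures on $\VV$ extending $\nvect$, then $\nvect_1-\nvect_2$ vanishes on $\WW$, hence on $\Cb(\XX)\cdot\WW$ by $\Cb(\XX)$-linearity (Lemma \ref{foutside}), hence on its closure; but $\Cb(\XX)\cdot\WW$ is dense in $\VV$ since $\WW$ generates $\mathscr M$ and elements of $\VV$ with pointwise norm in $\Lp^\infty(\mass)$ are $\Lp^\infty$-approximated by $\Lp^\infty$-combinations of generators (and one truncates to stay in $\VV$), so $\nvect_1=\nvect_2$ by \eqref{conicidence} exactly as in the uniqueness part of Proposition \ref{lipbscb}. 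This also shows the extension built in \eqref{eq:extsl} is \emph{the} strongly local extension, completing the proof.
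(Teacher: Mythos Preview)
There is a genuine gap in Step~(b)(2). You claim the pointwise bound $|L_\nvect(v)|\le|v|$ $|\nvect|$-a.e.\ is ``a consequence of \eqref{nearliplintmp} and $\|v\|_{|A}=\|\chi_A|v|\|_{\Lp^\infty(\mass)}$''. It is not. From \eqref{nearliplintmp} one only obtains $|L_\nvect(x)(v)|\le|v|_g(x)$, and in this setting $|v|_g$ is the $\mass$-essential \emph{upper semicontinuous envelope} of $|v|$, which in general is strictly larger than $|v|$ on sets of positive $\mass$-measure (hence possibly of positive $|\nvect|$-measure). Without the genuine pointwise bound you cannot invoke \cite[Proposition~1.4.8]{Gigli14} to build $M_\nvect$, and the whole construction of Step~(b) stalls. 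This is precisely the point at which the paper uses both strong locality \emph{and} the truncation hypothesis \eqref{stabilityfrac}: for $v\in\WW$ and $B$ Borel, set
\[
w_v\defeq\frac{\|\chi_B|v|\|_{\Lp^\infty(|\nvect|)}}{\|\chi_B|v|\|_{\Lp^\infty(|\nvect|)}\vee|v|}\,v\in\WW,
\]
note $|v-w_v|=0$ $|\nvect|$-a.e.\ on $B$, apply strong locality to get $\nvect(B)(v)=\nvect(B)(w_v)$, and use $\|w_v\|\le\|\chi_B|v|\|_{\Lp^\infty(|\nvect|)}$. Varying $B$ yields $|L_\nvect(v)|\le|v|$ $|\nvect|$-a.e.\ directly. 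You have both ingredients in your outline (strong locality appears in (1), \eqref{stabilityfrac} in (3)), but you never combine them for this purpose. Once this bound is in hand, the construction of $M_\nvect$ via \cite[Proposition~1.4.8]{Gigli14} is immediate and the ``mild circularity'' you worry about dissolves: there is no need to first establish $\Lp^\infty(\mass)$-linearity on $\WW$.

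A secondary issue concerns Step~(c). Your uniqueness argument relies on $\Cb(\XX)\cdot\WW$ being dense in $\VV$ for the $\Lp^\infty$-norm; this is not among the hypotheses (only that $\WW$ generates $\mathscr M$ in the $\Lp^p$-module sense) and need not hold. The paper sidesteps this by applying the implication i)$\Rightarrow$ii) to the extension $\hat\nvect$ itself (with $\VV$ playing the role of $\WW$, which also satisfies \eqref{stabilityfrac}), obtaining $M_{\hat\nvect}\in\mathscr M^*$; uniqueness of the module-dual element representing $L_\nvect$ on $\WW$ then forces $M_{\hat\nvect}=M_\nvect$.
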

\begin{proof}
	 The implication $ii)\Rightarrow i)$ is obvious, so we turn to the opposite one. We start by showing that for every $v\in\WW$ we have
\begin{equation}\label{todual0}
	\abs{L_\nvect(x)(v)}\le\abs{v}(x)\quad\text{for }\abs{\nvect}\text{-a.e.\ }x\in\XX.
\end{equation}
Let then $v\in\WW$ and let $B$ be a Borel subset of $\XX$. If $ v=0\ \abs{\nvect}$-a.e.\ on $B$, then $\nvect(B)(v)=0$. Otherwise we set $$w_v\defeq\frac{\Vert \abs{v}\chi_B\Vert_{\Lp^\infty(\abs{\nvect})}}{{\Vert \abs{v}\chi_B\Vert_{\Lp^\infty(\abs{\nvect})}}\vee\abs{v}} v$$ and notice that $w_v\in\WW$ by our assumption \eqref{stabilityfrac} and that $|v-w_v|=0$ $|\nvect|$-a.e.\ on $B$. Having assumed $i)$,
this implies
$$\abs{\nvect(B)(v)}=\abs{\nvect(B)(w_v)}\le\abs{\nvect}(B)\Vert w_v\Vert \le\abs{\nvect}(B)\Vert \abs{v}\chi_B\Vert_{\Lp^\infty(\abs{\nvect})}.$$
Therefore, for every $B\subseteq\XX$ Borel we have
$$\abs{\int_B L_\nvect(x)(v)\dd{\abs{\nvect}}}\le\abs{\nvect}(B)\Vert \abs{v}\chi_B\Vert_{\Lp^\infty(\abs{\nvect})}\quad\text{for every }v\in\WW,$$
so that \eqref{todual0} follows.

By the fact that  $\WW$ generates, in the sense of modules $\mathscr M$ and with \eqref{todual0} in mind, we can apply	\cite[Proposition 1.4.8 and Theorem 1.2.24]{Gigli14} to obtain existence and uniqueness of $M_\nvect\in{\mathscr M}^*$ such that \eqref{repr0} holds for every $v\in\WW$ and $\abs{M_\nvect}\le 1\ \abs{\nvect}$-a.e.. Then 
using \eqref{essusp} we show that $\abs{M_\nvect}= 1\ \abs{\nvect}$-a.e.

The fact that formula \eqref{eq:extsl} provides a strongly local extension of $\nvect$ is obvious. To  see that it is the only one, use the implication $(i)\Rightarrow(ii)$ just proved with $\VV$ in place of $\WW$ to find a (unique) corresponding $M_{\hat\nvect}\in\mathscr M^*$ such that \eqref{repr0} holds. Then the uniqueness of both $M_{\hat\nvect}$ and $M_{\nvect}$ forces the equality $M_{\hat\nvect}=M_{\nvect}$ and gives the conclusion.
\end{proof}

%

\section{The theory for RCD spaces}
In this section we treat the case of local vector measures defined on a particular class of $\Cb(\XX)$-normed modules, namely tangent modules on $\RCD$ spaces.
The reason for dealing with $\RCD$ spaces is having at our disposal a fine tangent module (with respect to the capacity). This fine tangent module is useful, in the practice, as many relevant objects turn out to have total variation which is absolutely continuous with respect to the capacity (e.g.\ the distributional derivative of a function of bounded variation).
Even tough it is fairly easy to adapt the theory developed in this section to a more general context, we decided to stick to this particular case for the sake of clarity and to avoid overloading the paper with the axiomatization of the properties regarding the interplay of modules involved, which are by now well known in the $\RCD$ setting.

\subsection{Some useful knowledge} 
With the introduction above in mind, let us briefly introduce $\RCD$ metric measure spaces. An $\RCD(K,N)$ space is an infinitesimally Hilbertian space (\cite{Gigli12}) satisfying a lower Ricci curvature bound and an upper dimension bound (meaningful if $N<\infty$) in synthetic sense according to \cite{Sturm06I,Sturm06II,Lott-Villani09}. General references on this topic are \cite{AmbICM,AmbrosioGigliMondinoRajala12,AmbrosioGigliSavare11,Ambrosio_2014,AmbrosioGigliSavare12,Gigli17,Gigli14,GP19,Villani2017} and we assume the reader to be familiar with this material.

\bigskip

Following \cite{Gigli14,Savare13} (with the additional request of a $\Lp^\infty$ bound on the Laplacian), we define the vector space of test functions on an $\RCD(K,\infty)$ space as
\begin{equation}\notag
	\TestF(\XX)\defeq\{f\in\LIP(\RR)\cap\Lpi\cap D(\Delta): 	\Delta f\in \HSs\cap\Lpi\},
\end{equation}
and the vector space of test vector fields as
\begin{equation}\label{testVdef}
	\TestV(\XX)\defeq\left\{ \sum_{i=1}^n f_i\nabla g_i : f_i\in\Ss\cap\Lpi,g_i\in\TestF(\XX)\right\}.
\end{equation}
Notice that   the original definition of $\TestV(\XX)$ given by the second  named author was slightly different, namely it was, $\left\{\sum_{i=1}^n f_i\nabla g_i: f_i,g_i\in\TestF(\XX)\right\}$.

Our choice is motivated by the desire of having \eqref{stabilityfrac} at our disposal without introducing further space of vectors. In this direction we point out that, rather clearly from the studies in \cite{Gigli14}, for any $v\in \TestV(\XX)$ we have $|v|\in {\rm W}^{1,2}(\XX)\cap\Lp^\infty(\XX)$, thus $\frac1{1\vee|v|}\in {\rm W}^{1,2}(\XX)\cap\Lp^\infty(\XX)$ as well, so that our definition of $\TestV(\XX)$ ensures that such space has the property \eqref{stabilityfrac}.
For what concerns the definition of the spaces  $\WHCSs, \WHHSs$  as closure of the space of test vector fields, having the enlarged space of vector fields makes no difference, as such enlargement is still, trivially, contained in the spaces  $\WHCSs, \WHHSs$   as originally introduced, and therefore such spaces can be equivalently defined taking the closure (with respect to the relevant norm) of the space defined in \eqref{testVdef}.

\bigskip

We assume familiarity with the definition of capacitary modules, quasi-continuous functions and vector fields and related material in \cite{debin2019quasicontinuous}. A summary of the material we will use can be found in \cite[Section 1.3]{bru2019rectifiability}. For the reader's convenience, we write the results that we will need most frequently.
Exploiting Sobolev functions, we define the $2$-capacity (to which we shall simply refer as capacity) of any set $A\subseteq\XX$ as 
$$\capa(A)\defeq\inf\left\{ \Vert f\Vert_{\HSs}^2:f\in\HSs,\ f\ge 1\ \mass\text{-a.e.\ on some neighbourhood of $A$}\right\}. $$
An important object will be the one of fine tangent module, as follows ($\qcr$ stands for `quasi continuous representative').
\begin{thm}[{\cite[Theorem 2.6]{debin2019quasicontinuous}}]\label{tancapa}
	Let $(\XX,\dist,\mass)$ be an $\RCD(K,\infty)$ space.
	Then there exists a unique couple $(\tanXcap,\nablatilde)$, where $\tanXcap$ is a $\Lp^0(\capa)$-normed $\Lp^0(\capa)$-module and $\nablatilde:	\TestF(\XX) \rightarrow\tanXcap$ is a linear operator such that:
	\begin{enumerate}[label=\roman*)]
		\item
		$|{\nablatilde f}|=\qcr(\abs{\nabla f}) \ \capa$-a.e.\ for every $f\in\TestF(\XX)$,
		\item
		the set $\left\{\sum_{n} \chi_{E_n}\nablatilde f_n\right\}$, where $\{f_n\}_n\subseteq\TestF(\XX)$ and $\{E_n\}_n$ is a Borel partition of $\XX$ is dense in $\tanXcap$.
	\end{enumerate}
	Uniqueness is intended up to unique isomorphism, in the following sense: if another couple $(\tanXcap',\nablatilde')$ satisfies the same properties, then there exists a unique module isomorphism $\Phi:\tanXcap\rightarrow\tanXcap'$ such that $\Phi\circ \nablatilde=\nablatilde'$.
	Moreover, $\tanXcap$ is a Hilbert module that we call capacitary tangent module.
\end{thm}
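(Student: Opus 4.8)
The plan is to run, in the $\Lp^0(\capa)$-setting, the by-now standard construction of ``the module generated by a prescribed family of generators with prescribed pointwise norms'' (in the spirit of the construction of the cotangent module in \cite{Gigli14}), the genuinely non-trivial input being the theory of quasi-continuous representatives of Sobolev vector fields from \cite{debin2019quasicontinuous}. So I would first isolate the facts I need from that reference: for $f\in\TestF(\XX)$ the gradient $\nabla f$ is a Sobolev vector field, hence admits a quasi-continuous representative $\qcr(\nabla f)$, well defined up to $\capa$-null sets; the map $\nabla f\mapsto\qcr(\nabla f)$ only depends on the $\mass$-a.e.\ class of $\nabla f$ (two $\mass$-a.e.\ equal Sobolev vector fields have $\capa$-a.e.\ equal quasi-continuous representatives); it is $\capa$-local (if $\nabla f=\nabla g$ $\mass$-a.e.\ on an open set $A$, then $\qcr(\nabla f)=\qcr(\nabla g)$ $\capa$-a.e.\ on $A$); it intertwines multiplication by bounded quasi-continuous functions and their quasi-continuous representatives; and $\abs{\qcr(\nabla f)}=\qcr(\abs{\nabla f})$ $\capa$-a.e., the right hand side being the quasi-continuous representative of the function $\abs{\nabla f}\in\HSs$.

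For \textbf{existence} I would build $\tanXcap$ explicitly as the $\Lp^0(\capa)$-module generated by the family $\{\qcr(\nabla f):f\in\TestF(\XX)\}$. Concretely, consider the collection of formal objects $\sum_n\chi_{E_n}\nablatilde f_n$, with $\{E_n\}_n$ a countable Borel partition of $\XX$ and $f_n\in\TestF(\XX)$, identifying two of them when, after passing to a common refinement $\{E_n\}_n$ of the two partitions, one has $\chi_{E_n}\qcr(\nabla f_n)=\chi_{E_n}\qcr(\nabla g_n)$ $\capa$-a.e.\ for every $n$. On this set I would set
\[
\Bigl|\sum_n\chi_{E_n}\nablatilde f_n\Bigr|\defeq\sum_n\chi_{E_n}\,\qcr(\abs{\nabla f_n}),
\]
which is well posed precisely because of the compatibility $\abs{\qcr(\nabla f)}=\qcr(\abs{\nabla f})$ $\capa$-a.e.\ together with the $\capa$-locality of $\qcr$ (so that $\chi_{E}\qcr(\nabla(f-g))=0$ $\capa$-a.e.\ forces $\chi_E\qcr(\abs{\nabla(f-g)})=0$ $\capa$-a.e.). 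Routine bookkeeping shows this is an $\Lp^0(\capa)$-normed $\Lp^0(\capa)$-module once one quotients out the elements of zero pointwise norm and takes the $\Lp^0(\capa)$-completion (convergence in capacity); the operator $\nablatilde$ is $f\mapsto 1\cdot\nablatilde f$, so $i)$ holds by construction and $ii)$ holds because the simple combinations are dense already before completing and remain dense after it. For the Hilbert structure, since $(\XX,\dist,\mass)$ is infinitesimally Hilbertian one has $\abs{\nabla(f+g)}^2+\abs{\nabla(f-g)}^2=2\abs{\nabla f}^2+2\abs{\nabla g}^2$ $\mass$-a.e., hence the same identity for the quasi-continuous representatives $\capa$-a.e.; this parallelogram identity holds for the generators $\qcr(\nabla f),\qcr(\nabla g)$, then piecewise for simple combinations, and then for all of $\tanXcap$ by continuity, so $\tanXcap$ is a Hilbert module.

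For \textbf{uniqueness}, given another couple $(\tanXcap',\nablatilde')$ I would define $\Phi$ on the dense set of simple combinations by $\sum_n\chi_{E_n}\nablatilde f_n\mapsto\sum_n\chi_{E_n}\nablatilde' f_n$. Property $i)$ for both couples gives
\[
\Bigl|\sum_n\chi_{E_n}\nablatilde f_n\Bigr|=\sum_n\chi_{E_n}\qcr(\abs{\nabla f_n})=\Bigl|\sum_n\chi_{E_n}\nablatilde' f_n\Bigr|\qquad\capa\text{-a.e.},
\]
so $\Phi$ is well defined, $\Lp^0(\capa)$-linear on simple combinations and preserves pointwise norms; hence it extends uniquely to an $\Lp^0(\capa)$-linear isometry $\tanXcap\to\tanXcap'$, which is surjective by property $ii)$ applied to the target and satisfies $\Phi\circ\nablatilde=\nablatilde'$. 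Uniqueness of $\Phi$ is immediate since it is forced on a dense set. The main obstacle of the whole argument is not this module-theoretic skeleton but the analytic input behind it: establishing that the quasi-continuous representative of a Sobolev vector field exists, is genuinely local with respect to the $2$-capacity (so that $\capa$-null sets are the correct notion of negligibility), is well defined on $\mass$-a.e.\ classes, and is compatible with the pointwise-norm operation and with multiplication by quasi-continuous functions — this is the substance of \cite{debin2019quasicontinuous}; once these structural facts are in hand, everything else (additivity and $\sigma$-additivity of the pointwise norm, compatibility of the $\Lp^0(\capa)$-completion with the operations, the density claim $ii)$) follows the familiar template.
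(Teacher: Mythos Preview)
The paper does not prove this theorem: it is stated as a citation of \cite[Theorem 2.6]{debin2019quasicontinuous} and no proof is given here. So there is no ``paper's own proof'' to compare against; your sketch is in the right spirit (it is the standard module-generated-by-a-family construction, transported to the $\Lp^0(\capa)$-setting), and the uniqueness and Hilbert-module parts are fine.

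There is, however, a circularity in your existence argument as written. You list among your inputs that ``the gradient $\nabla f$ is a Sobolev vector field, hence admits a quasi-continuous representative $\qcr(\nabla f)$'', and you use this vector-valued object to phrase the equivalence relation (``$\chi_{E_n}\qcr(\nabla f_n)=\chi_{E_n}\qcr(\nabla g_n)$ $\capa$-a.e.''). But in \cite{debin2019quasicontinuous} the map $\qcrbar$ on vector fields is constructed \emph{after} the capacitary tangent module $\tanXcap$ (indeed it takes values in it), so you cannot invoke it to build $\tanXcap$. The fix is to work purely with scalar quasi-continuous representatives: for test functions one has $\abs{\nabla f},\,\nabla f\cdot\nabla g\in\HSs$, hence $\qcr(\abs{\nabla f})$ and $\qcr(\nabla f\cdot\nabla g)$ are available as $\capa$-a.e.\ defined functions. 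Declare two simple combinations equivalent when the formal expression $\qcr(\abs{\nabla(f_n-g_n)})$ vanishes $\capa$-a.e.\ on each common piece $E_n$, define the pointwise norm by $\sum_n\chi_{E_n}\qcr(\abs{\nabla f_n})$ as you did, and use the $\capa$-a.e.\ identities inherited from the $\mass$-a.e.\ ones (triangle inequality, parallelogram law) via the fact that quasi-continuous functions agreeing $\mass$-a.e.\ agree $\capa$-a.e. Everything else in your outline then goes through unchanged.
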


Notice that we can, and will, extend the map $\qcr$ from $\HSs$ to $\Ss\cap\Lpi$ by a locality argument.
We define
\begin{equation}\notag
	\TestVbar(\XX)\defeq \left\{ \sum_{i=1}^n \qcr(f_i) \nablatilde g_i :f_i\in\Ss\cap\Lpi,g_i\in\TestF(\XX) \right\}.
\end{equation}

We define also the vector subspace of quasi-continuous vector fields, $\Cqcvf$, as the closure of $\TestVbar(\XX)$ in $\tanXcap$ and finally
\begin{equation}\label{Cqcvfdef}
	\Cqcvfinf\defeq\left\{v\in\Cqcvf:\abs{v}\text{ is $\capa$-essentially bounded} \right\}.
\end{equation}

Recall now that as $\mass\ll\capa$, we have a natural projection map
\begin{equation}\notag
	\Pr:\Lpc\rightarrow\Lpo \quad \text{defined as}\quad[f]_{\Lpc}\mapsto [f]_{\Lpo}
\end{equation}
where $[f]_{\Lpc}$ (resp.\ $[f]_{\Lpo}$) denotes the $\capa$ (resp.\ $\mass$) equivalence class of $f$. It turns out that $\Pr$, restricted to the set of quasi-continuous functions, is injective.
We have the following projection map $\Prbar$, given by \cite[Proposition 2.9 and Proposition 2.13]{debin2019quasicontinuous}, which plays the role of $\Pr$ on vector fields. 
\begin{prop}\label{prbardef}
	Let $(\XX,\dist,\mass)$ be an $\RCD(K,\infty)$ space. There exists a unique linear continuous map \begin{equation}\notag
		\Prbar :\tanXcap\rightarrow\tanXzero
	\end{equation}
	that satisfies
	\begin{enumerate}[label=\roman*)]
		\item $\Prbar (\nablatilde f)=\nabla f$ for every $f\in\TestF(\XX)$,
		\item $\Prbar (g v)=\Pr(g)\Prbar(v)$ for every $g\in\Lpc$ and $v\in\tanXcap$.
	\end{enumerate}
	Moreover, for every $v\in\tanXcap$,
	\begin{equation}\notag
		\abs{\Prbar(v)}=\Pr(\abs{v})\quad\mass\text{-a.e.}
	\end{equation}
	and $\Prbar$, when restricted to the set of quasi-continuous vector fields, is injective.
\end{prop}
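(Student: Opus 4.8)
The plan is to construct $\Prbar$ on the dense generating set exhibited in item ii) of Theorem \ref{tancapa} and then extend by density. For a \emph{simple} element $v=\sum_n\chi_{E_n}\nablatilde f_n$, with $\{f_n\}_n\subseteq\TestF(\XX)$ and $\{E_n\}_n$ a Borel partition of $\XX$, I would set
\[
\Prbar(v)\defeq\sum_n\Pr(\chi_{E_n})\,\nabla f_n\in\tanXzero .
\]
The crucial point is the pointwise norm identity $\abs{\Prbar(v)}=\Pr(\abs{v})$ $\mass$-a.e. To obtain it I would use locality of the pointwise norm to write $\abs{v}=\sum_n\chi_{E_n}\abs{\nablatilde f_n}$ $\capa$-a.e., then item i) of Theorem \ref{tancapa} to get $\abs{v}=\sum_n\chi_{E_n}\qcr(\abs{\nabla f_n})$ $\capa$-a.e.; applying the ring homomorphism $\Pr$ and using $\Pr\circ\qcr=\Id$ on $\Ss\cap\Lpi$ (legitimate since $\abs{\nabla f_n}\in\Ss\cap\Lpi$ by the improved regularity of test functions) yields $\Pr(\abs{v})=\sum_n\Pr(\chi_{E_n})\abs{\nabla f_n}$, and the right-hand side equals $\abs{\Prbar(v)}$ by locality of the pointwise norm in $\tanXzero$. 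Applied to $v-w$ when $v,w$ are two simple representations of the same element of $\tanXcap$, this identity forces $\Prbar(v)=\Prbar(w)$, so $\Prbar$ is well defined and linear on simple elements.

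Next I would extend $\Prbar$ by density. The estimate $\abs{\Prbar(v_1)-\Prbar(v_2)}=\Pr(\abs{v_1-v_2})$ together with the continuity of $\Pr:\Lpc\to\Lpo$ (valid because $\mass\ll\capa$) shows that if $v_k\to v$ in $\tanXcap$ then $\{\Prbar(v_k)\}_k$ is Cauchy in the complete module $\tanXzero$; I would define $\Prbar(v)$ as the limit, well-posed by the same estimate, and both the norm identity and linearity pass to the limit, giving a linear continuous map. Property i) is the case $E_1=\XX$ of the definition. Property ii) I would check first for simple $g\in\Lpc$ and simple $v$ — absorbing constants via $cf\in\TestF(\XX)$ and $\nablatilde(cf)=c\nablatilde f$, so that $gv$ is again simple — and then pass to general $g,v$ by density and continuity of all the operations involved. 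Uniqueness is immediate: any linear continuous map satisfying i) and ii) is forced to coincide with $\Prbar$ on simple elements, and these are dense.

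For injectivity on quasi-continuous vector fields, take $v\in\Cqcvf$ with $\Prbar(v)=0$. Then $\Pr(\abs{v})=\abs{\Prbar(v)}=0$ $\mass$-a.e. Now $\abs{v}$ is a quasi-continuous function: for $v\in\TestVbar(\XX)$ this follows by expanding $\abs{v}^2=\sum_{i,j}\qcr(f_i)\qcr(f_j)\langle\nablatilde g_i,\nablatilde g_j\rangle$ and using that $\langle\nablatilde g_i,\nablatilde g_j\rangle=\qcr(\langle\nabla g_i,\nabla g_j\rangle)$ is quasi-continuous, and for a general element of $\Cqcvf$ one passes to a $\capa$-a.e.\ convergent subsequence, along which quasi-continuity is preserved. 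Since a quasi-continuous function vanishing $\mass$-a.e.\ vanishes $\capa$-a.e.\ — this is precisely the injectivity of $\Pr$ on quasi-continuous functions recalled before the statement — we conclude $\abs{v}=0$ $\capa$-a.e., i.e.\ $v=0$ in $\tanXcap$.

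The main obstacle is not the algebraic and topological bookkeeping above but the two analytic inputs on which it rests: the improved regularity $\abs{\nabla f}\in\Ss\cap\Lpi$ for test functions $f$ (which is what makes item i) of Theorem \ref{tancapa} and hence the norm identity meaningful), and the stability of quasi-continuity under the pointwise operations and the $\tanXcap$-limits used to define $\Cqcvf$, together with injectivity of $\Pr$ on quasi-continuous functions. Once these are granted — as they are in the theory developed in \cite{debin2019quasicontinuous} — the construction, the norm identity, and the uniqueness of $\Prbar$ all follow routinely.
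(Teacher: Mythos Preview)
The paper does not prove this proposition: it is simply recalled from \cite[Propositions 2.9 and 2.13]{debin2019quasicontinuous}, so there is no ``paper's own proof'' to compare against. Your construction is the natural one and is essentially how the result is obtained in that reference: define $\Prbar$ on simple elements, establish the pointwise norm identity via item i) of Theorem \ref{tancapa} together with $\Pr\circ\qcr=\Id$, and extend by density; uniqueness and properties i)--ii) then follow immediately.

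One point deserves a sharper justification. In the injectivity argument you pass from $v\in\TestVbar(\XX)$ to general $v\in\Cqcvf$ by taking a subsequence of pointwise norms converging $\capa$-a.e.\ and asserting that ``quasi-continuity is preserved''. Pointwise (or $\capa$-a.e.) limits of quasi-continuous functions are \emph{not} quasi-continuous in general; what is true, and what is proved in \cite{debin2019quasicontinuous}, is that the class of quasi-continuous functions is closed in $\Lpc$ (equivalently, $\Lpc$-Cauchy sequences of quasi-continuous functions converge quasi-uniformly along a subsequence). You implicitly rely on this when you invoke \cite{debin2019quasicontinuous} at the end, but it would be cleaner to state it explicitly rather than phrasing it as stability under $\capa$-a.e.\ convergence. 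With that caveat, your argument is correct.
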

We point out that if $v\in\Cqcvf$, \cite[Proposition 2.12]{debin2019quasicontinuous} shows that $\abs{v}\in\Lpc$ is quasi-continuous, in particular,  $v\in \Cqcvfinf$ if and only if $\Prbar(v)\in\tanXinf$.

In what follows, with a little abuse, we will often write, for $v\in\tanXcap$, $v\in D(\dive)$ if and only if $\Prbar (v)\in D(\dive)$ and, if this is the case, $\dive\, v=\dive(\Prbar(v))$. Similar notation will be used for other operators acting on subspaces of $\tanXzero$. 

The following theorem describes the analogue of  the map $\qcr$ (defined on functions) in the case of vector fields.
\begin{thm}[{\cite[Theorem 2.14 and Proposition 2.13]{debin2019quasicontinuous}}]
	Let $(\XX,\dist,\mass)$ be an $\RCD(K,\infty)$ space. Then there exists a unique map $\qcrbar:\WHCSs\rightarrow\tanXcap$ such that
	\begin{enumerate}[label=\roman*)]
		\item $\qcrbar (v)\in\Cqcvf$ for every $v\in\WHCSs$,
		\item $\Prbar\circ{\qcrbar}(v)=v$ for every $v\in\WHCSs$.
	\end{enumerate}
	Moreover, $\qcrbar$ is linear and satisfies
	\begin{equation}\notag
		\abs{\qcrbar(v)}=\qcr(\abs{v})\quad \capa\text{-a.e.\ for every }v\in\WHCSs,
	\end{equation}
	so that $\qcrbar$ is continuous as map from $\WHCSs$ to $\tanXcap$.
\end{thm}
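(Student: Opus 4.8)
This statement is the combination of \cite[Theorem 2.14 and Proposition 2.13]{debin2019quasicontinuous}; here I only sketch the strategy. The plan is to first define $\qcrbar$ on the dense subspace $\TestV(\XX)\subseteq\WHCSs$ by the explicit formula
\[
\qcrbar\Big(\sum_{i=1}^n f_i\nabla g_i\Big)\defeq\sum_{i=1}^n\qcr(f_i)\,\nablatilde g_i\in\TestVbar(\XX),
\]
and then to extend it by continuity. The first and main point is that this formula is well posed, i.e.\ that the right-hand side depends only on $\sum_i f_i\nabla g_i\in\tanXzero$ and not on the chosen representation. The key identity behind this is
\begin{equation}\label{eq:keyqc}
\abs{\qcrbar(v)}=\qcr(\abs{v})\quad\capa\text{-a.e.}\qquad\text{for every }v\in\TestV(\XX).
\end{equation}
To obtain it one expands $\abs{v}^2=\sum_{i,j}f_if_j\langle\nabla g_i,\nabla g_j\rangle$ $\mass$-a.e.\ and $\abs{\qcrbar(v)}^2=\sum_{i,j}\qcr(f_i)\qcr(f_j)\langle\nablatilde g_i,\nablatilde g_j\rangle$ $\capa$-a.e., then uses the compatibility $\langle\nablatilde g_i,\nablatilde g_j\rangle=\qcr(\langle\nabla g_i,\nabla g_j\rangle)$ $\capa$-a.e.\ (which follows from property $i)$ of Theorem \ref{tancapa} by polarization, $\qcr$ being multiplicative on quasi-continuous functions) together with the fact that $\qcr$ is a ring homomorphism. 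Applying \eqref{eq:keyqc} to the difference of two formal representations of the same vector field — itself an element of $\TestV(\XX)$, now null $\mass$-a.e. — shows that $\qcrbar$ of it has null $\capa$-a.e.\ pointwise norm, hence is the zero element of $\tanXcap$; this gives well-posedness, and linearity of $\qcrbar$ on $\TestV(\XX)$ is then immediate from the formula.

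The second point is continuity. For $v\in\TestV(\XX)$ one has $\abs{v}\in{\rm W}^{1,2}(\XX)\cap\Lpi$ (as recalled before \eqref{testVdef}, following \cite{Gigli14}), with $\||v|\|_{{\rm W}^{1,2}(\XX)}$ controlled by $\|v\|_{\WHCSs}$ via $\abs{\nabla\abs{v}}\le\abs{\nabla v}$ $\mass$-a.e. Combining this with \eqref{eq:keyqc} and with the continuity of $\qcr:{\rm W}^{1,2}(\XX)\to\Lpc$ (convergence in capacity), one sees that if $\{v_k\}_k\subseteq\TestV(\XX)$ is $\WHCSs$-Cauchy then $\abs{\qcrbar(v_k)-\qcrbar(v_h)}=\qcr(\abs{v_k-v_h})\to 0$ in $\Lpc$, i.e.\ $\{\qcrbar(v_k)\}_k$ is Cauchy in the complete module $\tanXcap$. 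This lets one extend $\qcrbar$ to a linear continuous map $\WHCSs\to\tanXcap$ in the standard way; both \eqref{eq:keyqc} (now for all $v\in\WHCSs$) and the fact that $\qcrbar$ takes values in $\Cqcvf$, the closure of $\TestVbar(\XX)$, pass to the limit, yielding $i)$ and the \say{moreover} part.

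For property $ii)$ one checks $\Prbar\circ\qcrbar=\Id$ first on $\TestV(\XX)$: by properties $i)$ and $ii)$ of $\Prbar$ in Proposition \ref{prbardef} and $\Pr(\qcr(f))=f$ in $\Lpo$ one gets $\Prbar\big(\sum_i\qcr(f_i)\nablatilde g_i\big)=\sum_i\Pr(\qcr(f_i))\nabla g_i=\sum_i f_i\nabla g_i$, and the identity extends to $\WHCSs$ since both $\Prbar$ and $\qcrbar$ are continuous. Finally, for uniqueness, if $\qcrbar'$ is any map satisfying $i)$ and $ii)$, then for every $v\in\WHCSs$ both $\qcrbar(v)$ and $\qcrbar'(v)$ lie in $\Cqcvf$ and have the same image $v$ under $\Prbar$; since $\Prbar$ is injective on quasi-continuous vector fields (last part of Proposition \ref{prbardef}), $\qcrbar'(v)=\qcrbar(v)$. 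The step I expect to be most delicate is \eqref{eq:keyqc}, and more precisely the compatibility $\langle\nablatilde g_i,\nablatilde g_j\rangle=\qcr(\langle\nabla g_i,\nabla g_j\rangle)$: this is exactly the point where the finer capacitary structure of $\tanXcap$, as opposed to the merely $\mass$-a.e.\ tangent module, enters the argument.
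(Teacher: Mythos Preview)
Your sketch is correct and follows the natural (and, as far as I can see, the only reasonable) route: define $\qcrbar$ on test vector fields by the explicit formula, establish the pointwise-norm identity \eqref{eq:keyqc} via polarization of property $i)$ in Theorem \ref{tancapa} together with the multiplicativity of $\qcr$ on $\Ss\cap\Lpi$, deduce well-posedness and continuity, and extend by density; uniqueness then follows from the injectivity of $\Prbar$ on quasi-continuous vector fields. Note that the present paper does not give its own proof of this statement --- it is quoted verbatim from \cite[Theorem 2.14 and Proposition 2.13]{debin2019quasicontinuous} --- so there is no ``paper's proof'' to compare with; your argument is precisely the one carried out in that reference. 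Two minor remarks: first, with the paper's enlarged definition of $\TestV(\XX)$ (coefficients $f_i\in\Ss\cap\Lpi$ rather than $\TestF(\XX)$) you should use the locality-extended $\qcr$ on $\Ss\cap\Lpi$ rather than just on $\HSs$, but this changes nothing substantial; second, the phrase ``$\qcr$ is a ring homomorphism'' is slightly loose --- what you actually use, and what holds, is $\qcr(\varphi(h_1,\dots,h_m))=\varphi(\qcr(h_1),\dots,\qcr(h_m))$ for bounded Sobolev $h_j$ and Lipschitz $\varphi$ with $\varphi(0)=0$ (as recorded around \eqref{vndsjosocn}), which is enough to handle the finite sums and products appearing in $\abs{v}^2$.
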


We will often omit to write the $\qcrbar$ operator for simplicity of notation. This should cause no ambiguity thanks to the fact that 
\begin{equation}\label{qcrfactorizes}
	\qcrbar(g v)=\qcr(g)\qcrbar(v) \quad\text{for every }g\in\HSs\cap\Lpi\text{ and } v\in\WHCSs\cap\tanXinf.
\end{equation}
This can be proved easily as the continuity of the map $\qcr$ implies that $\qcr(g) \qcrbar(v)$ as above is quasi-continuous and the injectivity of the map $\Prbar$ restricted the set of quasi-continuous vector fields yields the conclusion.
Again by locality, we have that \eqref{qcrfactorizes} holds even for $g\in\Ss\cap\Lpi$.

\bigskip

The following theorem, which is {\cite[Section 1.3]{bru2019rectifiability}}, will be crucial in the construction of modules tailored to particular measures.
\begin{thm}
	\label{finemodule}
	Let $(\XX,\dist,\mass)$ be a metric measure space and let $\mu$ be a Borel measure finite on balls such that $\mu\ll\capa$. Let also $\MM$ be a $\Lpc$-normed $\Lpc$-module. Define the natural (continuous) projection 
	\begin{equation}\notag
		\pi_\mu:\Lpc\rightarrow\Lp^0(\mu).
	\end{equation}
	We define an equivalence relation $\sim_\mu$ on $\MM$ as 
	\begin{equation}\notag
		v\sim_\mu w \text{ if and only if } \abs{v-w}=0 \quad \mu\text{-a.e.}
	\end{equation} 
	Define the quotient space $\MM_{\mu}^0\defeq{\MM}/{\sim_\mu}$ with the natural  (continuous) projection
	\begin{equation}\notag
		\pibar_\mu:\MM\rightarrow\MM_{\mu}^0.
	\end{equation}
	Then $\MM_{\mu}^0$ is a $\Lp^0(\mu)$-normed $\Lp^0(\mu)$-module, with the pointwise norm and product induced by the ones of $\MM$: more precisely, for every $v\in\MM$ and $g\in\Lpc$,
	\begin{equation}\label{defnproj}
		\begin{cases}
			\abs{\pibar_\mu(v)}\defeq\pi_\mu(\abs{v}),\\
			\pi_\mu(g)\pibar_\mu(v)\defeq\pibar_\mu( g v).
		\end{cases}
	\end{equation}
	
	If $p\in[1,\infty]$, we set
	\begin{equation}\notag
		\MM^{p}_{\mu}\defeq\left\{ v\in\MM^0_{\mu}:\abs{v}\in\Lp^p(\mu)\right\},
	\end{equation}
	which is a $\Lp^p(\mu)$-normed $\Lp^\infty(\mu)$-module.
	Moreover, if $\MM$ is a Hilbert module, also $\MM_\mu^0$ and $\MM_\mu^2$ are Hilbert modules. 
\end{thm}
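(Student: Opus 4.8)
\textbf{Proof proposal for Theorem \ref{finemodule}.}

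The plan is to verify directly that the quotient $\MM^0_\mu = \MM/\!\!\sim_\mu$, equipped with the operations prescribed in \eqref{defnproj}, satisfies the axioms of an $\Lp^0(\mu)$-normed $\Lp^0(\mu)$-module, and then to read off the statements about $\MM^p_\mu$ and the Hilbert case. First I would check that $\sim_\mu$ is a module-congruence: it is clearly an equivalence relation, it is compatible with addition since $|(v+v')-(w+w')| \le |v-w| + |v'-w'|$, and it is compatible with multiplication by $g \in \Lpc$ since $|gv - gw| = |g|\,|v-w| = 0$ $\mu$-a.e.\ whenever $|v-w| = 0$ $\mu$-a.e. Hence the vector space operations descend to $\MM^0_\mu$, and the $\Lpc$-module structure descends to an $\Lp^0(\mu)$-module structure via $\pi_\mu(g)\pibar_\mu(v) \defeq \pibar_\mu(gv)$: one must check this is well defined in both arguments, which follows from the compatibility just noted together with the fact that $\pi_\mu(g_1) = \pi_\mu(g_2)$ means $g_1 = g_2$ $\mu$-a.e., so $|g_1 v - g_2 v| = |g_1 - g_2|\,|v| = 0$ $\mu$-a.e.

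Next I would establish that the pointwise norm $|\pibar_\mu(v)| \defeq \pi_\mu(|v|)$ is well defined and has the required properties. Well-definedness is the inequality $\big|\,|v| - |w|\,\big| \le |v-w|$, valid in $\MM$, which gives $\pi_\mu(|v|) = \pi_\mu(|w|)$ whenever $v \sim_\mu w$. The pointwise-norm axioms — namely $|\pibar_\mu(v)| \ge 0$ with equality iff $\pibar_\mu(v) = 0$, $|\pibar_\mu(v) + \pibar_\mu(w)| \le |\pibar_\mu(v)| + |\pibar_\mu(w)|$, and $|\pi_\mu(g)\pibar_\mu(v)| = |\pi_\mu(g)|\,|\pibar_\mu(v)|$ $\mu$-a.e.\ — all follow by applying $\pi_\mu$ to the corresponding identities/inequalities in $\MM$, using that $\pi_\mu$ is a positivity-preserving ring homomorphism $\Lpc \to \Lp^0(\mu)$ and that $\pibar_\mu(v) = 0$ means exactly $|v| = 0$ $\mu$-a.e., i.e.\ $\pi_\mu(|v|) = 0$. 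The $\Lp^0$-normed structure requires a distance inducing the topology of local convergence in measure; here one transports the standard construction, e.g.\ $\mathsf d_{\MM^0_\mu}(\pibar_\mu(v),\pibar_\mu(w)) \defeq \sum_k 2^{-k} \min\{1, \int_{B_k} |v-w| \wedge 1 \,\dd\mu\}$ for an exhausting sequence of balls $B_k$ (using that $\mu$ is finite on balls), and checks completeness: given a $\mathsf d_{\MM^0_\mu}$-Cauchy sequence, lift representatives and argue as in the standard proof that $\Lp^0$-modules are complete, extracting a subsequence convergent $\mu$-a.e.\ in pointwise norm and using completeness of $\MM$ together with the fact that $\pibar_\mu$ is continuous and surjective. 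The main obstacle I anticipate is precisely this last point: one must be a little careful that a sequence Cauchy in the quotient metric lifts to something controllable in $\MM$ — the cleanest route is to note $\pibar_\mu$ is a quotient map of $\Lp^0$-normed modules and invoke the general principle that such quotients are complete, or else to run the lifting argument by hand picking a fast-converging subsequence so that the series of $\mu$-a.e.\ differences is summable.

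Finally, with $\MM^0_\mu$ identified as an $\Lp^0(\mu)$-normed $\Lp^0(\mu)$-module, the assertions about $\MM^p_\mu \defeq \{v \in \MM^0_\mu : |v| \in \Lp^p(\mu)\}$ are immediate: it is a submodule over $\Lp^\infty(\mu)$ closed in the $\Lp^p(\mu)$ norm $\|v\|_p \defeq \||v|\|_{\Lp^p(\mu)}$, and completeness follows from completeness of $\MM^0_\mu$ in measure together with Fatou's lemma (a Cauchy sequence in $\|\cdot\|_p$ is Cauchy in measure, its $\mu$-a.e.\ limit lies in $\MM^0_\mu$, and Fatou bounds its $\Lp^p$ norm), so $\MM^p_\mu$ is an $\Lp^p(\mu)$-normed $\Lp^\infty(\mu)$-module. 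For the Hilbert statement, if $\MM$ satisfies the pointwise parallelogram identity $|v+w|^2 + |v-w|^2 = 2|v|^2 + 2|w|^2$ $\capa$-a.e., then applying $\pi_\mu$ yields the same identity $\mu$-a.e.\ in $\MM^0_\mu$ (and hence in $\MM^2_\mu$), which is exactly the condition that these be Hilbert modules; the associated pointwise scalar product is obtained by polarization, $\langle v,w\rangle \defeq \tfrac14(|v+w|^2 - |v-w|^2)$, and one checks it descends compatibly. This completes the verification.
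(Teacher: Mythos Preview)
The paper does not prove this theorem; it is quoted from \cite[Section 1.3]{bru2019rectifiability}, so there is no in-paper argument to compare against. Your proposal is the natural direct verification and is essentially correct.

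The one point that deserves a sharper treatment is completeness of $\MM^0_\mu$, which you rightly flag as the obstacle. The phrase ``using completeness of $\MM$'' hides a genuine issue: a sequence Cauchy in the $\mu$-quotient metric gives control of $|v_{n_{k+1}}-v_{n_k}|$ only $\mu$-a.e., not $\capa$-a.e., so you cannot directly conclude that the lifts form a Cauchy sequence in $\MM$. The fix is to localize: after passing to a fast subsequence with $\sum_k \mu\big(\{|v_{n_{k+1}}-v_{n_k}|>2^{-k}\}\big)<\infty$, set $F_m\defeq\bigcap_{k\ge m}\{|v_{n_{k+1}}-v_{n_k}|\le 2^{-k}\}$. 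On each $F_m$ the truncated sequence $\chi_{F_m}v_{n_k}$ \emph{is} Cauchy in $\MM$ (the differences are pointwise bounded by $2^{-k}$ $\capa$-a.e.\ on $F_m$), hence converges to some $w_m\in\MM$; the $w_m$ are consistent under restriction and $\mu\big(\XX\setminus\bigcup_m F_m\big)=0$ by Borel--Cantelli, so the countable glueing property of the $\Lpc$-module $\MM$ produces $w\in\MM$ with $\chi_{F_m}w=w_m$, and $\pibar_\mu(w)$ is the desired limit. The ``general principle that quotients are complete'' is not safe here without exactly this kind of argument, since quotients of complete metric spaces need not be complete in general.
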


In the particular case in which $\MM=\tanXcap$ and $\mu$ is a Borel measure finite on balls such that $\mu\ll\capa$, we set
\begin{equation}\notag
	\tanbvXp{p}{\mu}\defeq(\tanXcap)_\mu^p\quad\text{for }p\in\{0\}\cup[1,\infty].
\end{equation} 
In the case $\mu=\mass$ notice that considering the map
$$\dot{\nabla}:\TestF(\XX)\stackrel{\nablatilde}{\longrightarrow}\tanXcap \stackrel{\pibar_\mass }{\longrightarrow}(\tanXcap)_\mass^0 $$we can show that $(\tanXcap)_\mass^0$ is isomorphic to the usual $\Lp^0$ tangent module via a map that sends $\nabla f$ to $\dot{\nabla} f$ so that we have no ambiguity of notation and, by construction, the map $\pibar_\mass$ coincides with $\Prbar$ defined in Proposition \ref{prbardef}.
We define the traces
\begin{alignat}{5}\notag
	&\tr_\mu:\HSsloc\rightarrow\Lp^0( \mu)&&\quad\text{as}\quad &&&\tr_\mu\defeq\pi_{\mu}\circ\qcr,\\\notag
	&\trbar_\mu:\WHCSs\rightarrow\tanbvXzero{\mu}&&\quad\text{as}\quad&&&\trbar_\mu\defeq \pibar_{\mu}\circ\qcrbar.
\end{alignat}
To simplify the notation, we will often omit to write the trace operators. This should cause no ambiguity because from \eqref{qcrfactorizes} and \eqref{defnproj} it follows that
\begin{equation}\label{qcrfactorizes2}
	\trbar_\mu(g v)=\tr_\mu (g )\trbar_\mu(v) \quad\text{for every }g\in\HSsloc\cap\Lpi\text{ and } v\in\WHCSs\cap\tanXinf.
\end{equation}

We define
\begin{equation}\notag
	\TestV_\mu(\XX)\defeq\trbar_\mu(\TestV(\XX))\subseteq\tanbvXp{\infty}{\mu}
\end{equation}
and the proof of \cite[Lemma 2.7]{bru2019rectifiability} gives what follows.
\begin{lem}\label{densitytestbargen}
	Let $(\XX,\dist,\mass)$ be an $\RCD(K,\infty)$ space and let $\mu$ be a finite Borel measure such that $\mu\ll\capa$. Then $\TestV_\mu(\XX)$ is dense in $\tanbvXp{p}{\mu}$ for every $p\in[1,\infty)$.
\end{lem}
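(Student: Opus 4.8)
The plan is to deduce the statement from the known density result $\TestV_\mu(\XX)\subseteq\tanbvXp{p}{\mu}$ in the case $\mu=\mass$, transported along the projection/trace structure, together with the approximation machinery already available in the capacitary setting. First I would recall why the claim is plausible: by Theorem \ref{tancapa}$(ii)$, finite sums $\sum_n\chi_{E_n}\nablatilde f_n$ with $f_n\in\TestF(\XX)$ are dense in $\tanXcap$, hence (after applying $\pibar_\mu$, which is continuous and surjective) their images are dense in $\tanbvXzero{\mu}$, and thus also in $\tanbvXp{p}{\mu}$ for $p\in[1,\infty)$ by a standard truncation argument for $\Lp^p(\mu)$-normed modules. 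So the real content is to pass from building blocks of the form $\chi_E\nablatilde f$ to building blocks of the form $\trbar_\mu(v)$ with $v\in\TestV(\XX)$, i.e.\ of the form $\sum_i\qcr(h_i)\nablatilde g_i$ with $h_i\in\Ss\cap\Lpi$, $g_i\in\TestF(\XX)$.

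The key steps, in order, would be: \textbf{(1)} Reduce to approximating a single element $\chi_E\nablatilde f$ with $E$ Borel and $f\in\TestF(\XX)$, in $\Lp^p(\mu)$-norm. \textbf{(2)} Approximate $\chi_E$ in $\Lp^p(\mu)$-norm (equivalently $\mu$-a.e.\ up to subsequence, with uniform bound) by quasi-continuous functions: since $\mu\ll\capa$ and $\mu$ is finite, one can use that $\HSs$-functions (hence their quasi-continuous representatives) are dense enough — concretely, approximate $\chi_E$ first by $\mass$-a.e.\ bounded Sobolev functions via capacity-quasi-continuous representatives, exploiting that capacity is an outer measure for which quasi-continuous functions separate sets up to small capacity; then apply $\tr_\mu=\pi_\mu\circ\qcr$. \textbf{(3)} Use \eqref{qcrfactorizes2}, i.e.\ $\trbar_\mu(h v)=\tr_\mu(h)\trbar_\mu(v)$ for $h\in\HSsloc\cap\Lpi$ and $v\in\WHCSs\cap\tanXinf$, applied with $v=\nabla f\in\TestV(\XX)$ (so $\trbar_\mu(\nabla f)=\nablatilde f$ after the identifications) to write $\tr_\mu(h)\,\nablatilde f=\trbar_\mu(h\nabla f)$, and note $h\nabla f\in\TestV(\XX)$ precisely because of the enlarged definition \eqref{testVdef} (here $h\in\Ss\cap\Lpi$ is allowed). \textbf{(4)} Combine: $\trbar_\mu(h\nabla f)$ is in $\TestV_\mu(\XX)$ and approximates $\chi_E\nablatilde f$ in $\Lp^p(\mu)$; sum over finitely many blocks and use that the pointwise norm controls the $\Lp^p(\mu)$-norm via $|\sum|\le\sum|\cdot|$ and dominated convergence (all pieces are $\capa$-essentially bounded, $\mu$ finite, so the bound $|\trbar_\mu(h\nabla f)|\le\|h\|_\infty|\nablatilde f|$ from Proposition \ref{prbardef}/the isometry property gives the needed domination). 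This is essentially a transcription of the proof of \cite[Lemma 2.7]{bru2019rectifiability} with $\TestV(\XX)$ in its enlarged form, which is exactly what the paragraph before the lemma asserts.

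The main obstacle I expect is step \textbf{(2)}: cleanly approximating the rough cutoff $\chi_E$ (for $E$ merely Borel) by quasi-continuous functions in a way that converges in $\Lp^p(\mu)$ and stays uniformly bounded, using only $\mu\ll\capa$. The natural route is: given $\varepsilon>0$, use inner/outer regularity of $\capa$ (or of $\mu$) plus the fact that for a closed set $C$ and open $A\supseteq C$ with $\capa(A\setminus C)$ small there is a quasi-continuous Sobolev function between $\chi_C$ and $\chi_A$, then pass to $\tr_\mu$; the uniform $\Lp^\infty$ bound is automatic from the $[0,1]$-valued construction, and $\Lp^p(\mu)$ convergence follows from $\mu$ finite plus $\mu(A\setminus C)\le$ (something controlled by $\capa(A\setminus C)$). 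One must be slightly careful that $\qcr$ is only defined on $\HSs$ (or its extension to $\Ss\cap\Lpi$), so the approximants of $\chi_E$ must be taken in $\HSs\cap\Lpi$; this is fine since Sobolev cutoff functions are available in any metric measure space with the relevant structure. Once this lemma-level approximation is in place, steps (1), (3), (4) are routine module-theoretic bookkeeping, so I would allocate almost all the writing to making step (2) precise. One should also double-check the harmless identification, used throughout, that $\pibar_\mass\circ\nablatilde=\dot\nabla$ matches $\nabla$ under the isomorphism $(\tanXcap)^0_\mass\cong\tanXzero$, but this is recorded in the text just before the lemma and can be cited.
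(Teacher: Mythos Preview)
Your proposal is correct and matches the paper's approach, which simply cites \cite[Lemma 2.7]{bru2019rectifiability} without reproducing the argument; your outline is a faithful sketch of what that proof contains.

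One remark on the step you flag as the main obstacle: step \textbf{(2)} is easier than you suggest. You do not need to route the approximation of $\chi_E$ through capacity estimates. Since $\mu$ is a finite Borel measure on a Polish space it is Radon, so for any Borel $E$ and $\varepsilon>0$ there are $K\subseteq E\subseteq A$ with $K$ compact, $A$ open bounded, and $\mu(A\setminus K)<\varepsilon$; then any $h\in\LIPbs(\XX)$ with $\chi_K\le h\le\chi_A$ lies in $\HSs\cap\Lpi$, is its own quasi-continuous representative, and satisfies $\|\tr_\mu(h)-\chi_E\|_{\Lp^p(\mu)}^p\le\mu(A\setminus K)<\varepsilon$. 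The hypothesis $\mu\ll\capa$ is used only to make the module $\tanbvXp{p}{\mu}$ and the trace maps well-defined, not in the approximation itself; in particular there is no need to control $\mu(A\setminus C)$ by $\capa(A\setminus C)$.
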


\bigskip

We will also need Cartesian products of normed modules.
Fix $n\in\NN$, $n\ge 1$ and denote by $\Vert\,\cdot\,\Vert_e$ the Euclidean norm of $\RR^n$. 
Given a $\Lpo$-normed $\Lpo$-module $\mathcal{N} $, we can consider its Cartesian product $\mathcal{N}^n$ and endow it with the natural module structure and with the pointwise norm
$$\abs{(v_1,\dots,v_n)}\defeq\Vert (\abs{v_1},\dots,\abs{v_n})\Vert_e$$
which is induced by a scalar product if and only if the one of $\mathcal{N} $ is, and if this is the case, we will still denote the pointwise scalar product on $\mathcal{N}^n$ by $\,\cdot\,$. Similarly, if $\mathcal N$ is an $\Lp^p$-normed module, then  $\mathcal{N}^n$ has a natural structure of $\Lp^p$-normed module as well, where for $v=(v_1,\dots,v_n)\in \mathcal N^n$ we have
\begin{equation}
\label{eq:lpnorm}
\Vert v\Vert\defeq \| \abs{v}\|_{\Lp^p(\mass)}=\big\|\Vert (\abs{v_1},\dots,\abs{v_n})\Vert_e\big\|_{\Lp^p(\mass)}
\end{equation}
It is then clear that  a subspace $\mathcal{N}_1$ of $\mathcal{N}$ is dense if and only if $(\mathcal{N}_1)^n$ is dense in $\mathcal{N}^n$. Similar considerations hold if $\mass$ is replaced by a Borel measure finite on balls and (with the suitable interpretation) in the case of $\Lpc$-normed $\Lpc$-modules or if we alter the integrability exponent. It is clear $\MM$ is a $\Lpc$-normed $\Lpc$-module and $\mu$ is a Borel measure finite on balls such that $\mu\ll\capa$, then also $$(\MM_\mu^p)^n \cong(\MM^n)_\mu^p\quad\text{for }p\in\{0\}\cup[1,\infty].$$
Finally, we adopt the natural notation
$$ \mathrm{L}_\mu^p(T^{ n}\XX)\defeq \tanbvXp{p}{\mu}^{ n}.$$

\subsection{Definitions and results}\label{se:RCD}
Fix now an $\RCD(K,\infty)$ space $(\XX,\dist,\mass)$ and $n\in\NN$, $n\ge 1$.

In this section, we  often consider local vector measures defined on $\TestV(\XX)^n$, which is endowed with the structure inherited from $\tanXinfn$. 
We recall that the {space $\TestV(\XX)$, defined in \eqref{testVdef}}, slightly differs from the one that one usually finds in literature, as discussed right after the definition \eqref{testVdef}, but with this definition it follows that $\TestV(\XX)^n$ is a normed $\Rr$-module, for $\Rr\defeq\LIPb(\XX)$ and we are going to exploit this property throughout.  Also, $\TestV(\XX)^n$ has the property  \eqref{stabilityfrac}, as one may readily check
using \eqref{vndsjosocn} below.
Notice that, according to the conventions discussed at the end of the last section, the norm of $\TestV(\XX)^n$  is given by formula \eqref{eq:lpnorm} (in particular, in general $\|v\|\neq \|(\||v_1|\|_{\Lp^\infty},\ldots,\||v_n|\|_{\Lp^\infty})\|_e$).

We wish to remark the fact that,
\begin{equation}\label{vndsjosocn}
	 \text{if $f_1, \dots, f_n\in\HSs$ and $\varphi\in\LIP(\RR^n;\RR)$ is such that $\varphi(0)=0$, then $\varphi(f_1,\dots,f_n)\in\HSs$}
\end{equation} 
with $$\qcr(\varphi(f_1,\dots,f_n))=\varphi(\qcr(f_1),\dots,\qcr(f_n))\quad\capa\text{-a.e.}$$
This is trivial in the case $f_1,\dots,f_n\in\HSs\cap\LIPb(\XX)$ and the general case is proved by approximation.
In particular, if $v=(v_1,\dots,v_n)\in\TestV(\XX)^n$, then we have the following compatibility relation $$\qcr(\abs{v})=\Vert\qcr(\abs{v_1}),\dots,\qcr(\abs{v_n})\Vert_e=|{\qcrbar(v)}|\quad\capa\text{-a.e.}$$
where we define the map $\qcrbar:\TestV(\XX)^n\rightarrow (\Cqcvfinf)^n$ componentwise.
We will often omit to write the maps $\qcrbar$ and $\qcr$.

If $\mu$ is a Borel measure finite on balls such that $\mu\ll\capa$, we define the trace map $$\trbar:\TestV(\XX)^n\rightarrow\tanbvXpn{0}{\mu}\quad\text{as}\quad\trbar\defeq\pibar_\mu\circ\qcrbar,$$
where $\pibar_\mu$ is given by Theorem \ref{finemodule}. Similarly as for $\qcr$ and $\qcrbar$, we shall often omit to explicitly write $\trbar$ and $\tr$.

\bigskip



We now give the following two crucial definitions:
\begin{defn}[Polar measures]
	Let $(\XX,\dist,\mass)$ be an $\RCD(K,\infty)$ space and let $\nvect$ be a local vector measure defined on $\TestV(\XX)^n$. We say that $\nvect$ is polar (or that $\nvect$ is a polar vector measure) if $\abs{\nvect}\ll\capa$ and for every $A\subseteq\XX$ Borel
\[
	\nvect(A)(v)=0\quad\text{for every }v\in\TestV(\XX)^n\text{ such that }\abs{v}=0\ \abs{\nvect}\text{-a.e.\ on }A.
\]
\end{defn}

Here and after, we endow, naturally, $\tanXcapinfn$ with the $\Lpcinf$ norm of the pointwise norm. Notice that from the trivial identity
\[
|fv|=|f|\,|v|\qquad \capa\text{-a.e.}\qquad \forall v\in\Lp^0(\capa),\ f\in\Cb(\XX)
\]
it follows that  $\tanXcapinfn$ is a normed $\Cb(\XX)$-module.
\begin{defn}[Representable measures]\label{repmeas}
Let $(\XX,\dist,\mass)$ be an $\RCD(K,\infty)$ space and let $\nvect$ be a local vector measure defined on $\tanXcapinfn$. 
We say that $\nvect$ is representable  (or that $\nvect$ is a representable vector measure) if there exists a finite measure $\mu_\nvect\ll\capa$ and $\nu_\nvect\in\tanXcapn$ with $\abs{\nu_\nvect}=1$ $\mu_\nvect$-a.e.\ such that $\nvect=\nu_\nvect\mu_\nvect$, in the sense that
\begin{equation}\label{reprmeas}
	\nvect(A)(v)=\int_A v\,\cdot\,\nu_\nvect \dd{\mu_\nvect}\quad\text{for every }v\in\tanXcapinfn
\end{equation}
for every $A\subseteq\XX$ Borel. 
\end{defn}
An immediate difference between polar and representable vector measures is their domain of definition: the former are defined on $\TestV(\XX)^n$ whereas the latter are defined on  the whole $\tanXcapn$. We shall see in Proposition \ref{equivrepr} that this is basically the only difference between these notions (to this aim we shall exploit the results in Section \ref{normedmodules}). 
%
%
%
%
\begin{rem}\label{mvmrcdrem}
It is easy to show what follows.
\begin{enumerate}[label=\roman*)]
\item		
The representation of a representable vector measure is unique, in the sense that if $\nvect=\nu_\nvect \mu_\nvect=\nu_\nvect'\mu'_\nvect$, then $\mu_\nvect=\mu'_\nvect$ and $\nu_\nvect=\nu_\nvect'$ $\mu_\nvect$-a.e. Lemma \ref{densitytestbargen} shows moreover that if two representable vector measures coincide on $\XX$ on (the trace of) $\TestV(\XX)^n$, then they are equal.
\item
If $\nvect=\nu_\nvect{\mu_\nvect}$ is a representable  vector measure defined on $\tanXcapinfn$,
 $\mvect=\nu_\mvect{\mu_\mvect}$ is a representable  vector measure defined on $\tanXcapinfm$ and also $f\in\Lp^\infty(\mu_\nvect)^{k\times n}$ and $g\in\Lp^\infty(\mu_\mvect)^{k\times m}$, then $f \nvect 
  +g \mvect $ is a representable  vector measure defined on $\tanXcapinfk$. Indeed, we set $G\defeq \mu_\nvect+\mu_\mvect$ and $$\omega\defeq f \nu_\nvect\dv{\mu_\nvect}{G}+g \nu_\mvect\dv{\mu_\mvect}{G},$$ then  $$f \nvect+ g \mvect=\frac{\omega}{\abs{\omega}} \abs{\omega}G.$$

On the other hand, if $f\in\Lpcinf^{k\times n}$ and $\nvect$ is as above,  $$f \nvect (A)((v_1,\dots,v_k))=\nvect(A)(f^T(v_1,\dots,v_k)),$$
where $\cdot^T$ denotes the transpose operator.


\item
In general, we don't know whether a local vector measure whose total variation is absolutely continuous with respect to $\capa$ is polar, unless other hypothesis are satisfied (cf.\ Proposition \ref{polreprmodules}).
\item
We remark that, if $\abs{\nvect}\ll\capa$, then, for every $v\in\TestV(\XX)^n$,
$$\nvect(A)(v)=0\quad\text{for every $A\subseteq\XX$ Borel}\text{ such that }\abs{v}=0\ \abs{\nvect}\text{-a.e.\ on }A$$
if and only if 
$$\nvect(A)(v)=0\quad\text{for every $A\subseteq\XX$ Borel}\text{ such that }\abs{v}=0\ \capa\text{-a.e.\ on }A.$$
Indeed, if $\abs{v}=0\ {\capa}$-a.e.\ on $A$, then $\abs{v}=0\ \abs{\nvect}$-a.e.\ on $A$, so that the first line implies the second. Conversely, assume that $\abs{v}=0\ \abs{\nvect}$-a.e.\ on $A$. Then we can split $A=A_1\cup A_2$, where $\abs{\nvect}(A_1)=0$ and $\abs{v}=0 \ \capa$-a.e.\ on $A_2$ (just fix a Borel representative of $|v|$ and put $A_2:=\{|v|=0\}$) and therefore we conclude.
\item
It may seem not natural to include the request that $\abs{\nvect}$ is absolutely continuous with respect to the capacity in the definition of polar vector measure. However it makes sense as the quasi-continuous representative of a vector field is the finest representative at our disposal.\fr
%

\end{enumerate}
\end{rem}


\bigskip

Here we describe the polar decomposition of  a representable  vector measure. This will be crucial to exploit the main result of Section \ref{normedmodules}, i.e.\ Proposition \ref{polreprmodules}, whose consequence is the link between polar and representable   vector measures (see Proposition \ref{equivrepr}).
\begin{prop}\label{reprtopolar}
	Let $(\XX,\dist,\mass)$ be an $\RCD(K,\infty)$ space and let $\nvect=\nu\mu$ be a representable  vector measure. Then $\abs{\nvect}=\mu$ and $\nvect$ admits the polar decomposition $L_\nvect\abs{\nvect}$, where $$L_\nvect(x)(v)=v\,\cdot\,\nu(x)\quad\text{for }\abs{\nvect}\text{-a.e.\ $x\in\XX$}\text{ for every $v\in\TestV(\XX)^n$}.$$
\end{prop}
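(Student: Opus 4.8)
The plan is to verify the two assertions---that $\abs{\nvect}=\mu$ and that the formula for $L_\nvect$ defines a valid polar decomposition---essentially by unpacking the definitions and checking the defining properties of a polar decomposition against the representation \eqref{reprmeas}.

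First I would establish $\abs{\nvect}=\mu$. The inequality $\abs{\nvect}\le\mu$ is immediate: for any finite Borel partition $\pi$ of a Borel set $E$ and any $A\in\pi$, the Cauchy--Schwarz inequality in the Hilbert module $\tanXcapn$ gives $\abs{\nvect(A)(v)}=\abs{\int_A v\cdot\nu\,\dd\mu}\le\int_A\abs{v}\abs{\nu}\,\dd\mu=\int_A\abs{v}\,\dd\mu\le\mu(A)\Vert v\Vert$, so $\Vert\nvect(A)\Vert'\le\mu(A)$ and summing over $\pi$ yields $\abs{\nvect}(E)\le\mu(E)$. For the reverse inequality I would use the fact that $\abs{\nu}=1$ $\mu$-a.e.\ together with the density provided by Lemma \ref{densitytestbargen}: given $E$ Borel and $\varepsilon>0$, approximate $\chi_E\nu$ in $\tanbvXp{2}{\mu}$ (or rather locally, after truncation to make it bounded) by traces of test vector fields, so that one can find $v\in\tanXcapinfn$ with $\Vert v\Vert\le1$ and $\int_E v\cdot\nu\,\dd\mu$ close to $\int_E\abs{\nu}^2\,\dd\mu=\mu(E)$; this forces $\Vert\nvect(E)\Vert'\ge\mu(E)-\varepsilon$, hence $\abs{\nvect}(E)\ge\Vert\nvect(E)\Vert'\ge\mu(E)$ by Proposition \ref{localisfiniteabs} (note $\Vert\nvect(E)\Vert'=\abs{\nvect}(E)$). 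Care is needed because the approximation must be compatible with the $\Lp^\infty$ bound; one can instead argue that $\mu(E)=\int_E\abs{\nu}\,\dd\mu=\esssup\{\int_E v\cdot\nu\,\dd\mu:\Vert v\Vert\le1\}$ directly from the $\Lp^0(\capa)$-module structure, using that bounded sections of $\tanXcapn$ with pointwise norm $\le1$ separate the pointwise norm (by Lemma \ref{densitytestbargen} the test vector fields generate $\tanXcapn$ in the module sense).

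Next, with $\abs{\nvect}=\mu$ in hand, I would define $L_\nvect:\XX\to(\TestV(\XX)^n)'$ by $L_\nvect(x)(v):=v\cdot\nu(x)$ for $v\in\TestV(\XX)^n$, where $v\cdot\nu$ denotes the pointwise scalar product in the capacitary module $\tanXcapn$ (using the quasi-continuous representative of $v$), and check the three requirements. Weak$^*$ $\abs{\nvect}$-measurability of $x\mapsto L_\nvect(x)$ follows because for each fixed $v$ the function $x\mapsto v\cdot\nu(x)$ is $\capa$-measurable, hence $\mu$-measurable. The representation identity \eqref{reprpolarabs}, namely $\nvect(A)(v)=\int_A L_\nvect(v)\,\dd\abs{\nvect}=\int_A v\cdot\nu\,\dd\mu$, is exactly \eqref{reprmeas} restricted to $v\in\TestV(\XX)^n$. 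The pointwise bound \eqref{inequalitygerm}, $\abs{L_\nvect(x)(v)}\le\abs{v}_g(x)$, I would get from Cauchy--Schwarz, $\abs{v\cdot\nu(x)}\le\abs{v}(x)\abs{\nu}(x)=\abs{v}(x)$ $\mu$-a.e., combined with the observation (noted in Section \ref{normedmodules}) that for these modules $\Vert v\Vert_{|A}=\Vert\chi_A\abs{v}\Vert_{\Lp^\infty}$ so that the germ seminorm $\abs{v}_g$ is the essential upper semicontinuous envelope of $\abs{v}$, which dominates $\abs{v}$ $\mu$-a.e.\ since $\mu\ll\capa$; alternatively just invoke the weaker form of the requirement discussed in the remark after Proposition \ref{allhaspolarabs}, that $\abs{L_\nvect(x)(v)}\le\Vert v\Vert$ $\abs{\nvect}$-a.e., which is immediate, and the stated sharper inequality then comes for free from that remark.

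The main obstacle I anticipate is the reverse inequality $\mu\le\abs{\nvect}$, specifically producing, for a given Borel set $E$, a bounded section $v$ of the capacitary module with $\Vert v\Vert\le1$ that nearly realizes $\int_E\abs{\nu}\,\dd\mu=\mu(E)$; this requires care with the interplay between the $\Lp^0(\capa)$-structure, the $\Lp^\infty$-bound, and the density statement of Lemma \ref{densitytestbargen} (which is stated for $\Lp^p$, $p<\infty$). The clean way around it is to note that $\nu\in\tanXcapn$ with $\abs{\nu}=1$ $\mu$-a.e., so $\nu$ itself (viewed in $\tanXcapinfn$ via its $\mu$-class, lifted arbitrarily) is an admissible competitor with $\nvect(E)(\nu)=\int_E\abs{\nu}^2\,\dd\mu=\mu(E)$ and $\Vert\nu\Vert\le1$, giving $\abs{\nvect}(E)=\Vert\nvect(E)\Vert'\ge\mu(E)$ at once---so in fact no approximation is needed, only the remark in the definition of representable measure that $\nvect$ is tested against all of $\tanXcapinfn$. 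Everything else is routine unwinding of definitions.
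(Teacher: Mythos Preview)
Your proposal is correct and lands on the same argument as the paper, which is very terse: the paper simply says that $\abs{\nvect}=\mu$ ``follows immediately from the fact that $\nvect$ is defined on $\tanXcapinfn$''---this is exactly your final observation that (a truncation of) $\nu$ itself is an admissible competitor, so no density argument is needed---and that the polar decomposition claim ``follows from Proposition \ref{allhaspolarabs}, taking into account the uniqueness of the Radon-Nikodym derivative''.

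The only methodological difference is in the second part. The paper invokes the \emph{existence} of $L_\nvect$ from Proposition \ref{allhaspolarabs}, which gives $L_\nvect(v)=\dv{(v\cdot\nvect)}{\abs{\nvect}}$ $\abs{\nvect}$-a.e., and then observes that $v\cdot\nvect=(v\cdot\nu)\mu=(v\cdot\nu)\abs{\nvect}$ forces $L_\nvect(v)=v\cdot\nu$ $\abs{\nvect}$-a.e.\ by uniqueness of the Radon--Nikodym derivative. You instead try to \emph{define} $L_\nvect(x)(v):=v\cdot\nu(x)$ directly and verify the axioms. This is fine, but note that the definition of polar decomposition asks for a map $\XX\to\VV'$ satisfying \eqref{inequalitygerm} at \emph{every} $x$, while $v\cdot\nu$ is only defined $\capa$-a.e.; you correctly patch this by citing the remark after Proposition \ref{allhaspolarabs}. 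The paper's route sidesteps this entirely, since Proposition \ref{allhaspolarabs} already produces a genuine $L_\nvect$ and one only has to identify it a.e.
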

\begin{proof}
The fact that $\abs{\nvect}=\mu$ follows immediately from the fact that $\nvect$ is defined on $\tanXcapinfn$.
	The second assertion follows from Proposition \ref{allhaspolarabs}, taking into account the uniqueness of the Radon-Nikodym derivative.
\end{proof}

For the following proposition, we use the fact that representable  vector measures can be seen as polar vector measures: given a representable  vector measure $\nvect=\nu\mu$, we can always define a polar  vector measure ${\sf I}(\nvect)$   by restriction to $\TestV(\XX)^n\subseteq\tanXcapinfn$, namely $${\sf I}(\nvect)(A)(v)\defeq\int_A v\,\cdot\,\nu\dd{\mu},$$
where, as usual, we took the trace of $v$. 
\begin{prop}\label{equivrepr}
	Let $(\XX,\dist,\mass)$ be an $\RCD(K,\infty)$ space and consider the Banach spaces 
	\begin{align}
				{\sf Rep}_n(\XX)&\defeq\left(\left\{\text{representable  vector measures defined on $\tanXcapinfn$}\right\},\abs{\,\cdot\,}(\XX)\right)\notag\\
		{\sf Pol}_n(\XX)&\defeq\left(\left\{\text{polar vector measures defined on $\TestV(\XX)^n$}\right\},\abs{\,\cdot\,}(\XX)\right)\notag.
	\end{align}
	Then the natural inclusion map 
	\begin{align}\notag
		\sf{I}:	{\sf Rep}_n(\XX)\rightarrow 	{\sf Pol}_n(\XX)
	\end{align}
	is a bijective isometry.
\end{prop}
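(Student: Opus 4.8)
The plan is to exhibit an explicit two-sided inverse of $\sf I$ and check it is norm-preserving. The map $\sf I$ restricts a representable measure $\nvect = \nu\mu$ (defined on $\tanXcapinfn$) to the subspace $\TestV(\XX)^n$; the key facts I would use are Lemma \ref{densitytestbargen} (density of $\TestV_\mu(\XX)$ in $\tanbvXpn{p}{\mu}$ for $p\in[1,\infty)$), Proposition \ref{polreprmodules} applied with $\mathscr M = \mathrm{L}^0_\mu(T^n\XX)$ and $\WW$ the trace of $\TestV(\XX)^n$, and the polar decomposition results of Proposition \ref{allhaspolarabs} and Proposition \ref{reprtopolar}. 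First I would check $\sf I$ is well-defined into ${\sf Pol}_n(\XX)$: if $\nvect = \nu\mu$ is representable then $\abs{{\sf I}(\nvect)} = \mu \ll\capa$ (this is essentially Proposition \ref{reprtopolar} together with $\abs{\nvect}=\mu$), and the polar-type locality $\nvect(A)(v)=0$ whenever $\abs{v}=0$ $\mu$-a.e.\ on $A$ is immediate from the integral formula \eqref{reprmeas} since $\int_A v\cdot\nu\,\dd\mu$ only sees the values of $v$ on $\{|v|>0\}\cap A$. Linearity and the bound $\abs{{\sf I}(\nvect)(A)(v)}\le \mu(A)\Vert v\Vert$ are clear, so $\sf I$ is a bounded linear map, and in fact $\abs{{\sf I}(\nvect)}(\XX)\le\mu(\XX)=\abs{\nvect}(\XX)$.

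Next I would construct the inverse. Given a polar vector measure $\nvect_0$ defined on $\WW := \trbar(\TestV(\XX)^n)\subseteq \mathrm{L}^0_{\abs{\nvect_0}}(T^n\XX)=:\mathscr M$, one checks the hypotheses of Proposition \ref{polreprmodules}: $\abs{\nvect_0}\ll\capa$ so $\abs{\nvect_0}$ is a valid reference measure, $\WW$ satisfies \eqref{stabilityfrac} (this is the remark after \eqref{testVdef}, carried to the product via \eqref{vndsjosocn}), $\WW$ generates $\mathscr M$ in the sense of modules (Lemma \ref{densitytestbargen} plus the product structure of Theorem \ref{finemodule}), and $\WW$ is a normed $\Rr$-module for $\Rr=\LIPb(\XX)$. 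Strong locality of $\nvect_0$ on $\WW$ is precisely the polarity hypothesis (using item $iv)$ of Remark \ref{mvmrcdrem} to pass between $\abs{\nvect_0}$-a.e.\ and $\capa$-a.e.\ vanishing). Hence Proposition \ref{polreprmodules} yields a unique $M_{\nvect_0}\in\mathscr M^*$ with $\abs{M_{\nvect_0}}=1$ $\abs{\nvect_0}$-a.e.\ and $L_{\nvect_0}(v)=M_{\nvect_0}(v)$ $\abs{\nvect_0}$-a.e.\ for $v\in\WW$. Since $\mathscr M=\mathrm{L}^0_{\abs{\nvect_0}}(T^n\XX)$ is a Hilbert module, Riesz representation gives $\nu\in\mathrm{L}^0_{\abs{\nvect_0}}(T^n\XX)$ with $\abs{\nu}=1$ $\abs{\nvect_0}$-a.e.\ and $M_{\nvect_0}(v)=v\cdot\nu$; identifying $\mathrm{L}^0_{\abs{\nvect_0}}(T^n\XX)$-classes with $\tanXcapn$-classes via a $\capa$-a.e.\ Borel representative of $\nu$, one defines $\sf J(\nvect_0) := \nu\,\abs{\nvect_0}$, a representable vector measure, and checks via \eqref{eq:extsl} that $\sf I(\sf J(\nvect_0)) = \nvect_0$ (they agree on $\WW$ by construction, and a polar measure on $\WW$ is determined by its values on $\WW$, so no further argument is needed on that side). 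Conversely $\sf J(\sf I(\nvect)) = \nvect$ by uniqueness in Remark \ref{mvmrcdrem} $i)$: both are representable, $\abs{\sf I(\nvect)} = \abs{\nvect}$, and they agree on the trace of $\TestV(\XX)^n$.

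Finally, the isometry statement: $\sf I$ is norm-non-increasing as noted above, and $\sf J$ is norm-non-increasing because $\abs{\sf J(\nvect_0)}=\abs{\nvect_0}$ by Proposition \ref{reprtopolar} (the total variation of a representable measure $\nu\mu$ with $\abs\nu=1$ $\mu$-a.e.\ equals $\mu$), so $\abs{\sf J(\nvect_0)}(\XX)=\abs{\nvect_0}(\XX)$; being mutually inverse and both norm-non-increasing forces both to be isometries. One should also confirm that ${\sf Rep}_n(\XX)$ and ${\sf Pol}_n(\XX)$ are genuinely Banach spaces — ${\sf Pol}_n(\XX)$ is a closed subspace of the Banach space $\MM_{\TestV(\XX)^n}$ (Proposition \ref{vectvalbanach}), closedness of the polarity condition under the total-variation norm being routine, and ${\sf Rep}_n(\XX)$ is then Banach as the isometric image under $\sf J$. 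The main obstacle I anticipate is bookkeeping: keeping the three different ambient modules ($\tanXcapn$, $\mathrm{L}^0_{\abs{\nvect_0}}(T^n\XX)$, and the abstract $\mathscr M^*$ of Proposition \ref{polreprmodules}) properly identified, and verifying cleanly that all the hypotheses of Proposition \ref{polreprmodules} — especially the generation condition and \eqref{stabilityfrac} — survive passage to the $n$-fold product and to the $\mu$-quotient of Theorem \ref{finemodule}; none of this is deep but it is where an error could hide.
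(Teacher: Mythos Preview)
Your proposal is correct and follows essentially the same route as the paper: surjectivity via Proposition~\ref{polreprmodules} (with $\WW=\trbar(\TestV(\XX)^n)$, density from Lemma~\ref{densitytestbargen}, the truncation property \eqref{stabilityfrac}, and Riesz for Hilbert modules), and the isometry from the identification $|\nu\mu|=\mu$. The only cosmetic difference is organizational---you package things as an explicit two-sided inverse $\sf J$ and read off the isometry from $|\sf J(\nvect_0)|=|\nvect_0|$ via Proposition~\ref{reprtopolar}, whereas the paper proves $|{\sf I}(\nu\mu)|=\mu$ directly by approximating $\nu$ with normalized test vector fields; one small caution is that your parenthetical ``$|{\sf I}(\nvect)|=|\nvect|$'' in the $\sf J\circ\sf I=\mathrm{id}$ step is not yet established at that point, but it is also not needed there (agreement on $\TestV(\XX)^n$ plus Remark~\ref{mvmrcdrem}~$i)$ suffices).
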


\begin{proof}
	Thanks to Proposition \ref{vectvalbanach}, recalling \eqref{deflimit}, we easily see that ${\sf Pol}_n(\XX)$ is indeed Banach space and the fact that also ${\sf Rep}_n(\XX)$ is a Banach space will follow from the fact that $\sf I$ is an isometry. Notice that $\sf I$ is clearly linear.
\\\textsc{Step 1}.
	We prove that $\sf I$ is surjective.  Take then a polar vector measure $\nvect$ and notice that by restriction it induces a local vector measure, still denoted $\nvect$, on $\TestV(\XX)^n$.

	We are going to  apply Proposition \ref{polreprmodules} with $|\nvect|$ in place of  $\mass$, the (trace of) elements in $\TestV(\XX)^n$ in $\Lp^\infty(|\nvect|)$ in place of $\WW$ (recall that $\TestV(\XX)^n$ is a normed $\LIPb(\XX)$-module) and $\Lp^\infty_{|\nvect|}(T^n\XX)$ in place of $\VV$. The required density comes from Lemma \ref{densitytestbargen}. Taking also into account Riesz theorem for Hilbert modules (\cite[Theorem 1.2.24]{Gigli14})  we thus find $\nu\in {\rm L}_{\abs{\nvect}}^2(T^n\XX)$ such that $\abs{\nu}=1\ \abs{\nvect}$-a.e.\ and 
	 \[
	  L_\nvect(v)=\nu\,\cdot\, v\qquad\abs{\nvect}\text{-a.e.}\qquad\forall v\in \TestV(\XX)^n.
	  \]
It is then clear that  formula \eqref{reprmeas} with $|\nvect|$ and $\nu$ in place of $\mu_\nvect,\nu_\nvect$ defines a  representable  vector measure whose image via  ${\sf I}$ is precisely $\nvect$ (to be more precise, in  Definition \ref{repmeas} we require $\nu$ to be in $\tanXcapn$ and then use its trace in of formula \eqref{reprmeas}: this obviously makes no difference with what we have  done, since, by the definition given in Theorem \ref{finemodule} elements of $ {\rm L}_{\abs{\nvect}}^2(T^n\XX)$ are defined as traces of elements in $\tanXcapn$).
	\\\textsc{Step 2}. We prove that ${\sf I}$ is an isometry. Take then a representable  vector measure $\nu\mu$ and let $\nvect\defeq{\sf I}(\nu\mu)$.
	If $A\subseteq\XX$ is Borel and $v\in\TestV(\XX)^n$, we can compute $$\abs{\nvect(A)(v)}=\abs{\int_A v\,\cdot\,\nu\dd{\mu}}\le \int_A \abs{v}\abs{\nu}\dd{\mu}\le \Vert v\Vert{}\mu(A)$$
and this shows that $\abs{\nvect}\le \mu$.

Conversely, by Lemma \ref{densitytestbargen}, take $\{v_k\}_k\subseteq\TestV(\XX)^n$ such that $v_k\rightarrow \nu$ in $\tanbvXn{\mu}$.
Set $$w_k\defeq\frac{1}{1\vee \abs{v_k}} {v_k}$$ and notice $w_k\in\TestV(\XX)^n$, $\abs{w_k}\le 1\ \mass$-a.e.\ for every $k$ and $w_k\rightarrow \nu$ in $\tanbvXn{\mu}$. We can compute, by dominated convergence, recalling \eqref{qcrfactorizes2} $$
\abs{\nvect}(\XX)\ge \nvect(\XX)(w_k)=\int_\XX w_k\,\cdot\,\nu\dd{\mu}\rightarrow\int_\XX\dd{\mu},$$
so that, $\abs{\nvect}(\XX)\ge \mu(\XX)$. Then, as we have already showed $\abs{\nvect}\le \mu$, we have $\abs{\nvect}=\mu$.
\end{proof}

The following theorem builds upon the theory of quasi-continuous functions to improve the conclusion of Theorem \ref{weakder}, under a mild additional assumption. Namely, it allows us to prove that, under an additional tightness condition (see \eqref{tightplus}), the (unique) local vector measure given by Theorem \ref{weakder} is polar. The additional tightness condition just mentioned turns out to be rather manageable, explecially in practice, when one deals with differential objects. 
\begin{thm}\label{improvetopolar}
Let $(\XX,\dist,\mass)$ be an $\RCD(K,\infty)$ space and let $F\in (\TestV(\XX)^n)'$ be tight. Assume that $F$ satisfies 
\begin{equation}\label{tightplus}
	\begin{split}
		\text{for every sequence }&\text{$\{f_k\}_k\subseteq\HSs$ equibounded in $\Lpi$ with $f_k\rightarrow 0$ in $\HSs$,}\\&\text{it holds that $F(f_k v)\rightarrow 0$ for every $v\in\TestV(\XX)^n$. }
	\end{split}
\end{equation}
Then there exists a unique \textbf{polar} vector measure $\nvect_F$ defined on $\TestV(\XX)^n$ such that
$$
\nvect_F(\XX)(v)=F(v)\quad\text{for every }v\in\TestV(\XX)^n.
$$
	Moreover, it holds that $\abs{\nvect_F}=\mu$, where $\mu$ is the finite Borel measure given by Lemma \ref{bbtightiffmeas}.
\end{thm}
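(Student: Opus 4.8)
The plan is to invoke Theorem \ref{weakder} to obtain the unique local vector measure $\nvect_F$ on $\TestV(\XX)^n$ with $\nvect_F(\XX)(v)=F(v)$ and $|\nvect_F|=\mu$, and then to upgrade it to a polar measure using the extra tightness hypothesis \eqref{tightplus}. First I would establish that $\mu=|\nvect_F|\ll\capa$. The natural route: given a Borel set $A$ with $\capa(A)=0$, use the definition \eqref{mudef} of $\mu$ together with the fact that on an $\RCD$ space one can find a sequence $\{f_k\}_k\subseteq\HSs$, equibounded in $\Lp^\infty$, with $f_k\to 0$ in $\HSs$ but $f_k\geq 1$ on neighbourhoods of $A$ shrinking to $A$ (this is exactly how $\capa(A)=0$ is witnessed). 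For $v\in\TestV(\XX)^n$ with $\|v\|\leq 1$ and $\supp v\subseteq A'$ for $A'\supseteq A$ open small, one controls $F(v)$ by first replacing $v$ by $f_kv$ up to an error measured by $\mu(A'\setminus\{f_k\geq1\})$ (using weak locality / the cutoff estimate \eqref{eq:c2}, \eqref{nearliplintmp}) and then sending $F(f_kv)\to 0$ by \eqref{tightplus}. Letting the open neighbourhoods shrink to $A$ gives $\mu(A)=0$. This step — reconciling the topological definition of $\mu$ from Lemma \ref{bbtightiffmeas} with the functional-analytic capacity — is the one I expect to be the main obstacle, and care is needed with the order of quantifiers (choosing the neighbourhoods, then the $f_k$, then passing to the limit).

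Next I would prove the strong (polar) locality: for every $v\in\TestV(\XX)^n$ and Borel $A$ with $|v|=0$ $|\nvect_F|$-a.e.\ on $A$, we must show $\nvect_F(A)(v)=0$. The idea is to use the truncation/stability property \eqref{stabilityfrac} enjoyed by $\TestV(\XX)^n$ (noted right after \eqref{testVdef} and via \eqref{vndsjosocn}): set, for $B\subseteq A$ Borel,
\[
w\defeq\frac{\|\,|v|\chi_{A}\|_{\Lp^\infty(|\nvect_F|)}}{\|\,|v|\chi_{A}\|_{\Lp^\infty(|\nvect_F|)}\vee|v|}\,v\in\TestV(\XX)^n,
\]
which satisfies $|v-w|=0$ $|\nvect_F|$-a.e.\ on $A$ and $\|w\|\leq\|\,|v|\chi_{A}\|_{\Lp^\infty(|\nvect_F|)}$. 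Then $\nvect_F(A)(v)=\nvect_F(A)(w)$ provided one knows that $\nvect_F(A)(v-w)=0$ whenever $|v-w|$ vanishes $|\nvect_F|$-a.e.\ on $A$; but this last fact needs to be bootstrapped, so the cleaner argument is the one already used in the proof of Proposition \ref{polreprmodules}: it suffices to bound $|\nvect_F(B)(v)|\leq|\nvect_F|(B)\|\,|v|\chi_{B}\|_{\Lp^\infty(|\nvect_F|)}$ for all Borel $B$, which combined with $|v|=0$ $|\nvect_F|$-a.e.\ on $A$ gives $\nvect_F(A)(v)=0$. To get this refined bound one approximates: pick $B$ and use that $|v|$ is quasi-continuous (indeed $\TestV(\XX)\subseteq\Cqcvf$), so $\{|v|\leq t\}$ differs from a closed set by a small capacity set; using $\mu\ll\capa$, localize $v$ near where $|v|$ is bounded by $t$, employing the multiplication by cutoffs $f\in\HSs\cap\LIPb(\XX)$ and the extra tightness \eqref{tightplus} to pass to the limit. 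Here the role of \eqref{tightplus} is precisely to permit these capacity-level (rather than merely open-set-level) localizations.

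Putting this together: $|\nvect_F|=\mu\ll\capa$ by the first step, and $\nvect_F$ satisfies the strong locality condition in the Definition of polar measure by the second step, so $\nvect_F$ is polar. Uniqueness is inherited from Theorem \ref{weakder}: any polar measure satisfying $\nvect(\XX)(v)=F(v)$ for all $v\in\TestV(\XX)^n$ is in particular a local vector measure with that property, hence equals $\nvect_F$ by the uniqueness clause there; and the identity $|\nvect_F|=\mu$ is again the one from Theorem \ref{weakder}. I would present the argument in this order — existence and basic identities from Theorem \ref{weakder}, then $\mu\ll\capa$, then strong locality, then collect — flagging that the $\RCD$-specific input (quasi-continuous representatives, $\capa$-fine modules, the enlarged $\TestV(\XX)$ with property \eqref{stabilityfrac}, and Lemma \ref{densitytestbargen}) enters only in the second and third steps, exactly where hypothesis \eqref{tightplus} is consumed.
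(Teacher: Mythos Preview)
Your strategy is correct and matches the paper's; the key ingredients (quasi-continuity of $|v|$, small-capacity open sets, $\HSs$-cutoffs witnessing null capacity) are all present. The one genuine gap is in the strong locality step. After the quasi-continuity reduction you split $v=f_\varepsilon v+(1-f_\varepsilon)v$ with $f_\varepsilon\in\HSs$, $\|f_\varepsilon\|_{\HSs}<\varepsilon$, $f_\varepsilon\equiv 1$ on the bad small-capacity set, so that $|(1-f_\varepsilon)v|$ is small on an open neighbourhood of the compact $K$ and hence $\nvect_F(K)((1-f_\varepsilon)v)$ is small via \eqref{nearliplintmp}. But for the remaining piece $\nvect_F(K)(f_\varepsilon v)$ you appeal to ``the extra tightness \eqref{tightplus}'', which only gives $F(f_\varepsilon v)=\nvect_F(\XX)(f_\varepsilon v)\to 0$; it says nothing about $\nvect_F(K)(f_\varepsilon v)$. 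This passage from $\nvect_F(\XX)$ to $\nvect_F(K)$ is not automatic and is precisely where your sketch is incomplete.

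The paper closes this by isolating an intermediate lemma (its Step~1): for every Borel $B$, every $\{f_k\}$ as in \eqref{tightplus}, and every $v\in\TestV(\XX)^n$, one has $\nvect_F(B)(f_kv)\to 0$. The proof sandwiches $B$ between a compact set and an open $A$ with small $|\nvect_F|$-gap, inserts a Lipschitz cutoff $\psi\in\LIPbs(\XX)$ with $\psi\equiv 1$ near the compact and $\supp\psi\subseteq A$, and uses weak locality to reduce to $\nvect_F(A)(\psi f_k v)=F(\psi f_k v)\to 0$ (since $\{\psi f_k\}$ is again admissible in \eqref{tightplus}). This lemma then feeds cleanly into both remaining steps: for $\mu\ll\capa$, given compact $K$ with $\capa(K)=0$ pick $f_k\in\HSs$ with $f_k\in[0,1]$, $f_k\equiv 1$ near $K$, $f_k\to 0$ in $\HSs$, so weak locality gives $\nvect_F(K)(v)=\nvect_F(K)(f_kv)\to 0$; for strong locality, it handles the term $\nvect_F(K)(f_\varepsilon v)$ above. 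Your description of the $\mu\ll\capa$ argument (``let the open neighbourhoods shrink to $A$'') is also off --- one cannot make $\mu(A')$ small for open $A'\supseteq A$ --- but once the lemma is available the argument via inner regularity is immediate. Finally, the detour through the truncation of Proposition~\ref{polreprmodules} is, as you yourself note, circular here and should simply be dropped.
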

\begin{proof} We call $\nvect$ the unique local vector measure given by Theorem \ref{weakder}. Assume that $F$ satisfies \eqref{tightplus}, we just have to show that $\nvect$ is polar. 
\\\textsc{Step 1}.  We claim that if $\{f_k\}_k$ is as in \eqref{tightplus} and $B\subseteq\XX$ is Borel, then it holds
\begin{equation}\notag
\nvect(B)(f_k v)\rightarrow 0\quad\text{ for every $v\in\TestV(\XX)^n$. }
\end{equation}
Indeed, let $A\subseteq\XX$ be  open and let $K\subseteq A\cap B$ be compact and then take $\psi\in\LIPbs(\XX)$ taking values in $[0,1]$ be identically 1 on a neighbourhood of $K$ and with support contained in $A$. Then $\{\psi f_k\}_k$ still is as in \eqref{tightplus}, and by weak locality we have
\begin{align*}
	\abs{\nvect(B)( f_kv)}&\le\abs{\nvect(A)(f_k\psi v)}+\big(\abs{\nvect}(B\setminus K)+\abs{\nvect}(A\setminus K)\big)\Vert f_k\Vert_{\Lpi}\Vert v\Vert \\ &= |F(f_k\psi v)|+\big(\abs{\nvect}(B\setminus K)+\abs{\nvect}(A\setminus K)\big)\Vert f_k\Vert_{\Lpi}\Vert v\Vert
\end{align*}
for any $k\in\NN$, so the conclusion follows by first letting $k\to\infty$ and then using the arbitrariness of $A,K$ in conjunction with the regularity of $|\nvect|$.
\\\textsc{Step 2}. We claim that $\nvect\ll\capa$. By regularity of $\abs{\nvect}$, we just have to show that if $K$ is a compact set with $\capa(K)=0$, then $\abs{\nvect}(K)=0$. 

By \eqref{conicidence}, we conclude if we show that $\nvect(K)(v)=0$ for any $v\in\TestV(\XX)^n$. 
As $\capa(K)=0$, we can find a sequence $\{f_k\}_k$ as in \eqref{tightplus} such that $f_k(x)= 1$ for every $x$ in a neighbourhood of $K$. Thus by weak locality we have $\nvect(K)(v)=\nvect(K)(f_kv)$ for every $k\in\NN$ and then the conclusion follows from Step 1.
\\\textsc{Step 3} Now we show that  then $\nvect$ is polar. Taking into account item $iv)$ of Remark \ref{mvmrcdrem} and the regularity of $\abs{\nvect}$, it is sufficient to show that if $v\in\TestV(\XX)^n$ and $K$ is a compact set such that $\abs{v}=0\ \capa$-a.e.\ on $K$, then $\nvect(K)(v)=0$. Fix then $v\in\TestV(\XX)^n$, we can assume with no loss of generality that $\Vert v\Vert=1$. Let now $\varepsilon>0$. By the quasi-continuity of $\abs{v}$, we can find an open set $A_\varepsilon$ such that $\abs{v}$ is continuous on $\XX\setminus A_\varepsilon$ and $\capa(A_\varepsilon)<\varepsilon$. We fix now a continuous version of $\abs{v}$ on $\XX\setminus A_\varepsilon$. Also, as $\abs{v}=0\ \capa$-a.e.\ on $K$, we can assume, up to slightly enlarging $A_\varepsilon$, that $\abs{v}(x)=0$ for every $x\in K\setminus A_\varepsilon $ (still $\capa(A_\varepsilon)<\varepsilon$). 
 Then, $\abs{v}< \varepsilon$ on $B_\varepsilon\setminus A_\varepsilon$, where $B_\varepsilon$ is a suitable open subset of $\XX\setminus A_\varepsilon$ containing $K\setminus A_\varepsilon$.
 Let now $f_\varepsilon\in\HSs$ be such that $f_\varepsilon(x)=1$ for every $x$ in $A_\varepsilon$, $f_\varepsilon(x)\in[0,1]$ for every $x\in\XX$ and $\Vert f_\varepsilon\Vert_{\HSs}<\varepsilon$. 
  Now, by construction, $\abs{(1-f_\varepsilon) v}(x)<\varepsilon$ for every  $x\in B_\varepsilon\cup A_\varepsilon$, which is an open set in $\XX$ containing  of $K$.
 We can thus compute 
 $$ \abs{\nvect(K)(v)}\le\abs{  \nvect(K)( f_\varepsilon v)}+\abs{\nvect(K)(( 1-f_\varepsilon) v)}\le \abs{  \nvect(K)( f_\varepsilon v)}+\varepsilon\Vert v\Vert\abs{\nvect}(K) .$$
 Notice now that $ \nvect(K)( f_\varepsilon v)\rightarrow 0$ as $\varepsilon\searrow 0$ by Step 1. The  conclusion follows.
\end{proof}

\begin{cor}\label{corrcd}
	Let $(\XX,\dist,\mass)$ be  an $\RCD(K,\infty)$ space and consider the Banach spaces 
	\begin{align}
		{\sf Pol}_n(\XX)&\defeq\left(\left\{\text{polar vector measures defined on $\TestV(\XX)^n$}\right\},\abs{\,\cdot\,}(\XX)\right),\notag\\ {\sf Tig}_n(\XX)&\defeq\left(\left\{F\in(\TestV(\XX)^n)': \text{$F$ is tight and $F$ satisfies \eqref{tightplus}}\right\},\Vert\,\cdot\,\Vert'\right).\notag
	\end{align}
	Then the map 
\[
		{\sf Pol}_n(\XX)\rightarrow {\sf Tig}_n(\XX)\quad\text{defined as}\quad \nvect\mapsto \nvect(\XX)
\]
	is a bijective isometry.
\end{cor}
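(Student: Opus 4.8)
The plan is to assemble the statement from three facts already proved: Proposition~\ref{localisfiniteabs} (which gives $|\nvect|(A)=\|\nvect(A)\|'$ on Borel sets), Corollary~\ref{Rieszcor} (Riesz's theorem identifying $\MM_{\TestV(\XX)^n}$ with the tight functionals), and Theorem~\ref{improvetopolar} (the upgrade to \emph{polar} measures under the additional tightness condition \eqref{tightplus}). Write $\Theta\colon\nvect\mapsto\nvect(\XX)$; it is manifestly linear. Applying Proposition~\ref{localisfiniteabs} with $A=\XX$ gives $\|\Theta(\nvect)\|'=\|\nvect(\XX)\|'=|\nvect|(\XX)$, which is exactly the norm of $\nvect$ in ${\sf Pol}_n(\XX)$; hence $\Theta$ is norm-preserving, and in particular injective (if $|\nvect|(\XX)=0$ then $\nvect=0$). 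For surjectivity onto ${\sf Tig}_n(\XX)$: given $F$ tight and satisfying \eqref{tightplus}, Theorem~\ref{improvetopolar} produces a polar $\nvect_F$ with $\nvect_F(\XX)=F$, so $\Theta(\nvect_F)=F$. I would also record as a preliminary that ${\sf Tig}_n(\XX)$ is a closed subspace of $(\TestV(\XX)^n)'$, hence Banach: for either family of test sequences $\{\varphi_m v\}_m$ or $\{f_k v\}_k$ occurring in the two tightness conditions one has a uniform bound on the norms $\|\varphi_m v\|$, $\|f_k v\|$ in $\TestV(\XX)^n$, which lets the defining limits pass to a norm-limit of functionals; that ${\sf Pol}_n(\XX)$ is Banach was already noted in the proof of Proposition~\ref{equivrepr}.

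The one step that is not mere bookkeeping is checking that $\Theta$ really lands in ${\sf Tig}_n(\XX)$, i.e.\ that a polar vector measure $\nvect$ yields a functional $\nvect(\XX)$ satisfying \eqref{tightplus}; tightness of $\nvect(\XX)$ per se is immediate from Corollary~\ref{Rieszcor}. For \eqref{tightplus} the plan is to pass to the representable picture: by Proposition~\ref{equivrepr} write $\nvect={\sf I}(\nu\mu)$, and by Proposition~\ref{reprtopolar} one has $\mu=|\nvect|\ll\capa$, $|\nu|=1$ $\mu$-a.e., and $\nvect(\XX)(v)=\int_\XX v\cdot\nu\,\dd\mu$ for $v\in\TestV(\XX)^n$. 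Given $\{f_k\}_k\subseteq\HSs$ equibounded in $\Lpi$ with $f_k\to0$ in $\HSs$, I would first note $f_k v\in\TestV(\XX)^n$ (using that bounded Sobolev functions form an algebra), then use the factorization \eqref{qcrfactorizes2} to write $\nvect(\XX)(f_k v)=\int_\XX \tr_\mu(f_k)\,(v\cdot\nu)\,\dd\mu$. Convergence $f_k\to0$ in $\HSs$ forces the quasi-continuous representatives to converge to $0$ $\capa$-quasi everywhere along subsequences, hence $\mu$-a.e.\ since $\mu\ll\capa$; together with $\sup_k\|f_k\|_{\Lpi}<\infty$ this gives $\tr_\mu(f_k)\to0$ in $\mu$-measure. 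As $|v\cdot\nu|\le|v|$ is $\mu$-essentially bounded and $\mu$ is finite, dominated convergence along sub-subsequences plus the subsequence criterion gives $\nvect(\XX)(f_k v)\to0$, which is precisely \eqref{tightplus}.

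I expect this last paragraph to be the main (indeed essentially the only) obstacle, everything else being a direct invocation of the quoted results. Its two delicate points are: (i) verifying that $f_k v$ is a genuine element of $\TestV(\XX)^n$ so that $\nvect(\XX)(f_k v)$ is defined, which relies on $\{f_k\}$ being equibounded in $\Lpi$ and on the Leibniz rule for bounded $\mathrm S^2$-functions; and (ii) transferring $\HSs$-convergence of $\{f_k\}$ to $\mu$-convergence of the traces $\tr_\mu(f_k)$, for which the absolute continuity $|\nvect|\ll\capa$ built into the definition of polar measure is exactly what is needed. Once \eqref{tightplus} is checked, combining linearity, norm-preservation, injectivity and surjectivity shows that $\Theta$ is a bijective isometry from ${\sf Pol}_n(\XX)$ onto ${\sf Tig}_n(\XX)$.
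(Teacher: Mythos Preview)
Your proposal is correct and follows essentially the same route as the paper: both reduce the statement, via Corollary~\ref{Rieszcor} and Theorem~\ref{improvetopolar}, to the single verification that $\nvect(\XX)$ satisfies \eqref{tightplus} for polar $\nvect$, and both carry this out by invoking Proposition~\ref{equivrepr} to write $\nvect=\nu\mu$ and then using that $\HSs$-convergence forces the quasi-continuous representatives to converge $\capa$-a.e.\ along subsequences, concluding by dominated convergence. Your write-up is more explicit on ancillary points (closedness of ${\sf Tig}_n(\XX)$, membership $f_k v\in\TestV(\XX)^n$), but the mathematical content is the same.
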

\begin{proof}
Taking into account Corollary \ref{Rieszcor} and  Theorem \ref{improvetopolar}, it is enough to show that for every polar  vector measure $\nvect$, $F\defeq\nvect(\XX)$ satisfies \eqref{tightplus}.

	Let then $\nvect$ be a polar  vector measure. Then we can use Proposition \ref{equivrepr} to represent $\nvect$ as $\nu_\nvect\mu_\nvect$ and hence compute, if $\{f_k\}_k$ is as in \eqref{tightplus} and $v\in\TestV(\XX)^n$, 
$$\nvect(\XX)(f_k v)=\int_\XX f_k v\,\cdot\,\nu_\nvect\dd{\mu_\nvect}.
$$
Now, we recall that if $\{f_k\}_k\subseteq\HSs$ is as above, up to taking a (non relabelled) subsequence, \cite[Theorem 1.20, Proposition 1.12 and Proposition 1.17]{debin2019quasicontinuous} show that the quasi-continuous representatives of $f_k$ converge to $0$ $\capa$-a.e. 
Then claim then follows by standard arguments. 
\end{proof}
%

\subsection{An example: improved results for the differential of BV functions}\label{sectBVRCD}
In the previous section we developed the theory to deal with polar/representable  vector measures and to recognize the local vector measures with this particularly nice behaviour. We give an application of this abstract theory: working on $\RCD(K,\infty)$ spaces, we are able to improve the description of the local vector measure giving the distributional differential of a $\BV$ function studied in Section \ref{sectBV}. This amounts in improving weak locality to `strong locality' (i.e.\ being polar) and hence it gives us the framework to state finer calculus rules.


First, we recall the simple \cite[Remark 2.2]{BGBV}, based on the coarea formula, which provides us with the possibility to give a meaning to the integrals  $\int_\XX f\dive\, v\dd\mass$ even though $\dive\,v\notin\Lpi$.
\begin{rem}\label{interpretationint}
	If $f\in\BVv$, $v\in D(\dive)\cap\Lpi$ and $\{n_k\}_k\subseteq(0,\infty)$, $\{m_k\}_k\subseteq(0,\infty)$ are two sequences with $\lim_k n_k=\lim_k m_k=+\infty$, then the limit
	\begin{equation}\label{defint}
		\lim_k  \int_\XX(f\vee -m_k)\wedge n_k\dive\,v\dd{\mass}
	\end{equation}
	exists finite and does not depend on the particular choice of the sequences $\{n_k\}_k$ and $\{m_k\}_k$.  

Therefore, if $f\in\BVv$ and $v\in D(\dive)\cap\Lpi$,  we can write
$$\int_\XX f\dive\, v\dd{\mass}$$
with the convention that it has to be interpreted as the limit  in \eqref{defint}.\fr
\end{rem}

For what follows, see \cite{BGBV} and the references therein.
\begin{defn}
	Let $(\XX,\dist,\mass)$ be a metric measure space and $F\in\BVv^n$.
	We define, for any $A$ open subset of $\XX$,
	\begin{equation}\label{defntvvector}
		\abs{\DIFF F}(A)\defeq \inf \left\{\liminf_k \int_A \Vert (\lip(F_{i,k}))_{i=1,\dots,n}\Vert_e\dd{\mass}\right\}
	\end{equation}
	where the infimum is taken among all sequences $\{F_{i,k}\}_k\subseteq\LIPloc(A)$ such that $F_{i,k}\rightarrow F_i$ in $\Lp^1(A,\mass)$ for every $i=1,\dots,n$. 
\end{defn}

\begin{prop}
	Let $(\XX,\dist,\mass)$ be a metric measure space and $F\in\BVv^n$. Then $ \abs{\DIFF F}(\,\cdot\,)$ as defined in \eqref{defntvvector} is the restriction to open sets of a finite non negative Borel measure that we call total variation of $F$ and still denote with the same symbol.
\end{prop}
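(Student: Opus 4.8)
The plan is to reduce the vector-valued statement to the scalar case treated in the literature and recalled just above, exploiting the coarea-type formula \eqref{coareaeqdiff} componentwise together with the basic properties of the scalar total variation. Concretely, I would first show that $|\DIFF F|(\,\cdot\,)$ is well defined (finite) on open sets and that it admits a good upper bound. For any open $A$ and any $i$, picking the competitor sequence that perturbs only the $i$-th component we obtain $|\DIFF F_i|(A)\le |\DIFF F|(A)$; conversely, taking $\{F_{i,k}\}_k$ to be (diagonal combinations of) optimal approximating sequences for each $F_i$ and using $\|(\lip F_{i,k})_i\|_e\le\sum_i\lip F_{i,k}$ yields $|\DIFF F|(A)\le\sum_i|\DIFF F_i|(A)$. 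Since each $F_i\in\BVv$, the right-hand side is finite; this gives finiteness and the two-sided estimate
\[
\max_i|\DIFF F_i|(A)\le|\DIFF F|(A)\le\sum_i|\DIFF F_i|(A)\qquad\text{for every }A\subseteq\XX\text{ open}.
\]

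Next I would verify the conditions of the Carathéodory-type criterion invoked in Lemma \ref{bbtightiffmeas} (monotonicity, superadditivity on positively separated open sets, inner continuity along increasing unions, finite subadditivity) for the set function $A\mapsto|\DIFF F|(A)$. Monotonicity, superadditivity and finite subadditivity are immediate from the definition \eqref{defntvvector}, arguing exactly as in the scalar case: superadditivity follows because on well-separated open sets one can combine localized competitors without interaction, subadditivity follows from a partition-of-unity/cut-off argument gluing competitors on $A$ and $B$, and both use only the sublinearity of $w\mapsto\lip(w)$ and of the Euclidean norm. Inner continuity under $A_k\nearrow A$ follows from the fact that the infimum in \eqref{defntvvector} is, by a standard relaxation argument, attained by sequences of \emph{locally} Lipschitz functions and that any competitor on $A$ can be cut off to live on some $A_k$ with controlled error. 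Then the cited criterion produces a finite Borel measure whose restriction to open sets is $|\DIFF F|$; its finiteness is guaranteed by the bound $|\DIFF F|(\XX)\le\sum_i|\DIFF F_i|(\XX)<\infty$ established in the first step.

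The one point requiring genuine care, and which I expect to be the main obstacle, is the inner continuity property (property $(3)$ in the list inside the proof of Lemma \ref{bbtightiffmeas}): unlike monotonicity and (sub/super)additivity, it is not purely formal, since one must show that the value on $A$ is approximated by the values on the exhausting open sets $A_k$. In the scalar setting this is handled in \cite{MIRANDA2003} via the lower semicontinuity of the relaxed functional together with a truncation/cut-off in space; I would reproduce that argument verbatim in the vector case, observing that the only structural ingredients used are: (i) $\|(\lip u_i)_i\|_e$ is lower semicontinuous under $\Lp^1_{\mathrm{loc}}$ convergence (which follows from the scalar lower semicontinuity and Fatou), and (ii) a Leibniz/chain-type estimate for $\lip$ under multiplication by Lipschitz cut-offs, applied componentwise. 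Once inner continuity is secured, the remaining hypotheses of the Carathéodory criterion are routine, and the measure extension follows; so the proof is essentially ``run the scalar proof of \cite{MIRANDA2003} componentwise, controlling everything by $\sum_i|\DIFF F_i|$''.
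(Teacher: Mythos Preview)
The paper does not give its own proof of this proposition; it is stated as a known result with a reference to \cite{BGBV} (see the sentence ``For what follows, see \cite{BGBV} and the references therein'' just above the definition). So there is no paper proof to compare against, only the literature.

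Your outline is the standard one and is essentially correct: control $|\DIFF F|$ from above by $\sum_i|\DIFF F_i|$ and from below by $\max_i|\DIFF F_i|$, then verify the De~Giorgi--Letta/Carath\'eodory criterion (this is exactly the criterion invoked in the proof of Lemma~\ref{bbtightiffmeas}, with reference to \cite{AFP00}). A couple of small comments. Your phrasing for the bound $|\DIFF F_i|(A)\le|\DIFF F|(A)$ is a bit off: you do not ``pick a competitor that perturbs only the $i$-th component''; rather, \emph{any} vector competitor $(F_{1,k},\dots,F_{n,k})$ for $F$ on $A$ gives a scalar competitor $F_{i,k}$ for $F_i$ on $A$ with $\lip(F_{i,k})\le\|(\lip F_{j,k})_j\|_e$, and you pass to the infimum. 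For inner continuity along $A_k\nearrow A$, the clean way to avoid the vague ``cut off with controlled error'' is to use the subadditivity you already have together with the upper bound by the scalar measures: choose $A_k$ so that $\overline{A_{k-1}}\subseteq A_k$, then
\[
|\DIFF F|(A)\le |\DIFF F|(A_{k})+|\DIFF F|(A\setminus \overline{A_{k-1}})\le |\DIFF F|(A_k)+\sum_i|\DIFF F_i|(A\setminus \overline{A_{k-1}}),
\]
and the last term tends to $0$ because each $|\DIFF F_i|$ is already a finite measure by the scalar result. This replaces your appeal to a lower semicontinuity of $\int\|(\lip u_i)_i\|_e$, which is not literally what you need (and does not follow from ``scalar lower semicontinuity and Fatou'' in the way you suggest). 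With this adjustment the argument goes through exactly as in \cite{MIRANDA2003}.
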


In view of the following proposition, recall that the interpretation of the integral in \eqref{intbypartsvet} is given by Remark \ref{interpretationint}.
\begin{prop}\label{reprvett1}
	Let $(\XX,\dist,\mass)$ be an $\RCD(K,\infty)$ space and $F\in\BVv^n$. Then, for every $A$ open subset of $\XX$, it holds that 
	\begin{equation}
		\label{intbypartsvet}
		\abs{\DIFF F}(A)=\sup\left\{\sum_{i=1}^n\int_A F_i \dive\, v_i\dd{\mass}\right\},
	\end{equation}
	where the supremum is taken among all $v=(v_1,\dots,v_n)\in\mathcal{W}_A^n$, where
\[
		\begin{split}
			\mathcal{{W}}_A^n\defeq\Big\{v=(v_1,\dots v_n)\in\TestV(\XX)^n:\abs{v}\le 1\ \mass\text{-a.e.\ }\text{and } \supp \abs{v}\subseteq A\Big\}.
		\end{split}
\]
\end{prop}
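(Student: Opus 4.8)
The plan is to deduce Proposition \ref{reprvett1} from the scalar representation formula in Proposition \ref{reprfordiffregularpre2}, which we may apply componentwise, together with the definition \eqref{defntvvector} of the vector total variation and the polarization trick already familiar from the theory of BV functions. The key point is that the vector total variation $\abs{\DIFF F}(A)$ is obtained by integrating the Euclidean norm of the vector of local Lipschitz constants, and this Euclidean norm can be written variationally as $\Vert (a_1,\dots,a_n)\Vert_e = \sup\{\sum_i a_i b_i : \Vert b\Vert_e\le 1\}$; so the vector formula should come from the scalar one by pairing each component $F_i$ with a component $v_i$ of a vector field of pointwise norm $\le 1$.

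First I would prove the inequality $\ge$ in \eqref{intbypartsvet}. Fix $v=(v_1,\dots,v_n)\in\mathcal W_A^n$. For each $i$, since $v_i\in\TestV(\XX)\subseteq D(\dive^\infty)$, Remark \ref{interpretationint} gives meaning to $\int_A F_i\dive\,v_i\dd\mass$, and an approximation by the Lipschitz functions $F_{i,k}\to F_i$ defining $\abs{\DIFF F}(A)$ together with the integration-by-parts formula \eqref{divedefneq} (localized to $A$ via cutoffs, using $\supp\abs{v}\subseteq A$) yields $\sum_i\int_A F_i\dive\,v_i\dd\mass = -\lim_k\sum_i\int_A \diff F_{i,k}(v_i)\dd\mass$. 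Then $\abs{\sum_i\diff F_{i,k}(v_i)}\le \sum_i\lip(F_{i,k})\abs{v_i}\le \Vert(\lip(F_{i,k}))_i\Vert_e\,\abs{v}\le \Vert(\lip(F_{i,k}))_i\Vert_e$ pointwise $\mass$-a.e., so passing to the liminf and taking the infimum over the approximating sequences gives $\sum_i\int_A F_i\dive\,v_i\dd\mass\le\abs{\DIFF F}(A)$, hence $\ge$ in \eqref{intbypartsvet}.

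For the converse inequality $\le$, the cleanest route is via the coarea-type / gluing structure. Observe that $\abs{\DIFF F}(A)\le\sum_{i=1}^n\abs{\DIFF F_i}(A)$ (subadditivity of total variation applied coordinatewise), but this crude bound loses the Euclidean structure; instead I would argue by a measurable selection of near-optimal directions. Concretely: by Proposition \ref{reprfordiffregularpre2} applied to a single scalar $\sum_i c_i F_i$ one controls directional derivatives, but to recover the Euclidean norm one uses that $\abs{\DIFF F}$ is the least measure dominating all $\abs{\DIFF(\sum_i c_i F_i)}$ for $c\in S^{n-1}$, together with a partition argument: choose finitely many directions $c^{(1)},\dots,c^{(m)}\in S^{n-1}$ that are $\varepsilon$-dense in $S^{n-1}$ and a Borel partition $\{A_j\}$ of $A$ on which $c^{(j)}$ is nearly optimal for the polar decomposition of $\DIFF F$, then for each $j$ apply the scalar Proposition \ref{reprfordiffregularpre2} on (an open set slightly larger than) $A_j$ to get $v^{(j)}\in\mathcal W^\bullet$ realizing $\int F\cdot c^{(j)}\dive\,v^{(j)}$ close to $\abs{\DIFF F}(A_j)$, and finally glue the fields $c^{(j)}v^{(j)}$ via cutoff functions with disjoint supports (using $\TestV(\XX)$'s module structure over $\LIPb(\XX)$ and the compatibility \eqref{compeq}) into a single $v\in\mathcal W_A^n$ with $\sum_i\int_A F_i\dive\,v_i\dd\mass\ge\abs{\DIFF F}(A)-C\varepsilon$. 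Letting $\varepsilon\to0$ finishes the proof.

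The main obstacle I expect is the gluing/selection step: one must simultaneously arrange that the chosen directions are near-optimal on the pieces of a Borel partition, that the test fields are supported in $A$ (so the integrations by parts are licit, recalling the convention of Remark \ref{interpretationint} for $\dive\,v\notin\Lpi$), and that after multiplying by cutoffs the resulting field still has pointwise norm $\le1$ $\mass$-a.e.\ and lies in $\TestV(\XX)^n$ — the last requiring a careful use of the enlarged definition \eqref{testVdef} of $\TestV(\XX)$ and the fact, noted after \eqref{testVdef}, that it is closed under multiplication by $\LIPb(\XX)$ and has property \eqref{stabilityfrac}. The measure-theoretic bookkeeping (inner regularity of $\abs{\DIFF F}$ to reduce to compact pieces, choosing the partition fine enough, controlling the errors on the overlaps of the cutoffs) is routine but must be done with care; everything else reduces to the scalar statement already available.
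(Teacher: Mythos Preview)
The paper does not give a self-contained proof of Proposition \ref{reprvett1}; it is quoted from \cite{BGBV} (see the sentence ``For what follows, see \cite{BGBV} and the references therein'' preceding the definition of $|\DIFF F|$ for vector-valued $F$). So there is no in-paper argument to compare against directly.

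That said, your sketch has a genuine gap in the $\le$ direction. You write ``choose \ldots\ a Borel partition $\{A_j\}$ of $A$ on which $c^{(j)}$ is nearly optimal for the polar decomposition of $\DIFF F$''. But at this point in the logical flow no polar decomposition of $\DIFF F$ is available: the local vector measure $\DIFF F$ and its representation $\nu_F|\DIFF F|$ are only constructed in Theorem \ref{weakder2}, whose proof \emph{uses} Proposition \ref{reprvett1} to identify the total variation of the constructed measure with the $|\DIFF F|$ of \eqref{defntvvector}. As written, the argument is circular. Relatedly, your claim that $|\DIFF F|$ is ``the least measure dominating all $|\DIFF(\sum_i c_i F_i)|$ for $c\in S^{n-1}$'' is not a triviality from the relaxation definition \eqref{defntvvector}: since $\lip$ is a $\limsup$, for generic Lipschitz approximants one only has $\|(\lip F_{i,k})_i\|_e \ge \sup_{c}\lip(c\cdot F_k)$ pointwise, and turning this into the measure-level identity you need is precisely the substantive step. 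In the $\RCD$ setting one rescues this by using infinitesimal Hilbertianity together with $\lip(f)=|\diff f|$ $\mass$-a.e.\ for Lipschitz $f$, so that $\|(\lip F_{i,k})_i\|_e = |(\nabla F_{i,k})_i|$ and pointwise duality with $(\tanX)^n$ becomes available; this is essentially the route taken in \cite{BGBV}, but it is real work beyond the scalar Proposition \ref{reprfordiffregularpre2} and is not covered by the ``partition and glue'' outline you give.

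A smaller point on the $\ge$ direction: the convergence $\int_A F_{i,k}\,\dive v_i\,\dd\mass\to\int_A F_i\,\dive v_i\,\dd\mass$ along $\Lp^1$-approximants is not automatic, since for $v\in\TestV(\XX)^n$ one has in general only $\dive v_i\in\Lpt$, not $\Lpi$ (the cross term $\nabla f\cdot\nabla g$ in $\dive(f\nabla g)$ need not be bounded). You should either first restrict to test fields with $\dive v_i\in\Lpi$ and pass to the general case afterwards, or route the argument through Remark \ref{interpretationint} more carefully. This is a technicality, but it should be addressed.
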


In Section \ref{sectBV} we built a local vector measure describing the distributional differential of a function of bounded variation. We improve now the result, in the framework of $\RCD(K,\infty)$ spaces.
Indeed, here we show that we can treat vector valued functions of bounded variation and also that we have a  more powerful description of the local vector measure describing the weak derivative, as it turns out to be representable.
With a slight abuse, we will denote  the distributional differential of $F$ on a $\RCD(K,\infty)$ space by $\DIFF F$, even though the same notation has been used in Section \ref{sectBV} for the distributional differential  on general metric measure spaces. As in this section we will work only  on $\RCD(K,\infty)$ spaces, this should case no confusion. Also, we justify again the notation $\DIFF F$ as we show that the total variation of the local vector measure $\DIFF F$ is (by construction) equal to the total variation of the $\BV$ function $F$.

In view of the following theorem, recall that the interpretation of the integral in \eqref{difffloccaompeq2} is given by Remark \ref{interpretationint}. Recall also \eqref{Cqcvfdef}.

\begin{thm}\label{weakder2}
	Let $(\XX,\dist,\mass)$ be an $\RCD(K,\infty)$ space and let  $F\in\BVv^n$.
	Then there exists a unique representable  vector measure $\DIFF F$ (hence defined on $\tanXcapinfn$) such that it holds 
	\begin{equation}\label{difffloccaompeq2}
		\sum_{i=1}^n\int_\XX F_i\dive\, v_i\dd\mass=-v\,\cdot\,\DIFF F(\XX)\quad\text{for every }v=(v_1,\dots,v_n)\in (\Cqcvfinf\cap D(\dive))^n.
	\end{equation}
\end{thm}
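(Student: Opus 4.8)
The strategy is to reduce the statement to the machinery already built. First I would recall from Section~\ref{sectBV} (Theorem~\ref{difffloccaomp}, applied componentwise, together with Proposition~\ref{reprvett1}) that the functional
\[
F\mapsto \mathcal F(v)\defeq -\sum_{i=1}^n\int_\XX F_i\dive\,v_i\dd\mass,\qquad v\in\TestV(\XX)^n,
\]
is well defined and, by Proposition~\ref{reprvett1}, satisfies $\|\mathcal F\|'=\abs{\DIFF F}(\XX)$ and, more precisely,
\[
\sup\{\mathcal F(v):v\in\TestV(\XX)^n,\ \|v\|\le 1,\ \supp\abs v\subseteq A\}=\abs{\DIFF F}(A)\qquad\forall A\subseteq\XX\text{ open}.
\]
Since the right-hand side is the restriction to open sets of a finite Borel measure, Lemma~\ref{bbtightiffmeas} gives that $\mathcal F$ is tight (with respect to $\LIPb(\XX)$, which approximates open sets and for which $\TestV(\XX)^n$ is a normed module). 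Hence Theorem~\ref{weakder} produces a unique local vector measure $\nvect$ on $\TestV(\XX)^n$ with $\nvect(\XX)=\mathcal F$ and $\abs{\nvect}=\abs{\DIFF F}$.

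The heart of the matter is to upgrade $\nvect$ to a \emph{polar} (hence representable) measure via Theorem~\ref{improvetopolar}: I must check the enhanced tightness condition \eqref{tightplus}, i.e.\ that $\mathcal F(f_k v)\to0$ whenever $\{f_k\}_k\subseteq\HSs$ is equibounded in $\Lpi$ with $f_k\to0$ in $\HSs$, for every $v\in\TestV(\XX)^n$. Here I would use the Leibniz rule \eqref{calcdiveeq} (in the $\Lp^2$ form valid for $v_i\in D(\dive)\cap\tanXinf$ and $f_k\in\Ss\cap\Lpi$, with $f_k\in\HSs\subseteq\Ss$): writing $\dive(f_k v_i)=\diff f_k(v_i)+f_k\dive\,v_i$, one gets, after the truncation convention of Remark~\ref{interpretationint},
\[
\mathcal F(f_k v)=-\sum_{i=1}^n\int_\XX F_i\big(\diff f_k(v_i)+f_k\dive\,v_i\big)\dd\mass.
\]
Since $F_i\in\Lpu\cap\Lp^\infty$ is not available in general, one truncates $F_i$ and passes to the limit as in Remark~\ref{interpretationint}; on truncations $F_i^{(m)}\in\BVv\cap\Lp^\infty$ the term $\int F_i^{(m)}f_k\dive\,v_i\dd\mass$ is $-\int \diff(F_i^{(m)}f_k)(v_i)\dd\mass$ by definition of divergence, so everything is controlled by $\|F_i^{(m)}\|_{\Lp^\infty}$, $\|\diff f_k(v_i)\|_{\Lpt}$, $\|f_k\|_{\Lpt}$ and $\abs{\DIFF F}(\XX)$; all these tend to $0$ with $k$ because $f_k\to0$ and $\diff f_k\to0$ in $\Lpt$ and $v_i\in\tanXinf$. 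The uniformity in the truncation parameter $m$, guaranteed by Remark~\ref{interpretationint}/\cite[Remark 2.2]{BGBV}, lets me interchange the limits. This establishes \eqref{tightplus}.

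By Theorem~\ref{improvetopolar} the measure $\nvect$ is therefore polar on $\TestV(\XX)^n$, with $\abs{\nvect}=\abs{\DIFF F}$. Applying Proposition~\ref{equivrepr} (the bijective isometry $\sf I$ between representable measures on $\tanXcapinfn$ and polar measures on $\TestV(\XX)^n$) I obtain a unique representable vector measure $\DIFF F=\nu\,\abs{\DIFF F}$ on $\tanXcapinfn$ restricting to $\nvect$ on $\TestV(\XX)^n$. It remains to verify the identity \eqref{difffloccaompeq2} for the larger test class $v\in(\Cqcvfinf\cap D(\dive))^n$: by definition $\DIFF F(\XX)(v)=\int_\XX v\cdot\nu\dd{\abs{\DIFF F}}$, and on $\TestV(\XX)^n$ this equals $\mathcal F(v)$ by construction; one then extends by the density of $\TestV_\mu(\XX)^n$ in $\tanbvXn{\mu}$ (Lemma~\ref{densitytestbargen}, with $\mu=\abs{\DIFF F}$), combined with the truncation $w_k=\tfrac1{1\vee|v_k|}v_k\in\TestV(\XX)^n$ (property \eqref{stabilityfrac}, which $\TestV(\XX)^n$ enjoys) and dominated convergence, matching the argument in Step~2 of the proof of Proposition~\ref{equivrepr}; simultaneously one checks the left-hand sides converge using again \eqref{calcdiveeq} and the truncation convention. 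Uniqueness follows from item~i) of Remark~\ref{mvmrcdrem} together with Lemma~\ref{densitytestbargen}. \emph{The main obstacle} I anticipate is the careful bookkeeping in verifying \eqref{tightplus}: one must simultaneously handle the truncation of the (merely $\Lpu$, possibly unbounded) $\BV$ function $F$ via Remark~\ref{interpretationint} and the Leibniz expansion of $\dive(f_k v_i)$, making sure the two limiting procedures (in $k$ and in the truncation level) commute and that all error terms are genuinely controlled by quantities vanishing as $f_k\to0$ in $\HSs$.
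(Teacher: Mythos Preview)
Your overall strategy is the same as the paper's: check tightness via Proposition~\ref{reprvett1} and Lemma~\ref{bbtightiffmeas}, verify the enhanced condition~\eqref{tightplus} by expanding $\dive(f_kv_i)$ with the Leibniz rule, invoke Theorem~\ref{improvetopolar} and then Proposition~\ref{equivrepr}. Two points deserve comment.

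\emph{Handling of unbounded $F$.} The paper organizes this more cleanly than you propose. Rather than truncating $F$ \emph{inside} the verification of~\eqref{tightplus} and then arguing about interchange of limits (your flagged ``main obstacle''), the paper first assumes $F_i\in\Lpi$---in which case~\eqref{tightplus} is a two-line computation, since $F_i\in\Lpt$ and both $f_k\dive\,v_i$ and $\nabla f_k\cdot v_i$ tend to $0$ in $\Lpt$---and only afterwards passes to general $F$ by setting $F^m_i\defeq(F_i\vee-m)\wedge m$, observing that $\abs{\DIFF(F^m-F)}(\XX)\to0$ via the coarea formula, and concluding that $\{\DIFF F^m\}_m$ is Cauchy in the Banach space of representable measures (Proposition~\ref{vectvalbanach} and Proposition~\ref{equivrepr}). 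The limit interchange happens at the level of the measures, where it is automatic, rather than inside~\eqref{tightplus}. Your route is not wrong in principle, but it makes the bookkeeping you worry about unnecessary.

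\emph{Extension to $(\Cqcvfinf\cap D(\dive))^n$.} Here there is a genuine gap. Your argument---approximate a given $v$ by $v_k\in\TestV(\XX)^n$ using Lemma~\ref{densitytestbargen} and pass to the limit on both sides---works for the right-hand side $\int v\cdot\nu\dd{\abs{\DIFF F}}$ (this is exactly convergence of traces in $\Lp^2_{\abs{\DIFF F}}$), but \emph{not} for the left-hand side $\sum_i\int F_i\dive\,v_{i,k}\dd\mass$: density in $\Lp^2_{\abs{\DIFF F}}(T^n\XX)$ gives you no control whatsoever on $\dive\,v_{i,k}$, and ``\eqref{calcdiveeq} and the truncation convention'' do not supply any. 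The paper does not attempt a self-contained density argument here; it first invokes \cite[Lemma~3.2]{BGBV} to pass from $\TestV(\XX)^n$ to $(\WHHSs\cap\tanXinf)^n$ (where one can approximate in a norm that \emph{does} control the divergence) and then the argument of \cite[Theorem~3.13]{BGBV} to reach $(\Cqcvfinf\cap D(\dive))^n$. You should either cite those results or supply an approximation that simultaneously converges in $\Lp^2_{\abs{\DIFF F}}$ \emph{and} has convergent divergences.
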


\begin{proof}
	Notice first that we know that such measure, if exists, is unique, being representable (by $\rm i)$ of Remark \ref{mvmrcdrem}). We start with the case $F_i\in\Lpi$ for every $i=1,\dots,n$. 
	
	Define $\mathcal{F}:\TestV(\XX)^n\rightarrow\RR$ as $$\mathcal{F}(v)\defeq-\sum_{i=1}^n\int_\XX F_i\dive\,v_i\dd{\mass}.$$
	Notice now that from Proposition \ref{reprvett1} it follows that $$\sup
	\left\{\FF(v):v\in\TestV(\XX)^n,\ \Vert v\Vert \le 1, \supp v\subseteq A\right\} =\abs{\DIFF F}(A).$$ 
	Now we want to argue as in the proof of Theorem \ref{difffloccaomp}, building upon Theorem \ref{improvetopolar} instead of Theorem \ref{weakder}.  Take then $\{f_k\}_k$ as in \eqref{tightplus}. We compute, if $v\in\TestV(\XX)^n$,
	$$\mathcal F(f_k v)=-\sum_{i=1}^n\int_\XX F_i \dive(f_k v_i)\dd\mass=-\sum_{i=1}^n\int_\XX F_i f_k\dive v_i\dd\mass-\sum_{i=1}^n\int_\XX F_i \nabla f_k\,\cdot\,v_i\dd\mass$$
	and notice that the right hand side converges to $0$ by the assumption $f_k\rightarrow 0$ in $\HSs$ and $v_i\in\TestV(\XX)$. 
	
	We therefore obtain a polar  vector measure  $\DIFF F$ that satisfies \eqref{difffloccaompeq2} for $v\in\TestV(\XX)^n$ and whose total variation coincides with $\abs{\DIFF F}$. Then, by Proposition \ref{equivrepr}, $\DIFF F$ induces a unique representable  vector measure (that we still call $\DIFF F$) defined on $\tanXcapinfn$, which still has total variation $\abs{\DIFF F}$ and still satisfies \eqref{difffloccaompeq2} for $v\in\TestV(\XX)^n$. 
	
	By \cite[Lemma 3.2]{BGBV},  \eqref{difffloccaompeq2} holds for every $v\in (\WHHSs\cap\tanXinf)^n$. Then the very same argument of \cite[Theorem 3.13]{BGBV} shows that \eqref{difffloccaompeq2} holds for every $v\in (\Cqcvfinf\cap D(\dive))^n.$

	In the general case, we can define $F^m\in(\BVv\cap\Lpi)^n$ as $F^m_i\defeq (F_i\vee -m)\wedge m$ and therefore consider the sequence of polar  vector measures $\{\DIFF F^m\}_m$ given by the paragraphs above. By uniqueness we have that $\DIFF F^l-\DIFF F^m=\DIFF (F^l-F^m)$ and by  \eqref{coareaeqdiff},
	$$|\DIFF (F^m-F)|(\XX)\stackrel{\eqref{intbypartsvet}}{\le}\sum_{i=1}^n|{\DIFF (F_i^m-F_i)}|(\XX)\rightarrow 0\quad\text{as }m\rightarrow\infty.$$  We therefore have that
	$\{\DIFF F^m\}_m$ is a Cauchy sequence that, thanks to Proposition \ref{vectvalbanach}, converges to a representable  vector measure whose total variation is $\abs{\DIFF F}$ (see also Proposition \ref{equivrepr}). Also, taking into account \eqref{deflimit} and Remark \ref{interpretationint}, ${\DIFF F}$ still satisfies \eqref{difffloccaompeq2}.
\end{proof}
Notice that $\DIFF F=\nu_F\abs{\DIFF F}$, characterized by \eqref{difffloccaompeq2}, is coherent with the notions developed in \cite{bru2019rectifiability,BGBV}. In particular, if $f=\chi_E$, where $E$ is a set of finite perimeter and finite mass, we obtain the Gauss-Green integration by parts formula stated in \cite{bru2019rectifiability}. This motivates the following definition.
\begin{defn}
	Let $(\XX,\dist,\mass)$ be an $\RCD(K,\infty)$ space and $F\in\BVv^n$. We call the local vector measure $\DIFF F$ given by Theorem \ref{weakder2} the distributional derivative of $F$. 
\end{defn}

\medskip

We state now the Leibniz rule for bounded functions of bounded variation, that is \cite[Proposition 3.35]{BGBV} and we encourage the reader to compare it with Proposition \ref{leibnizcontprop}. This calculus rule has been, in \cite{BGBV}, the building block to prove the chain rule for vector valued functions of bounded variation, recalled in Theorem \ref{volprop} below. We recall that for a $\mass$-measurable function $f:\XX\rightarrow\RR$ it is customary to define
\begin{equation}\label{veeandwedge}
\begin{alignedat}{3}
	f^{\wedge}(x)&\defeq \apliminf_{y\rightarrow x} f(y)&&\defeq\sup&&\left\{t\in\bar{\RR}: \lim_{r\searrow 0} \frac{\mass(B_r(x)\cap\{f<t\})}{\mass(B_r(x))}=0\right\}, \\
	f^{\vee}(x)&\defeq \aplimsup_{y\rightarrow x} f(y)&&\defeq\inf&&\left\{t\in\bar{\RR}:\lim_{r\searrow 0} \frac{\mass(B_r(x)\cap\{f>t\})}{\mass(B_r(x))}=0\right\}
\end{alignedat}
\end{equation}
and finally $$\bar f\defeq\frac{f^\vee+f^\wedge}{2},$$
with the convention that $+\infty-\infty=0$.

\begin{prop}[Leibniz rule]\label{leibnizrcd}
	Let $(\XX,\dist,\mass)$ be an $\RCD(K,N)$ space and let $f,g\in\BVv\cap\Lpi$. Then $f g \in\BVv$ and
	\begin{equation}\notag
		\DIFF (f g) =\bar{f}\DIFF g+\bar{g}\DIFF f.
	\end{equation}
	In particular, $\abs{\DIFF (f g)}\le \abs{\bar{f}}\abs{\DIFF g}+ \abs{\bar{g}}\abs{\DIFF f}$.
\end{prop}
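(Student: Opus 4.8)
The statement to prove is the Leibniz rule for bounded $\BV$ functions on $\RCD(K,N)$ spaces (Proposition \ref{leibnizrcd}), which is quoted as being \cite[Proposition 3.35]{BGBV}. So my plan would be to reduce the identity $\DIFF(fg) = \bar f \DIFF g + \bar g \DIFF f$ — an equality of \emph{representable vector measures} in the sense of Theorem \ref{weakder2} — to the corresponding identity at the level of total variations and polar vectors that is established in \cite{BGBV}, checking carefully that the passage to the language of local/representable vector measures developed here is compatible with it.

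\medskip

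\textbf{Approach.} First I would note that $fg \in \BVv$ and that the classical Leibniz rule $\DIFF(fg) = \bar f \DIFF g + \bar g \DIFF f$ (as measures, with precise representatives $\bar f, \bar g$ and vector-valued polar decompositions) is exactly the content of \cite[Proposition 3.35]{BGBV}; the only genuine work is to show this translates into the statement about the representable vector measures $\DIFF(fg)$, $\DIFF f$, $\DIFF g$ furnished by Theorem \ref{weakder2}. By item i) of Remark \ref{mvmrcdrem}, a representable vector measure is uniquely determined by its action $v \cdot \DIFF(\cdot)(\XX)$ on (traces of) $\TestV(\XX)$, so it suffices to test the claimed identity against $v \in \TestV(\XX)$ (equivalently, by \cite[Lemma 3.2]{BGBV} and the density/approximation arguments in the proof of Theorem \ref{weakder2}, against $v \in \Cqcvfinf \cap D(\dive)$). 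For such $v$, the defining formula \eqref{difffloccaompeq2} gives $-v \cdot \DIFF(fg)(\XX) = \int_\XX fg \dive v \, \dd\mass$, with the integral interpreted via Remark \ref{interpretationint}, and I would compare this with $-v \cdot (\bar f \DIFF g + \bar g \DIFF f)(\XX)$. The right-hand side, by item ii) of Remark \ref{mvmrcdrem} on multiplying a representable measure by an $\Lpcinf$ function (here the quasi-continuous precise representatives $\bar f, \bar g$, which are bounded since $f, g \in \Lpi$), and again \eqref{difffloccaompeq2}, unwinds to $\int_\XX f \dive(\bar g v)\,\dd\mass + \int_\XX g \dive(\bar f v)\,\dd\mass$ modulo the appropriate interpretation. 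The identity between these two expressions is precisely the weak/integration-by-parts form of the Leibniz rule proved in \cite{BGBV}.

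\medskip

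\textbf{Key steps, in order.} (1) Record that $fg \in \BVv$ (standard, e.g.\ from \cite{BGBV} or the coarea/chain-rule machinery), so that $\DIFF(fg)$ is defined by Theorem \ref{weakder2}. (2) Observe that $\bar f \DIFF g$ and $\bar g \DIFF f$ are representable vector measures: their total variations $\abs{\bar f}\abs{\DIFF g}$ and $\abs{\bar g}\abs{\DIFF f}$ are finite (since $\bar f, \bar g \in \Lpi$) and absolutely continuous with respect to $\capa$ (since $\abs{\DIFF f}, \abs{\DIFF g} \ll \capa$ by the regularity theory recalled in the introduction, e.g.\ \cite{BGBV}), and one multiplies the polar vector fields accordingly, as in item ii) of Remark \ref{mvmrcdrem}; hence $\bar f \DIFF g + \bar g \DIFF f$ is again a representable vector measure. (3) By the uniqueness in item i) of Remark \ref{mvmrcdrem}, reduce the claimed identity to checking $v \cdot \DIFF(fg)(\XX) = v \cdot (\bar f \DIFF g + \bar g \DIFF f)(\XX)$ for all $v \in \TestV(\XX)$. (4) Use \eqref{difffloccaompeq2} and the $\Lpcinf$-multiplication formula to rewrite both sides as $\mass$-integrals involving $\dive(\bar g v)$, $\dive(\bar f v)$, $\dive v$, in the sense of Remark \ref{interpretationint}. (5) Invoke \cite[Proposition 3.35]{BGBV} (equivalently its proof, which establishes the integration-by-parts identity $\int f g \dive v = -\int \bar f\, \dd(v \cdot \DIFF g) - \int \bar g\, \dd(v \cdot \DIFF f)$) to conclude. (6) Finally, the pointwise bound $\abs{\DIFF(fg)} \le \abs{\bar f}\abs{\DIFF g} + \abs{\bar g}\abs{\DIFF f}$ follows immediately from the additivity of the representable-measure structure together with Proposition \ref{absfoutside} (or directly from $\abs{L} = 1$ $\abs{\DIFF(fg)}$-a.e.\ and the triangle inequality applied to the representing vector field $\omega = \bar f \nu_g \frac{\dd\abs{\DIFF g}}{\dd G} + \bar g \nu_f \frac{\dd\abs{\DIFF f}}{\dd G}$ with $G = \abs{\DIFF f} + \abs{\DIFF g}$).

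\medskip

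\textbf{Main obstacle.} The conceptual content of the Leibniz rule itself is imported wholesale from \cite{BGBV}, so the real (if mild) obstacle is bookkeeping: making sure the integrals $\int_\XX fg \dive v\,\dd\mass$ etc.\ are consistently interpreted through Remark \ref{interpretationint} when $\dive v \notin \Lpi$, and that multiplying the representable measures $\DIFF f, \DIFF g$ by the \emph{quasi-continuous} precise representatives $\bar f, \bar g$ (rather than arbitrary $\mass$-representatives) is the operation that matches the formula in \cite{BGBV} — this is where the capacitary fine structure ($\bar f \in \Lpcinf$, $\abs{\DIFF f}\ll\capa$) is essential, since on general metric measure spaces such a sharp Leibniz rule can fail (as the final remark in Section \ref{sectBV} points out). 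Once that compatibility is pinned down, the proof is a short translation.
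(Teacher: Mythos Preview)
The paper does not give its own proof of this proposition: it is stated as a direct recall of \cite[Proposition 3.35]{BGBV} (see the sentence immediately preceding the statement), and the compatibility between the representable vector measure $\DIFF F$ of Theorem \ref{weakder2} and the objects $\nu_F\abs{\DIFF F}$ from \cite{BGBV} is asserted without argument just after Theorem \ref{weakder2} (``$\DIFF F=\nu_F\abs{\DIFF F}$, characterized by \eqref{difffloccaompeq2}, is coherent with the notions developed in \cite{bru2019rectifiability,BGBV}''). Your proposal is correct and supplies precisely the translation the paper leaves implicit: reducing the equality of representable measures to testing against $\TestV(\XX)$ via the uniqueness in Remark \ref{mvmrcdrem}~i), unwinding both sides through \eqref{difffloccaompeq2} and the $\Lpcinf$-multiplication of Remark \ref{mvmrcdrem}~ii), and then invoking the integration-by-parts identity from \cite{BGBV}. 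So your approach is not different from the paper's---it is the argument the paper presupposes when it says the two frameworks are ``coherent''.
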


To state Theorem \ref{volprop}, which is a restatement of \cite[Theorem 3.38]{BGBV} in the language of local vector measures, we recall Definition \ref{intmatrix}. We also need the following proposition, extracted from \cite{BGBV}, to define the functions $F^l$ and $F^r$. 
\begin{prop}\label{normalvectval}
	Let $(\XX,\dist,\mass)$ be an $\RCD(K,N)$ space and $F\in\BVv^n$. Then there exists a pair of $\abs{\DIFF F}$-measurable functions $F^l, F^r: \XX\rightarrow\RR^n$  such that for  $\abs{\DIFF F}\text{-a.e.\ }x$ the following holds. Either $F^l(x)=F^r(x)$ and then 
	$$ 
	\lim_{r\searrow 0}\dashint_{B_r(x)} \abs{F- F^r(x)}\dd{\mass}=0,
	$$
	or  there exists a Borel set $E\subseteq\XX$ with
	$$
	\lim_{r\searrow 0} \frac{\mass(E\cap B_r(x))}{\mass(B_r(x))}=\frac{1}{2}
	$$
	such that  
	\begin{equation}\notag
		\lim_{r\searrow 0}\dashint_{B_r(x)\cap E} \abs{F- F^r(x)}\dd{\mass}=\lim_{r\searrow 0}\dashint_{B_r(x)\cap(\XX\setminus E)} |F- F^l(x)|\dd{\mass}=0.
	\end{equation}
	If $\tilde{F}^l, \tilde{F}^r: \XX\rightarrow\RR^n$ is another pair as above, then for $\abs{\DIFF F}\text{-a.e.\ }x$ either $(\tilde{F}^l(x), \tilde{F}^r(x))=({F}^l(x), {F}^r(x))$ or $(\tilde{F}^l(x), \tilde{F}^r(x))=({F}^r(x), {F}^l(x))$.
\end{prop}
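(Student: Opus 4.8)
The plan is to deduce this from the coarea formula together with the rectifiability and blow-up theory for sets of finite perimeter on finite-dimensional $\RCD$ spaces, essentially retracing the fine-structure analysis of \cite{BGBV}. A preliminary reduction: since projecting onto a single component is $1$-Lipschitz we have $|\DIFF F_i| \le |\DIFF F|$ for each $i$, while \eqref{defntvvector} and subadditivity give $|\DIFF F| \le \sum_i |\DIFF F_i|$; hence $|\DIFF F|$ and $\sum_{i=1}^n |\DIFF F_i|$ are mutually absolutely continuous, and it is enough to build $F^l, F^r$ up to $\bigl(\sum_i |\DIFF F_i|\bigr)$-null sets. By \eqref{coareaeqdiff}, each $|\DIFF F_i|$ is an integral over $t$ of the perimeter measures $\per(\{F_i>t\},\cdot)$, so $|\DIFF F|$ is carried by a countably rectifiable set and, at $|\DIFF F|$-a.e.\ point lying on $\partial^*\{F_i>t\}$, the blow-up of $\{F_i>t\}$ is a half-space (this is the input from Ambrosio--Bruè--Semola / Bruè--Pasqualetto--Semola on $\RCD(K,N)$ spaces, recalled in \cite{BGBV}).

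First I would treat the scalar case, i.e.\ the pair $(f^\wedge, f^\vee)$ for $f\in\BVv$ as in \eqref{veeandwedge}: the half-space blow-up of the superlevel sets forces, for $|\DIFF f|$-a.e.\ $x$, either $f^\wedge(x)=f^\vee(x)$ with $f$ admitting that value as approximate limit at $x$, or $f^\wedge(x)<f^\vee(x)$ together with a Borel set $E$ of density $1/2$ at $x$ such that $f$ has approximate limit $f^\vee(x)$ along $E$ and $f^\wedge(x)$ along $\XX\setminus E$ — this is the classical argument of \cite{AFP00} transplanted to the metric setting. Applying this componentwise gives, $|\DIFF F|$-a.e., the values $F_i^\wedge(x), F_i^\vee(x)$ and, at the jump points of $F_i$, sets $E_i$. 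The main obstacle is the \emph{compatibility} step: one must show that at $|\DIFF F|$-a.e.\ jump point of $F$ the sets $E_i$ can be chosen to coincide, i.e.\ there is a single density-$1/2$ set $E$ serving all components simultaneously. This is a statement about the jump normals of the various components agreeing $|\DIFF F|$-a.e., and it is where the structure theory genuinely enters; in \cite{BGBV} it is obtained by concentrating the jump parts of the $\DIFF F_i$ on a common rectifiable set with a well-defined normal and checking that at a common blow-up point all components blow up to functions of the same half-space.

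Granting the compatibility, I would set $F^r(x)$ to be the approximate limit of $F$ along $E$ (and just the plain approximate limit of $F$ at $x$ when $x$ is not a jump point, in which case also $F^l(x):=F^r(x)$), and $F^l(x)$ the approximate limit of $F$ along $\XX\setminus E$; the required blow-up identities then hold by construction, and $|\DIFF F|$-measurability of $F^l,F^r$ follows from that of the componentwise objects $F_i^\wedge,F_i^\vee$ and of the density function of $E$. Finally, for uniqueness up to the transposition $(F^l,F^r)\leftrightarrow(F^r,F^l)$: at a non-jump point both maps must equal the approximate limit, while at a jump point the unordered pair $\{F^l(x),F^r(x)\}$ and the partition of a neighbourhood into the two density-$1/2$ pieces are intrinsically determined, the only ambiguity being which piece is labelled $E$; hence any two admissible pairs differ $|\DIFF F|$-a.e.\ by that swap. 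I expect essentially all the difficulty to be concentrated in the compatibility of the jump sets across components.
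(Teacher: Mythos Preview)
The paper does not give its own proof of this proposition: it is stated with the preamble ``the following proposition, extracted from \cite{BGBV}'' and no argument follows, so there is nothing to compare against in the paper itself. Your sketch is a faithful outline of the strategy actually carried out in \cite{BGBV}: reduce to the scalar case via the mutual absolute continuity $|\DIFF F|\sim\sum_i|\DIFF F_i|$, invoke the coarea formula and the half-space blow-up for sets of finite perimeter on $\RCD(K,N)$ spaces to produce $(F_i^\wedge,F_i^\vee)$ and the density-$\tfrac12$ sets $E_i$, then prove the compatibility of the $E_i$'s across components and assemble $F^l,F^r$. You have also correctly located the genuine content in the compatibility step; everything else is soft. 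If you want to turn this into a self-contained proof rather than a pointer to \cite{BGBV}, that compatibility argument is what you would need to spell out.
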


We state now the following result about the chain rule in the $\BV$ setting. Notice that, requiring that the space is $\RCD(K,N)$, we can improve considerably what stated in Proposition \ref{propchaincont}: not only we treat vector valued functions, but we also drop the continuity assumption on the $\BV$ function.
\begin{thm}[Chain rule for vector valued functions]\label{volprop}
	Let $(\XX,\dist,\mass)$ be an $\RCD(K,N)$ space and $F\in\BVv^n$. Let $\phi\in C^1(\RR^n;\RR^m)\cap\LIP(\RR^n;\RR^m)$ for some $m\in\NN$, $m\ge 1$ such that $\phi(0)=0$.
	Then $\phi\circ F\in\BVv^m$ and 
\[
	\DIFF(\phi\circ F)=\left(\int_0^1 \nabla\phi(t F^r+(1-t) F^l)\dd{t}\right)\DIFF F,
\]
where $F^l,F^r$ are given by Proposition \ref{normalvectval}.
\end{thm}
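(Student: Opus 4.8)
The plan is to deduce Theorem \ref{volprop} from the corresponding statement \cite[Theorem 3.38]{BGBV}, which already establishes the chain rule at the level of the classical total variation and the precise representative, by translating that result into the language of representable vector measures via Theorem \ref{weakder2}. First I would recall that, by Theorem \ref{weakder2}, both sides of the claimed identity are (or induce) representable vector measures defined on $\tanXcapinfn$ (resp.\ $\tanXcapinfm$): the left-hand side $\DIFF(\phi\circ F)$ is the distributional derivative of $\phi\circ F\in\BVv^m$, which belongs to $\BVv^m$ precisely because $\phi$ is Lipschitz with $\phi(0)=0$, so that $|\DIFF(\phi\circ F)|\leq\Lip(\phi)\,|\DIFF F|<\infty$; the right-hand side is obtained from $\DIFF F$ by the matrix-multiplication operation of Definition \ref{intmatrix}, applied with the bounded Borel map $x\mapsto\int_0^1\nabla\phi(tF^r(x)+(1-t)F^l(x))\,\dd t\in\RR^{m\times n}$ (bounded since $\|\nabla\phi\|_\infty\leq\Lip(\phi)$), and by item ii) of Remark \ref{mvmrcdrem} this is again a representable vector measure. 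By item i) of Remark \ref{mvmrcdrem}, to prove equality of two representable vector measures it suffices to check that they agree, when evaluated on $\XX$, against (the trace of) every $v\in\TestV(\XX)^n$ — equivalently, by Lemma \ref{densitytestbargen} and continuity, against a dense class.

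Next I would make the identification explicit. Writing $\DIFF F=\nu_F|\DIFF F|$ for the polar decomposition given by Proposition \ref{reprtopolar}, one has for $v=(v_1,\dots,v_m)\in\TestV(\XX)^m$
\[
\Big(\Big(\int_0^1\nabla\phi(tF^r+(1-t)F^l)\,\dd t\Big)\DIFF F\Big)(\XX)(v)=\int_\XX \Big(\int_0^1\nabla\phi(tF^r+(1-t)F^l)\,\dd t\Big)^{\!T}\!v\,\cdot\,\nu_F\,\dd{|\DIFF F|},
\]
using Definition \ref{intmatrix} and the transpose formula from Remark \ref{mvmrcdrem}. On the other hand, $\DIFF(\phi\circ F)(\XX)(v)=-\sum_{j=1}^m\int_\XX(\phi\circ F)_j\dive\,v_j\,\dd\mass$ by \eqref{difffloccaompeq2} (with the integral interpreted as in Remark \ref{interpretationint} in the unbounded case). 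So the theorem reduces to the scalar (against-test-vector-fields) identity
\[
-\sum_{j=1}^m\int_\XX(\phi\circ F)_j\dive\,v_j\,\dd\mass=\int_\XX\Big(\int_0^1\nabla\phi(tF^r+(1-t)F^l)\,\dd t\Big)^{\!T}\!v\,\cdot\,\nu_F\,\dd{|\DIFF F|}\qquad\forall v\in\TestV(\XX)^m,
\]
and this is exactly the content of \cite[Theorem 3.38]{BGBV} once one reads its Gauss–Green/integration-by-parts formulation through the polar decomposition of $\DIFF F$; indeed \cite[Theorem 3.38]{BGBV} asserts that $\phi\circ F\in\BVv^m$ with $\DIFF(\phi\circ F)$ represented by the stated matrix acting on the polar vector $\nu_F$ against $|\DIFF F|$. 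The bounded-case claim then follows, and the passage to general (not necessarily bounded) $F\in\BVv^n$ is handled exactly as in the last paragraph of the proof of Theorem \ref{weakder2}: truncate $F^m_i\defeq(F_i\vee -m)\wedge m$, note $\phi\circ F^m\to\phi\circ F$ in $\Lpu$-components with $|\DIFF(\phi\circ F^m-\phi\circ F)|(\XX)\leq\Lip(\phi)\sum_i|\DIFF(F_i^m-F_i)|(\XX)\to0$, invoke the completeness of $\MM_\VV$ (Proposition \ref{vectvalbanach}) and Proposition \ref{equivrepr} to pass to the limit on the left, while on the right $F^{m,l},F^{m,r}\to F^l,F^r$ in the appropriate sense by Proposition \ref{normalvectval} together with dominated convergence (using $\|\nabla\phi\|_\infty<\infty$ and $\C^1$ continuity of $\nabla\phi$).

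The main obstacle, to my mind, is not any new analysis but rather the careful bookkeeping of the several identifications involved: matching the normalization of the precise representatives $F^l,F^r$ (unique only up to swapping, per Proposition \ref{normalvectval}) with the normalization of the polar vector $\nu_F$ (unique up to the sign inherent in a polar decomposition), and checking that the symmetry of $\int_0^1\nabla\phi(tF^r+(1-t)F^l)\,\dd t$ under $(F^l,F^r)\mapsto(F^r,F^l)$ indeed makes the right-hand side well defined; and, in the unbounded case, verifying that the interpretation of $\int_\XX(\phi\circ F)_j\dive\,v_j\,\dd\mass$ via Remark \ref{interpretationint} is compatible with the limiting procedure — this is where one must be slightly careful, but it mirrors verbatim the corresponding step in Theorem \ref{weakder2} and in \cite{BGBV}, so no genuinely new difficulty arises. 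In short, the proof is a translation: state the two representable measures, reduce equality to testing against $\TestV(\XX)^n$ by Remark \ref{mvmrcdrem} i), quote \cite[Theorem 3.38]{BGBV} for that identity, and run the truncation argument for the general case.
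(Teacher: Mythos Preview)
Your proposal is correct and aligns with the paper's own treatment: the paper states Theorem \ref{volprop} without proof, explicitly describing it as ``a restatement of \cite[Theorem 3.38]{BGBV} in the language of local vector measures'', so the translation you outline via Theorem \ref{weakder2}, Definition \ref{intmatrix}, and the uniqueness in Remark \ref{mvmrcdrem} is exactly the intended argument. Your added detail (the truncation step for unbounded $F$ and the bookkeeping about the $(F^l,F^r)$ swap symmetry) is reasonable but likely already absorbed in the cited result, so the core of the proof really is just the citation plus the identification of both sides as representable measures.
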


\section*{Appendix}
\setcounter{equation}{0}
\renewcommand\theequation{A.\arabic{equation}}

In this appendix we show how using Doob's martingale convergence theorem we can construct Borel representatives of functions in $\Lp^1(\mass)$ that `linearly' depend on the given function. Notice that no Choice, other than Countable Dependent, is needed in the proof, so our construction differs from similar ones based on the concept of von Neumann lifting: the price that we pay for this is that the Borel representatives are defined only on subsets of full measure.

Since we will need to distinguish between functions and representatives, for given $f$ Borel and integrable, we denote by $[f]$ its equivalence class   in the Lebesgue space $\Lpu$.
Also, $\Lpuloc$  denotes the space of measurable functions $f$ such that for every $x\in\XX$, there exists a neighbourhood of $x$, $B_x$, with $f\in \Lp^1(\mass\mres B_x)$.  As before,  we denote by $[f]$ the equivalence class of $f$, for $f\in\Lpuloc$ Borel.

\medskip

We recall that a measure space is a triplet $(\XX,\FF,\mass)$ where $\XX$ is a set, $\FF$ is a $\sigma$-algebra and $\mass$ is a measure defined on $\FF$. We say that the measure space is separable if there exists a countable collection $\{A_n\}_{n\in\NN}\subseteq\FF$ such that for every $B\in\FF$ with $\mass(B)<\infty$ we can find a sequence $\{B_n\}_n\subseteq\{A_n\}_n$ with $\mass(B_n\Delta B)\rightarrow 0$. 

It is easy to see that for a measure space $(\XX,\FF,\mass)$ the following assertions are equivalent:
\begin{itemize}[label=$\bullet$]
	\item $(\XX,\FF,\mass)$ is separable, say $\{A_n\}_n$ is a countable dense subset of $\FF$,
	\item $\Lp^p(\mass)$ is separable for \emph{some} $p\in[1,\infty)$,
	\item $\Lp^p(\mass)$ is separable for \emph{every} $p\in[1,\infty)$.
\end{itemize}
Moreover, if some (hence all) of the item above is satisfied, a countable dense subset of $\Lp^p(\mass)$, for $p\in[1,\infty)$, can be obtained considering the linear span over $\mathbb Q$ of $\{\chi_{A_n}\}_n$, for $\{A_n\}_n$ as above.
\begin{thma}[`Linear' choice of measurable representatives for measure spaces]\label{ultimo}
Let $(\XX,\FF,\mass)$  be a separable measure space with $\mass$ $\sigma$-finite. 
Then there exist two maps\newline $\leb:\Lpu\rightarrow\FF$ and $\FF\mathrm{Rep}:\Lpuloc\rightarrow\left\{\text{$\FF$-measurable real valued maps on $\XX$}\right\}$ such that 
\begin{enumerate}[label=\roman*)]
	\item $\mass(\XX\setminus\leb([f])=0$ for every $[f]\in\Lpuloc$,
	\item for every $[f]\in\Lpu$ and $f'\in[f]$, we have $f'=\FF\mathrm{Rep}([f])\ \mass$-a.e.
	\item for every $[f], [g]\in\Lpu$ and $\alpha,\beta\in\RR$, we have 
	$$\leb([f])\cap \leb([g])\subseteq\leb([\alpha f+\beta g])$$
	and for every $x\in \leb([f])\cap \leb([g])$, it holds 
	$$\alpha\FF\mathrm{Rep}([f])(x)+\beta\FF\mathrm{Rep}([g])(x)=\FF\mathrm{Rep}([\alpha f+\beta g])(x).$$
\end{enumerate}
\end{thma}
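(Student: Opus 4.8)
The strategy is to reduce the abstract statement to the concrete separable case $\XX = \NN$ with counting measure (or, equivalently, to $\Lp^1$ of a fixed countable generating algebra), to which one applies Doob's martingale convergence theorem directly. Since $(\XX,\FF,\mass)$ is $\sigma$-finite, write $\XX = \bigsqcup_k X_k$ with $\mass(X_k) < \infty$; by rescaling the measure on each piece (which does not affect measurability, a.e.\ equality, or linearity of representatives) I may assume $\mass$ is finite, and then by a further harmless normalisation that $\mass(\XX) = 1$, i.e.\ $(\XX,\FF,\mass)$ is a probability space. Separability gives a countable subalgebra $\mathcal A = \{A_n\}_n \subseteq \FF$ whose generated $\sigma$-algebra is $\mass$-dense in $\FF$; enlarging $\mathcal A$ if necessary I assume $\mathcal A$ is an algebra and take an increasing sequence of finite subalgebras $\FF_1 \subseteq \FF_2 \subseteq \cdots$ with $\bigcup_m \FF_m = \mathcal A$ and $\sigma(\mathcal A)$ dense in $\FF$.

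\textbf{Construction of the representative.} For $f \in \Lpu$ (first assume $\mass$ finite), let $f_m := \mathbb E[f \mid \FF_m]$ be the conditional expectation; since $\FF_m$ is finite, $f_m$ is a finite linear combination $\sum_{B \text{ atom of }\FF_m} \big(\fint_B f \, \dd\mass\big)\chi_B$, which has a \emph{canonical} everywhere-defined Borel representative, depending \emph{linearly} on $f$ and defined for \emph{every} $f$ (no choice involved). By Doob's martingale convergence theorem $f_m \to \mathbb E[f \mid \sigma(\mathcal A)] = f$ in $\Lp^1$ and $\mass$-a.e. Now set
\[
\leb([f]) := \Big\{ x \in \XX : \lim_m f_m(x) \text{ exists in } \RR \Big\},
\]
which is a Borel (indeed $\FF$-measurable) set of full measure by Doob, and define $\FF\mathrm{Rep}([f])(x) := \lim_m f_m(x)$ on $\leb([f])$ and, say, $0$ off it. Property $ii)$ (that this is an a.e.\ representative of $[f]$) is exactly the a.e.\ convergence statement; property $iii)$ is immediate because $f_m$ depends linearly and pointwise on $f$, so on $\leb([f]) \cap \leb([g])$ the limit defining $\FF\mathrm{Rep}([\alpha f + \beta g])$ equals $\alpha \lim f_m + \beta \lim g_m$. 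One must check that $\leb([f])$ depends only on the class $[f]$ and not on the Borel representative: if $f = f'$ $\mass$-a.e.\ then $\mathbb E[f\mid\FF_m] = \mathbb E[f'\mid\FF_m]$ as \emph{functions} on the atoms (the value on an atom $B$ is $\fint_B f\,\dd\mass$, insensitive to $\mass$-null changes), so the sequences $f_m$ literally coincide and hence so do the sets and the limits; this is where the specific choice of the finite canonical representative of $f_m$ matters.

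\textbf{Passage to the $\sigma$-finite case and the localised space.} For $\mass$ merely $\sigma$-finite I perform the above construction separately on each $X_k$ (with the restricted, then normalised, measure) and glue: $\leb([f]) := \bigcup_k \leb([f\chi_{X_k}])$ and $\FF\mathrm{Rep}([f]) := \sum_k \FF\mathrm{Rep}([f\chi_{X_k}])\chi_{X_k}$, which is legitimate since the $X_k$ are disjoint and the gluing preserves measurability, a.e.-equality, and linearity. For $f \in \Lpuloc$ — relevant for property $i)$, which asks $\leb$ to be defined on all of $\Lpuloc$ — note that $\leb([f])$ makes sense because the conditional expectations $\mathbb E[f\chi_{X_k}\mid\FF_m]$ on the finite-measure piece $X_k$ only require $f$ to be integrable on $X_k$, which $\Lpuloc$ guarantees (taking the $X_k$ to be, or to be covered by, the neighbourhoods from the definition of $\Lpuloc$); so the definition of $\leb([f])$ extends verbatim, while $\FF\mathrm{Rep}$ is only claimed on $\Lpu$, consistently with the statement. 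The main obstacle I anticipate is purely bookkeeping: ensuring that the finite subalgebras $\FF_m$ can be chosen so that $\sigma\big(\bigcup_m \FF_m\big)$ is genuinely $\mass$-dense in $\FF$ (so that Doob's limit is $f$ itself and not a proper conditional expectation) — this follows from separability of $\Lp^1(\mass)$, since the $\mathbb Q$-span of $\{\chi_{A_n}\}$ is dense and each $\chi_{A_n}$ is $\FF_m$-measurable for $m$ large — together with checking compatibility of the per-$X_k$ constructions; none of this requires more than Countable Dependent Choice, since Doob's theorem and all the countable manipulations are choice-free at this level.
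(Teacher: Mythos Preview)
Your proof is correct and takes essentially the same approach as the paper: reduce to finite measure, pick an increasing sequence of finite sub-$\sigma$-algebras $\FF_m$ generated by a countable dense family, set $f_m=\mathbb E[f\mid\FF_m]$ with its canonical atom-wise representative, and invoke Doob's martingale convergence theorem to define $\leb([f])$ and $\FF\mathrm{Rep}([f])$ as the convergence set and pointwise limit. The only cosmetic difference is that the paper phrases the $f_m$ as averages over an explicit refining sequence of finite partitions and identifies the limit with $f$ via a density-plus-$1$-Lipschitz argument, whereas you appeal directly to the identification $\mathbb E[f\mid\FF_\infty]=f$ coming from $\mass$-density of $\sigma(\mathcal A)$ in $\FF$; both are fine.
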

\begin{proof}
By a gluing argument, we can clearly assume that $\mass$ is finite. Let $\{A_n\}_n$ denote the countable dense subset of $\FF$.

We take a sequence of finite $\FF$-measurable partitions of $\XX$, $\{{\mathcal{E}}^k\}_{k\in\NN}$, where ${\mathcal{E}}^k=\{{\mathcal{E}}^k_l\}_{l=1,\dots,n(k)}\subseteq\FF$, with the following properties:
	\begin{itemize}[label=\roman*)]
		\item[$a)$] ${\mathcal{E}}^{k+1}$ is a refinement of ${\mathcal{E}}^k$, in the sense that for every $l$, ${\mathcal{E}}^{k+1}_l\subseteq {\mathcal{E}}^k_m$ for some $m=m(l)$,
		\item[$b)$] for every $n$, there exists $k=k(n)$ such that $A_n$ can be written as union of sets in ${\mathcal{E}}^{k}$.
	\end{itemize}
	We build such sequence as follows: first, let $\FF_k$ denote the $\sigma$-algebra generated by $\{A_1,\dots,A_k\}$ and then let 
	$\mathcal E^k$ be the finest partition of $\XX$ whose sets belong to $\FF_k$.
	
	We then define  a sequence of linear maps $\{P_k\}_{k\in\NN}$ $$P_k:\Lpu\rightarrow\left\{\text{$\FF$-measurable real valued maps on $\XX$}\right\}$$ as follows:
	\begin{equation}\label{defPk}
		P_k( [f])(x) \defeq
		\begin{cases}
			\dashint_{{\mathcal{E}}^k_l}f\dd{\mass}\quad&\text{if }x\in {\mathcal{E}}^k_l \text{ and }\mass( {\mathcal{E}}^k_l)>0,\\
			0&\text{otherwise}.
		\end{cases}
	\end{equation}
	Notice that for every $k$, $P_k:\Lpu\rightarrow\Lpu$ is $1$-Lipschitz, in particular $\Vert P_k([f])\Vert_{\Lpu}\le \Vert f \Vert_{\Lpu}$. We can easily check that the discrete stochastic process $\{P_k([f])\}_k$ is a martingale with respect to the filtration $\{\FF_k\}_k$. To this aim we use property $a)$ in the construction of $\{\mathcal E^k\}_k$.
	Therefore, by \cite[Theorem 2.2]{revyor}, $P_k([f])$ converges $\mass$-a.e.\ to a finite limit. We define then $$\leb([f])\defeq \left\{x: \lim_k P_k([f])(x)\text{ exists finite}\right\}$$
	and then the Borel function
	$$ 
	\FF\mathrm{Rep}([f])(x)\defeq
	\begin{cases}
		\lim_k P_k([f])(x)\quad&\text{if }x\in\leb([f]),\\
		0&\text{otherwise}.
	\end{cases}
	$$
	We notice now that property $i)$ of the statement is trivially satisfied while $iii)$ follows from the linearity of the integral. We only need to show property $ii)$ of the statement. First note that by the request $b)$ in the construction of $\{\mathcal{E}^k\}_k$, property $ii)$ holds true if $f$ belongs to the span over $\mathbb Q$ of $\{\chi_{A_n}\}_n$. Indeed, in such case, $P_k([f])=f$, eventually. Notice now that as $\FF\mathrm{Rep}$ is defined as pointwise limit of $P_k([\,\cdot\,])$ and the maps $P_k:\Lpu\rightarrow\Lpu$ are $1$-Lipschitz, it follows from Fatou's lemma (and a slight abuse of notation) that also $\FF\mathrm{Rep}:\Lpu\rightarrow\Lpu$ is $1$-Lipschitz. Then the conclusion follows by density.
\end{proof}

\begin{cora}[`Linear' choice of Borel representatives for Polish spaces]\label{thmapp}
	Let $(\XX,\tau)$ be a Polish space and let $\mass$ be a $\sigma$-finite Borel measure on $\XX$. Then there exist two maps\newline $\leb:\Lpuloc\rightarrow\mathcal{B}(\XX)$ and $\borrep:\Lpuloc\rightarrow\left\{\text{Borel real valued maps on $\XX$}\right\}$ such that 
	\begin{enumerate}[label=\roman*)]
		\item $\mass(\XX\setminus\leb([f])=0$ for every $[f]\in\Lpuloc$,
		\item for every $[f]\in\Lpuloc$ and $f'\in[f]$, we have $f'=\borrep([f])\ \mass$-a.e.
		\item for every $[f], [g]\in\Lpuloc$ and $\alpha,\beta\in\RR$, we have 
		$$\leb([f])\cap \leb([g])\subseteq\leb([\alpha f+\beta g])$$
		and for every $x\in \leb([f])\cap \leb([g])$, it holds 
		$$\alpha\borrep([f])(x)+\beta\borrep([g])(x)=\borrep([\alpha f+\beta g])(x),$$
		\item for every $[f]\in \Lpuloc$, it holds
		\begin{equation}\label{buoncontrollo}
		|\borrep([f])(x)|\le\inf_{r>0} \Vert f\Vert_{\Lp^\infty(\mass\mressmall B_r(x))} 	\qquad\text{for every }x\in \leb([f]).
		\end{equation}
	\end{enumerate}
\end{cora}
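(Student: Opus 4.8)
The statement is a corollary of Theorem~\ref{ultimo}, so the plan is to deduce it by localization. First I would reduce the Polish-space setting to the measure-space setting to which Theorem~\ref{ultimo} applies. Since $(\XX,\tau)$ is Polish it is second countable, hence there is a countable base $\{U_n\}_n$; the Borel $\sigma$-algebra $\mathcal B(\XX)$ is then countably generated. Moreover $\mass$ is $\sigma$-finite, so the measure space $(\XX,\mathcal B(\XX),\mass)$ is separable in the sense of the Appendix (a countable dense family in $\mathcal B(\XX)$ is obtained from the finite-measure members of the algebra generated by $\{U_n\}_n$). Thus Theorem~\ref{ultimo} furnishes maps $\leb$ and $\FF\mathrm{Rep}$ on $\Lpu$ with properties i)--iii). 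The only genuinely new content of the Corollary with respect to Theorem~\ref{ultimo} is the passage from $\Lpu$ to $\Lpuloc$ and the pointwise bound \eqref{buoncontrollo}.

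For the extension to $\Lpuloc$, the plan is a standard exhaustion. Fix a countable base $\{U_n\}_n$ of $\XX$ consisting of sets of finite $\mass$-measure (possible since $\mass$ is $\sigma$-finite and finite on a neighbourhood of each point is not assumed, but one can always intersect with an exhausting sequence $\{X_k\}_k$ of finite measure and refine the base). Given $[f]\in\Lpuloc$, each $\chi_{U_n}f$ lies in $\Lpu$, so I set $\leb([f])\defeq\bigcap_n\big(\leb([\chi_{U_n}f])\cup(\XX\setminus U_n)\big)$ and define $\borrep([f])(x)\defeq\FF\mathrm{Rep}([\chi_{U_n}f])(x)$ whenever $x\in U_n\cap\leb([f])$, and $0$ otherwise. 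One must check this is well posed: if $x\in U_n\cap U_m$, then $\chi_{U_n}f$ and $\chi_{U_m}f$ agree $\mass$-a.e.\ on $U_n\cap U_m$, hence $\chi_{U_n\cap U_m}f=\chi_{U_n}f-\chi_{U_n\setminus U_m}f$ and an application of property iii) of Theorem~\ref{ultimo} (additivity of $\FF\mathrm{Rep}$ on the Lebesgue sets) on $U_n\cap U_m$ gives $\FF\mathrm{Rep}([\chi_{U_n}f])=\FF\mathrm{Rep}([\chi_{U_m}f])$ on $U_n\cap U_m\cap\leb([\chi_{U_n}f])\cap\leb([\chi_{U_m}f])$; shrinking $\leb([f])$ by a further countable intersection over pairs $(n,m)$ absorbs the exceptional null set. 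Properties i), ii), iii) for $\borrep$ then follow from the corresponding ones in Theorem~\ref{ultimo} applied on each $U_n$, using that the $U_n$ cover $\XX$ and that countable unions of $\mass$-null sets are null.

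It remains to establish \eqref{buoncontrollo}. The plan is purely local: fix $[f]\in\Lpuloc$, $x\in\leb([f])$ and $r>0$, and put $M\defeq\Vert f\Vert_{\Lp^\infty(\mass\mressmall B_r(x))}$ (if $M=\infty$ there is nothing to prove, so assume $M<\infty$). Pick a basic open $U_n$ with $x\in U_n\subseteq B_r(x)$. Then $|\chi_{U_n}f|\le M$ $\mass$-a.e., so by property ii) of Theorem~\ref{ultimo} the representative $\FF\mathrm{Rep}([\chi_{U_n}f])$ equals a function bounded by $M$ $\mass$-a.e.; but one needs the \emph{pointwise} bound at the specific point $x$. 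Here I would use the explicit martingale construction in the proof of Theorem~\ref{ultimo}: $\FF\mathrm{Rep}([\chi_{U_n}f])(x)=\lim_k P_k([\chi_{U_n}f])(x)$ where each $P_k([\chi_{U_n}f])(x)$ is an average $\dashint_{\mathcal E^k_l}\chi_{U_n}f\,\dd\mass$ over the partition cell containing $x$, and such an average of a function with $|\cdot|\le M$ on $U_n$ and $=0$ off $U_n$ is bounded in absolute value by $M$; passing to the limit in $k$ gives $|\borrep([f])(x)|=|\FF\mathrm{Rep}([\chi_{U_n}f])(x)|\le M$. Taking the infimum over $r>0$ yields \eqref{buoncontrollo}. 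The main obstacle is exactly this last point: Theorem~\ref{ultimo} as stated only gives an $\mass$-a.e.\ bound, so one must either quote the averaging structure of its proof, or note that $\FF\mathrm{Rep}$ is the pointwise (not merely a.e.) limit of the conditional-expectation operators $P_k$ and that these obey the pointwise bound cell-by-cell; I would add a sentence making explicit that the $P_k$'s satisfy $|P_k([g])(x)|\le\Vert g\Vert_{\Lp^\infty(\mass\mressmall A)}$ whenever $x\in A$ with $A$ a union of partition cells, and then let $A\downarrow$ through the base. Everything else is bookkeeping with countable intersections of full-measure sets.
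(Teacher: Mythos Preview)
Your localization step has a gap. You claim to find a countable base $\{U_n\}$ of finite-measure open sets and then assert that $\chi_{U_n}f\in L^1(\mass)$ for every $f\in\Lpuloc$; but finite measure of $U_n$ does not imply integrability of $f$ on $U_n$ --- what is actually needed is that each $U_n$ be contained in a compact set (so that local integrability of $f$ upgrades to integrability on $U_n$), and in a non-locally-compact Polish space no such base of open sets exists. Your suggested fix of intersecting with a finite-measure exhaustion $\{X_k\}$ destroys openness, which you need later to fit some $U_m$ inside $B_r(x)$ for \eqref{buoncontrollo}. A smaller slip: the well-posedness check via property iii) does not by itself give $\FF\mathrm{Rep}([\chi_{U_n\setminus U_m}f])(x)=0$ for $x\in U_n\cap U_m$; the correct reason the two representatives agree $\mass$-a.e.\ on $U_n\cap U_m$ is simply property ii), and one then discards the null disagreement set --- which is in effect what your final clause does.

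The paper takes a different route that sidesteps this. It writes $\XX=\bigcup_n K_n$ up to a null set with $K_n$ compact of finite measure, reduces to each $K_n$ (on which $\Lpuloc=L^1$, so no separate localization is needed), and then \emph{modifies} the partitions $\mathcal E^k$ in the proof of Theorem~\ref{ultimo} so that $\sup_l\diam(\mathcal E^k_l)\to 0$, by refining each $\FF_k$ with a finite cover of $K_n$ by sets of diameter $<1/k$ (compactness is used here). With shrinking cells, the cell through $x$ eventually lies inside $B_r(x)$, so $|P_k([f])(x)|\le\|f\|_{L^\infty(\mass\mressmall B_r(x))}$ directly, and \eqref{buoncontrollo} follows in the limit. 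Your alternative for \eqref{buoncontrollo} --- bounding $|P_k([\chi_{U_m}f])|$ globally by $\|\chi_{U_m}f\|_{L^\infty(\mass)}\le M$ when $U_m\subseteq B_r(x)$ --- is correct and neatly avoids the partition refinement, but it only works once the cutoffs $\chi_{U_m}$ are legitimately in $L^1$; the paper's refinement gives the bound for $\FF\mathrm{Rep}$ itself with no auxiliary cutoffs.
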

\begin{proof}
	Fix a complete and separable distance $\dist$ on $\XX$ inducing the topology $\tau$.
	Notice first that, up to a $\mass$-negligible Borel set, $\XX=\bigcup_n K_n$, where $\{K_n\}_n$ is an increasing sequence of compact sets such that $\mass\mres K_n$ is a finite measure for every $n$. 
	With a gluing argument, we see that it is enough to  prove the theorem for the compact metric measure space $(K_n,\dist,\mass\mres K_n)$.  In particular, $\Lp^1(\mass\mres K_n)=\Lploc^1(\mass\mres K_n)$. Notice also that by basic measure theory we can check that $(K_n,\mathcal B(K_n),\mass\mres K_n)$ is a separable measure space, where, as usual,  $\mathcal B$ denotes the Borel $\sigma$-algebra. Now we apply Theorem \ref{ultimo}.  It remains to show \eqref{buoncontrollo}. To this aim, we have to modify slightly the partitions $\mathcal E^{k}$ used to prove Theorem \ref{ultimo} in order to ensure that 
	$$
	\lim_{k\rightarrow \infty} \sup_l\diam(\mathcal E_l^k)\rightarrow0.$$
	This can be easily done: using the notation of the proof of Theorem \ref{ultimo}, for $k\ge 2$ we just have to redefine $\FF_k$  as the $\sigma$-algebra generated by $\FF_{k-1}$, $\{A_1,\dots,A_k\}$ and a finite covering of $K_n$ of sets with diameter smaller than $k^{-1}$, and we leave $\mathcal F_1$ unchanged.
\end{proof}

\bibliographystyle{alpha}
	\bibliography{Biblio11}
\end{document}